\documentclass[3p,10pt,a4paper,twoside,fleqn,sort&compress]{amsart}
\usepackage{amssymb,amsmath,latexsym}
\usepackage[varg]{pxfonts}
\usepackage[margin=1.9cm]{geometry}

\newtheorem{theorem}{Theorem}[section]

\newtheorem{corollary}[theorem]{Corollary}

\newtheorem{definition}[theorem]{Definition}
\newtheorem{example}[theorem]{Example}

\newtheorem{lemma}[theorem]{Lemma}

\newtheorem{proposition}[theorem]{Proposition}
\newtheorem{remark}[theorem]{Remark}

\newcommand{\core}{\mathrel{\text{\textcircled{$\#$}}}}

\begin{document}

\title{Generalized inverses, ideals, and projectors in rings}

\author[P. M. Morillas]{Patricia Mariela Morillas$^{*}$\\\\\textit{$^{*}$I\lowercase{nstituto de} M\lowercase{atem\'{a}tica} A\lowercase{plicada} S\lowercase{an} L\lowercase{uis} (UNSL-CONICET). E\lowercase{j\'{e}rcito de los }A\lowercase{ndes 950, 5700 }S\lowercase{an} L\lowercase{uis},
A\lowercase{rgentina}\\E\lowercase{-mail address:
morillas.unsl@gmail.com}}}


\begin{abstract}
The theory of generalized inverses of matrices and operators is
closely connected with projections, i.e., idempotent (bounded)
linear transformations. We show that a similar situation occurs in
any associative ring $\mathcal{R}$ with a unit $1 \neq 0$. We prove
that generalized inverses in $\mathcal{R}$ are related to idempotent
group endomorphisms $\rho: \mathcal{R} \rightarrow \mathcal{R}$,
called projectors. We use these relations to give characterizations
and existence conditions for $\{1\}$, $\{2\}$, and $\{1,2\}$-inverses with any given principal/annihilator ideals. As a
consequence, we obtain sufficient conditions for any right/left
ideal of $\mathcal{R}$ to be a principal or an annihilator ideal of
an idempotent element of $\mathcal{R}$. We also study some
particular generalized inverses: Drazin and $(b,c)$ inverses, and $(e,f)$ Moore-Penrose, $e$-core, $f$-dual
core, $w$-core, dual $v$-core, right $w$-core, left dual $v$-core,
and $(p,q)$ inverses in
rings with involution.

\bigskip
\bigskip

{\bf Keywords:} generalized inverse, ring, ideal, direct sum,
projector, involution.

\medskip

{\bf 2020 Mathematical Subject Classification:} Primary 16U90; Secondary 16W10,
15A09.

\end{abstract}

\maketitle

\makeatletter
\renewcommand\@makefnmark%
{\mbox{\textsuperscript{\normalfont\@thefnmark)}}}
\makeatother

\section{Introduction}\label{S Introduction}

For non-invertible operators and matrices, and more generally, for
non-invertible elements of semigroups and rings, several generalized
inverses were defined and studied. Each generalized inverse is used
to study specific types of problems. They are useful for solving
matrix and operator equations (including integral and differential
equations), in probability theory, in the study of algebras, rings,
and semigroups, among others. See, e.g., \cite{Ben-Israel-Greville
(2003), Bhaskara Rao (2002), Campbell-Meyer (2009),
CvetkovicIlic-Wei (2017), Rao-Mitra (1971), Wang-Wei-Qiao (2018)}
and references therein.

Throughout this paper, $\mathcal{R}$ will be an associative ring
with a unit $1 \neq 0$. An involution $\ast$ of $\mathcal{R}$ is an
involutory anti-automorphism $a \mapsto a^{\ast}$, i.e.,
$(a^{\ast})^{\ast} = a$, $(a + b)^{\ast} = a^{\ast} + b^{\ast}$,
$(ab)^{\ast}= b^{\ast}a^{\ast}$ for all $a, b \in \mathcal{R}$. For
$a \in \mathcal{R}$, consider the \emph{principal right (resp. left)
ideal of $\mathcal{R}$ with generator $a$}, $a\mathcal{R}=\{ar : r
\in \mathcal{R}\}$ (resp. $\mathcal{R}a=\{ra : r \in
\mathcal{R}\}$), and the \emph{right} (resp. \emph{left})
\emph{annihilator of $a$}, ${\rm rann}(a)=\{r \in \mathcal{R} :
ar=0\}$ (resp. ${\rm lann}(a)=\{r \in \mathcal{R} : ra=0\}$). For
$a, x \in \mathcal{R}$, consider the following equalities:

\begin{flushleft}
\setlength{\abovedisplayskip}{0pt}
\setlength{\belowdisplayskip}{0pt}
\begin{minipage}{.24\textwidth}
\begin{equation}
axa = a,
\end{equation}
\end{minipage}
\begin{minipage}{.24\textwidth}
\begin{equation}
xax=x,
\end{equation}
\end{minipage}
\begin{minipage}{.24\textwidth}
\begin{equation}
(xa)^{\ast} = xa,
\end{equation}
\end{minipage}
\begin{minipage}{.24\textwidth}
\begin{equation}
(ax)^{\ast} = ax,
\end{equation}
\end{minipage}
\end{flushleft}

\begin{flushleft}
\setlength{\abovedisplayskip}{0pt}
\setlength{\belowdisplayskip}{0pt}
\begin{minipage}{.19\textwidth}
\begin{equation}
ax = xa,
\end{equation}
\end{minipage}
\begin{minipage}{.19\textwidth}
\begin{equation}
xa^{2} = a,
\end{equation}
\end{minipage}
\begin{minipage}{.19\textwidth}
\begin{equation}
ax^{2} = x,
\end{equation}
\end{minipage}
\begin{minipage}{.19\textwidth}
\begin{equation}
a^{2}x = a,
\end{equation}
\end{minipage}
\begin{minipage}{.19\textwidth}
\begin{equation}
x^{2}a = x,
\end{equation}
\end{minipage}
\end{flushleft}
\[(1^{k})\,\,\,\,\,\,\,\,\,\,\,\,\,\,\,\,\,\,\,\,xa^{k+1} = a^{k} \text{ for some } k \in \{1, 2, \ldots\}\,\,\,\,\,\,\,\,\,\,\,\,\,\,\,\,\,\,\,\,\,\,\,\,\,\,\,\,\,\,\,\,\,\,\,\,\,\,\,(\,^{k}1)\,\,\,\,\,\,\,\,\,\,\,\,\,\,\,\,\,\,\,\,a^{k+1}x = a^{k} \text{ for some } k \in \{1, 2,
\ldots\}\] where (3) and (4) require $\mathcal{R}$ to be a
$\ast$-ring.
\begin{definition}
For any $a \in \mathcal{R}$, let $a\{i, j, \ldots , l\}$ denote
the set of elements $x \in \mathcal{R}$ which satisfy equations (i),
(j), . . . , (l) from among equations (1)--(9),($1^{k}$) and
($\,^{k}1$). An element $x \in a\{i, j, \ldots , l\}$ is called an
$\{i, j, \ldots , l\}$-inverse of $a$, and denoted by $a^{(i,j,...
,l)}$.
\end{definition}

The relation of generalized inverses of matrices and operators to
oblique and orthogonal projections is one of the most important
properties for their study and applications. Before describing the approach and the results for generalized inverses in rings that we
present in this paper, we now recall some of these relations.

Let $\mathbb{C}^{m \times n}$ denote the set of matrices of order $m
\times n$. Let $A \in \mathbb{C}^{m \times n}$ and $X \in
\mathbb{C}^{n \times m}$. Then \[X\in A\{1\} \Leftrightarrow
AX=P_{{\rm R}(A),S} \text{ and } XA=P_{T,{\rm N}(A)},\] where $S$ is
some subspace of $\mathbb{C}^{m}$ complementary to the range ${\rm
R}(A)$ of $A$, $P_{{\rm R}(A),S}$ is the oblique projection onto
${\rm R}(A)$ along $S$, $T$ is some subspace of $\mathbb{C}^{n}$
complementary to the null space ${\rm N}(A)$ of $A$, and $P_{T,{\rm
N}(A)}$ is the oblique projection onto $T$ along ${\rm N}(A)$. We
also have
\[X\in A\{1,3\} \Leftrightarrow AX=P_{{\rm R}(A)}\] and \[X\in A\{1,4\} \Leftrightarrow
XA=P_{{\rm R}(A^{\ast})},\] where $A^{\ast}$ is the conjugate
transpose of $A$, and $P_{{\rm R}(A)}$ and $P_{{\rm R}(A^{\ast})}$
denote the orthogonal projections onto the ${\rm R}(A)$ and ${\rm
R}(A^{\ast})$, respectively. See, e.g., \cite{Ben-Israel-Greville
(1976)} for more details.

Let $\mathcal{H}_{1}$ and $\mathcal{H}_{2}$ be Hilbert spaces over
$\mathbb{F}=\mathbb{R}$ or $\mathbb{F}=\mathbb{C}$. Let
$\mathcal{BC}(\mathcal{H}_{1}, \mathcal{H}_{2})$ denote the set of
bounded linear operators from $\mathcal{H}_{1}$ to $\mathcal{H}_{2}$
with closed range. If $A \in \mathcal{BC}(\mathcal{H}_{1},
\mathcal{H}_{2})$, then the Moore-Penrose inverse
$A^{\dag}=A^{(1,2,3,4)}$ exists and
\[AA^{\dag} = P_{{\rm R}(A)} \text{ and } A^{\dag}A = P_{{\rm
R}(A^{\ast})},\]  where $A^{\ast}$ is the adjoint of $A$ (see, e.g.,
\cite[Theorem 1]{Petryshyn (1967)}).

Let now $\mathcal{V}$ be a complex Banach space. If $A \in
\mathcal{B}(\mathcal{V})$ has finite index $k$, then the Drazin
inverse $A^{D}=A^{(1^{k},2,5)}$ of $A$ exists and satisfies
\[AA^{D}=A^{D}A=P_{{\rm R}(A^{k}),{\rm N}(A^{k})},\] where $P_{{\rm
R}(A^{k}),{\rm N}(A^{k})}$ is the oblique projection onto ${\rm
R}(A^{k})$ along ${\rm N}(A^{k})$ (see, e.g., \cite[Theorem 4 and
its proof]{King (1977)}).

The core $A^{\core}=A^{(1,2,3,6,7)}$ and the dual core
$A_{\core}=A^{(1,2,4,8,9)}$ inverses of $A \in \mathbb{C}^{n \times
n}$ are defined by the conditions
\begin{equation}\label{E D CI matrix}
AA^{\core}=P_{{\rm R}(A)} \text{ and } {\rm R}(A^{\core}) \subseteq
{\rm R}(A)
\end{equation}
and
\begin{equation}\label{E D DCI matrix}
A_{\core}A=P_{{\rm R}(A^{\ast})} \text{ and } {\rm R}(A_{\core})
\subseteq {\rm R}(A^{\ast}),
\end{equation}
respectively (see \cite[Definition 1, (i) and (ii) on page
693]{Baksalary-Trenkler (2010)}).

Since the theory of generalized inverses of matrices and operators
is closely connected with projections, i.e., idempotent (bounded)
linear transformations, it is natural to think about a similar
situation when working in any ring. In this paper, we show that we can similarly relate
generalized inverses in a ring $\mathcal{R}$ to idempotent group
endomorphisms $\rho: \mathcal{R} \rightarrow \mathcal{R}$, called
projectors, which are linked to direct sum decompositions of
$\mathcal{R}$, and use these relations in their study.

The organization of the paper is as follows:

In Section~\ref{S Preliminaries}, we present some properties of
generalized inverses, elements, direct sums and projectors in rings
that we use throughout the article.

In Section~\ref{S Basic relations}, we relate $\{1\}$, $\{2\}$,
$\{1,2\}$, $\{1,5\}$, and Drazin inverses to projectors.

Numerous particular generalized inverses are defined or studied by
means of their associated principal or annihilator ideals. In
Sections~\ref{S 1I with prescribed principal and annihilator
ideals}, \ref{S 2I with prescribed principal and annihilator
ideals}, and \ref{S 12I with prescribed principal and annihilator
ideals}, we address this topic with generality using
projectors. Let $a \in \mathcal{R}$, $\mathcal{S}$, $\mathcal{T}$ be
right ideals of $\mathcal{R}$, and $\mathcal{S}'$, $\mathcal{T}'$ be
left ideals of $\mathcal{R}$. We study $\{1\}$, $\{2\}$, and
$\{1,2\}$-inverses $x$ of $a$ such that one of the following
conditions holds: $xa\mathcal{R}=\mathcal{S}$ and ${\rm
rann}(xa)=\mathcal{T}$; $\mathcal{R}xa=\mathcal{S}'$ and ${\rm
lann}(xa)=\mathcal{T}'$; $xa\mathcal{R}=\mathcal{S}$ and
$\mathcal{R}xa=\mathcal{S}'$; or ${\rm rann}(xa)=\mathcal{T}$ and
${\rm lann}(xa)=\mathcal{T}'$. We also consider $\{1\}$ and
$\{1,2\}$-inverses such that one of the following conditions holds:
$xa\mathcal{R}=\mathcal{S}$; ${\rm rann}(xa)=\mathcal{T}$;
$\mathcal{R}xa=\mathcal{S}'$; or ${\rm lann}(xa)=\mathcal{T}'$. We
give several characterizations and existence conditions for these
generalized inverses. We establish the connection of the Mitsch
partial order \cite{Mitsch (1986)} with the $\{2\}$-inverses
considered in Section~\ref{S 2I with prescribed principal and
annihilator ideals}. Since in these sections, we do not make any
additional a priori assumption for the ideals $\mathcal{S}$,
$\mathcal{T}$, $\mathcal{S}'$, and $\mathcal{T}'$, we can consider a
large variety of particular cases (as we do in  Section~\ref{S
Particular classes of 1 2 and 12 inverses}). Moreover, as a
consequence, we obtain sufficient conditions for any right/left
ideal of $\mathcal{R}$ to be a principal or an annihilator ideal of
an idempotent element of $\mathcal{R}$.

In Section~\ref{S Particular classes of 1 2 and 12 inverses}, we
apply results of the previous sections to study some classes of
$\{1\}$, $\{2\}$, and $\{1,2\}$-inverses. First, we consider
$\{1,3\}$, $\{1,4\}$, $\{1,3,4\}$, $\{1,3,6\}$, $\{1,4,8\}$,
$\{1,3,7\}$, and $\{1,4,9\}$-inverses. Then, we analyze the $(e,f)$
Moore-Penrose inverse \cite{Mosic-Djordjevic (2011)}, the $e$-core
and $f$-dual core inverses \cite{Mosic-Deng-Ma (2018)}, the $w$-core
and the dual $v$-core inverses \cite{Zhu-Wu-Chen (2023)}, and the
right $w$-core and left dual $v$-core inverses \cite{Zhu-Wu-Mosic
(2023)}. Finally, we obtain some properties of two types of
$\{2\}$-inverses: the $(b,c)$ inverses \cite{Drazin (2012)} and the
$(p,q)$ inverse \cite{Djordjevic-Wei (2005)}. We end Section~\ref{S
Particular classes of 1 2 and 12 inverses} by giving an illustrative
example with a matrix over a field.

\section{Preliminaries}\label{S Preliminaries}

This section presents notations, definitions and results about
generalized inverses, elements, direct sums and projectors in rings,
that will be used later.

\subsection{Generalized inverses}

Let $a \in \mathcal{R}$. If $a\{1\}\neq\emptyset$, then $a$ is
called \emph{regular} (in the sense of von Neumann) and an $x\in
a\{1\}$ is called an \emph{inner inverse} of $a$. If
$a\{2\}\neq\emptyset$, then $a$ is called \emph{anti-regular} and an
$x\in a\{2\}$ is called an \emph{outer inverse} of $a$.

Note that if $x\in a\{5\}$, then ($1^{k}$) is equivalent to
($\,^{k}1$). If there exists $k \in \{1, 2, \ldots\}$ such that
$a\{2, 5, 1^{k}\}\neq\emptyset$, then $a$ is called \emph{Drazin
invertible}. The smallest of these positive integers $k$ is called
the index of $a$. The set $a\{2,5,1^{k}\}$ has a unique element
called the \emph{Drazin inverse} of $a$ and denoted by $a^{D}$. In
particular, if $a\{1, 2, 5\}\neq\emptyset$, then $a$ is called
\emph{group invertible} and the \emph{group inverse} of $a$ is
denoted by $a^{\#}$. For more details about these inverses in rings
see, e.g., \cite{Drazin (1958), Lebtahi-Patrício-Thome (2013), Patrício-Puystjens (2004)}.

Let $\mathcal{R}$ be a $\ast$-ring and $a \in \mathcal{R}$. If
$a\{1,2,3,4\}$ is not empty, then $a$ is called \emph{Moore-Penrose
invertible}. In this case, $a\{1,2,3,4\}$ has a unique element
called the \emph{Moore-Penrose inverse} of $a$ and denoted by
$a^{\dag}$. See, e.g., \cite{Hartwig (1976),
Koliha-Djordjevic-Cvetkovic (2007), Koliha-RakoCevic (2007), Xu-Chen
(2019)} for properties of the Moore-Penrose inverse in $\ast$-rings.

Baksalary and Trenkler \cite{Baksalary-Trenkler (2010)} introduced
two generalized inverses for complex matrices (see (\ref{E D CI matrix}) and (\ref{E D DCI matrix})). Later, Raki\'{c},
Din\v{c}i\'{c}, and Djordjevi\'{c} \cite{Rakic-Dincic-Djordjevic
(2014b)} generalized these notions to an arbitrary $\ast$-ring. Let $\mathcal{R}$ be a $\ast$-ring and $a, x \in \mathcal{R}$. Then $x$
is a \emph{core} (resp. \emph{dual core}) \emph{inverse} of $a$ if
$x \in a\{1\}$ and $x\mathcal{R}=x^{\ast}\mathcal{R}=a\mathcal{R}$
(resp. $x\mathcal{R}=x^{\ast}\mathcal{R}=a^{\ast}\mathcal{R}$). (see
\cite[Definitions 2.3 and 2.4]{Rakic-Dincic-Djordjevic (2014b)}). If
they exist, the core and the dual core inverses of an element $a \in
\mathcal{R}$ are unique and are denoted by $a^{\core}$ and
$a_{\core}$, respectively. By \cite[Lemmas 2.1 and
2.2]{Rakic-Dincic-Djordjevic (2014b)}, the conditions of
\cite[Definitions 2.3 and 2.4]{Rakic-Dincic-Djordjevic (2014b)} for
the core and dual core inverses are equivalent to the conditions
(\ref{E D CI matrix}) and (\ref{E D DCI matrix}) for finite complex
matrices. In \cite{Rakic-Dincic-Djordjevic (2014b)}, it is proved
that $a^{\core}=a^{(1,2,3,6,7)}$ and $a_{\core}=a^{(1,2,4,8,9)}$. In
\cite{Xu-Chen-Zhang (2017)}, it is proved that
$a^{\core}=a^{(3,6,7)}$, moreover, by the proof of \cite[Theorem
3.1]{Xu-Chen-Zhang (2017)}, $a\{6,7\} \subseteq a\{1,2\}$.
Similarly, $a_{\core}=a^{(4,8,9)}$ and $a\{8,9\} \subseteq
a\{1,2\}$. Properties of the core and dual core inverse in
$\ast$-rings can be found in, e.g., \cite{Li-Chen (2018), Rakic-Dincic-Djordjevic
(2014b), Xu-Chen-Zhang (2017)}.

Let us denote the sets of all Drazin invertible, group invertible,
Moore-Penrose invertible, core invertible and dual core invertible
elements in $\mathcal{R}$ by $\mathcal{R}^{D}$, $\mathcal{R}^{\#}$,
$\mathcal{R}^{\dag}$, $\mathcal{R}^{\core}$ and
$\mathcal{R}_{\core}$, respectively. We will use the following
well-known equalities:
\begin{enumerate}
  \item Let $a \in \mathcal{R}$ and $a^{(1)} \in a\{1\}$. Then $aa^{(1)}\mathcal{R}=a\mathcal{R}$, ${\rm rann}(a^{(1)}a)={\rm rann}(a)$,
  ${\rm lann}(aa^{(1)})={\rm lann}(a)$, and
$\mathcal{R}a^{(1)}a=\mathcal{R}a$.
  \item Let $a \in \mathcal{R}$ and $a^{(1)} \in a\{2\}$. Then ${\rm rann}(aa^{(2)})={\rm rann}(a^{(2)})$, $a^{(2)}a\mathcal{R}=a^{(2)}\mathcal{R}$,
$\mathcal{R}aa^{(2)}=\mathcal{R}a^{(2)}$, and ${\rm
lann}(a^{(2)}a)={\rm lann}(a^{(2)})$.
  \item Let $a \in \mathcal{R}^{D}$ with index $k$ and $l \geq k$. Then
$a^{D}a\mathcal{R}=aa^{D}\mathcal{R}=a^{D}\mathcal{R}=a^{l}\mathcal{R}$,
${\rm rann}(aa^{D})={\rm rann}(a^{D}a)={\rm rann}(a^{D})={\rm
rann}(a^{l})$,
$\mathcal{R}aa^{D}=\mathcal{R}a^{D}a=\mathcal{R}a^{D}=\mathcal{R}a^{l}$
and ${\rm lann}(aa^{D})={\rm lann}(a^{D}a)={\rm lann}(a^{D})={\rm
lann}(a^{l})$.
\end{enumerate}

\subsection{Elements in rings}\label{S Preliminaries Sub Elements in rings}

An element $p \in \mathcal{R}$ is an \emph{idempotent} if $p=p^{2}$.
If $\mathcal{R}$ is a $\ast$-ring and $p=p^{\ast}$, then $p$ is said
to be \emph{symmetric}. If $p=p^{2}=p^{\ast}$, then $p$ is called a
\emph{projection}. The set of invertible and idempotent elements in
$\mathcal{R}$ are denoted with $\mathcal{R}^{-1}$ and
$\mathcal{R}^{\bullet}$, respectively. If $\mathcal{R}$ is a
$\ast$-ring, the set of symmetric elements in $\mathcal{R}$ is
denoted with $\mathcal{R}^{\rm sym}$.

For $a \in \mathcal{R}$, we consider the group endomorphisms
$\varphi_{a}: \mathcal{R} \rightarrow \mathcal{R}$ given by
$\varphi_{a}(x)=ax$ and $\,_{a}\varphi: \mathcal{R} \rightarrow
\mathcal{R}$ given by $\,_{a}\varphi(x)=xa$. We have, ${\rm
im}(\varphi_{a})=a\mathcal{R}$, ${\rm ker}(\varphi_{a})={\rm
rann}(a)$, ${\rm im}(_{a}\varphi)=\mathcal{R}a$, and ${\rm
ker}(_{a}\varphi)={\rm lann}(a)$.

\begin{lemma}\label{L invertible}
Let $a \in \mathcal{R}$. Then the following assertions are
equivalent:
\begin{enumerate}
  \item $a \in \mathcal{R}^{-1}$.
  \item $a\mathcal{R}=\mathcal{R}$ and ${\rm rann}(a)=\{0\}$.
  \item $\mathcal{R}a=\mathcal{R}$ and ${\rm lann}(a)=\{0\}$.
\end{enumerate}
\end{lemma}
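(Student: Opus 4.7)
The plan is to prove the equivalences via the cycle $(1) \Rightarrow (2) \Rightarrow (1)$ and $(1) \Leftrightarrow (3)$ by a symmetric argument. The implications $(1) \Rightarrow (2)$ and $(1) \Rightarrow (3)$ will be immediate from the definition of invertibility, since if $b = a^{-1}$ then $1 = ab \in a\mathcal{R}$ forces $a\mathcal{R} = \mathcal{R}$, and any $r \in {\rm rann}(a)$ satisfies $r = 1 \cdot r = ba r = 0$; the case of $\mathcal{R}a$ and ${\rm lann}(a)$ is handled by the same computation on the other side.

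The substantive content lies in $(2) \Rightarrow (1)$. First, I would use $a\mathcal{R} = \mathcal{R}$ to pick some $b \in \mathcal{R}$ with $ab = 1$, i.e., a right inverse of $a$. The goal is then to promote $b$ to a two-sided inverse using the hypothesis ${\rm rann}(a) = \{0\}$. The key manipulation is to consider the element $ba - 1$ and left-multiply by $a$: a direct computation gives
\[
a(ba - 1) = (ab)a - a \cdot 1 = a - a = 0,
\]
so $ba - 1 \in {\rm rann}(a) = \{0\}$, which yields $ba = 1$. Thus $a$ has $b$ as a two-sided inverse and $a \in \mathcal{R}^{-1}$.

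For $(3) \Rightarrow (1)$, the proof is strictly dual: pick $c \in \mathcal{R}$ with $ca = 1$ using $\mathcal{R}a = \mathcal{R}$, and then $(ac - 1)a = a - a = 0$ places $ac - 1$ in ${\rm lann}(a) = \{0\}$, giving $ac = 1$.

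I do not expect any real obstacle: this is the classical argument that, in a ring with unit, a one-sided inverse together with triviality of the corresponding one-sided annihilator forces two-sided invertibility. The only point worth emphasizing when writing the proof cleanly is that both directions of the promotion step require the presence of the unit $1 \in \mathcal{R}$, which is part of the standing assumption on $\mathcal{R}$, and that no further structure (such as an involution) is needed.
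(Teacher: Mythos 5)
Your proof is correct, but it takes a genuinely different (and more elementary) route than the paper's. For (2) $\Rightarrow$ (1) the paper works with the group endomorphism $\varphi_{a}(x)=ax$: the hypotheses say exactly that $\varphi_{a}$ is surjective with trivial kernel, hence a group automorphism, and the paper then identifies its inverse $\psi=\varphi_{a}^{-1}$ as $\varphi_{b}$ for the unique $b$ with $ab=1$, from which $ba=\varphi_{b}(a)=\psi(a)=1$. You instead bypass the endomorphism viewpoint entirely: take $b$ with $ab=1$ (possible since $1\in a\mathcal{R}=\mathcal{R}$) and observe $a(ba-1)=(ab)a-a=0$, so $ba-1\in{\rm rann}(a)=\{0\}$, i.e.\ $ba=1$; the dual computation handles (3) $\Rightarrow$ (1). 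Your argument is shorter and needs only the unit and the annihilator hypothesis, while the paper's phrasing fits its overall program of translating statements about ring elements into statements about the group endomorphisms $\varphi_{a}$ and $\,_{a}\varphi$ (the same mechanism reappears in Lemma~\ref{L oblique projector sii idempotent} and throughout). Both are complete proofs; nothing is missing from yours, and the uniqueness of $b$ that the paper invokes is not actually needed in your version.
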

\begin{proof}
(1) $\Rightarrow$ (2)(3): It is immediate.

(2) $\Rightarrow$ (1): Assume that $a\mathcal{R}=\mathcal{R}$ and
${\rm rann}(a)=\{0\}$. Then $\varphi_{a}$ is a group automorphism.
Let $\psi$ be the group automorphism such that
$\psi=\varphi_{a}^{-1}$. Since $\varphi_{a}(1)=a$, we have
$\psi(a)=1$. For each $s \in \mathcal{R}$ there exists a unique $r
\in \mathcal{R}$ such that $ar=s$. In particular, there exists a
unique $b \in \mathcal{R}$ such that $ab=1$. Then, $\varphi_{a}(r)=s
\Leftrightarrow ar=s \Leftrightarrow ar=abs \Leftrightarrow r=bs$.
Hence, $\psi(s)=r=bs$. From here, $\psi=\varphi_{b}$ and
$ba=\varphi_{b}(a)=\psi(a)=1$. This shows that $b=a^{-1}$.
Therefore, $a \in \mathcal{R}^{-1}$.

(3) $\Rightarrow$ (1): It is similar to the proof of (2)
$\Rightarrow$ (1).
\end{proof}
\begin{lemma}\label{L igualdad idempotentes}
Let $p, q \in \mathcal{R}^{\bullet}$. Then:
\begin{enumerate}
  \item $p\mathcal{R}={\rm rann}(1-p)$, $\mathcal{R}p={\rm lann}(1-p)$.
  \item $p\mathcal{R} \subseteq q\mathcal{R} \Leftrightarrow {\rm lann}(q) \subseteq {\rm lann}(p)$, $\mathcal{R}p \subseteq \mathcal{R}q \Leftrightarrow {\rm rann}(q) \subseteq
{\rm rann}(p)$.
  \item $q=p \Leftrightarrow \{q\mathcal{R} \subseteq p\mathcal{R}$ and ${\rm rann}(q) \subseteq {\rm rann}(p)\}$.
\end{enumerate}
\end{lemma}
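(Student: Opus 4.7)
The plan is to exploit the standard identities $(1-p)p = 0 = p(1-p)$ and $(1-q)q = 0 = q(1-q)$ available for any idempotents, together with the elementary observation that $1 \in \mathcal{R}$, so containment statements about principal ideals can always be tested by plugging in $1$.

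For part (1), I would argue both inclusions directly. If $x \in p\mathcal{R}$, write $x = pr$; then $(1-p)x = (1-p)pr = (p-p^{2})r = 0$, so $x \in \operatorname{rann}(1-p)$. Conversely, if $(1-p)x = 0$, then $x = px \in p\mathcal{R}$. The identity $\mathcal{R}p = \operatorname{lann}(1-p)$ follows by the symmetric argument using $p(1-p) = 0$.

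For part (2), the forward direction is immediate: if $p\mathcal{R} \subseteq q\mathcal{R}$, then $p = qs$ for some $s$, and any $t$ with $tq = 0$ satisfies $tp = tqs = 0$. For the converse, I would use part (1): since $q$ is idempotent, $(1-q)q = 0$ gives $1-q \in \operatorname{lann}(q)$. Hence, assuming $\operatorname{lann}(q) \subseteq \operatorname{lann}(p)$, we get $(1-q)p = 0$, i.e.\ $p = qp \in q\mathcal{R}$, which is equivalent to $p\mathcal{R} \subseteq q\mathcal{R}$. The dual equivalence between left principal ideals and right annihilators goes the same way, swapping sides.

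For part (3), the forward direction is trivial. For the converse, I would combine the two hypotheses. From $q\mathcal{R} \subseteq p\mathcal{R}$, write $q = pu$ for some $u$, so $pq = p^{2}u = pu = q$. From $\operatorname{rann}(q) \subseteq \operatorname{rann}(p)$, the idempotency $q(1-q) = 0$ places $1-q \in \operatorname{rann}(q)$, hence in $\operatorname{rann}(p)$, giving $p(1-q) = 0$, i.e.\ $p = pq$. Combining, $p = pq = q$.

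There is no real obstacle here; the whole lemma is a sequence of one-line manipulations. The only point requiring a small amount of care is to use the right ``witness'' for each annihilator containment, namely $1-p$ or $1-q$, which is precisely where idempotency enters. Once part (1) is in hand, parts (2) and (3) are essentially its corollaries.
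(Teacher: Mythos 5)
Your proof is correct. The paper states this lemma without proof (it is treated as elementary background), and your argument — using the witnesses $1-p$ and $1-q$ supplied by idempotency, proving part (1) directly and then deriving (2) and (3) from the identities $p=qp$, $pq=q$, $p=pq$ — is exactly the standard argument one would expect to fill that gap; every step checks out.
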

\begin{lemma}\label{L L 2.5 RDD (2014b)}\cite[Lemmas 2.5 and
2.6]{Rakic-Dincic-Djordjevic (2014b)} Let $a, b \in \mathcal{R}$.
Then:
\begin{enumerate}
  \item If $a\mathcal{R} \subseteq b\mathcal{R}$, then ${\rm lann}(b) \subseteq
{\rm lann}(a)$.
  \item If ${\rm lann}(b) \subseteq {\rm lann}(a)$ and $b\{1\} \neq \emptyset$, then
$a\mathcal{R} \subseteq b\mathcal{R}$.
  \item If $\mathcal{R}a \subseteq \mathcal{R}b$, then ${\rm rann}(b) \subseteq
{\rm rann}(a)$.
  \item If ${\rm rann}(b) \subseteq {\rm rann}(a)$ and
$b\{1\} \neq \emptyset$, then $\mathcal{R}a \subseteq \mathcal{R}b$.
\end{enumerate}
\end{lemma}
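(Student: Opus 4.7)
The plan is to prove the four items directly from the definitions, using only the fact that $1 \in \mathcal{R}$ so $a \in a\mathcal{R}$, and the defining identity $b b^{(1)} b = b$ of an inner inverse when it is available. Items (3) and (4) will follow by the obvious left-right symmetry (swap $\varphi_a$ for $_{a}\varphi$, or equivalently work in the opposite ring), so I would only write out (1) and (2).

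For (1), the key observation is that $a = a \cdot 1 \in a\mathcal{R}$. Under the hypothesis $a\mathcal{R} \subseteq b\mathcal{R}$ this gives some $r \in \mathcal{R}$ with $a = br$. Then for any $c \in \mathrm{lann}(b)$ we have $c a = c b r = 0 \cdot r = 0$, so $c \in \mathrm{lann}(a)$. This is a one-line computation with no obstacle.

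For (2), fix an inner inverse $b^{(1)} \in b\{1\}$, so $b b^{(1)} b = b$, i.e., $(1 - b b^{(1)}) b = 0$. Hence $1 - b b^{(1)} \in \mathrm{lann}(b)$, and by the hypothesis $\mathrm{lann}(b) \subseteq \mathrm{lann}(a)$ we get $(1 - b b^{(1)}) a = 0$, that is, $a = b b^{(1)} a \in b \mathcal{R}$. Consequently, for every $r \in \mathcal{R}$, $a r = b (b^{(1)} a r) \in b \mathcal{R}$, which yields $a \mathcal{R} \subseteq b \mathcal{R}$. The existence of an inner inverse is essential here; without it, the implication can fail, which is precisely why the hypothesis $b\{1\} \neq \emptyset$ appears.

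There is no real obstacle: the argument is a standard manipulation of principal ideals and annihilators in a unital ring, and the only non-trivial ingredient is the clever but routine use of the element $1 - b b^{(1)}$ as a ``test'' left annihilator of $b$. For (3) and (4) I would simply note that the same argument applies with $\varphi_a$ replaced by $_{a}\varphi$: in (3), write $a = sb$ for some $s$ and compute $a c = s b c = 0$ whenever $b c = 0$; in (4), use $b b^{(1)} b = b$ as $b(1 - b^{(1)} b) = 0$ to obtain $1 - b^{(1)} b \in \mathrm{rann}(b) \subseteq \mathrm{rann}(a)$, hence $a = a b^{(1)} b \in \mathcal{R} b$, and then $\mathcal{R} a \subseteq \mathcal{R} b$.
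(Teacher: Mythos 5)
Your proof is correct. The paper itself states this lemma without proof, citing \cite[Lemmas 2.5 and 2.6]{Rakic-Dincic-Djordjevic (2014b)}, and your argument is exactly the standard one used there: part (1) via $a=br$, and part (2) via the test element $1-bb^{(1)}\in{\rm lann}(b)$ — the same device the paper employs in its proof of Lemma~\ref{L a1novac aR subset bR b1novac} (where $1-xb\in{\rm rann}(b)$ plays the analogous role) — with (3) and (4) following by left-right symmetry. Nothing is missing.
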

\begin{lemma}\label{L a1novac aR subset bR b1novac}
Let $a, b \in \mathcal{R}$ be such that $b\{1\} \neq \emptyset$.
Then:
\begin{enumerate}
  \item If ${\rm rann}(b) \subseteq {\rm rann}(a)$ and $\mathcal{R}b \subseteq \mathcal{R}a$, then $a\{1\} \neq \emptyset$.
  \item If ${\rm lann}(b) \subseteq {\rm lann}(a)$ and $b\mathcal{R} \subseteq
a\mathcal{R}$, then $a\{1\} \neq \emptyset$.
\end{enumerate}
\end{lemma}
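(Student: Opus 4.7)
The plan is to use the inner inverse $b^{(1)} \in b\{1\}$ guaranteed by hypothesis and build an explicit inner inverse of $a$. The preliminary step is to strengthen each one-sided ideal inclusion into an equality by invoking Lemma~\ref{L L 2.5 RDD (2014b)}, after which a direct calculation produces the desired $x$.

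For part (1), I would first apply Lemma~\ref{L L 2.5 RDD (2014b)}(4) to the hypotheses ${\rm rann}(b) \subseteq {\rm rann}(a)$ and $b\{1\} \neq \emptyset$ to obtain $\mathcal{R}a \subseteq \mathcal{R}b$. Combined with the assumed $\mathcal{R}b \subseteq \mathcal{R}a$, this yields $\mathcal{R}a = \mathcal{R}b$, so there exist $r, s \in \mathcal{R}$ with $a = rb$ and $b = sa$. Setting $x := b^{(1)} s$, one computes
\[
axa \;=\; a\, b^{(1)}(sa) \;=\; a\, b^{(1)} b \;=\; (rb)\, b^{(1)} b \;=\; r\bigl(b\, b^{(1)} b\bigr) \;=\; rb \;=\; a,
\]
hence $x \in a\{1\}$, as required.

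For part (2), the argument is the left-right mirror image: apply Lemma~\ref{L L 2.5 RDD (2014b)}(2) to get $a\mathcal{R} \subseteq b\mathcal{R}$, combine with the hypothesis to obtain $a\mathcal{R} = b\mathcal{R}$, write $a = bs'$ and $b = ar'$ for suitable $r', s' \in \mathcal{R}$, and verify that $x := r'\, b^{(1)}$ satisfies $axa = (ar')\, b^{(1)} a = b\, b^{(1)} (bs') = bs' = a$.

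There is essentially no obstacle here: the whole substance of the lemma is that the extra ideal inclusion supplied in each part is precisely what is needed to reverse, via Lemma~\ref{L L 2.5 RDD (2014b)}, the one-sided inclusion produced by the existence of $b^{(1)}$. Once the two-sided equality of ideals is in place, the inner inverse of $a$ is forced by a direct three-factor computation using $b\, b^{(1)} b = b$.
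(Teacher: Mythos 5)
Your proof is correct, but it takes a slightly different route than the paper. The paper argues in two lines without invoking Lemma~\ref{L L 2.5 RDD (2014b)} at all: for part (1), since $1-b^{(1)}b \in {\rm rann}(b) \subseteq {\rm rann}(a)$ one gets $a = ab^{(1)}b$, and then $b^{(1)}b \in \mathcal{R}b \subseteq \mathcal{R}a$ immediately gives $a \in a\mathcal{R}b \subseteq a\mathcal{R}a$, i.e.\ $a\{1\}\neq\emptyset$; part (2) is the mirror image. You instead first use Lemma~\ref{L L 2.5 RDD (2014b)}(4) (resp.\ (2)) to reverse the given inclusion, obtaining $\mathcal{R}a=\mathcal{R}b$ (resp.\ $a\mathcal{R}=b\mathcal{R}$), and then write $a=rb$, $b=sa$ and verify $x=b^{(1)}s$ (resp.\ $x=r'b^{(1)}$) by a direct computation. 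Both arguments are valid and of comparable length; your version has the small advantage of exhibiting an explicit inner inverse of $a$, while the paper's is more self-contained (note that the cited parts of Lemma~\ref{L L 2.5 RDD (2014b)} are themselves proved by exactly the step $a=ab^{(1)}b$ that the paper uses directly) and shows that the full ideal equality is not actually needed — the two stated inclusions alone suffice. Also, the ``strengthen to equality'' framing is slightly stronger than necessary: in your part (1) computation the hypothesis ${\rm rann}(b)\subseteq{\rm rann}(a)$ is consumed only through the citation, whereas using it directly as $a=ab^{(1)}b$ would shortcut your argument to the paper's.
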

\begin{proof}
(1): Let $x\in b\{1\}$. Since ${\rm rann}(b) \subseteq {\rm
rann}(a)$ and $1-xb \in {\rm rann}(b)$, we have $a(1-xb)=0$. From
here, $a=axb\in aRb\subseteq aRa$. Therefore, $a\{1\} \neq
\emptyset$.

(2): The proof is similar to the proof of (1).
\end{proof}
As a consequence of Lemmas~\ref{L L 2.5 RDD (2014b)}(2)(4) and
\ref{L a1novac aR subset bR b1novac} we get the following result.
\begin{lemma}\label{L a1neqemptyset}
Let $a, b \in \mathcal{R}$ be such that $b\{1\} \neq \emptyset$.
\begin{enumerate}
  \item If ${\rm rann}(a) = {\rm rann}(b)$, then $\mathcal{R}b \subseteq
  \mathcal{R}a$ if and only if $a\{1\} \neq \emptyset$.
  \item If ${\rm lann}(a) = {\rm lann}(b)$, then $b\mathcal{R} \subseteq
a\mathcal{R}$ if and only if $a\{1\} \neq \emptyset$.
\end{enumerate}
\end{lemma}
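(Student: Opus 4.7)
The plan is to recognize this lemma as a straightforward consequence of Lemmas~\ref{L L 2.5 RDD (2014b)}(2)(4) and \ref{L a1novac aR subset bR b1novac}, as advertised in the paragraph preceding the statement. Both assertions are biconditionals; I will prove each direction by a single application of one of the quoted lemmas, taking care to orient the annihilator inclusions correctly.

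For part (1), I would argue as follows. For the implication $\mathcal{R}b \subseteq \mathcal{R}a \Rightarrow a\{1\} \neq \emptyset$, the hypothesis ${\rm rann}(a)={\rm rann}(b)$ gives in particular ${\rm rann}(b)\subseteq {\rm rann}(a)$, so combining this with the assumed inclusion $\mathcal{R}b \subseteq \mathcal{R}a$ and invoking Lemma~\ref{L a1novac aR subset bR b1novac}(1) (applied as stated) produces the desired $\{1\}$-inverse of $a$. For the converse $a\{1\}\neq\emptyset \Rightarrow \mathcal{R}b\subseteq\mathcal{R}a$, the equality ${\rm rann}(a)={\rm rann}(b)$ also yields ${\rm rann}(a)\subseteq {\rm rann}(b)$. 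I now apply Lemma~\ref{L L 2.5 RDD (2014b)}(4) with the roles of $a$ and $b$ interchanged: the hypothesis ${\rm rann}(a)\subseteq {\rm rann}(b)$, together with $a\{1\}\neq\emptyset$, gives exactly $\mathcal{R}b\subseteq\mathcal{R}a$.

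Part (2) is formally dual and follows by the same scheme. For $b\mathcal{R}\subseteq a\mathcal{R} \Rightarrow a\{1\}\neq\emptyset$, the equality ${\rm lann}(a)={\rm lann}(b)$ gives ${\rm lann}(b)\subseteq{\rm lann}(a)$, which combined with $b\mathcal{R}\subseteq a\mathcal{R}$ allows Lemma~\ref{L a1novac aR subset bR b1novac}(2) to be applied directly. For the converse, from ${\rm lann}(a)\subseteq{\rm lann}(b)$ and $a\{1\}\neq\emptyset$, Lemma~\ref{L L 2.5 RDD (2014b)}(2) (again with $a$ and $b$ swapped) yields $b\mathcal{R}\subseteq a\mathcal{R}$.

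There is no genuine obstacle in the argument; the only thing to watch is that each of the two applications of Lemma~\ref{L L 2.5 RDD (2014b)} is made after swapping the symbols $a$ and $b$, which is legitimate because the hypothesis $a\{1\}\neq\emptyset$ that one gets from the assumption in each forward implication is precisely what Lemma~\ref{L L 2.5 RDD (2014b)} needs in the swapped form. The role of the equality (as opposed to a mere inclusion) of annihilators is exactly to supply, in each direction, the one inclusion between ${\rm rann}(a)$ and ${\rm rann}(b)$ (respectively ${\rm lann}(a)$ and ${\rm lann}(b)$) that the invoked lemma requires.
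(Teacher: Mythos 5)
Your proof is correct and is exactly the argument the paper intends: the paper states this lemma without proof as an immediate consequence of Lemma~\ref{L L 2.5 RDD (2014b)}(2)(4) and Lemma~\ref{L a1novac aR subset bR b1novac}, and your careful orientation of the inclusions (including the swap of $a$ and $b$ when invoking Lemma~\ref{L L 2.5 RDD (2014b)}) fills in precisely the omitted details.
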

If $\mathcal{R}$ is a $\ast$-ring, then $a\mathcal{R} \subseteq
b\mathcal{R} \Leftrightarrow \mathcal{R}a^{\ast} \subseteq
\mathcal{R}b^{\ast}$ and ${\rm rann}(a) \subseteq {\rm rann}(b)
\Leftrightarrow {\rm lann}(a^{\ast}) \subseteq {\rm
lann}(b^{\ast})$.
\begin{definition}
Let $\mathcal{R}$ be a $\ast$-ring.
\begin{enumerate}
  \item Let $a, b \in \mathcal{R}$. Then $a$ and $b$ are \emph{right orthogonal} (resp. \emph{left orthogonal}), written $a \perp_{r} b$ or $b \perp_{r}
a$ (resp. $a \perp_{l} b$ or $b \perp_{l} a$), if $a^{\ast}b=0$
(resp. $ab^{\ast}=0$).
  \item Let $\mathcal{S}, \mathcal{T} \subseteq \mathcal{R}$. Then $\mathcal{S}$ and $\mathcal{T}$ are
\emph{right orthogonal} (resp. \emph{left orthogonal}), written
$\mathcal{S} \perp_{r} \mathcal{T}$ or $\mathcal{T} \perp_{r}
\mathcal{S}$ (resp. $\mathcal{S} \perp_{l} \mathcal{T}$ or
$\mathcal{T} \perp_{l} \mathcal{S}$), if $a \perp_{r} b$ (resp. $a
\perp_{l} b$) for each $a \in \mathcal{S}$ and $b \in \mathcal{T}$.
\end{enumerate}
\end{definition}
\begin{lemma}\label{L aR a0 ortogonales}
Let $\mathcal{R}$ be a $\ast$-ring and $a \in \mathcal{R}$. Then:
\begin{enumerate}
  \item $a\mathcal{R} \perp_{r} {\rm rann}(a^{\ast})$.
  \item If $a \in \mathcal{R}^{\rm sym}$ then $a\mathcal{R} \perp_{r} {\rm rann}(a)$.
  \item If $a \in \mathcal{R}^{\bullet}$ and $a\mathcal{R} \perp_{r} {\rm rann}(a)$, then $a \in \mathcal{R}^{\rm sym}$.
\end{enumerate}
\end{lemma}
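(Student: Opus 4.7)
The plan is to observe that all three parts reduce to direct manipulation of the defining equation $x^{\ast}y = 0$ for right orthogonality, once one unpacks the correct test elements. Parts (1) and (2) are immediate, and part (3) is the only substantive direction: the strategy there is to feed the idempotent identity $a(1-a)=0$ into the orthogonality hypothesis at the distinguished pair $(a\cdot 1,\,1-a)$, extract an algebraic identity for $a^{\ast}$, and then apply the involution to force $a^{\ast}=a$.

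For (1), I would simply take an arbitrary $ar \in a\mathcal{R}$ and $s \in {\rm rann}(a^{\ast})$ and compute $(ar)^{\ast} s = r^{\ast}(a^{\ast}s) = r^{\ast}\cdot 0 = 0$, which is exactly what $a\mathcal{R}\perp_{r}{\rm rann}(a^{\ast})$ requires. For (2), the hypothesis $a = a^{\ast}$ gives ${\rm rann}(a) = {\rm rann}(a^{\ast})$, so the conclusion is just a restatement of (1).

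For (3), the key step is to use that $a \in \mathcal{R}^{\bullet}$ implies $a(1-a) = a - a^{2} = 0$, so $1 - a \in {\rm rann}(a)$. Since also $a = a\cdot 1 \in a\mathcal{R}$, the hypothesis $a\mathcal{R}\perp_{r}{\rm rann}(a)$ yields $a^{\ast}(1-a)=0$, i.e.,
\[
a^{\ast} = a^{\ast}a.
\]
Applying the involution to both sides and using $(a^{\ast}a)^{\ast} = a^{\ast}(a^{\ast})^{\ast} = a^{\ast}a$, we get $a = (a^{\ast})^{\ast} = (a^{\ast}a)^{\ast} = a^{\ast}a = a^{\ast}$. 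Hence $a \in \mathcal{R}^{\rm sym}$.

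There is no real obstacle here; the only mildly delicate point is recognizing in (3) that testing the orthogonality on the particular pair $(a,\,1-a)$ is already enough to recover the full symmetry of $a$, which works precisely because $a^{\ast}a$ is automatically symmetric. No additional results from earlier in the paper are needed beyond the definitions of involution, idempotent, right annihilator, and right orthogonality.
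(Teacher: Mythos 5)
Your proposal is correct and follows essentially the same route as the paper: the same direct computation for (1), reduction of (2) to (1), and for (3) testing orthogonality on the pair $(a,\,1-a)$ to get $a^{\ast}=a^{\ast}a$ and hence $a^{\ast}=a$. The only difference is that you spell out the final involution step, which the paper leaves implicit.
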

\begin{proof}
(1): Let $x \in a\mathcal{R}$ and $y \in {\rm rann}(a^{\ast})$.
Hence, there exists $r \in \mathcal{R}$ such that $x=ar$ and
$a^{\ast}y=0$. Then, $x^{\ast}y=r^{\ast}a^{\ast}y=0$. This proves
that $a\mathcal{R} \perp_{r} {\rm rann}(a^{\ast})$.

(2): It follows from (1).

(3): Assume that $a \in \mathcal{R}^{\bullet}$ and $a\mathcal{R}
\perp_{r} {\rm rann}(a)$. Since $a \in \mathcal{R}^{\bullet}$, it
follows that  $1-a \in {\rm rann}(a)$. Then, since $a\mathcal{R}
\perp_{r} {\rm rann}(a)$, we have $a^{\ast}(1-a)=0$. This last
equality is equivalent to $a^{\ast}=a^{\ast}a$. Then, $a^{\ast}=a$.
\end{proof}
Analogously to Lemma~\ref{L aR a0 ortogonales}, we have:
\begin{lemma}\label{L aR a0 ortogonales b}
Let $\mathcal{R}$ be a $\ast$-ring and $a \in \mathcal{R}$. Then:
\begin{enumerate}
  \item $\mathcal{R}a \perp_{l} {\rm lann}(a^{\ast})$.
  \item If $a \in \mathcal{R}^{\rm sym}$ then $\mathcal{R}a \perp_{l} {\rm lann}(a)$.
  \item If $a \in \mathcal{R}^{\bullet}$ and $\mathcal{R}a \perp_{l} {\rm lann}(a)$, then $a \in \mathcal{R}^{\rm sym}$.
\end{enumerate}
\end{lemma}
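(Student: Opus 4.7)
The plan is to mirror exactly the argument used for Lemma \ref{L aR a0 ortogonales}, only swapping the roles of left and right and adjusting the definition of $\perp_{l}$ accordingly (recall $u \perp_{l} v$ means $uv^{\ast}=0$).

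For part (1), I would take arbitrary $x \in \mathcal{R}a$ and $y \in \mathrm{lann}(a^{\ast})$. Write $x = ra$ for some $r \in \mathcal{R}$ and note that $ya^{\ast} = 0$, which upon applying the involution gives $ay^{\ast} = 0$. Then
\[
x y^{\ast} \;=\; (ra)\,y^{\ast} \;=\; r\,(ay^{\ast}) \;=\; 0,
\]
proving $\mathcal{R}a \perp_{l} \mathrm{lann}(a^{\ast})$. Part (2) follows instantly from (1), since $a \in \mathcal{R}^{\rm sym}$ means $a = a^{\ast}$ and therefore $\mathrm{lann}(a) = \mathrm{lann}(a^{\ast})$.

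For part (3), assume $a \in \mathcal{R}^{\bullet}$ and $\mathcal{R}a \perp_{l} \mathrm{lann}(a)$. Idempotence gives $a(1-a) = a - a^{2} = 0$, so $1-a \in \mathrm{lann}(a)$. Taking $x = a \in \mathcal{R}a$ and $y = 1-a \in \mathrm{lann}(a)$, the orthogonality yields $a(1-a)^{\ast} = 0$, i.e. $a = aa^{\ast}$. Applying the involution produces $a^{\ast} = aa^{\ast} = a$, so $a \in \mathcal{R}^{\rm sym}$.

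No part of this is subtle; the only thing to keep straight is the asymmetric placement of the involution in the definition of $\perp_{l}$ versus $\perp_{r}$, so that in (1) one must convert $ya^{\ast}=0$ into $ay^{\ast}=0$ by applying $\ast$, and in (3) one must pair $a$ with $(1-a)^{\ast}$ rather than with $1-a$ directly. Once these sign-of-involution bookkeeping points are handled, the proof is a direct transcription of the proof of Lemma \ref{L aR a0 ortogonales}.
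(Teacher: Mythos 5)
Your proof is correct and is exactly the argument the paper intends: the paper states this lemma as the left-handed analogue of Lemma~\ref{L aR a0 ortogonales} without writing the proof out, and your transcription (applying $\ast$ to convert $ya^{\ast}=0$ into $ay^{\ast}=0$ in part (1), and pairing $a$ with $(1-a)^{\ast}$ in part (3)) is the same argument with left and right swapped. The only nitpick is cosmetic: to conclude $1-a\in{\rm lann}(a)$ you should cite $(1-a)a=a-a^{2}=0$ rather than $a(1-a)=0$, though both expand to the same identity $a=a^{2}$, so nothing is affected.
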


\subsection{Projectors}\label{S Preliminaries Sub Projectors}

In vector spaces and modules, idempotent linear transformations are
well known as (oblique) projections or projectors (see, e.g.,
\cite{Anderson-Fuller (1973), Roman (2008)}). In rings, we consider
idempotent group endomorphisms, called projectors, to use them to
study generalized inverses. Let $\mathcal{S}$ and $\mathcal{T}$ be
subgroups of $\mathcal{R}$. Let

\centerline{$\mathcal{S} + \mathcal{T}=\{s+t : s \in \mathcal{S}
\text{ and } t \in \mathcal{T}\}$.}

\begin{definition}
Let $\mathcal{S}$ and $\mathcal{T}$ be subgroups of
$\mathcal{R}$. Then $\mathcal{R}$ is the (\emph{internal})
\emph{direct sum} of $\mathcal{S}$ and $\mathcal{T}$, written
$\mathcal{R} = \mathcal{S} \oplus \mathcal{T}$, if $\mathcal{R} =
\mathcal{S} + \mathcal{T}$ and $\mathcal{S}\cap\mathcal{T}=\{0\}$.
In this case, $\mathcal{S}$ and $\mathcal{T}$ are called
\emph{direct summands} of $\mathcal{R}$, and $\mathcal{T}$ (resp.
$\mathcal{S}$) is called a \emph{complement} of $\mathcal{S}$ (resp.
$\mathcal{T}$) in $\mathcal{R}$.
\end{definition}
Associated with a direct sum decomposition of $\mathcal{R}$ we have
a group endomorphism:
\begin{definition}\label{D projector}
Let $\mathcal{S}$ and $\mathcal{T}$ be subgroups of
$\mathcal{R}$ such that $\mathcal{R} = \mathcal{S} \oplus
\mathcal{T}$. The group endomorphism $\rho_{\mathcal{S},\mathcal{T}}
: \mathcal{R} \rightarrow \mathcal{R}$ defined by
$\rho_{\mathcal{S},\mathcal{T}}(s+t)=s$, where $s \in \mathcal{S}$
and $t \in \mathcal{T}$, is called the \emph{oblique projector onto}
$\mathcal{S}$ \emph{along} $\mathcal{T}$. If $\mathcal{R}$ is a
$\ast$-ring and $\mathcal{S} \perp_{r} \mathcal{T}$ (resp.
$\mathcal{S} \perp_{l} \mathcal{T}$), we say that
$\rho_{\mathcal{S},\mathcal{T}}$ is a \emph{right (resp. left)
orthogonal projector}.
\end{definition}
We usually say projector and orthogonal projector instead of oblique
projector and right (left) orthogonal projector, respectively. From
Definition~\ref{D projector} we obtain:
\begin{lemma}\label{L proy im ker}
Let $\mathcal{S}$ and $\mathcal{T}$ be subgroups of $\mathcal{R}$
such that $\mathcal{R} = \mathcal{S} \oplus \mathcal{T}$. Then:
\begin{enumerate}
  \item $\rho_{\mathcal{S},\mathcal{T}}+\rho_{\mathcal{T},
\mathcal{S}}={\rm id}_{\mathcal{R}}$.
  \item ${\rm im}(\rho_{\mathcal{S},\mathcal{T}})=\mathcal{S}$ and ${\rm ker}(\rho_{\mathcal{S},\mathcal{T}})=\mathcal{T}$.
  \item $r \in {\rm im}(\rho_{\mathcal{S},\mathcal{T}})
\Leftrightarrow \rho_{\mathcal{S},\mathcal{T}}(r)=r$.
\end{enumerate}
\end{lemma}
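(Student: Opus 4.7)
The three claims are essentially immediate consequences of the uniqueness of the direct sum decomposition, so the plan is to unpack the definition carefully and verify each part by a direct computation. I first note that $\mathcal{R} = \mathcal{S} \oplus \mathcal{T}$ is symmetric in $\mathcal{S}$ and $\mathcal{T}$, since both conditions $\mathcal{R} = \mathcal{S}+\mathcal{T}$ and $\mathcal{S}\cap\mathcal{T}=\{0\}$ are symmetric, so $\rho_{\mathcal{T},\mathcal{S}}$ is well-defined. Also, for every $r \in \mathcal{R}$ there exist \emph{unique} $s \in \mathcal{S}$ and $t \in \mathcal{T}$ with $r = s+t$, which is the fact I will use everywhere.

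For (1), I would pick an arbitrary $r \in \mathcal{R}$, write uniquely $r = s+t$ with $s \in \mathcal{S}$, $t \in \mathcal{T}$, and observe that $\rho_{\mathcal{S},\mathcal{T}}(r) = s$ directly from the definition, while $\rho_{\mathcal{T},\mathcal{S}}(r) = t$ because $r = t+s$ expresses $r$ as a sum of an element of $\mathcal{T}$ and one of $\mathcal{S}$. Adding gives $(\rho_{\mathcal{S},\mathcal{T}}+\rho_{\mathcal{T},\mathcal{S}})(r) = s+t = r$, establishing the identity.

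For (2), the inclusion $\mathrm{im}(\rho_{\mathcal{S},\mathcal{T}}) \subseteq \mathcal{S}$ is built into the definition. For the reverse, given $s \in \mathcal{S}$, the decomposition $s = s+0$ with $0 \in \mathcal{T}$ shows $\rho_{\mathcal{S},\mathcal{T}}(s) = s$, hence $s \in \mathrm{im}(\rho_{\mathcal{S},\mathcal{T}})$. For the kernel, if $r = s+t$ is the unique decomposition, then $\rho_{\mathcal{S},\mathcal{T}}(r)=0$ iff $s=0$ iff $r = t \in \mathcal{T}$; thus $\ker(\rho_{\mathcal{S},\mathcal{T}}) = \mathcal{T}$.

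For (3), the $(\Leftarrow)$ direction is trivial. For $(\Rightarrow)$, the proof of (2) already gave $\rho_{\mathcal{S},\mathcal{T}}(s) = s$ for every $s \in \mathcal{S} = \mathrm{im}(\rho_{\mathcal{S},\mathcal{T}})$, so any $r$ in the image satisfies $\rho_{\mathcal{S},\mathcal{T}}(r) = r$. There is no real obstacle anywhere in this lemma; the only point one must be careful about is not confusing $\rho_{\mathcal{S},\mathcal{T}}$ with $\rho_{\mathcal{T},\mathcal{S}}$, which is handled by invoking the uniqueness of the decomposition consistently.
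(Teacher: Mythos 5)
Your proof is correct and is exactly the direct unpacking of Definition~\ref{D projector} (uniqueness of the decomposition $r=s+t$) that the paper relies on, since the paper states this lemma without proof as an immediate consequence of the definition. Nothing further is needed.
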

Another property of projectors derived easily from Definition~\ref{D
projector} is the following:
\begin{lemma}\label{L oblique projector sii direct sum}
If $\varphi : \mathcal{R} \rightarrow \mathcal{R}$ is a group
endomorphism such that $\mathcal{R} = {\rm im}(\varphi) \oplus {\rm
ker}(\varphi)$ and $\varphi_{|{\rm im}(\varphi)}={\rm id}_{{\rm
im}(\varphi)}$, then $\varphi=\rho_{{\rm im}(\varphi), {\rm
ker}(\varphi)}$.
\end{lemma}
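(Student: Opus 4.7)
The plan is to unpack both sides of the claimed equality on an arbitrary element $r\in\mathcal{R}$ using the hypotheses, and then invoke the definition of $\rho_{{\rm im}(\varphi),{\rm ker}(\varphi)}$.

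First I would fix $r \in \mathcal{R}$. Since $\mathcal{R} = {\rm im}(\varphi) \oplus {\rm ker}(\varphi)$, there exist uniquely determined $s \in {\rm im}(\varphi)$ and $t \in {\rm ker}(\varphi)$ with $r = s + t$. By Definition~\ref{D projector} applied to the subgroups $\mathcal{S}={\rm im}(\varphi)$ and $\mathcal{T}={\rm ker}(\varphi)$, this gives $\rho_{{\rm im}(\varphi),{\rm ker}(\varphi)}(r) = s$.

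Next I would compute $\varphi(r)$. Because $\varphi$ is a group endomorphism, $\varphi(r) = \varphi(s) + \varphi(t)$. The hypothesis $\varphi_{|{\rm im}(\varphi)} = {\rm id}_{{\rm im}(\varphi)}$ gives $\varphi(s) = s$, and $t \in {\rm ker}(\varphi)$ gives $\varphi(t) = 0$. Therefore $\varphi(r) = s = \rho_{{\rm im}(\varphi),{\rm ker}(\varphi)}(r)$. Since $r$ was arbitrary, the two group endomorphisms coincide.

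There is no real obstacle here; the statement is essentially a reformulation of the definition of a projector once one knows that $\varphi$ fixes its image and kills its kernel, and that the two summands recover every element of $\mathcal{R}$ uniquely. The only point worth stating cleanly is the uniqueness of the decomposition $r=s+t$, which is automatic from $\mathcal{S}\cap\mathcal{T}=\{0\}$ in the definition of direct sum.
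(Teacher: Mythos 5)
Your proof is correct and is exactly the easy verification the paper has in mind: it states this lemma without proof as something "derived easily from Definition~\ref{D projector}", and your argument (decompose $r=s+t$, use additivity of $\varphi$ together with $\varphi(s)=s$ and $\varphi(t)=0$) is precisely that routine check. Nothing is missing.
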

The next lemma asserts that projectors are precisely idempotent
group endomorphisms.
\begin{lemma}\label{L oblique projector sii idempotent}
A group endomorphism $\varphi : \mathcal{R} \rightarrow \mathcal{R}$
is a projector if and only if $\varphi$ is idempotent, and in this
case, $\mathcal{R} = {\rm im}(\varphi) \oplus {\rm ker}(\varphi)$
and $\varphi=\rho_{{\rm im}(\varphi), {\rm ker}(\varphi)}$.
\end{lemma}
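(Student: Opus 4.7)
The plan is to prove both directions separately, using the preceding lemmas to shortcut most of the work.

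For the forward implication, suppose $\varphi = \rho_{\mathcal{S},\mathcal{T}}$ for some direct sum decomposition $\mathcal{R} = \mathcal{S} \oplus \mathcal{T}$. For any $r \in \mathcal{R}$, Lemma~\ref{L proy im ker}(2) gives $\varphi(r) \in \mathcal{S} = {\rm im}(\varphi)$, and then Lemma~\ref{L proy im ker}(3) yields $\varphi(\varphi(r)) = \varphi(r)$. Hence $\varphi^{2} = \varphi$, i.e., $\varphi$ is idempotent.

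For the converse, assume $\varphi$ is an idempotent group endomorphism. I would first verify $\mathcal{R} = {\rm im}(\varphi) \oplus {\rm ker}(\varphi)$ by the standard idempotent trick: for each $r \in \mathcal{R}$, write $r = \varphi(r) + \bigl(r - \varphi(r)\bigr)$, observe $\varphi(r) \in {\rm im}(\varphi)$, and check that $\varphi\bigl(r - \varphi(r)\bigr) = \varphi(r) - \varphi^{2}(r) = 0$, so $r - \varphi(r) \in {\rm ker}(\varphi)$. This establishes $\mathcal{R} = {\rm im}(\varphi) + {\rm ker}(\varphi)$. To see that the sum is direct, take any $r \in {\rm im}(\varphi) \cap {\rm ker}(\varphi)$; writing $r = \varphi(s)$ for some $s$ gives $0 = \varphi(r) = \varphi^{2}(s) = \varphi(s) = r$.

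Next, I would check that $\varphi$ restricts to the identity on its image: any $r \in {\rm im}(\varphi)$ has the form $r = \varphi(s)$, so $\varphi(r) = \varphi^{2}(s) = \varphi(s) = r$, i.e., $\varphi_{|{\rm im}(\varphi)} = {\rm id}_{{\rm im}(\varphi)}$. Having assembled the two hypotheses of Lemma~\ref{L oblique projector sii direct sum}, I would invoke it to conclude $\varphi = \rho_{{\rm im}(\varphi),{\rm ker}(\varphi)}$, which in particular exhibits $\varphi$ as a projector.

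There is really no obstacle here; the only thing one has to be mindful of is to not confuse idempotence of the endomorphism $\varphi$ (which is what is being characterized) with idempotence of ring elements. All the substantive content has been packaged into Lemmas~\ref{L proy im ker} and~\ref{L oblique projector sii direct sum}, so the proof reduces to two short direct verifications plus an application of the latter lemma.
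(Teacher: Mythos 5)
Your proposal is correct and follows essentially the same route as the paper: a direct verification of idempotence for the forward direction, and for the converse the standard decomposition $r=\varphi(r)+(r-\varphi(r))$, the check that $\varphi$ is the identity on its image, directness of the sum, and a final appeal to Lemma~\ref{L oblique projector sii direct sum}. The only cosmetic difference is that you cite Lemma~\ref{L proy im ker}(2)(3) for the forward implication where the paper computes directly with $r=s+t$; this is the same content.
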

\begin{proof}
Assume that there exist $\mathcal{S}$ and $\mathcal{T}$ subgroups of
$\mathcal{R}$ such that $\mathcal{R} = \mathcal{S} \oplus
\mathcal{T}$ and $\varphi=\rho_{\mathcal{S},\mathcal{T}}$. Let $r
\in \mathcal{R}$.  There exist $s \in \mathcal{S}$ and $t \in
\mathcal{T}$ such that $r=s+t$. Then
$\varphi^{2}(r)=\rho_{\mathcal{S},\mathcal{T}}(\rho_{\mathcal{S},\mathcal{T}}(s+t))=\rho_{\mathcal{S},\mathcal{T}}(s)=s=\rho_{\mathcal{S},\mathcal{T}}(r)=\varphi(r)$.
Hence, $\varphi^{2}=\varphi$.

Conversely, suppose that $\varphi$ is idempotent. Let $r \in
\mathcal{R}$, $s=\varphi(r)$ and $t=r-s$. Then
$\varphi(t)=\varphi(r-s)=\varphi(r)-\varphi(s)=\varphi(r)-\varphi^{2}(r)=0$.
Thus, $r=s+t$ with $s \in {\rm im}(\varphi)$ and $t \in {\rm
ker}(\varphi)$. This shows that $\mathcal{R} = {\rm im}(\varphi) +
{\rm ker}(\varphi)$. If $r \in {\rm im}(\varphi)$, then there exists
$s \in \mathcal{R}$ such that
\begin{equation}\label{E L oblique projector sii idempotent}
r=\varphi(s)=\varphi^{2}(s)=\varphi(r).
\end{equation}
Hence, $\varphi_{|{\rm im}(\varphi)}={\rm id}_{{\rm im}(\varphi)}$.
By (\ref{E L oblique projector sii idempotent}), if $r \in {\rm
im}(\varphi) \cap {\rm ker}(\varphi)$, then $r=0$. Thus, ${\rm
im}(\varphi) \cap {\rm ker}(\varphi) = \{0\}$. Therefore,
$\mathcal{R} = {\rm im}(\varphi) \oplus {\rm ker}(\varphi)$.
Applying now Lemma~\ref{L oblique projector sii direct sum}, we get
$\varphi=\rho_{{\rm im}(\varphi), {\rm ker}(\varphi)}$.
\end{proof}
So far, we only required $\mathcal{S}$ and $\mathcal{T}$ to be
subgroups of $\mathcal{R}$. The next two lemmas give properties of
the projectors when $\mathcal{S}$ and $\mathcal{T}$ are right (left)
ideals of $\mathcal{R}$.
\begin{lemma}\label{L S T right left ideals sii proy}
Let $\mathcal{S}$ and $\mathcal{T}$ be subgroups of $\mathcal{R}$
such that $\mathcal{R} = \mathcal{S} \oplus \mathcal{T}$. Then:
\begin{enumerate}
  \item $\mathcal{S}$ and $\mathcal{T}$ are right ideals of
$\mathcal{R}$ if and only if
$\rho_{\mathcal{S},\mathcal{T}}(r_{1}r_{2})=\rho_{\mathcal{S},\mathcal{T}}(r_{1})r_{2}$
for each $r_{1}, r_{2} \in \mathcal{R}$.
  \item $\mathcal{S}$ and $\mathcal{T}$ are left ideals of
$\mathcal{R}$ if and only if
$\rho_{\mathcal{S},\mathcal{T}}(r_{1}r_{2})=r_{1}\rho_{\mathcal{S},\mathcal{T}}(r_{2})
$ for each $r_{1}, r_{2} \in \mathcal{R}$.
\end{enumerate}
\end{lemma}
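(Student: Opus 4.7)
The plan is to prove each direction of (1) directly, using Lemma~\ref{L proy im ker} together with the defining property of the direct sum decomposition, and then note that (2) follows by an entirely symmetric argument.

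For the forward implication of (1), I would take arbitrary $r_1, r_2 \in \mathcal{R}$ and use the decomposition $\mathcal{R} = \mathcal{S} \oplus \mathcal{T}$ to write $r_1 = s + t$ with $s \in \mathcal{S}$, $t \in \mathcal{T}$. Then $r_1 r_2 = sr_2 + tr_2$, and the hypothesis that $\mathcal{S}$ and $\mathcal{T}$ are right ideals gives $sr_2 \in \mathcal{S}$ and $tr_2 \in \mathcal{T}$. By the definition of $\rho_{\mathcal{S},\mathcal{T}}$, this decomposition yields $\rho_{\mathcal{S},\mathcal{T}}(r_1 r_2) = sr_2 = \rho_{\mathcal{S},\mathcal{T}}(r_1) r_2$.

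For the converse, assume the identity $\rho_{\mathcal{S},\mathcal{T}}(r_1 r_2) = \rho_{\mathcal{S},\mathcal{T}}(r_1) r_2$ holds for all $r_1, r_2 \in \mathcal{R}$. To see that $\mathcal{S}$ is a right ideal, take $s \in \mathcal{S}$ and $r \in \mathcal{R}$; by Lemma~\ref{L proy im ker}(3), $\rho_{\mathcal{S},\mathcal{T}}(s) = s$, so the hypothesis gives $\rho_{\mathcal{S},\mathcal{T}}(sr) = sr$, and applying Lemma~\ref{L proy im ker}(3) again yields $sr \in \mathcal{S}$. For $\mathcal{T}$, take $t \in \mathcal{T} = \ker(\rho_{\mathcal{S},\mathcal{T}})$ (Lemma~\ref{L proy im ker}(2)) and $r \in \mathcal{R}$; then $\rho_{\mathcal{S},\mathcal{T}}(tr) = \rho_{\mathcal{S},\mathcal{T}}(t) r = 0$, so $tr \in \ker(\rho_{\mathcal{S},\mathcal{T}}) = \mathcal{T}$.

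Part (2) is proved by the mirror argument: replace $r_1 r_2 = sr_2 + tr_2$ by $r_1 r_2 = r_1 s + r_1 t$ in the forward direction, and multiply on the left by $r$ in the converse. No step presents a real obstacle; the proof is essentially a direct unpacking of the definitions of projector, direct sum, and one-sided ideal, with Lemma~\ref{L proy im ker} providing the bridge between membership in $\mathcal{S}$ or $\mathcal{T}$ and the action of $\rho_{\mathcal{S},\mathcal{T}}$.
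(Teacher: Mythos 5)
Your proof is correct and follows essentially the same route as the paper: in the forward direction you decompose $r_{1}$ via the projectors and use the right-ideal hypothesis to identify the $\mathcal{S}$- and $\mathcal{T}$-components of $r_{1}r_{2}$, and in the converse you apply the identity to elements of $\mathcal{S}=\mathrm{im}(\rho_{\mathcal{S},\mathcal{T}})$ and $\mathcal{T}=\ker(\rho_{\mathcal{S},\mathcal{T}})$, just as the paper does, with part (2) handled by the mirror argument.
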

\begin{proof}
We prove (1). The proof of (2) is similar.

Assume that $\mathcal{S}$ and $\mathcal{T}$ are right ideals of
$\mathcal{R}$. Let $r_{1}, r_{2} \in \mathcal{R}$. Since
$\rho_{\mathcal{S},\mathcal{T}}(r_{1})r_{2} \in \mathcal{S}$ and
$\rho_{\mathcal{T}, \mathcal{S}}(r_{1})r_{2} \in \mathcal{T}$, we
have
$\rho_{\mathcal{S},\mathcal{T}}(r_{1}r_{2})=\rho_{\mathcal{S},\mathcal{T}}(\rho_{\mathcal{S},\mathcal{T}}(r_{1})r_{2}+\rho_{\mathcal{T},
\mathcal{S}}(r_{1})r_{2})=\rho_{\mathcal{S},\mathcal{T}}(r_{1})r_{2}$.

Conversely, assume that
$\rho_{\mathcal{S},\mathcal{T}}(r_{1}r_{2})=\rho_{\mathcal{S},\mathcal{T}}(r_{1})r_{2}$
for each $r_{1}, r_{2} \in \mathcal{R}$. Take $r_{1} \in
\mathcal{S}$ and $r_{2} \in \mathcal{R}$. Then
$\rho_{\mathcal{S},\mathcal{T}}(r_{1}r_{2})=r_{1}r_{2}$ and by
Lemma~\ref{L proy im ker}(2), $r_{1}r_{2} \in \mathcal{S}$. This
shows that $\mathcal{S}$ is a right ideal of $\mathcal{R}$. Take now
$r_{1} \in \mathcal{T}$ and $r_{2} \in \mathcal{R}$. Then
$\rho_{\mathcal{S},\mathcal{T}}(r_{1}r_{2})=0$ and by Lemma~\ref{L
proy im ker}(2), $r_{1}r_{2} \in \mathcal{T}$. This shows that
$\mathcal{T}$ is a right ideal of $\mathcal{R}$.
\end{proof}

As a consequence of Lemmas~\ref{L proy im ker} and~\ref{L S T right
left ideals sii proy} we get:
\begin{lemma}\label{L ro a = a and a ro = a}
Let $\mathcal{S}$ and $\mathcal{T}$ be subgroups of $\mathcal{R}$
such that $\mathcal{R} = \mathcal{S} \oplus \mathcal{T}$ and $a \in
\mathcal{R}$. Then the following assertions hold:
\begin{enumerate}
  \item If $\mathcal{S}$ and $\mathcal{T}$ are right ideals of $\mathcal{R}$, then $\rho_{\mathcal{S},\mathcal{T}}(1)a=a \Leftrightarrow a\mathcal{R} \subseteq \mathcal{S}$ and $a\rho_{\mathcal{S},\mathcal{T}}(1)=a \Leftrightarrow
\mathcal{T} \subseteq {\rm rann}(a)$.
  \item If $\mathcal{S}$ and $\mathcal{T}$ are left ideals of $\mathcal{R}$, then $a\rho_{\mathcal{S},\mathcal{T}}(1)=a \Leftrightarrow \mathcal{R}a \subseteq \mathcal{S}$ and $\rho_{\mathcal{S},\mathcal{T}}(1)a=a \Leftrightarrow
\mathcal{T} \subseteq {\rm lann}(a)$.
\end{enumerate}
\end{lemma}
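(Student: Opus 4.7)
The plan is to reduce both parts of the lemma to Lemmas~\ref{L proy im ker} and~\ref{L S T right left ideals sii proy}, together with the complementarity identity $\rho_{\mathcal{S},\mathcal{T}}(1)+\rho_{\mathcal{T},\mathcal{S}}(1)=1$ coming from Lemma~\ref{L proy im ker}(1).

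For part (1), I would first apply Lemma~\ref{L S T right left ideals sii proy}(1) with $r_{1}=1$ and $r_{2}=a$ to get the key factorization $\rho_{\mathcal{S},\mathcal{T}}(a)=\rho_{\mathcal{S},\mathcal{T}}(1)\,a$. Hence $\rho_{\mathcal{S},\mathcal{T}}(1)\,a=a$ is equivalent to $\rho_{\mathcal{S},\mathcal{T}}(a)=a$, which by Lemma~\ref{L proy im ker}(3) means $a\in{\rm im}(\rho_{\mathcal{S},\mathcal{T}})=\mathcal{S}$. Since $\mathcal{S}$ is a right ideal and $a=a\cdot 1\in a\mathcal{R}$, this is equivalent to $a\mathcal{R}\subseteq\mathcal{S}$, which yields the first equivalence.

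For the second equivalence in (1), I would use Lemma~\ref{L proy im ker}(1) to rewrite $a\rho_{\mathcal{S},\mathcal{T}}(1)=a$ as $a\,\rho_{\mathcal{T},\mathcal{S}}(1)=0$. The key observation is that, because $\mathcal{T}$ is also a right ideal, applying Lemma~\ref{L S T right left ideals sii proy}(1) to $\rho_{\mathcal{T},\mathcal{S}}$ together with Lemma~\ref{L proy im ker}(2) gives $\mathcal{T}=\rho_{\mathcal{T},\mathcal{S}}(\mathcal{R})=\rho_{\mathcal{T},\mathcal{S}}(1)\,\mathcal{R}$, so $\mathcal{T}$ is the principal right ideal generated by the idempotent $\rho_{\mathcal{T},\mathcal{S}}(1)$. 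Therefore $a\,\rho_{\mathcal{T},\mathcal{S}}(1)=0$ if and only if $at=0$ for every $t\in\mathcal{T}$ (forward: every $t$ has the form $\rho_{\mathcal{T},\mathcal{S}}(1)\,r$; backward: $\rho_{\mathcal{T},\mathcal{S}}(1)\in\mathcal{T}$), that is, $\mathcal{T}\subseteq{\rm rann}(a)$.

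Part (2) follows by a completely symmetric argument, using Lemma~\ref{L S T right left ideals sii proy}(2) to obtain the factorization $\rho_{\mathcal{S},\mathcal{T}}(a)=a\,\rho_{\mathcal{S},\mathcal{T}}(1)$ on the one hand, and $\mathcal{T}=\mathcal{R}\,\rho_{\mathcal{T},\mathcal{S}}(1)$ as a principal left ideal on the other. The whole proof is mechanical; the only real conceptual step is recognizing that, once $\mathcal{S}$ and $\mathcal{T}$ are right (resp. left) ideals, $\rho_{\mathcal{S},\mathcal{T}}(1)$ and $\rho_{\mathcal{T},\mathcal{S}}(1)$ are complementary idempotents of $\mathcal{R}$ generating $\mathcal{S}$ and $\mathcal{T}$ as principal right (resp. left) ideals, which is what lets every statement collapse to a right (resp. left) multiplication by these idempotents. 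I do not foresee any serious obstacle.
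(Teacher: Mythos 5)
Your proposal is correct and follows essentially the same route as the paper: both reduce $\rho_{\mathcal{S},\mathcal{T}}(1)a=a$ to $\rho_{\mathcal{S},\mathcal{T}}(a)=a$ via the right-ideal factorization of Lemma~\ref{L S T right left ideals sii proy}, and both handle the second equivalence through the identity $\rho_{\mathcal{S},\mathcal{T}}(1)+\rho_{\mathcal{T},\mathcal{S}}(1)=1$ together with the fact that $\rho_{\mathcal{T},\mathcal{S}}(r)=\rho_{\mathcal{T},\mathcal{S}}(1)r$ sweeps out all of $\mathcal{T}$. Your phrasing via the complementary idempotents generating $\mathcal{S}$ and $\mathcal{T}$ as principal one-sided ideals is just a repackaging of the paper's chain of equivalences, so no gap.
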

\begin{proof}
Assume that $\mathcal{S}$ and $\mathcal{T}$ are right ideals of
$\mathcal{R}$. Then $\rho_{\mathcal{S},\mathcal{T}}(1)a=a
\Leftrightarrow \rho_{\mathcal{S},\mathcal{T}}(a)=a \Leftrightarrow
a \in \mathcal{S} \Leftrightarrow a\mathcal{R} \subseteq
\mathcal{S}$ and $a\rho_{\mathcal{S},\mathcal{T}}(1)=a
\Leftrightarrow
a\rho_{\mathcal{S},\mathcal{T}}(1)=a(\rho_{\mathcal{S},\mathcal{T}}(1)+\rho_{\mathcal{T},
\mathcal{S}}(1)) \Leftrightarrow a\rho_{\mathcal{T},
\mathcal{S}}(1)=0 \Leftrightarrow \forall r \in \mathcal{R}:
a\rho_{\mathcal{T}, \mathcal{S}}(1)r=0 \Leftrightarrow \forall r \in
\mathcal{R}: a\rho_{\mathcal{T}, \mathcal{S}}(r)=0 \Leftrightarrow
\mathcal{T} \subseteq {\rm rann}(a)$. This proves (1). The proof of
(2) is similar.
\end{proof}
In what follows, whenever we write $\rho_{\mathcal{S},\mathcal{T}}$,
we are implicity asserting that $\mathcal{R} = \mathcal{S} \oplus
\mathcal{T}$. Part (1) of the following corollary is a consequence
of Lemma~\ref{L oblique projector sii idempotent} whereas part (2)
follows from Lemma~\ref{L S T right left ideals sii proy}.
\begin{corollary}\label{C a idempotent varphia proy}
The following assertions hold:
\begin{enumerate}
  \item If $a \in \mathcal{R}^{\bullet}$, then
$\varphi_{a}=\rho_{a\mathcal{R},{\rm rann}(a)}$ and
$\,_{a}\varphi=\rho_{\mathcal{R}a,{\rm lann}(a)}$.
  \item Let $\mathcal{S}$, $\mathcal{T}$ be right (resp. left) ideals of $\mathcal{R}$ and $a=\rho_{\mathcal{S},\mathcal{T}}(1)$, then $a \in \mathcal{R}^{\bullet}$ and $\varphi_{a}=\rho_{\mathcal{S},\mathcal{T}}$ (resp. $\,_{a}\varphi=\rho_{\mathcal{S},\mathcal{T}}$).
\end{enumerate}
\end{corollary}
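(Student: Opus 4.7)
The statement is essentially already staged by the preceding lemmas, so my plan is to just follow the hints indicated right before the corollary.

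For part (1), I would first check that $\varphi_a$ is an idempotent group endomorphism when $a$ is idempotent. This is immediate: for any $x\in\mathcal{R}$, $\varphi_a^2(x) = a(ax) = a^2 x = ax = \varphi_a(x)$, and similarly $(_a\varphi)^2(x) = (xa)a = xa^2 = xa$. By Lemma~\ref{L oblique projector sii idempotent}, an idempotent group endomorphism $\varphi$ satisfies $\varphi = \rho_{{\rm im}(\varphi),\,{\rm ker}(\varphi)}$. So it only remains to identify image and kernel, which was already recorded in Section~\ref{S Preliminaries Sub Elements in rings}: ${\rm im}(\varphi_a) = a\mathcal{R}$ and ${\rm ker}(\varphi_a) = {\rm rann}(a)$, and ${\rm im}(_a\varphi) = \mathcal{R}a$, ${\rm ker}(_a\varphi) = {\rm lann}(a)$. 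Substituting yields $\varphi_a = \rho_{a\mathcal{R},\,{\rm rann}(a)}$ and $\,_a\varphi = \rho_{\mathcal{R}a,\,{\rm lann}(a)}$.

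For part (2), suppose $\mathcal{S}$ and $\mathcal{T}$ are right ideals and set $a = \rho_{\mathcal{S},\mathcal{T}}(1)$. By Lemma~\ref{L S T right left ideals sii proy}(1), $\rho_{\mathcal{S},\mathcal{T}}(r_1 r_2) = \rho_{\mathcal{S},\mathcal{T}}(r_1)\,r_2$ for all $r_1, r_2 \in \mathcal{R}$. Specializing $r_1 = 1$ gives $\rho_{\mathcal{S},\mathcal{T}}(r_2) = a r_2 = \varphi_a(r_2)$ for every $r_2 \in \mathcal{R}$, so $\varphi_a = \rho_{\mathcal{S},\mathcal{T}}$. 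When $\mathcal{S}$ and $\mathcal{T}$ are left ideals, Lemma~\ref{L S T right left ideals sii proy}(2) gives $\rho_{\mathcal{S},\mathcal{T}}(r_1 r_2) = r_1\,\rho_{\mathcal{S},\mathcal{T}}(r_2)$, and specializing $r_2 = 1$ yields $\rho_{\mathcal{S},\mathcal{T}}(r_1) = r_1 a = \,_a\varphi(r_1)$.

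There is essentially no obstacle here: both parts are direct readings of the named lemmas after identifying image/kernel and after plugging in $1$ in the appropriate slot. The only thing worth being a little careful about is to state clearly that in part (2), the object $a = \rho_{\mathcal{S},\mathcal{T}}(1)$ is automatically idempotent (it lies in $\mathcal{S} = {\rm im}(\rho_{\mathcal{S},\mathcal{T}})$, so by Lemma~\ref{L proy im ker}(3) it is fixed by $\rho_{\mathcal{S},\mathcal{T}}$, hence $a^2 = \varphi_a(a) = \rho_{\mathcal{S},\mathcal{T}}(a) = a$), which is consistent with but not strictly needed for the short argument above.
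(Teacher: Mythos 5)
Your proof is correct and follows exactly the route the paper intends: part (1) is the lemma that idempotent group endomorphisms are projectors onto their image along their kernel (with ${\rm im}(\varphi_a)=a\mathcal{R}$, ${\rm ker}(\varphi_a)={\rm rann}(a)$), and part (2) is the right/left-ideal compatibility lemma specialized at $1$. The paper gives no further detail beyond citing those two lemmas, so your write-up is just the expected expansion of that citation.
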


\section{Relations of
$\{1\}$, $\{2\}$, $\{1,2\}$, $\{1,5\}$, and Drazin inverses to
projectors}\label{S Basic relations}

Let $a\{1\} \neq \emptyset$ and $a^{(1)} \in a\{1\}$. Since
$aa^{(1)}, a^{(1)}a \in \mathcal{R}^{\bullet}$, Corollary~\ref{C a
idempotent varphia proy}(1) yields the next theorem that relates
$\{1\}$-inverses to projectors.
\begin{theorem}\label{T 1I projectors}
Let $a \in \mathcal{R}$. Then the following assertions are
equivalent:
\begin{enumerate}
  \item $x \in a\{1\}$.
  \item $\varphi_{ax}=\rho_{a\mathcal{R},{\rm rann}(ax)}$.
  \item $\varphi_{xa}=\rho_{xa\mathcal{R},{\rm
rann}(a)}$.
  \item $\,_{ax}\varphi=\rho_{\mathcal{R}ax,{\rm lann}(a)}$.
  \item $\,_{xa}\varphi=\rho_{\mathcal{R}a,{\rm lann}(xa)}$.
\end{enumerate}
\end{theorem}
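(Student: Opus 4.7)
The plan is to prove the implications $(1) \Rightarrow (2),(3),(4),(5)$ uniformly by reducing each right-hand side to an instance of Corollary~\ref{C a idempotent varphia proy}(1), and then to close the loop by deriving $axa = a$ from each of $(2)$--$(5)$ individually. The preamble to the theorem already indicates the mechanism for the forward direction, so the substantive work is in the four converses.

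Assuming $x \in a\{1\}$, I would first observe that $(ax)^{2} = (axa)x = ax$ and $(xa)^{2} = x(axa) = xa$, so both $ax$ and $xa$ lie in $\mathcal{R}^{\bullet}$. Corollary~\ref{C a idempotent varphia proy}(1) then yields
\[
\varphi_{ax} = \rho_{ax\mathcal{R},\,\mathrm{rann}(ax)}, \qquad \varphi_{xa} = \rho_{xa\mathcal{R},\,\mathrm{rann}(xa)},
\]
together with the analogous identities for $\,_{ax}\varphi$ and $\,_{xa}\varphi$. To match the four formulas in the theorem, I would then substitute the well-known equalities $ax\mathcal{R} = a\mathcal{R}$, $\mathrm{rann}(xa) = \mathrm{rann}(a)$, $\mathrm{lann}(ax) = \mathrm{lann}(a)$, and $\mathcal{R}xa = \mathcal{R}a$ recorded in Section~\ref{S Preliminaries}. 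This is routine bookkeeping.

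For the converses I would handle $(2)$ and $(5)$ together by a fixed-point argument, and $(3)$ and $(4)$ together by an idempotency argument. For $(2) \Rightarrow (1)$, the hypothesis gives $\mathrm{im}(\varphi_{ax}) = a\mathcal{R}$ by Lemma~\ref{L proy im ker}(2); since $a \in a\mathcal{R}$, Lemma~\ref{L proy im ker}(3) yields $\varphi_{ax}(a) = a$, i.e.\ $axa = a$. The implication $(5) \Rightarrow (1)$ is the left-sided analogue, using $a \in \mathcal{R}a = \mathrm{im}(\,_{xa}\varphi)$.

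For $(3) \Rightarrow (1)$, the hypothesis says $\varphi_{xa}$ is a projector, hence idempotent by Lemma~\ref{L oblique projector sii idempotent}, so $(xa)^{2} = xa$ and therefore $1 - xa \in \mathrm{rann}(xa) = \mathrm{ker}(\varphi_{xa}) = \mathrm{rann}(a)$ by Lemma~\ref{L proy im ker}(2); this gives $a(1 - xa) = 0$, i.e.\ $axa = a$. The implication $(4) \Rightarrow (1)$ is entirely dual, using $1 - ax \in \mathrm{lann}(ax) = \mathrm{lann}(a)$. I do not expect any serious obstacle; the only wrinkle worth flagging is the asymmetry between the two pairs of converses: in $(2)$ and $(5)$ the element $a$ sits in the prescribed image and is therefore fixed by the projector, whereas in $(3)$ and $(4)$ the element $a$ is not in the image, forcing one to exploit the kernel via $1 - ax$ or $1 - xa$.
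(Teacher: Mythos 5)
Your proof is correct, and its forward direction is exactly the paper's argument: $ax,xa\in\mathcal{R}^{\bullet}$, Corollary~\ref{C a idempotent varphia proy}(1), and the standard equalities $ax\mathcal{R}=a\mathcal{R}$, ${\rm rann}(xa)={\rm rann}(a)$, ${\rm lann}(ax)={\rm lann}(a)$, $\mathcal{R}xa=\mathcal{R}a$ from the preliminaries. The paper leaves the converse implications implicit, and your two mechanisms --- fixing $a$ under the projector when $a$ lies in the prescribed image (cases (2) and (5)), and using idempotency together with $1-xa\in{\rm rann}(xa)={\rm rann}(a)$, respectively $1-ax\in{\rm lann}(ax)={\rm lann}(a)$, when it does not (cases (3) and (4)) --- are valid and complete the equivalences.
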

Since $a^{(2)} \in a\{2\}$ if and only if $a \in a^{(2)}\{1\}$,
Theorem~\ref{T 1I projectors} has an analogous version for
$\{2\}$-inverses:
\begin{theorem}\label{T 2I projectors}
Let $a \in \mathcal{R}$. Then the following assertions are
equivalent:
\begin{enumerate}
  \item $x \in a\{2\}$.
  \item $\varphi_{ax}=\rho_{ax\mathcal{R},{\rm rann}(x)}$.
  \item $\varphi_{xa}=\rho_{x\mathcal{R},{\rm rann}(xa)}$.
  \item $\,_{ax}\varphi=\rho_{\mathcal{R}x,{\rm
lann}(ax)}$.
  \item $\,_{xa}\varphi=\rho_{\mathcal{R}xa,{\rm lann}(x)}$.
\end{enumerate}
\end{theorem}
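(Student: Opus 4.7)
The plan is to reduce Theorem~\ref{T 2I projectors} directly to Theorem~\ref{T 1I projectors} via the elementary duality noted just before its statement: the single equation $xax = x$ can be read either as saying $x \in a\{2\}$ or as saying $a \in x\{1\}$, so $x \in a\{2\}$ if and only if $a \in x\{1\}$. I then apply Theorem~\ref{T 1I projectors} to the element $x$ (playing the role of $a$), with $a$ itself playing the role of the inner inverse of $x$.

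Concretely, after this renaming the four projector identities of Theorem~\ref{T 1I projectors} transform as follows. Statement (2) there, $\varphi_{ax} = \rho_{a\mathcal{R},{\rm rann}(ax)}$, becomes $\varphi_{xa} = \rho_{x\mathcal{R},{\rm rann}(xa)}$, which is item (3) of Theorem~\ref{T 2I projectors}. Statement (3) there, $\varphi_{xa} = \rho_{xa\mathcal{R},{\rm rann}(a)}$, becomes $\varphi_{ax} = \rho_{ax\mathcal{R},{\rm rann}(x)}$, which is item (2). Statements (4) and (5) transform in the same way into items (5) and (4), respectively. Because Theorem~\ref{T 1I projectors} asserts the mutual equivalence of its five items, this substitution automatically delivers all five equivalences asserted in Theorem~\ref{T 2I projectors}.

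Since the whole argument is a single substitution $a \leftrightarrow x$, there is no genuine obstacle beyond bookkeeping: one need only verify that the swap $ax \leftrightarrow xa$ and the corresponding exchange of image/kernel data between the $\{1\}$- and $\{2\}$-inverse pictures line up with the stated items (2)--(5). This is a mechanical check and adds no new ring-theoretic content, which is why the paper itself prefaces the statement of Theorem~\ref{T 2I projectors} with precisely this duality.
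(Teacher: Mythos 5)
Your proposal is correct and is exactly the argument the paper itself uses: the statement is prefaced with the observation that $x\in a\{2\}$ iff $a\in x\{1\}$, and Theorem~\ref{T 2I projectors} is obtained from Theorem~\ref{T 1I projectors} by this swap. Your bookkeeping of which item maps to which (items (2)/(3) interchanging and (4)/(5) interchanging) is accurate.
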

The following lemma will be used later.
\begin{lemma}\label{L ab ab1 a y b ab1 ab}
Let $a, b \in \mathcal{R}$ be such that $(ab)\{1\} \neq \emptyset$.
Let $(ab)^{(1)} \in (ab)\{1\}$. Then:
\begin{enumerate}
  \item $ab(ab)^{(1)}a=a \Leftrightarrow ab\mathcal{R}=a\mathcal{R} \Leftrightarrow {\rm lann}(ab)={\rm lann}(a)$.
  \item $b(ab)^{(1)}ab=b \Leftrightarrow {\rm rann}(ab)={\rm rann}(b) \Leftrightarrow \mathcal{R}ab=\mathcal{R}b$.
\end{enumerate}
\end{lemma}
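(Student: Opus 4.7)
The plan is to reduce each of the two three-way equivalences to a pair of two-way equivalences. I will give the details only for (1); part (2) is completely symmetric (swap sides, swap $a\mathcal{R}$ with $\mathcal{R}b$, swap $\mathrm{lann}$ with $\mathrm{rann}$, and use the left-sided halves of Lemma~\ref{L L 2.5 RDD (2014b)}).

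First I will record the trivial inclusions that hold without any hypothesis: $ab\mathcal{R}\subseteq a\mathcal{R}$ and $\mathrm{lann}(a)\subseteq \mathrm{lann}(ab)$. Consequently, to get the equalities $ab\mathcal{R}=a\mathcal{R}$ and $\mathrm{lann}(ab)=\mathrm{lann}(a)$ it is enough to check the reverse inclusions $a\mathcal{R}\subseteq ab\mathcal{R}$ and $\mathrm{lann}(ab)\subseteq \mathrm{lann}(a)$, respectively.

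Next I will prove the equivalence $ab(ab)^{(1)}a=a\Leftrightarrow ab\mathcal{R}=a\mathcal{R}$ directly. For the forward direction, $ab(ab)^{(1)}a=a$ exhibits $a$ as an element of $ab\mathcal{R}$, which gives $a\mathcal{R}\subseteq ab\mathcal{R}$ and hence equality. For the backward direction, if $a\mathcal{R}\subseteq ab\mathcal{R}$, write $a=abc$ for some $c\in\mathcal{R}$; then $ab(ab)^{(1)}a=ab(ab)^{(1)}abc=abc=a$, using the defining identity $(ab)(ab)^{(1)}(ab)=ab$.

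Finally, the equivalence $ab\mathcal{R}=a\mathcal{R}\Leftrightarrow\mathrm{lann}(ab)=\mathrm{lann}(a)$ is obtained by invoking Lemma~\ref{L L 2.5 RDD (2014b)}. Indeed, part (1) of that lemma applied with $b\rightsquigarrow ab$ yields $a\mathcal{R}\subseteq ab\mathcal{R}\Rightarrow \mathrm{lann}(ab)\subseteq\mathrm{lann}(a)$, and part (2) applied with $b\rightsquigarrow ab$ uses the standing hypothesis $(ab)\{1\}\neq\emptyset$ to give the converse implication $\mathrm{lann}(ab)\subseteq\mathrm{lann}(a)\Rightarrow a\mathcal{R}\subseteq ab\mathcal{R}$. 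Combining with the trivial inclusions above closes the chain.

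I do not expect any substantive obstacle: everything reduces to one elementary manipulation of the identity $(ab)(ab)^{(1)}(ab)=ab$ together with a direct appeal to Lemma~\ref{L L 2.5 RDD (2014b)}. The only point worth flagging is that the equivalence with the annihilator condition genuinely needs the regularity hypothesis $(ab)\{1\}\neq\emptyset$, which is used precisely to invoke Lemma~\ref{L L 2.5 RDD (2014b)}(2); without it, the implication from annihilator equality to range equality would fail in general.
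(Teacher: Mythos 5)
Your proof is correct, but it takes a different route from the paper. The paper proves all four equivalences in one stroke through its projector machinery: by Theorem~\ref{T 1I projectors}, $\varphi_{ab(ab)^{(1)}}=\rho_{ab\mathcal{R},{\rm rann}(ab(ab)^{(1)})}$ and $\,_{ab(ab)^{(1)}}\varphi=\rho_{\mathcal{R}ab(ab)^{(1)},{\rm lann}(ab)}$ (similarly for $(ab)^{(1)}ab$), and then Lemma~\ref{L ro a = a and a ro = a} converts the equalities $ab(ab)^{(1)}a=a$ and $b(ab)^{(1)}ab=b$ directly into the inclusions $a\mathcal{R}\subseteq ab\mathcal{R}$, ${\rm lann}(ab)\subseteq{\rm lann}(a)$, ${\rm rann}(ab)\subseteq{\rm rann}(b)$, $\mathcal{R}b\subseteq\mathcal{R}ab$; so both ideal conditions are tied to the same algebraic identity and the paper never invokes Lemma~\ref{L L 2.5 RDD (2014b)} here. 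You instead prove the equivalence with the principal-ideal equality by an elementary two-line computation from $(ab)(ab)^{(1)}(ab)=ab$, and then pass between the principal-ideal and annihilator conditions via Lemma~\ref{L L 2.5 RDD (2014b)}(1)(2) (for part (2) of the statement you in fact need parts (3)(4), the halves concerning $\mathcal{R}a\subseteq\mathcal{R}b$ and right annihilators, though your description of the swaps makes the intended application clear). Your route is more self-contained and pinpoints exactly where regularity of $ab$ is used (only in the direction from annihilator containment back to the range containment), whereas the paper's version exercises the projector formalism that Section~\ref{S Basic relations} is built to showcase and treats all four equivalences uniformly; both arguments are complete and correct.
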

\begin{proof}
We always have $ab\mathcal{R} \subseteq a\mathcal{R}$, ${\rm
lann}(a) \subseteq {\rm lann}(ab)$, ${\rm rann}(b) \subseteq {\rm
rann}(ab)$ and $\mathcal{R}ab \subseteq \mathcal{R}b$. Using
Theorem~\ref{T 1I projectors} and Lemma~\ref{L ro a = a and a ro =
a} we get:
\[ab(ab)^{(1)}a=a \Leftrightarrow
\rho_{ab\mathcal{R},{\rm rann}(ab(ab)^{(1)})}(1)a=a \Leftrightarrow
a\mathcal{R} \subseteq ab\mathcal{R},\]
\[ab(ab)^{(1)}a=a \Leftrightarrow
\rho_{\mathcal{R}ab(ab)^{(1)},{\rm lann}(ab)}(1)a=a \Leftrightarrow
{\rm lann}(ab) \subseteq {\rm lann}(a),\]
\[b(ab)^{(1)}ab = b \Leftrightarrow
b\rho_{(ab)^{(1)}ab\mathcal{R},{\rm rann}(ab)}(1)=b \Leftrightarrow
{\rm rann}(ab) \subseteq {\rm rann}(b)\] and
\[b(ab)^{(1)}ab = b \Leftrightarrow b\rho_{\mathcal{R}ab,{\rm lann}((ab)^{(1)}ab)}(1) \Leftrightarrow \mathcal{R}b \subseteq \mathcal{R}ab.\]
This proves (1) and (2).
\end{proof}
The next theorem is a consequence of Theorems~\ref{T 1I projectors}
and \ref{T 2I projectors}. It relates $\{1,2\}$-inverses to
projectors.
\begin{theorem}\label{T 12I projectors}
Let $a, x \in \mathcal{R}$. Then the following assertions are
equivalent:
\begin{enumerate}
  \item $x \in a\{1,2\}$.
  \item $\varphi_{ax}=\rho_{a\mathcal{R},{\rm rann}(x)}$.
  \item $\varphi_{xa} = \rho_{x\mathcal{R},
  {\rm rann}(a)}$.
  \item $\,_{ax}\varphi=\rho_{\mathcal{R}x,{\rm
lann}(a)}$.
  \item $\,_{xa}\varphi=\rho_{\mathcal{R}a,{\rm lann}(x)}$.
\end{enumerate}
\end{theorem}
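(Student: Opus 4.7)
The plan is to derive Theorem~\ref{T 12I projectors} directly from Theorems~\ref{T 1I projectors} and~\ref{T 2I projectors} together with the triviality that $x \in a\{1,2\}$ if and only if $x \in a\{1\}$ and $x \in a\{2\}$. Each of the four equivalences (1)$\Leftrightarrow$(i), for $i \in \{2,3,4,5\}$, is obtained by pairing clause (i) of Theorem~\ref{T 1I projectors} (which records the $\{1\}$-side of the data: image on $a$, kernel on $a$) with clause (i) of Theorem~\ref{T 2I projectors} (which records the $\{2\}$-side: image on $x$, kernel on $x$). I would write the equivalence (1)$\Leftrightarrow$(2) out in full and remark that the remaining three are proved by the same template applied to $\varphi_{xa}$, $\,_{ax}\varphi$, and $\,_{xa}\varphi$ respectively.

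For (1)$\Rightarrow$(2), assume $x \in a\{1,2\}$. Theorem~\ref{T 1I projectors} gives $\varphi_{ax} = \rho_{a\mathcal{R},\,{\rm rann}(ax)}$, while Theorem~\ref{T 2I projectors} gives $\varphi_{ax} = \rho_{ax\mathcal{R},\,{\rm rann}(x)}$. Since both presentations describe the same group endomorphism, Lemma~\ref{L proy im ker}(2) forces their images and kernels to match: $a\mathcal{R} = ax\mathcal{R}$ and ${\rm rann}(ax) = {\rm rann}(x)$. Substituting into either presentation yields $\varphi_{ax} = \rho_{a\mathcal{R},\,{\rm rann}(x)}$, which is (2).

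For (2)$\Rightarrow$(1), assume $\varphi_{ax} = \rho_{a\mathcal{R},\,{\rm rann}(x)}$. Lemma~\ref{L oblique projector sii idempotent} shows $\varphi_{ax}$ is idempotent, hence $ax \in \mathcal{R}^{\bullet}$, and Corollary~\ref{C a idempotent varphia proy}(1) delivers the parallel presentation $\varphi_{ax} = \rho_{ax\mathcal{R},\,{\rm rann}(ax)}$. Comparing with the hypothesis via Lemma~\ref{L proy im ker}(2) gives $ax\mathcal{R} = a\mathcal{R}$ and ${\rm rann}(ax) = {\rm rann}(x)$. The first equality turns the hypothesis into exactly clause (2) of Theorem~\ref{T 1I projectors}, so $x \in a\{1\}$; the second turns it into exactly clause (2) of Theorem~\ref{T 2I projectors}, so $x \in a\{2\}$. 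Hence $x \in a\{1,2\}$.

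No essential obstacle is anticipated: the only delicate point is that the equality of two projector presentations forces both the images and the kernels to coincide, and this is precisely what Lemma~\ref{L proy im ker}(2) provides. The remaining equivalences (1)$\Leftrightarrow$(3), (1)$\Leftrightarrow$(4), (1)$\Leftrightarrow$(5) are obtained by the same short argument applied to the respective projectors, so the bulk of the write-up is bookkeeping rather than new content.
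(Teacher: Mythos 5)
Your proposal is correct and follows exactly the route the paper intends: Theorem~\ref{T 12I projectors} is stated there as a direct consequence of Theorems~\ref{T 1I projectors} and~\ref{T 2I projectors}, and your write-up (equality of two projector presentations forces equality of images and kernels via Lemma~\ref{L proy im ker}(2), plus Corollary~\ref{C a idempotent varphia proy}(1) for the converse) is just the explicit version of that. The only blemish is cosmetic: in (2)$\Rightarrow$(1) it is the equality ${\rm rann}(ax)={\rm rann}(x)$ that matches the hypothesis with clause (2) of Theorem~\ref{T 1I projectors} and the equality $ax\mathcal{R}=a\mathcal{R}$ that matches it with clause (2) of Theorem~\ref{T 2I projectors}, i.e.\ your ``first'' and ``second'' are swapped, which does not affect the argument.
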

We note that Theorem~\ref{T 1I projectors}(1)(2)(3) is related to
\cite[Lemma 1.1(f) and (2.28)]{Ben-Israel-Greville (2003)}, whereas
Lemma~\ref{L ab ab1 a y b ab1 ab} and Theorem~\ref{T 12I projectors}
generalize \cite[Lemma 1.2]{Ben-Israel-Greville (2003)}, \cite[Ex.
2.21]{Ben-Israel-Greville (2003)} and \cite[Corollary
2.7]{Ben-Israel-Greville (2003)} (see also \cite[Corollary
1.3.2]{Wang-Wei-Qiao (2018)}), respectively.
\begin{remark}
Let $a \in \mathcal{R}$ and $x \in a\{1,2\}$. Using an argument
similar to the one used in the proof of the \emph{if} part of
\cite[Theorem 1.2]{Ben-Israel-Greville (2003)} we get:
\begin{enumerate}
  \item If $a \in \mathcal{R}$, $x \in a\{1\}$ and $x\mathcal{R}= xa\mathcal{R}$
(or $\mathcal{R}x=\mathcal{R}ax$), then $x \in a\{1,2\}$.
  \item If $a \in \mathcal{R}$, $x \in a\{2\}$ and $a\mathcal{R}= ax\mathcal{R}$
  (or $\mathcal{R}a=\mathcal{R}xa$), then $x \in a\{1,2\}$.
\end{enumerate}
\end{remark}
The characterizations of Moore-Penrose, core, and dual core
inverses using $\varphi_{ax}$ (resp. $\,_{ax}\varphi$) and
$\varphi_{xa}$ (resp. $\,_{xa}\varphi$) appear in Section~\ref{S
Moore-Penrose core and dual core inverses} as a consequence of some
general results. Now, we present an example to show that only
these group endomorphisms are not sufficient to characterize $x$ as
the Drazin, the Moore-Penrose, the core, or the dual core inverse.
\begin{example}
Consider the real matrices $X=\left(
         \begin{array}{cc}
           x_{1,1} & x_{1,2} \\
           x_{2,1} & x_{2,2} \\
         \end{array}
       \right)$ and $A=\left(
         \begin{array}{cc}
           2 & -2 \\
           0 & 0 \\
         \end{array}
       \right)
$. Then $A^{\#}=\left(
         \begin{array}{cc}
           1/2 & -1/2 \\
           0 & 0 \\
         \end{array}
       \right)$, $A^{\dag}=\left(
         \begin{array}{cc}
           1/4 & 0 \\
           -1/4 & 0 \\
         \end{array}
       \right)$, $A^{\core}=\left(
         \begin{array}{cc}
           1/2 & 0 \\
           0 & 0 \\
         \end{array}
       \right)$, and $A_{\core}=\left(
         \begin{array}{cc}
           1/4 & -1/4 \\
           -1/4 & 1/4 \\
         \end{array}
       \right)$. We have:
\begin{enumerate}
  \item $AX=XA=AA^{\#}=A^{\#}A$ if and only
       if $x_{1,1}=1/2$, $x_{2,1}=0$ and $x_{2,2}=x_{1,2}+1/2$ if and only if $X\in A\{1,5\}$.

  \item $AX=AA^{\dag}$ and $XA=A^{\dag}A$ if and only
       if $x_{1,1}=-x_{2,1}=1/4$, and $x_{1,2}=x_{2,2}$ if and only if $X\in A\{1,3,4\}$.

  \item $AX=AA^{\core}$ and $XA=A^{\core}A$ if and only
       if $x_{1,1}=1/2$, $x_{2,1}=0$ and $x_{1,2}=x_{2,2}$ if and only $X\in A\{3,6\}$.

  \item $AX=AA_{\core}$ and $XA=A_{\core}A$ if and only
       if $x_{1,1}=-x_{2,1}=1/4$ and $x_{2,2}=x_{1,2}+1/2$ if and only $X\in A\{4,8\}$.
\end{enumerate}
\end{example}
We have the following immediate result.
\begin{theorem}\label{T 15 projector}
Let $a \in \mathcal{R}$. Then the following assertions are
equivalent:
\begin{enumerate}
  \item $x \in a\{1,5\}$.
  \item $\varphi_{ax} =
\varphi_{xa} = \rho_{a\mathcal{R},{\rm rann}(a)}$.
  \item $\,_{ax}\varphi = \,_{xa}\varphi = \rho_{\mathcal{R}a,{\rm
lann}(a)}$.
\end{enumerate}
\end{theorem}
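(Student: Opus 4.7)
The plan is to derive Theorem~\ref{T 15 projector} as an immediate corollary of Theorem~\ref{T 1I projectors}, by observing that the commutativity condition $ax=xa$ collapses the four projector identities of Theorem~\ref{T 1I projectors} into the two displayed above, while simultaneously forcing the ideals and annihilators appearing there to simplify to those of $a$ itself.

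For (1) $\Rightarrow$ (2), I would start from $axa=a$ and $ax=xa$. The equality $ax=xa$ gives $\varphi_{ax}=\varphi_{xa}$ at once, while Theorem~\ref{T 1I projectors}(2)--(3) supplies $\varphi_{ax}=\rho_{a\mathcal{R},{\rm rann}(ax)}$ and $\varphi_{xa}=\rho_{xa\mathcal{R},{\rm rann}(a)}$. It then remains to verify the simplifications $xa\mathcal{R}=a\mathcal{R}$ and ${\rm rann}(ax)={\rm rann}(a)$. The first follows from $xa\mathcal{R}=ax\mathcal{R}\subseteq a\mathcal{R}$ together with $a=axa\in ax\mathcal{R}$. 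The second follows from the trivial inclusion ${\rm rann}(a)\subseteq{\rm rann}(xa)={\rm rann}(ax)$ and, conversely, from $xar=0$ by multiplying on the left by $a$ and invoking $axa=a$.

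For (2) $\Rightarrow$ (1), evaluating $\varphi_{ax}=\varphi_{xa}$ at $1\in\mathcal{R}$ recovers $ax=xa$, so $x\in a\{5\}$. Then, by Lemma~\ref{L proy im ker}(3), the projector $\rho_{a\mathcal{R},{\rm rann}(a)}$ fixes every element of its image, so applying $\varphi_{ax}$ to $a\in a\mathcal{R}$ yields $axa=a$, i.e. $x\in a\{1\}$. The equivalence (1) $\Leftrightarrow$ (3) is entirely parallel: I would invoke parts (4)--(5) of Theorem~\ref{T 1I projectors}, together with the mirror identities $\mathcal{R}ax=\mathcal{R}a$ and ${\rm lann}(xa)={\rm lann}(a)$, and use the evaluation $\,_{xa}\varphi(a)=axa$.

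I do not anticipate any real obstacle; the argument is essentially bookkeeping once Theorem~\ref{T 1I projectors} is available. The one point that deserves a moment's care is that writing $\rho_{a\mathcal{R},{\rm rann}(a)}$ tacitly asserts the direct sum decomposition $\mathcal{R}=a\mathcal{R}\oplus{\rm rann}(a)$. In the forward direction this is automatic, since Theorem~\ref{T 1I projectors} already guarantees $\mathcal{R}=xa\mathcal{R}\oplus{\rm rann}(a)$, which combined with the identity $xa\mathcal{R}=a\mathcal{R}$ established above furnishes exactly the required decomposition.
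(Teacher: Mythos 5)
Your proposal is correct, and it fills in precisely the bookkeeping the paper omits: the paper states this theorem without proof, introducing it as an "immediate result," with the intended route being exactly yours, namely Theorem~\ref{T 1I projectors} combined with the simplifications $xa\mathcal{R}=a\mathcal{R}$, ${\rm rann}(ax)={\rm rann}(a)$ and their left-sided analogues forced by $ax=xa$ and $axa=a$. Your additional remark that the implicit decomposition $\mathcal{R}=a\mathcal{R}\oplus{\rm rann}(a)$ is supplied in the forward direction by $\varphi_{xa}=\rho_{xa\mathcal{R},{\rm rann}(a)}$ together with $xa\mathcal{R}=a\mathcal{R}$ is a correct and worthwhile point of care.
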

The next theorem relates the Drazin inverse to projectors.
\begin{theorem}\label{T DI oblique projector}
Let $a, x \in \mathcal{R}$. Then the following assertions are
equivalent:
\begin{enumerate}
  \item $a \in \mathcal{R}^{D}$ with index $k \leq l$ and $x=a^{D}$.
  \item $\varphi_{xa}=\varphi_{ax}=\rho_{a^{l}\mathcal{R},{\rm rann}(a^{l})}$ and $x\mathcal{R} \subseteq a^{l}\mathcal{R}$.
  \item $\varphi_{xa}=\varphi_{ax}=\rho_{a^{l}\mathcal{R},{\rm rann}(a^{l})}$ and ${\rm rann}(a^{l}) \subseteq {\rm rann}(x)$.
  \item $\,_{xa}\varphi=\,_{xa}\varphi=\rho_{\mathcal{R}a^{l},{\rm lann}(a^{l})}$ and $\mathcal{R}x \subseteq \mathcal{R}a^{l}$.
  \item $\,_{xa}\varphi=\,_{xa}\varphi=\rho_{\mathcal{R}a^{l},{\rm lann}(a^{l})}$ and ${\rm lann}(a^{l}) \subseteq {\rm lann}(x)$.
\end{enumerate}
\end{theorem}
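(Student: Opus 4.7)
The plan is to prove the cycle $(1) \Leftrightarrow (2)$ in detail, verify $(1) \Leftrightarrow (3)$ by the same scheme with one small modification in the harder direction, and then observe that $(1) \Leftrightarrow (4)$ and $(1) \Leftrightarrow (5)$ follow by a mirror argument in which the right-multiplication endomorphisms $\varphi_{(\cdot)}$ are replaced by $\,_{(\cdot)}\varphi$ and $a\mathcal{R}$, ${\rm rann}(\cdot)$ are replaced by $\mathcal{R}a$, ${\rm lann}(\cdot)$ (all the tools from Section~\ref{S Preliminaries} come in left/right pairs).

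For $(1) \Rightarrow (2)$ and $(1) \Rightarrow (3)$ I would invoke property (3) in the list at the end of the Generalized inverses subsection: for every $l \geq k$ we have $aa^{D}\mathcal{R} = a^{D}a\mathcal{R} = a^{D}\mathcal{R} = a^{l}\mathcal{R}$ and the matching identities for right annihilators. Since $aa^{D} = a^{D}a$ (equation $(5)$ for $a^{D}$) is idempotent, Corollary~\ref{C a idempotent varphia proy}(1) applied to it immediately gives $\varphi_{ax} = \varphi_{xa} = \rho_{a^{l}\mathcal{R},{\rm rann}(a^{l})}$, while $x\mathcal{R} = a^{l}\mathcal{R}$ and ${\rm rann}(x) = {\rm rann}(a^{l})$ deliver the side inclusions in $(2)$ and $(3)$.

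The substantive step is $(2) \Rightarrow (1)$. Evaluating $\varphi_{ax} = \varphi_{xa}$ at $1$ yields $ax = xa$, so $(5)$ holds. The element $p := ax = xa$ is idempotent because $\varphi_{ax}$ is the idempotent endomorphism $\rho_{a^{l}\mathcal{R},{\rm rann}(a^{l})}$ (Lemma~\ref{L oblique projector sii idempotent}). Corollary~\ref{C a idempotent varphia proy}(1) then rewrites $\varphi_{p}$ as $\rho_{p\mathcal{R},{\rm rann}(p)}$, and comparison with the hypothesis via Lemma~\ref{L proy im ker}(2) forces $p\mathcal{R} = a^{l}\mathcal{R}$ and ${\rm rann}(p) = {\rm rann}(a^{l})$. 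Because $a^{l} \in p\mathcal{R} = {\rm im}(\rho_{p\mathcal{R},{\rm rann}(p)})$, Lemma~\ref{L proy im ker}(3) gives $pa^{l} = a^{l}$, equivalently $xa^{l+1} = a^{l}$, which is condition $(1^{l})$. The side hypothesis $x\mathcal{R} \subseteq a^{l}\mathcal{R} = p\mathcal{R}$ forces $x \in p\mathcal{R}$, so $px = x$ and $xax = px = x$, which is $(2)$. Thus $x \in a\{2,5,1^{l}\}$, so $a \in \mathcal{R}^{D}$ with index $k \leq l$. To conclude $x = a^{D}$, I apply the already-established implication $(1) \Rightarrow (2)$ to $a^{D}$ itself, obtaining $a^{D}a = aa^{D} = \rho_{a^{l}\mathcal{R},{\rm rann}(a^{l})}(1) = p = xa$; then $x = xax = (xa)x = (a^{D}a)x = a^{D}(ax) = a^{D}(aa^{D}) = a^{D}$. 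For $(3) \Rightarrow (1)$ only the proof of $xax = x$ changes: from ${\rm rann}(p) = {\rm rann}(a^{l}) \subseteq {\rm rann}(x)$ and $p(1-p) = 0$ we deduce $1 - p \in {\rm rann}(p) \subseteq {\rm rann}(x)$, hence $x(1-p) = 0$, i.e., $x = xp = xax$.

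The main obstacle is the identification $x = a^{D}$ at the end: the projector identity by itself does not distinguish $x$ from $a^{D}$, so one must first establish Drazin invertibility from the hypothesis, then compare projectors for $x$ and $a^{D}$ to deduce $xa = a^{D}a$, and finally combine this with the equation $x = xax$ and the standard identity $a^{D}aa^{D} = a^{D}$ to collapse everything. The remaining bookkeeping consists of reading off $p\mathcal{R} = a^{l}\mathcal{R}$ and ${\rm rann}(p) = {\rm rann}(a^{l})$ from the projector equality, which is routine once Corollary~\ref{C a idempotent varphia proy}(1) and Lemma~\ref{L proy im ker} are in hand.
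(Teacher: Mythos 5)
Your proposal is correct and follows essentially the same route as the paper: deduce (1) $\Rightarrow$ (2)--(5) from the standard identities $aa^{D}\mathcal{R}=a^{l}\mathcal{R}$, ${\rm rann}(aa^{D})={\rm rann}(a^{l})$ together with Corollary~\ref{C a idempotent varphia proy}, and for the converse extract $ax=xa$, $xa^{l+1}=a^{l}$, and $xax=x$ from the projector equality plus the side inclusion, so that $x\in a\{2,5,1^{l}\}$. The only difference is that you make explicit the final identification $x=a^{D}$ (by comparing projectors for $x$ and $a^{D}$), which the paper leaves implicit via the uniqueness of the element of $a\{2,5,1^{l}\}$; this is a harmless elaboration, not a different method.
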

\begin{proof}
The implications (1) $\Rightarrow$ (2)--(5) follow from the
definition of the Drazin inverse and Corollary~\ref{C a idempotent
varphia proy}.

If $\varphi_{xa}=\varphi_{ax}=\rho_{a^{l}\mathcal{R},{\rm
rann}(a^{l})}$, then $ax=xa$ and
$xa^{l+1}=\rho_{a^{l}\mathcal{R},{\rm rann}(a^{l})}(a^{l})=a^{l}$.
Thus, $x \in a\{5,1^{l}\}$. Similarly, if
$\,_{xa}\varphi=\,_{xa}\varphi=\rho_{\mathcal{R}a^{l},{\rm
lann}(a^{l})}$, then $x \in a\{5,1^{l}\}$.

(2) $\Rightarrow$ (1): Assume that (2) holds. Then $x \in
a\{5,1^{l}\}$ and $xax=\rho_{a^{l}\mathcal{R},{\rm
rann}(a^{l})}(x)=x$. So, $a \in \mathcal{R}^{D}$ with index
$k \leq l$ and $x=a^{D}$.

The proofs of the other implications are similar.
\end{proof}

\section{$\{1\}$-inverses with prescribed principal and annihilator ideals}\label{S 1I with prescribed principal and annihilator ideals}

As a consequence of Theorem~\ref{T 1I xaR ax0 expression} below, we
can assert that if we choose arbitrary right ideals $\mathcal{S}$
and $\mathcal{T}$ of $\mathcal{R}$ complementary to ${\rm rann}(a)$
and $a\mathcal{R}$, respectively, then there exists $x \in a\{1\}$
such that $xa\mathcal{R}=\mathcal{S}$ and ${\rm
rann}(ax)=\mathcal{T}$. Similar considerations can be made for
Theorems~\ref{T 1I Rax 0xa expression}-\ref{T 1I 0xa expression}.
Before we enunciate these theorems, we observe that if
$\mathcal{R}=a\mathcal{R}\oplus\mathcal{T}$, then there exists $z
\in \mathcal{R}$ such that $az=\rho_{a\mathcal{R},\mathcal{T}}(1)$
and $aza=\rho_{a\mathcal{R},\mathcal{T}}(a)=a$. Hence,
$a\{1\}\neq\emptyset$. Analogously, if $\mathcal{T}'$ is a left
ideal of $\mathcal{R}$ and
$\mathcal{R}=\mathcal{R}a\oplus\mathcal{T}'$, then
$a\{1\}\neq\emptyset$.
\begin{theorem}\label{T 1I xaR ax0 expression}
Let $a, x \in \mathcal{R}$ and $\mathcal{S}$, $\mathcal{T}$ be right
ideals of $\mathcal{R}$. Then the following assertions are
equivalent:
\begin{enumerate}
  \item $x \in a\{1\}$, $xa\mathcal{R}=\mathcal{S}$, and ${\rm rann}(ax)=\mathcal{T}$.
  \item $\varphi_{ax}=\rho_{a\mathcal{R},\mathcal{T}}$ and $\varphi_{xa}=\rho_{\mathcal{S},{\rm rann}(a)}$.
  \item $x=\rho_{\mathcal{S},{\rm rann}(a)}(1)a^{(1)}\rho_{a\mathcal{R},\mathcal{T}}(1)+(1-a^{(1)}a)y(1-aa^{(1)})$ where $a^{(1)} \in a\{1\}$ and $y \in \mathcal{R}$.
\end{enumerate}
\end{theorem}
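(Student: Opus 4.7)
The plan is to establish the cycle (1)$\Leftrightarrow$(2)$\Leftrightarrow$(3) by reducing the projector identities to standard properties of the specific idempotents associated with the prescribed ideals.

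For (1)$\Leftrightarrow$(2), I appeal to Theorem~\ref{T 1I projectors}. If $x\in a\{1\}$, that theorem gives $\varphi_{ax}=\rho_{a\mathcal{R},{\rm rann}(ax)}$ and $\varphi_{xa}=\rho_{xa\mathcal{R},{\rm rann}(a)}$, so the prescribed ideal equalities ${\rm rann}(ax)=\mathcal{T}$ and $xa\mathcal{R}=\mathcal{S}$ immediately yield (2). Conversely, assuming (2), since $a\in a\mathcal{R}$, Lemma~\ref{L proy im ker}(3) gives $axa=\varphi_{ax}(a)=\rho_{a\mathcal{R},\mathcal{T}}(a)=a$, so $x\in a\{1\}$; reapplying Theorem~\ref{T 1I projectors} and comparing images and kernels of the two resulting expressions for $\varphi_{ax}$ and $\varphi_{xa}$ forces $xa\mathcal{R}=\mathcal{S}$ and ${\rm rann}(ax)=\mathcal{T}$.

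For (2)$\Rightarrow$(3), I set $e=\rho_{\mathcal{S},{\rm rann}(a)}(1)$ and $f=\rho_{a\mathcal{R},\mathcal{T}}(1)$. By Corollary~\ref{C a idempotent varphia proy}(2) these are idempotents with $\varphi_e=\rho_{\mathcal{S},{\rm rann}(a)}$ and $\varphi_f=\rho_{a\mathcal{R},\mathcal{T}}$. Evaluating the identities in (2) at $1$ gives $xa=e$ and $ax=f$, so $ea^{(1)}f=xa\cdot a^{(1)}\cdot ax=x(aa^{(1)}a)x=xax$. Choosing $y=x-xax$, I need $(1-a^{(1)}a)y(1-aa^{(1)})=x-xax$. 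Using the idempotency of $ax=f$ and $xa=e$:
\[
(1-a^{(1)}a)(x-xax)=x-xax-a^{(1)}ax+a^{(1)}(ax)^{2}=x-xax,
\]
\[
(x-xax)(1-aa^{(1)})=x-xax-xaa^{(1)}+(xa)^{2}a^{(1)}=x-xax,
\]
so $(1-a^{(1)}a)y(1-aa^{(1)})=x-xax$, and the formula in (3) evaluates to $xax+(x-xax)=x$.

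For (3)$\Rightarrow$(2), by Lemma~\ref{L ro a = a and a ro = a}(1), $ae=a$ and $fa=a$. Then
\[
ax=aea^{(1)}f+(a-aa^{(1)}a)y(1-aa^{(1)})=aa^{(1)}f,
\]
and since $f\in f\mathcal{R}=a\mathcal{R}=aa^{(1)}\mathcal{R}$ with $aa^{(1)}$ idempotent, $aa^{(1)}f=f$; hence $ax=f$ and $\varphi_{ax}=\varphi_{f}=\rho_{a\mathcal{R},\mathcal{T}}$. Symmetrically, $xa=ea^{(1)}fa+e(1-a^{(1)}a)y(a-aa^{(1)}a)=ea^{(1)}a$; since $1-a^{(1)}a\in{\rm rann}(a)={\rm rann}(e)$, one has $e(1-a^{(1)}a)=0$, so $xa=e$ and $\varphi_{xa}=\rho_{\mathcal{S},{\rm rann}(a)}$. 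The main obstacle is the algebraic bookkeeping in (2)$\Leftrightarrow$(3); the crucial insight is that (2) forces both $ax$ and $xa$ to be the specific idempotents $f$ and $e$ determined by the prescribed ideals, and that these idempotents absorb $a$ on the appropriate side via Lemma~\ref{L ro a = a and a ro = a}, so the cross terms in the sandwich $(1-a^{(1)}a)(\,\cdot\,)(1-aa^{(1)})$ collapse.
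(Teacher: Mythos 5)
Your proof is correct. It uses the same toolkit as the paper --- Theorem~\ref{T 1I projectors}, Corollary~\ref{C a idempotent varphia proy}(2), and Lemma~\ref{L ro a = a and a ro = a} --- but organizes the equivalences differently and is in places sharper. The paper runs the cycle (1) $\Rightarrow$ (2) $\Rightarrow$ (3) $\Rightarrow$ (1), proves (2) $\Rightarrow$ (3) with the single witness $a^{(1)}=y=x$, and in (3) $\Rightarrow$ (1) only obtains $ax=aa^{(1)}\rho_{a\mathcal{R},\mathcal{T}}(1)$ and $xa=\rho_{\mathcal{S},{\rm rann}(a)}(a^{(1)}a)$, so it must afterwards compute ${\rm rann}(ax)=\mathcal{T}$ and $xa\mathcal{R}=\mathcal{S}$ by separate element chases. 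You instead close (2) $\Rightarrow$ (1) directly (the comparison is immediate since ${\rm im}(\varphi_{xa})=xa\mathcal{R}$, ${\rm ker}(\varphi_{ax})={\rm rann}(ax)$, and the projectors have image $\mathcal{S}$ and kernel $\mathcal{T}$), you prove (2) $\Rightarrow$ (3) for an arbitrary $a^{(1)}\in a\{1\}$ with $y=x-xax$ (mildly stronger than the paper's single choice), and in (3) $\Rightarrow$ (2) you pin down the exact identities $ax=\rho_{a\mathcal{R},\mathcal{T}}(1)=f$ and $xa=\rho_{\mathcal{S},{\rm rann}(a)}(1)=e$, via $aa^{(1)}f=f$ (because $f\in a\mathcal{R}=aa^{(1)}\mathcal{R}$) and $e(1-a^{(1)}a)=0$ (because $1-a^{(1)}a\in{\rm rann}(a)={\rm ker}(\varphi_{e})$); this renders the paper's separate determination of the two prescribed ideals unnecessary. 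One cosmetic slip: in your computation of $xa$ the second summand should read $(1-a^{(1)}a)y(1-aa^{(1)})a=(1-a^{(1)}a)y(a-aa^{(1)}a)$, without the leading factor $e$; since $a-aa^{(1)}a=0$ the term vanishes anyway, so the conclusion $xa=ea^{(1)}a=e$ stands.
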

\begin{proof}
(1) $\Rightarrow$ (2): It follows from Theorem~\ref{T 1I
projectors}.

(2) $\Rightarrow$ (3): Assume that (2) holds. Then, $x \in a\{1\}$
and \[\rho_{\mathcal{S},{\rm
rann}(a)}(x\rho_{a\mathcal{R},\mathcal{T}}(1))+(1-xa)x(1-ax) =
xa(x(ax))+x-xax-xax+xaxax = xax+x-xax-xax+xax=x.\] Thus, (3) holds
with $a^{(1)}=y=x$.

(3) $\Rightarrow$ (1): Assume that (3) holds. Using Lemma~\ref{L ro
a = a and a ro = a}(1) we obtain,
$ax=aa^{(1)}\rho_{a\mathcal{R},\mathcal{T}}(1)$,
$xa=\rho_{\mathcal{S},{\rm rann}(a)}(a^{(1)}a)$ and  $axa=a$. Hence,
$x \in a\{1\}$, and by Theorem~\ref{T 1I projectors},
$\varphi_{ax}=\rho_{a\mathcal{R},{\rm rann}(ax)}$ and
  $\varphi_{xa}=\rho_{xa\mathcal{R},{\rm rann}(a)}$.

Let $r \in \mathcal{R}$. Then
\begin{align*}
axr =0 \Leftrightarrow aa^{(1)}\rho_{a\mathcal{R},\mathcal{T}}(r)=0
&\Leftrightarrow \{\exists s \in \mathcal{R},\,
\rho_{a\mathcal{R},\mathcal{T}}(r)=as \text{ and } aa^{(1)}as=0\}\\
&\Leftrightarrow \{\exists s \in \mathcal{R},\,
\rho_{a\mathcal{R},\mathcal{T}}(r)=as  \text{ and } as=0\}
\Leftrightarrow r \in \mathcal{T}.
\end{align*}
Thus, ${\rm rann}(ax)=\mathcal{T}$. By Theorem~\ref{T 1I
projectors}, $\varphi_{1-a^{(1)}a}=\rho_{{\rm
rann}(a),a^{(1)}a\mathcal{R}}$. Then, $\rho_{\mathcal{S},{\rm
rann}(a)}((1-a^{(1)}a)\mathcal{R}) = \{0\}$ and
\[\rho_{\mathcal{S},{\rm rann}(a)}(r)=\rho_{\mathcal{S},{\rm rann}(a)}(a^{(1)}ar)+\rho_{\mathcal{S},{\rm rann}(a)}((1-a^{(1)}a)r)=\rho_{\mathcal{S},{\rm rann}(a)}(a^{(1)}ar)=xar.\]
From here, $xa\mathcal{R}=\mathcal{S}$. Consequently, (1) holds.
\end{proof}
Theorem~\ref{T 1I xaR ax0 expression} generalizes \cite[Theorem
2.12(a)(b)]{Ben-Israel-Greville (2003)}. Using Theorem~\ref{T 1I
projectors} and Lemma~\ref{L ro a = a and a ro = a}(2), we
analogously prove the next result about $\{1\}$-inverses with given
left principal and annihilator ideals.
\begin{theorem}\label{T 1I Rax 0xa expression}
Let $a, x \in \mathcal{R}$ and $\mathcal{S}$, $\mathcal{T}$ be left
ideals of $\mathcal{R}$. Then the following assertions are
equivalent:
\begin{enumerate}
  \item $x \in a\{1\}$, $\mathcal{R}ax=\mathcal{S}$, and ${\rm lann}(xa)=\mathcal{T}$.
  \item $\,_{ax}\varphi=\rho_{\mathcal{S},{\rm lann}(a)}$ and $\,_{xa}\varphi=\rho_{\mathcal{R}a,\mathcal{T}}$.
  \item $x=\rho_{\mathcal{R}a,\mathcal{T}}(1)a^{(1)}\rho_{\mathcal{S},{\rm lann}(a)}(1)+(1-a^{(1)}a)y(1-aa^{(1)})$ where $a^{(1)} \in a\{1\}$ and $y \in \mathcal{R}$.
\end{enumerate}
\end{theorem}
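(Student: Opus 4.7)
The plan is to mirror the proof of Theorem~\ref{T 1I xaR ax0 expression} verbatim, replacing the left-multiplication group endomorphisms $\varphi_a$ by the right-multiplication endomorphisms $\,_a\varphi$ and exchanging parts (1) and (2) of both Lemma~\ref{L ro a = a and a ro = a} and Lemma~\ref{L S T right left ideals sii proy}. For (1) $\Rightarrow$ (2), I would simply invoke Theorem~\ref{T 1I projectors}(4)(5) to obtain $\,_{ax}\varphi=\rho_{\mathcal{R}ax,{\rm lann}(a)}$ and $\,_{xa}\varphi=\rho_{\mathcal{R}a,{\rm lann}(xa)}$; the hypothesized equalities $\mathcal{R}ax=\mathcal{S}$ and ${\rm lann}(xa)=\mathcal{T}$ yield (2) at once.

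For (2) $\Rightarrow$ (3), evaluating the two projector identities at $1$ gives $xa=\rho_{\mathcal{R}a,\mathcal{T}}(1)$ and $ax=\rho_{\mathcal{S},{\rm lann}(a)}(1)$. Lemma~\ref{L ro a = a and a ro = a}(2), applied with the trivial inclusion $\mathcal{R}a\subseteq\mathcal{R}a$, then yields $axa=a\rho_{\mathcal{R}a,\mathcal{T}}(1)=a$, so $x\in a\{1\}$ and in particular $xa$ is idempotent. Setting $a^{(1)}=y=x$ in the right-hand side of (3) produces $xa\cdot x\cdot ax+(1-xa)x(1-ax)=xaxax+x-2xax+xaxax$, which collapses to $x$ using $xa\cdot xa=xa$, exactly as in the verification for the right-sided theorem.

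For (3) $\Rightarrow$ (1), I would expand $xa$ and $ax$ from the formula. Using Lemma~\ref{L ro a = a and a ro = a}(2) twice (with ${\rm lann}(a)\subseteq{\rm lann}(a)$ and $\mathcal{R}a\subseteq\mathcal{R}a$), we get $\rho_{\mathcal{S},{\rm lann}(a)}(1)a=a$ and $a\rho_{\mathcal{R}a,\mathcal{T}}(1)=a$; combined with $(1-aa^{(1)})a=0=a(1-a^{(1)}a)$, these give $xa=\rho_{\mathcal{R}a,\mathcal{T}}(1)a^{(1)}a$, $ax=aa^{(1)}\rho_{\mathcal{S},{\rm lann}(a)}(1)$, and $axa=a$, so $x\in a\{1\}$. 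To verify ${\rm lann}(xa)=\mathcal{T}$, I apply Lemma~\ref{L S T right left ideals sii proy}(2) to write $r\cdot xa=\rho_{\mathcal{R}a,\mathcal{T}}(r)a^{(1)}a$; since any element $\rho_{\mathcal{R}a,\mathcal{T}}(r)\in\mathcal{R}a$ has the form $sa$ and $saa^{(1)}a=sa$, this product equals $\rho_{\mathcal{R}a,\mathcal{T}}(r)$, whence $r\in{\rm lann}(xa)\Leftrightarrow\rho_{\mathcal{R}a,\mathcal{T}}(r)=0\Leftrightarrow r\in\mathcal{T}$. To verify $\mathcal{R}ax=\mathcal{S}$, I observe that $1-aa^{(1)}$ is idempotent with $\mathcal{R}(1-aa^{(1)})={\rm lann}(a)$, so by Corollary~\ref{C a idempotent varphia proy}(1), $r(1-aa^{(1)})\in{\rm lann}(a)={\rm ker}(\rho_{\mathcal{S},{\rm lann}(a)})$ for every $r\in\mathcal{R}$; decomposing $r=raa^{(1)}+r(1-aa^{(1)})$ and applying Lemma~\ref{L S T right left ideals sii proy}(2), I obtain $\rho_{\mathcal{S},{\rm lann}(a)}(r)=raa^{(1)}\rho_{\mathcal{S},{\rm lann}(a)}(1)=rax$, so $\mathcal{S}={\rm im}(\rho_{\mathcal{S},{\rm lann}(a)})=\mathcal{R}ax$.

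The main difficulty will be clerical rather than conceptual: every ideal, every side of multiplication, and every invocation of Lemmas~\ref{L ro a = a and a ro = a} and~\ref{L S T right left ideals sii proy} must be swapped consistently from right to left. Once that template is in place, the computation is a faithful dual of the right-sided proof and introduces no new subtlety.
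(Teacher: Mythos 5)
Your proposal is correct and is exactly the dualization the paper intends: the paper's own "proof" of this theorem is just the remark that it follows analogously from Theorem~\ref{T 1I xaR ax0 expression} via Theorem~\ref{T 1I projectors} and Lemma~\ref{L ro a = a and a ro = a}(2), and your left-sided computations (including the verifications ${\rm lann}(xa)=\mathcal{T}$ and $\mathcal{R}ax=\mathcal{S}$) carry this out faithfully. The only cosmetic point is the citation for $r(1-aa^{(1)})\in{\rm lann}(a)$, which is more directly the dual of the paper's use of Theorem~\ref{T 1I projectors} (i.e. $\,_{1-aa^{(1)}}\varphi=\rho_{{\rm lann}(a),\mathcal{R}aa^{(1)}}$) than of Corollary~\ref{C a idempotent varphia proy}(1), but the fact itself is immediate since $(1-aa^{(1)})a=0$.
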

With similar proofs, we obtain the next six theorems. The first is
about $\{1\}$-inverses with given right and left principal ideals.
\begin{theorem}\label{T 1I xaR Rax expression}
Let $a, x \in \mathcal{R}$, $\mathcal{S}$ be a right ideal of
$\mathcal{R}$, and $\mathcal{S}'$ be a left ideal of $\mathcal{R}$.
Then the following assertions are equivalent:
\begin{enumerate}
  \item $x \in a\{1\}$, $xa\mathcal{R}=\mathcal{S}$, and $\mathcal{R}ax=\mathcal{S}'$.
  \item $\varphi_{xa}=\rho_{\mathcal{S},{\rm rann}(a)}$ and $\,_{ax}\varphi=\rho_{\mathcal{S}',{\rm lann}(a)}$.
  \item $a\{1\}\neq\emptyset$ and $x=\rho_{\mathcal{S},{\rm rann}(a)}(1)a^{(1)}\rho_{\mathcal{S}',{\rm lann}(a)}(1)+(1-a^{(1)}a)y(1-aa^{(1)})$ where $a^{(1)} \in a\{1\}$ and $y \in \mathcal{R}$.
\end{enumerate}
\end{theorem}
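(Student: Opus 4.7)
My plan is to mirror the proofs of Theorem~\ref{T 1I xaR ax0 expression} and Theorem~\ref{T 1I Rax 0xa expression}, since this is the ``mixed'' version in which one prescribes a right principal ideal and a left principal ideal rather than a principal/annihilator pair on the same side. The core tools will be Theorem~\ref{T 1I projectors} together with Lemmas~\ref{L ro a = a and a ro = a} and~\ref{L S T right left ideals sii proy}. I will close the cycle (1)$\Rightarrow$(2)$\Rightarrow$(3)$\Rightarrow$(1).

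For (1)$\Rightarrow$(2), I will just apply parts (3) and (4) of Theorem~\ref{T 1I projectors}: assuming $x \in a\{1\}$, one has $\varphi_{xa} = \rho_{xa\mathcal{R},\,{\rm rann}(a)} = \rho_{\mathcal{S},\,{\rm rann}(a)}$ and $\,_{ax}\varphi = \rho_{\mathcal{R}ax,\,{\rm lann}(a)} = \rho_{\mathcal{S}',\,{\rm lann}(a)}$.

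For (2)$\Rightarrow$(3), I will first extract a $\{1\}$-inverse. The kernel identity ${\rm ker}(\varphi_{xa}) = {\rm rann}(a)$ forces ${\rm rann}(xa) = {\rm rann}(a)$, and since $1 - xa$ always lies in ${\rm rann}(xa)$, this gives $axa = a$; hence $a\{1\} \neq \emptyset$. Then I will take $a^{(1)} = y = x$ and check the formula by direct expansion: using $\rho_{\mathcal{S},\,{\rm rann}(a)}(1) = xa$, $\rho_{\mathcal{S}',\,{\rm lann}(a)}(1) = ax$, and the consequence $(ax)^2 = ax$ of $axa = a$, the expression $xa \cdot x \cdot ax + (1-xa)x(1-ax)$ collapses to $x$.

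The bulk of the work will be (3)$\Rightarrow$(1), a near-verbatim transcription of the computation in Theorem~\ref{T 1I xaR ax0 expression}. Multiplying the formula on the right by $a$ kills the second summand because $(1 - aa^{(1)})a = 0$; in the first summand, Lemma~\ref{L ro a = a and a ro = a}(2) with $\mathcal{T} = {\rm lann}(a)$ gives $\rho_{\mathcal{S}',\,{\rm lann}(a)}(1)\,a = a$, leaving $xa = \rho_{\mathcal{S},\,{\rm rann}(a)}(1)\,a^{(1)}a = \rho_{\mathcal{S},\,{\rm rann}(a)}(a^{(1)}a)$ by Lemma~\ref{L S T right left ideals sii proy}(1). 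A further left multiplication by $a$ combined with Lemma~\ref{L ro a = a and a ro = a}(1) (with $\mathcal{T} = {\rm rann}(a)$) yields $axa = a \cdot a^{(1)} \cdot a = a$, so $x \in a\{1\}$. To identify $\varphi_{xa}$ with $\rho_{\mathcal{S},\,{\rm rann}(a)}$, I will write $\varphi_{xa}(r) = \rho_{\mathcal{S},\,{\rm rann}(a)}(a^{(1)}ar)$ and discard the correction $\rho_{\mathcal{S},\,{\rm rann}(a)}((1 - a^{(1)}a)r)$, which vanishes because $(1 - a^{(1)}a)r \in {\rm rann}(a)$ by Theorem~\ref{T 1I projectors} applied to $a^{(1)}$; Lemma~\ref{L proy im ker}(2) then delivers $xa\mathcal{R} = \mathcal{S}$. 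The strictly symmetric computation on the left gives $\mathcal{R}ax = \mathcal{S}'$.

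The main obstacle will be bookkeeping rather than any genuine difficulty: because the statement mixes right and left ideals, the absorption identities $\rho(1)a = a$ and $a\rho(1) = a$, together with the endomorphism identities $\rho(r_1 r_2) = \rho(r_1)r_2$ and $\rho(r_1 r_2) = r_1 \rho(r_2)$, must be pulled from the correct halves of Lemmas~\ref{L ro a = a and a ro = a} and~\ref{L S T right left ideals sii proy}, matched to whether the projector in question is formed from right or from left ideals. Once those matches are tracked carefully, everything reduces to the pattern already established in Theorem~\ref{T 1I xaR ax0 expression}.
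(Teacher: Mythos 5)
Your proof is correct and is essentially the paper's own route: the paper disposes of Theorem~\ref{T 1I xaR Rax expression} as one of the results obtained ``with similar proofs'' to Theorem~\ref{T 1I xaR ax0 expression}, and that is precisely the adaptation you carry out, cycling (1)$\Rightarrow$(2)$\Rightarrow$(3)$\Rightarrow$(1) with Theorem~\ref{T 1I projectors}, Lemma~\ref{L ro a = a and a ro = a}, and Lemma~\ref{L S T right left ideals sii proy}, and taking $a^{(1)}=y=x$ in (2)$\Rightarrow$(3). Only one wording slip: $1-xa\in{\rm rann}(xa)$ is not automatic for arbitrary $x,a$, but under (2) it does hold because $\varphi_{xa}=\rho_{\mathcal{S},{\rm rann}(a)}$ is idempotent, so $(xa)^{2}=xa$, whence $xa(1-xa)=0$ and $1-xa\in{\rm rann}(xa)={\rm rann}(a)$, giving $axa=a$ as you claim.
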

Now we give a theorem for $\{1\}$-inverses with given right and left
annihilator ideals.
\begin{theorem}\label{T 1I 0xa ax0 expression}
Let $a, x \in \mathcal{R}$, $\mathcal{T}$ be a right ideal of
$\mathcal{R}$, and $\mathcal{T}'$ be a left ideal of $\mathcal{R}$.
Then the following assertions are equivalent:
\begin{enumerate}
  \item $x \in a\{1\}$, ${\rm rann}(ax)=\mathcal{T}$, and ${\rm lann}(xa)=\mathcal{T}'$.
  \item $\varphi_{ax}=\rho_{a\mathcal{R},\mathcal{T}}$ and $\,_{xa}\varphi=\rho_{\mathcal{R}a,\mathcal{T}'}$.
  \item $x=\rho_{\mathcal{R}a,\mathcal{T}'}(1)a^{(1)}\rho_{a\mathcal{R},\mathcal{T}}(1)+(1-a^{(1)}a)y(1-aa^{(1)})$ where $a^{(1)} \in a\{1\}$ and $y \in \mathcal{R}$.
\end{enumerate}
\end{theorem}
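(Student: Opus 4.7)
The plan is to follow the cyclic scheme $(1)\Rightarrow(2)\Rightarrow(3)\Rightarrow(1)$, closely paralleling the proof of Theorem~\ref{T 1I xaR ax0 expression} but swapping the right principal ideal data with left annihilator data. Throughout I will write $p := \rho_{a\mathcal{R},\mathcal{T}}(1)$ and $q := \rho_{\mathcal{R}a,\mathcal{T}'}(1)$; by Lemma~\ref{L S T right left ideals sii proy}, the right-ideal projector $\rho_{a\mathcal{R},\mathcal{T}}$ acts as left multiplication by $p$, while the left-ideal projector $\rho_{\mathcal{R}a,\mathcal{T}'}$ acts as right multiplication by $q$.

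The implication $(1)\Rightarrow(2)$ is immediate from Theorem~\ref{T 1I projectors}, which for any $x\in a\{1\}$ already yields $\varphi_{ax}=\rho_{a\mathcal{R},{\rm rann}(ax)}$ and $\,_{xa}\varphi=\rho_{\mathcal{R}a,{\rm lann}(xa)}$; one only substitutes ${\rm rann}(ax)=\mathcal{T}$ and ${\rm lann}(xa)=\mathcal{T}'$. For $(2)\Rightarrow(3)$ I will choose $a^{(1)}=y=x$. Evaluating (2) at $1$ gives $ax=p$ and $xa=q$, and since $a\in a\mathcal{R}$ is fixed by $\rho_{a\mathcal{R},\mathcal{T}}$ this also yields $axa=a$, so both $ax$ and $xa$ are idempotent. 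Then the direct expansion $qxp+(1-xa)x(1-ax)$ equals $xaxax+x-xax-xax+xaxax$, which collapses to $x$ once $axax=ax$ is used, exactly as in the proof of Theorem~\ref{T 1I xaR ax0 expression}.

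For $(3)\Rightarrow(1)$ I will compute $ax$ and $xa$ from the given expression. Since $a(1-a^{(1)}a)=0$ and $(1-aa^{(1)})a=0$, the second summand disappears and $ax=aqa^{(1)}p$, $xa=qa^{(1)}pa$. Lemma~\ref{L ro a = a and a ro = a} gives $aq=a$ (from $\mathcal{R}a\subseteq\mathcal{R}a$) and $pa=a$ (from $a\mathcal{R}\subseteq a\mathcal{R}$); writing $p=ar$ and $q=sa$ for suitable $r,s\in\mathcal{R}$ (because $p\in a\mathcal{R}$ and $q\in\mathcal{R}a$) and applying $aa^{(1)}a=a$ then reduces $ax$ to $p$ and $xa$ to $q$. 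Consequently $axa=pa=a$, so $x\in a\{1\}$; moreover $\varphi_{ax}=\rho_{a\mathcal{R},\mathcal{T}}$ and $\,_{xa}\varphi=\rho_{\mathcal{R}a,\mathcal{T}'}$, and Lemma~\ref{L proy im ker}(2) then delivers the required annihilator equalities ${\rm rann}(ax)=\mathcal{T}$ and ${\rm lann}(xa)=\mathcal{T}'$. The only real obstacle is the double bookkeeping of right- versus left-ideal projectors; once the conventions ``multiply by $p$ on the left'' and ``multiply by $q$ on the right'' are fixed, the rest mirrors the calculation in Theorem~\ref{T 1I xaR ax0 expression} essentially verbatim.
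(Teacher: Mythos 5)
Your proposal is correct and takes essentially the same route as the paper, which proves this theorem ``with similar proofs'' to Theorem~\ref{T 1I xaR ax0 expression}: the same cyclic scheme, the same choice $a^{(1)}=y=x$ in (2)$\Rightarrow$(3), and the same use of Theorem~\ref{T 1I projectors} and Lemma~\ref{L ro a = a and a ro = a} in (3)$\Rightarrow$(1). Your (3)$\Rightarrow$(1) is marginally more direct in that you show $ax=\rho_{a\mathcal{R},\mathcal{T}}(1)$ and $xa=\rho_{\mathcal{R}a,\mathcal{T}'}(1)$ on the nose (using $p\in a\mathcal{R}$, $q\in\mathcal{R}a$, and $aa^{(1)}a=a$), but this is only a cosmetic streamlining of the paper's computation.
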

The following two theorems are about $\{1\}$-inverses for which only
a right principal or annihilator ideal is prefixed.
\begin{theorem}\label{T 1I xaR expression}
Let $a, x \in \mathcal{R}$ and $\mathcal{S}$ be a right ideal of
$\mathcal{R}$. Then the following assertions are equivalent:
\begin{enumerate}
  \item $x \in a\{1\}$, $xa\mathcal{R}=\mathcal{S}$.
  \item $\varphi_{xa}=\rho_{\mathcal{S},{\rm rann}(a)}$.
  \item $a\{1\}\neq\emptyset$ and $x=\rho_{\mathcal{S},{\rm rann}(a)}(1)a^{(1)}+(1-a^{(1)}a)y(1-aa^{(1)})$ where $a^{(1)} \in a\{1\}$ and $y \in \mathcal{R}$.
\end{enumerate}
\end{theorem}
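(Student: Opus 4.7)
The plan is to prove the cycle $(1)\Rightarrow(2)\Rightarrow(3)\Rightarrow(1)$, following the template of the proof of Theorem~\ref{T 1I xaR ax0 expression} but simplified, since here only the right principal ideal $\mathcal{S}$ is prescribed (no right annihilator).

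The implication $(1)\Rightarrow(2)$ is immediate from Theorem~\ref{T 1I projectors}: when $x\in a\{1\}$, that theorem yields $\varphi_{xa}=\rho_{xa\mathcal{R},{\rm rann}(a)}$, and substituting the assumption $xa\mathcal{R}=\mathcal{S}$ gives (2).

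For $(2)\Rightarrow(3)$, I would first recover $x\in a\{1\}$ from the hypothesis. Idempotency of $\varphi_{xa}$ gives $xaxa=xa$, hence $xa-1\in\ker\varphi_{xa}={\rm rann}(a)$, so $a(xa-1)=0$, i.e., $axa=a$; in particular $a\{1\}\neq\emptyset$. To produce the explicit formula I would choose $a^{(1)}=y=x$. Using $xa=\varphi_{xa}(1)=\rho_{\mathcal{S},{\rm rann}(a)}(1)$ and the collapse $xaxax=x(axa)x=xax$ (valid thanks to $axa=a$), the expression $\rho_{\mathcal{S},{\rm rann}(a)}(1)\,x+(1-xa)\,x\,(1-ax)$ simplifies to $xax+x-xax-xax+xax=x$, as required.

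For $(3)\Rightarrow(1)$, write $p=\rho_{\mathcal{S},{\rm rann}(a)}(1)$. Right-multiplying the formula for $x$ by $a$ kills the second term (since $a-aa^{(1)}a=0$), giving $xa=pa^{(1)}a$. For any $r\in\mathcal{R}$, Lemma~\ref{L S T right left ideals sii proy}(1) applied to the right ideals $\mathcal{S}$ and ${\rm rann}(a)$ yields $xar=pa^{(1)}ar=\rho_{\mathcal{S},{\rm rann}(a)}(a^{(1)}ar)$; and because $a(1-a^{(1)}a)r=0$, the element $(1-a^{(1)}a)r$ lies in ${\rm rann}(a)=\ker\rho_{\mathcal{S},{\rm rann}(a)}$, so $\rho_{\mathcal{S},{\rm rann}(a)}(a^{(1)}ar)=\rho_{\mathcal{S},{\rm rann}(a)}(r)$. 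Hence $\varphi_{xa}=\rho_{\mathcal{S},{\rm rann}(a)}$, which is (2); the argument already given for $(2)\Rightarrow(3)$ then supplies $x\in a\{1\}$, and $xa\mathcal{R}={\rm im}(\varphi_{xa})=\mathcal{S}$ by Lemma~\ref{L proy im ker}(2).

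The main obstacle I expect is the bookkeeping in $(2)\Rightarrow(3)$: guessing the correct substitution $a^{(1)}=y=x$ and carrying out the cancellations so that every step uses only the identity $axa=a$ (which must itself be extracted from the projector equation before it can be invoked). Everything else reduces to standard projector manipulations and the lemmas cited.
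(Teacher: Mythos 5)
Your proposal is correct and follows essentially the same route the paper intends: the paper proves these six theorems by the same template as Theorem~\ref{T 1I xaR ax0 expression}, i.e.\ (1)$\Rightarrow$(2) via Theorem~\ref{T 1I projectors}, (2)$\Rightarrow$(3) by verifying the formula with $a^{(1)}=y=x$, and (3)$\Rightarrow$(1) by computing $xar=\rho_{\mathcal{S},{\rm rann}(a)}(r)$ using Lemma~\ref{L S T right left ideals sii proy}. Your extraction of $axa=a$ from the single projector equation (idempotency of $\varphi_{xa}$ plus $\ker\varphi_{xa}={\rm rann}(a)$, or equivalently $1-xa\in{\rm rann}(a)$) is exactly the adaptation needed here, so no gap remains.
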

\begin{theorem}\label{T 1I ax0 expression}
Let $a, x \in \mathcal{R}$ and $\mathcal{T}$ be a right ideal of
$\mathcal{R}$. Then the following assertions are equivalent:
\begin{enumerate}
  \item $x \in a\{1\}$ and ${\rm rann}(ax)=\mathcal{T}$.
  \item $\varphi_{ax}=\rho_{a\mathcal{R},\mathcal{T}}$.
  \item $x=a^{(1)}\rho_{a\mathcal{R},\mathcal{T}}(1)+(1-a^{(1)}a)y(1-aa^{(1)})$ where $a^{(1)} \in a\{1\}$ and $y \in \mathcal{R}$.
\end{enumerate}
\end{theorem}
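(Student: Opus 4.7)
The plan is to prove the three-way equivalence cyclically, following the pattern of the proof of Theorem~\ref{T 1I xaR ax0 expression} but in a simplified form, since only the annihilator $\mathrm{rann}(ax)$ is prescribed (no condition on $xa\mathcal{R}$). I would establish (1) $\Rightarrow$ (2) $\Rightarrow$ (3) $\Rightarrow$ (1).

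For (1) $\Rightarrow$ (2): this is immediate from Theorem~\ref{T 1I projectors}, which gives $\varphi_{ax}=\rho_{a\mathcal{R},\mathrm{rann}(ax)}$ whenever $x\in a\{1\}$; substituting the hypothesis $\mathrm{rann}(ax)=\mathcal{T}$ yields $\varphi_{ax}=\rho_{a\mathcal{R},\mathcal{T}}$.

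For (2) $\Rightarrow$ (3): assuming $\varphi_{ax}=\rho_{a\mathcal{R},\mathcal{T}}$, first observe that $\rho_{a\mathcal{R},\mathcal{T}}(1)=ax$ and, since $a\in a\mathcal{R}$, Lemma~\ref{L proy im ker}(3) gives $axa=\rho_{a\mathcal{R},\mathcal{T}}(a)=a$, so $x\in a\{1\}$. I would then just verify that the choice $a^{(1)}=y=x$ satisfies the formula in (3): the first term becomes $x(ax)=xax$ and, expanding $(1-xa)x(1-ax)=x-xax-xax+xaxax$, the cancellations (using $axa=a$ to collapse $xaxax$ to $xax$) give the sum $x$.

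For (3) $\Rightarrow$ (1): from the formula, $a(1-a^{(1)}a)=0$ kills the second summand, so $ax=aa^{(1)}\rho_{a\mathcal{R},\mathcal{T}}(1)$. Using the preliminary identity $aa^{(1)}\mathcal{R}=a\mathcal{R}$, the idempotent $aa^{(1)}$ fixes every element of $a\mathcal{R}$, whence $ax=\rho_{a\mathcal{R},\mathcal{T}}(1)$. Lemma~\ref{L S T right left ideals sii proy}(1) applied to the right ideals $a\mathcal{R}$ and $\mathcal{T}$ then gives $axa=\rho_{a\mathcal{R},\mathcal{T}}(1)\,a=\rho_{a\mathcal{R},\mathcal{T}}(a)=a$, so $x\in a\{1\}$. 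Finally, for any $r\in\mathcal{R}$, $axr=\rho_{a\mathcal{R},\mathcal{T}}(1)r=\rho_{a\mathcal{R},\mathcal{T}}(r)$ by the same lemma, so $axr=0$ iff $r\in\ker(\rho_{a\mathcal{R},\mathcal{T}})=\mathcal{T}$, giving $\mathrm{rann}(ax)=\mathcal{T}$.

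No serious obstacle is expected: the proof is a straightforward specialization of Theorem~\ref{T 1I xaR ax0 expression}'s argument, where the absence of the left factor $\rho_{\mathcal{S},\mathrm{rann}(a)}$ actually streamlines the computation. The only point requiring care is the verification that the idempotent $aa^{(1)}$ really acts as the identity on $\rho_{a\mathcal{R},\mathcal{T}}(1)$, which rests on the well-known identity $aa^{(1)}\mathcal{R}=a\mathcal{R}$ recorded in the preliminaries rather than on any direct property of the projector.
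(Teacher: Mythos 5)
Your proof is correct and takes essentially the same approach as the paper: the paper gives no separate proof of this theorem, stating only that it follows ``with similar proofs'' from Theorem~\ref{T 1I xaR ax0 expression}, and your cyclic argument (1) $\Rightarrow$ (2) $\Rightarrow$ (3) $\Rightarrow$ (1), with the choice $a^{(1)}=y=x$ in (2) $\Rightarrow$ (3), is exactly that specialization. The only cosmetic difference is that in (3) $\Rightarrow$ (1) you simplify $ax=aa^{(1)}\rho_{a\mathcal{R},\mathcal{T}}(1)$ to $\rho_{a\mathcal{R},\mathcal{T}}(1)$ before reading off ${\rm rann}(ax)=\mathcal{T}$, whereas the paper's template argues via a chain of equivalences; both rest on the idempotent $aa^{(1)}$ acting as the identity on $a\mathcal{R}$.
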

In the next two theorems, we consider the case of $\{1\}$-inverses
for which only a left principal or annihilator ideal is prefixed.
\begin{theorem}\label{T 1I Rax expression}
Let $a, x \in \mathcal{R}$ and $\mathcal{S}$ be a left ideal of
$\mathcal{R}$. Then the following assertions are equivalent:
\begin{enumerate}
  \item $x \in a\{1\}$ and $\mathcal{R}ax=\mathcal{S}$.
  \item $\,_{ax}\varphi=\rho_{\mathcal{S},{\rm lann}(a)}$.
  \item $a\{1\}\neq\emptyset$ and $x=a^{(1)}\rho_{\mathcal{S},{\rm lann}(a)}(1)+(1-a^{(1)}a)y(1-aa^{(1)})$ where $a^{(1)} \in a\{1\}$ and $y \in \mathcal{R}$.
\end{enumerate}
\end{theorem}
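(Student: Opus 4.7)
My plan is to establish the cyclic chain (1)~$\Rightarrow$~(2)~$\Rightarrow$~(3)~$\Rightarrow$~(1), mirroring the proof of Theorem~\ref{T 1I xaR ax0 expression} but systematically switching to the left-ideal versions of the supporting lemmas. The implication (1)~$\Rightarrow$~(2) is immediate from Theorem~\ref{T 1I projectors}(1)$\Leftrightarrow$(4): if $x\in a\{1\}$ then $\,_{ax}\varphi=\rho_{\mathcal{R}ax,\,{\rm lann}(a)}$, and the hypothesis $\mathcal{R}ax=\mathcal{S}$ finishes the job.

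For (2)~$\Rightarrow$~(3) I would first evaluate $\,_{ax}\varphi$ at $1$ to obtain $ax=\rho_{\mathcal{S},{\rm lann}(a)}(1)$, then invoke Lemma~\ref{L ro a = a and a ro = a}(2) with the trivial inclusion ${\rm lann}(a)\subseteq{\rm lann}(a)$ to conclude $\rho_{\mathcal{S},{\rm lann}(a)}(1)\,a=a$, i.e.\ $axa=a$. This yields $x\in a\{1\}$, in particular $a\{1\}\neq\emptyset$, and then I would exhibit $x$ in the required form by taking $a^{(1)}=y=x$: the term $(1-xa)x(1-ax)$ collapses after a short calculation that uses the idempotence of $ax$ (which follows from $axa=a$).

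The implication (3)~$\Rightarrow$~(1) is the main technical step. From the formula for $x$ I would use $a(1-a^{(1)}a)=0$ and $(1-aa^{(1)})a=0$ to reduce to $ax=aa^{(1)}\rho_{\mathcal{S},{\rm lann}(a)}(1)$ and $xa=a^{(1)}\rho_{\mathcal{S},{\rm lann}(a)}(1)\,a$; another application of Lemma~\ref{L ro a = a and a ro = a}(2) gives $xa=a^{(1)}a$ and hence $axa=a$, so $x\in a\{1\}$. To close the loop I must still show $\mathcal{R}ax=\mathcal{S}$, and this is the place where the left-ideal hypothesis really does work: since both $\mathcal{S}$ and ${\rm lann}(a)$ are left ideals, Lemma~\ref{L S T right left ideals sii proy}(2) lets me pull a scalar through $\rho_{\mathcal{S},{\rm lann}(a)}$, yielding for any $r\in\mathcal{R}$ the identity $\,_{ax}\varphi(r)=raa^{(1)}\rho_{\mathcal{S},{\rm lann}(a)}(1)=\rho_{\mathcal{S},{\rm lann}(a)}(raa^{(1)})$. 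Observing that $r(1-aa^{(1)})\in{\rm lann}(a)=\ker\rho_{\mathcal{S},{\rm lann}(a)}$, I can replace $raa^{(1)}$ by $r$ and conclude $\,_{ax}\varphi=\rho_{\mathcal{S},{\rm lann}(a)}$, hence $\mathcal{R}ax={\rm im}(\,_{ax}\varphi)=\mathcal{S}$.

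I expect the only subtle point to be this final normalization step, where one has to replace the right-module property of $\rho$ used in Theorem~\ref{T 1I xaR expression} by the corresponding left-module property from Lemma~\ref{L S T right left ideals sii proy}(2); everything else is a routine transcription of the earlier arguments.
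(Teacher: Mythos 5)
Your proof is correct and is essentially the paper's intended argument: the paper proves these companion theorems by transcribing the proof of Theorem~\ref{T 1I xaR ax0 expression} to the left-sided setting, exactly as you do — (1)$\Rightarrow$(2) via Theorem~\ref{T 1I projectors}, (2)$\Rightarrow$(3) with the choice $a^{(1)}=y=x$, and (3)$\Rightarrow$(1) using Lemma~\ref{L ro a = a and a ro = a}(2) together with the left-module property of $\rho_{\mathcal{S},{\rm lann}(a)}$ from Lemma~\ref{L S T right left ideals sii proy}(2). All steps check out, including the collapse of $(1-xa)x(1-ax)$ and the normalization $\rho_{\mathcal{S},{\rm lann}(a)}(r)=\rho_{\mathcal{S},{\rm lann}(a)}(raa^{(1)})$ via $r(1-aa^{(1)})\in{\rm lann}(a)$.
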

\begin{theorem}\label{T 1I 0xa expression}
Let $a, x \in \mathcal{R}$ and $\mathcal{T}$ be left a ideal of
$\mathcal{R}$. Then the following assertions are equivalent:
\begin{enumerate}
  \item $x \in a\{1\}$ and ${\rm lann}(xa)=\mathcal{T}$.
  \item $\,_{xa}\varphi=\rho_{\mathcal{R}a,\mathcal{T}}$.
  \item $x=\rho_{\mathcal{R}a,\mathcal{T}}(1)a^{(1)}+(1-a^{(1)}a)y(1-aa^{(1)})$ where $a^{(1)} \in a\{1\}$ and $y \in \mathcal{R}$.
\end{enumerate}
\end{theorem}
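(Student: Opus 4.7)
The plan is to follow the template of the proof of Theorem~\ref{T 1I xaR ax0 expression}, replacing its right-sided ingredients by their left-sided counterparts: Lemma~\ref{L S T right left ideals sii proy}(2) and Lemma~\ref{L ro a = a and a ro = a}(2) will play the roles previously taken by parts (1) of those lemmas, and the characterization (1)$\Leftrightarrow$(5) of Theorem~\ref{T 1I projectors} will replace (1)$\Leftrightarrow$(3). The equivalence (1)$\Leftrightarrow$(2) would then come immediately from Theorem~\ref{T 1I projectors}: it already identifies $x\in a\{1\}$ with $\,_{xa}\varphi = \rho_{\mathcal{R}a,\,{\rm lann}(xa)}$, and the extra equality $\mathcal{T}={\rm lann}(xa)$ is exactly what identifies this projector with $\rho_{\mathcal{R}a,\mathcal{T}}$.

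For (2)$\Rightarrow$(3), I would set $a^{(1)}=y=x$ in the candidate formula. Assumption (2) yields $xa=\rho_{\mathcal{R}a,\mathcal{T}}(1)$ and, via Lemma~\ref{L ro a = a and a ro = a}(2) applied to the trivial inclusion $\mathcal{R}a\subseteq\mathcal{R}a$, the equality $axa=a$. A direct expansion then collapses $\rho_{\mathcal{R}a,\mathcal{T}}(1)\,x+(1-xa)x(1-ax)$ to $xax + x - 2\,xax + xaxax$, and the identity $xaxax = x(axa)x = xax$ reduces this to $x$, giving the claimed representation.

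For (3)$\Rightarrow$(1), I would right-multiply the representation by $a$. The term $(1-aa^{(1)})a = a-aa^{(1)}a$ vanishes, leaving $xa = \rho_{\mathcal{R}a,\mathcal{T}}(1)\,a^{(1)}a$. Since $\mathcal{R}a$ is a left ideal, Lemma~\ref{L S T right left ideals sii proy}(2) gives $\rho_{\mathcal{R}a,\mathcal{T}}(r) = r\,\rho_{\mathcal{R}a,\mathcal{T}}(1)$ for every $r$; in particular $\rho_{\mathcal{R}a,\mathcal{T}}(1)\in\mathcal{R}a$, so writing it as $sa$ yields $\rho_{\mathcal{R}a,\mathcal{T}}(1)\,a^{(1)}a = saa^{(1)}a = sa = \rho_{\mathcal{R}a,\mathcal{T}}(1)$, hence $xa = \rho_{\mathcal{R}a,\mathcal{T}}(1)$. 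Therefore $\,_{xa}\varphi(r) = r\,xa = r\,\rho_{\mathcal{R}a,\mathcal{T}}(1) = \rho_{\mathcal{R}a,\mathcal{T}}(r)$ for all $r$, which is (2) and so implies (1) by the already-established equivalence; alternatively, Lemma~\ref{L ro a = a and a ro = a}(2) supplies $axa = a\,\rho_{\mathcal{R}a,\mathcal{T}}(1) = a$ directly and then $\mathcal{T} = {\rm ker}(\,_{xa}\varphi) = {\rm lann}(xa)$.

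No step looks technically hard, since each echoes a right-sided move already rehearsed in the proof of Theorem~\ref{T 1I xaR ax0 expression}. The one point worth flagging is the identity $\rho_{\mathcal{R}a,\mathcal{T}}(1)\,a^{(1)}a = \rho_{\mathcal{R}a,\mathcal{T}}(1)$: it is what makes the parameter $y$ in (3) genuinely free, its contribution to $xa$ disappearing so that the second summand enumerates exactly the ambiguity among inner inverses sharing the prescribed left annihilator.
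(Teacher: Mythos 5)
Your proof is correct and is exactly the argument the paper intends: it declares this theorem to follow "with similar proofs" by transposing the proof of Theorem~\ref{T 1I xaR ax0 expression} to the left-sided setting via Theorem~\ref{T 1I projectors}(1)(5) and Lemmas~\ref{L S T right left ideals sii proy}(2) and \ref{L ro a = a and a ro = a}(2), which is precisely what you do. The computations (taking $a^{(1)}=y=x$ for (2)$\Rightarrow$(3), and the collapse $\rho_{\mathcal{R}a,\mathcal{T}}(1)a^{(1)}a=\rho_{\mathcal{R}a,\mathcal{T}}(1)$ for (3)$\Rightarrow$(1)) all check out.
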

In the previous results, we can replace the hypothesis
$a\{1\}\neq\emptyset$ by the condition $\rho_{\mathcal{S},{\rm
rann}(a)}(1) \in a\mathcal{R}$ (resp. $\rho_{\mathcal{S},{\rm
lann}(a)}(1) \in \mathcal{R}a$).

Part (1) of Proposition~\ref{P 1I sets} generalizes \cite[Corollary
2.9]{Ben-Israel-Greville (2003)}.
\begin{proposition}\label{P 1I sets}
Let $a \in \mathcal{R}$ and $a^{(1)} \in a\{1\}$. Let
$\mathcal{A}=\{a^{(1)}+(1-a^{(1)}a)y(1-aa^{(1)}): y \in
\mathcal{R}\}$. The following assertions hold:
\begin{enumerate}
  \item Let $\mathcal{S}$, $\mathcal{T}$ be right ideals of $\mathcal{R}$ such
that $\mathcal{R}=a\mathcal{R}\oplus\mathcal{T}$ and
$\mathcal{R}=\mathcal{S}\oplus {\rm rann}(a)$. If
$a^{(1)}a\mathcal{R}=\mathcal{S}$ and ${\rm
rann}(aa^{(1)})=\mathcal{T}$, then $\mathcal{A}=\{x \in a\{1\} :
xa\mathcal{R}=\mathcal{S} \text{ and } {\rm
rann}(ax)=\mathcal{T}\}$.
  \item Let $\mathcal{S}$, $\mathcal{T}$ be left ideals of $\mathcal{R}$ such
that $\mathcal{R}=\mathcal{R}a\oplus\mathcal{T}$ and
$\mathcal{R}=\mathcal{S}\oplus {\rm lann}(a)$. If
$\mathcal{R}aa^{(1)}=\mathcal{S}$ and ${\rm
lann}(a^{(1)}a)=\mathcal{T}$, then $\mathcal{A}=\{x \in a\{1\}:
\mathcal{R}ax=\mathcal{S} \text{ and } {\rm
lann}(xa)=\mathcal{T}\}$.
  \item Let $\mathcal{S}$ be a right ideal of $\mathcal{R}$ and $\mathcal{S}'$ be a left ideal of $\mathcal{R}$ such
that $\mathcal{R}=\mathcal{S}\oplus {\rm rann}(a)$ and
$\mathcal{R}=\mathcal{S}'\oplus {\rm lann}(a)$. If
$a^{(1)}a\mathcal{R}=\mathcal{S}$ and
$\mathcal{R}aa^{(1)}=\mathcal{S}'$, then $\mathcal{A}=\{x \in
a\{1\}: xa\mathcal{R}=\mathcal{S} \text{ and }
\mathcal{R}ax=\mathcal{S}'\}$.
  \item Let $\mathcal{T}$ be a right ideal of
$\mathcal{R}$ and $\mathcal{T}'$ be a left ideal of $\mathcal{R}$
such that $\mathcal{R}=a\mathcal{R}\oplus\mathcal{T}$ and
$\mathcal{R}=\mathcal{R}a\oplus\mathcal{T}'$. If ${\rm
rann}(aa^{(1)})=\mathcal{T}$ and ${\rm
lann}(a^{(1)}a)=\mathcal{T}'$, then $\mathcal{A}=\{x \in a\{1\} :
{\rm rann}(ax)=\mathcal{T} \text{ and } {\rm
lann}(xa)=\mathcal{T}'\}$.
  \item Let $\mathcal{S}$ be a right ideal of $\mathcal{R}$ such
that $\mathcal{R}=\mathcal{S}\oplus {\rm rann}(a)$. If
$a^{(1)}a\mathcal{R}=\mathcal{S}$, then $\mathcal{A}=\{x \in a\{1\}
: xa\mathcal{R}=\mathcal{S}\}$.
  \item Let $\mathcal{S}$ be a left ideal of $\mathcal{R}$ such
that $\mathcal{R}=\mathcal{S}\oplus {\rm lann}(a)$. If
$\mathcal{R}aa^{(1)}=\mathcal{S}$, then $\mathcal{A}=\{x \in a\{1\}:
\mathcal{R}ax=\mathcal{S}\}$.
  \item Let $\mathcal{T}$ be a right ideal of
$\mathcal{R}$ such that $\mathcal{R}=a\mathcal{R}\oplus\mathcal{T}$.
If ${\rm rann}(aa^{(1)})=\mathcal{T}$, then $\mathcal{A}=\{x \in
a\{1\} : {\rm rann}(ax)=\mathcal{T}\}$.
  \item Let $\mathcal{T}'$ be a left ideal of
$\mathcal{R}$ such that
$\mathcal{R}=\mathcal{R}a\oplus\mathcal{T}'$. If ${\rm
lann}(a^{(1)}a)=\mathcal{T}'$, then $\mathcal{A}=\{x \in a\{1\} :
{\rm lann}(xa)=\mathcal{T}'\}$.
\end{enumerate}
\end{proposition}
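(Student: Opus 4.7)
The plan is to deduce each of the eight assertions from the corresponding theorem of Section~\ref{S 1I with prescribed principal and annihilator ideals}, showing that under the hypotheses the parameterized family produced there coincides with $\mathcal{A}$.

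I would begin with the containment $\mathcal{A} \subseteq \{x \in a\{1\} : \text{stated conditions}\}$, which is uniform across all eight parts. For $x = a^{(1)} + (1-a^{(1)}a)y(1-aa^{(1)}) \in \mathcal{A}$, the relation $aa^{(1)}a = a$ yields $a(1-a^{(1)}a) = 0 = (1-aa^{(1)})a$, whence
\[
ax = aa^{(1)}, \qquad xa = a^{(1)}a, \qquad axa = a.
\]
So $x \in a\{1\}$, and the required ideal identities $xa\mathcal{R} = a^{(1)}a\mathcal{R} = \mathcal{S}$, ${\rm rann}(ax) = {\rm rann}(aa^{(1)}) = \mathcal{T}$, $\mathcal{R}ax = \mathcal{R}aa^{(1)} = \mathcal{S}'$, and ${\rm lann}(xa) = {\rm lann}(a^{(1)}a) = \mathcal{T}'$ follow directly from the respective hypothesis.

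For the reverse inclusion in parts (1)--(4), the two ideal hypotheses, combined with Theorem~\ref{T 1I projectors}, force $\varphi_{xa} = \varphi_{a^{(1)}a}$ and $\varphi_{ax} = \varphi_{aa^{(1)}}$ (or their left-sided analogues $\,_{xa}\varphi$, $\,_{a^{(1)}a}\varphi$, etc.), since both sides are identified with the same projector $\rho_{\mathcal{S}, {\rm rann}(a)}$ or $\rho_{a\mathcal{R}, \mathcal{T}}$. Evaluating at $1$ gives $xa = a^{(1)}a$ and $ax = aa^{(1)}$. With both equalities in hand, a direct expansion (using $aa^{(1)}a = a$ to collapse each of $xaa^{(1)}$, $a^{(1)}ax$, and $a^{(1)}aa^{(1)}aa^{(1)}$ to $a^{(1)}aa^{(1)}$) proves the central identity
\[
(1-a^{(1)}a)(x - a^{(1)})(1-aa^{(1)}) = x - a^{(1)}.
\]
Choosing $y = x - a^{(1)}$ then exhibits $x$ as an element of $\mathcal{A}$.

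Parts (5)--(8) follow the same template, with only one of the two ideal conditions imposed. I would invoke Theorem~\ref{T 1I xaR expression}, \ref{T 1I Rax expression}, \ref{T 1I ax0 expression}, or \ref{T 1I 0xa expression} (as appropriate) to parameterize the target set; the single hypothesis simplifies the projector factor in that theorem (for instance $\rho_{\mathcal{S}, {\rm rann}(a)}(1) = a^{(1)}a$), yielding a family of the shape $\{a^{(1)}aa^{(1)} + (1-a^{(1)}a)y(1-aa^{(1)}) : y \in \mathcal{R}\}$. The elementary identity
\[
(1-a^{(1)}a)\,a^{(1)}\,(1-aa^{(1)}) = a^{(1)} - a^{(1)}aa^{(1)},
\]
another immediate consequence of $aa^{(1)}a = a$, then identifies this family with $\mathcal{A}$ via a relabelling of $y$. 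The main technical point throughout is the central identity in parts (1)--(4); everything else is routine bookkeeping driven by Theorem~\ref{T 1I projectors}.
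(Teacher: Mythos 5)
Your treatment of parts (1)--(4) is correct and is essentially the paper's own argument for part (1): the two prescribed ideals, read through Theorem~\ref{T 1I projectors} (the paper routes this through Theorem~\ref{T 1I xaR ax0 expression}), force $ax=aa^{(1)}$ and $xa=a^{(1)}a$, and then $y=x-a^{(1)}$ exhibits $x\in\mathcal{A}$, exactly as you compute; the forward inclusion is the same routine verification in both texts.

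The template you propose for parts (5)--(8), however, has a genuine gap. Assertion (3) of Theorem~\ref{T 1I xaR expression} (and of Theorems~\ref{T 1I ax0 expression}, \ref{T 1I Rax expression}, \ref{T 1I 0xa expression}) is existential in $a^{(1)}$: its proof, modelled on that of Theorem~\ref{T 1I xaR ax0 expression}, represents a given $x$ by taking $a^{(1)}=y=x$, so you are not entitled to parameterize $\{x\in a\{1\}: xa\mathcal{R}=\mathcal{S}\}$ using the one fixed $a^{(1)}$ of the proposition. Fixing $a^{(1)}$ shrinks the family: a single hypothesis such as $xa\mathcal{R}=\mathcal{S}=a^{(1)}a\mathcal{R}$ pins down only $xa$ (indeed $xa=a^{(1)}a$ by Lemma~\ref{L igualdad idempotentes}(3), since both idempotents have the same principal right ideal and right annihilator ${\rm rann}(a)$) and leaves $ax$ completely unconstrained, so that set is $\{x: xa=a^{(1)}a\}=a^{(1)}+{\rm lann}(a)=\{a^{(1)}+y(1-aa^{(1)}): y\in\mathcal{R}\}$, which in general strictly contains $\mathcal{A}$. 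Only the inclusion $\mathcal{A}\subseteq\{x\in a\{1\}: xa\mathcal{R}=\mathcal{S}\}$ that you establish first is valid. The example of Section~\ref{S An example} makes the failure concrete: for $A=E_{1,2}$ and $A^{(1)}=E_{2,1}$ one has $A^{(1)}A\mathbb{F}^{2\times2}=\mathcal{S}$ and $\mathcal{A}=\{E_{2,1}+bE_{1,2}: b\in\mathbb{F}\}$, while the paper itself computes $\{X\in A\{1\}: XA\mathbb{F}^{2\times2}=\mathcal{S}\}=\{aE_{2,2}+E_{2,1}+bE_{1,2}: a,b\in\mathbb{F}\}$, so for instance $E_{2,1}+E_{2,2}$ lies in the target set but not in $\mathcal{A}$. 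Thus the reverse inclusion in parts (5)--(8) cannot be obtained by your relabelling step (nor by the paper's one-line ``can be similarly proved''); as printed, those parts conflict with the paper's own example, and a correct formulation needs either both ideal conditions, as in (1)--(4), or a larger set on the left-hand side, e.g.\ $\{a^{(1)}+y(1-aa^{(1)}): y\in\mathcal{R}\}$ in part (5).
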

\begin{proof}
(1): By Theorem~\ref{T 1I xaR ax0 expression}, $x \in a\{1\}$,
$xa\mathcal{R}=\mathcal{S}$, and ${\rm rann}(ax)=\mathcal{T}$ if and
only if $\varphi_{ax}=\rho_{a\mathcal{R},\mathcal{T}}$ and
$\varphi_{xa}=\rho_{\mathcal{S},{\rm rann}(a)}$. Then $ax=aa^{(1)}$
and $xa=a^{(1)}a$. Consequently,
$x=a^{(1)}+(1-a^{(1)}a)y(1-aa^{(1)})$ with $y=x-a^{(1)}$. Conversely
if there exists $y \in \mathcal{R}$ such that
$x=a^{(1)}+(1-a^{(1)}a)y(1-aa^{(1)})$, then $ax=aa^{(1)}$ and
$xa=a^{(1)}a$. This implies that $x \in a\{1\}$,
$xa\mathcal{R}=\mathcal{S}$, and ${\rm rann}(ax)=\mathcal{T}$. This
shows that (1) holds.

Using Theorems~\ref{T 1I Rax 0xa expression}-\ref{T 1I 0xa
expression}, parts (2)-(8) can be similarly proved.
\end{proof}

\section{$\{2\}$-inverses with prescribed principal and annihilator ideals}\label{S 2I with prescribed principal and annihilator ideals}

Taking into account that $x\in a\{2\}$ if and only if $a\in x\{1\}$,
applying the results of Section~\ref{S 1I with prescribed principal
and annihilator ideals} we obtain corresponding results for
$\{2\}$-inverses $x$ of $a$ such that $ax\mathcal{R}$, ${\rm
rann}(xa)$, $\mathcal{R}xa$, and/or ${\rm lann}(ax)$ are given. In
this section, we further study $\{2\}$-inverses with prescribed
principal and/or annihilator ideals. We begin with the following two
results about uniqueness and relations to projectors.
\begin{theorem}\label{T 2I xR x0 unique}
Let $a \in \mathcal{R}$, let $\mathcal{S}$, $\mathcal{T}$ be right
ideals of $\mathcal{R}$. If $a$ has a $\{2\}$-inverse $x$ such that
$x\mathcal{R}=\mathcal{S}$ and ${\rm rann}(x)=\mathcal{T}$, then
$\varphi_{ax}=\rho_{\varphi_{a}(\mathcal{S}),\mathcal{T}}$ and
$\varphi_{xa}=\rho_{\mathcal{S},\varphi_{a}^{-1}(\mathcal{T})}$. If
there exists, this $\{2\}$-inverse $x$ is unique and will be denoted
by $a^{(2)}_{{\rm rprin}=\mathcal{S},{\rm rann}=\mathcal{T}}$.
\end{theorem}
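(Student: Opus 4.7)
The plan is to derive the projector identities directly from Theorem~\ref{T 2I projectors} and then to obtain uniqueness from them by a short algebraic manipulation.

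First, I would invoke Theorem~\ref{T 2I projectors}(1)$\Leftrightarrow$(2),(3): since $x\in a\{2\}$, we automatically have $\varphi_{ax}=\rho_{ax\mathcal{R},{\rm rann}(x)}$ and $\varphi_{xa}=\rho_{x\mathcal{R},{\rm rann}(xa)}$. Substituting the hypotheses $x\mathcal{R}=\mathcal{S}$ and ${\rm rann}(x)=\mathcal{T}$ into these identities immediately replaces two of the four ideal slots: $\varphi_{ax}=\rho_{ax\mathcal{R},\mathcal{T}}$ and $\varphi_{xa}=\rho_{\mathcal{S},{\rm rann}(xa)}$.

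Next, I would compute the remaining two ideals in terms of $\mathcal{S}$, $\mathcal{T}$, and $\varphi_a$. For the image, $ax\mathcal{R}=\{a(xr):r\in\mathcal{R}\}=a(x\mathcal{R})=a\mathcal{S}=\varphi_a(\mathcal{S})$. For the kernel, $r\in{\rm rann}(xa)\Leftrightarrow xar=0\Leftrightarrow ar\in{\rm rann}(x)=\mathcal{T}\Leftrightarrow r\in\varphi_a^{-1}(\mathcal{T})$. These two observations give exactly $\varphi_{ax}=\rho_{\varphi_a(\mathcal{S}),\mathcal{T}}$ and $\varphi_{xa}=\rho_{\mathcal{S},\varphi_a^{-1}(\mathcal{T})}$, as required.

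For uniqueness, suppose $x_{1},x_{2}\in a\{2\}$ both satisfy $x_{i}\mathcal{R}=\mathcal{S}$ and ${\rm rann}(x_{i})=\mathcal{T}$. The projector identities just proved depend only on $a$, $\mathcal{S}$, $\mathcal{T}$, so $\varphi_{ax_{1}}=\varphi_{ax_{2}}$ and $\varphi_{x_{1}a}=\varphi_{x_{2}a}$. Evaluating each endomorphism at $1\in\mathcal{R}$ yields $ax_{1}=ax_{2}$ and $x_{1}a=x_{2}a$. Using $x_{i}=x_{i}ax_{i}$ (from $x_{i}\in a\{2\}$), I can then chain $x_{1}=x_{1}ax_{1}=(x_{2}a)x_{1}=x_{2}(ax_{1})=x_{2}(ax_{2})=x_{2}ax_{2}=x_{2}$, which concludes the proof.

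The argument is essentially a bookkeeping exercise once Theorem~\ref{T 2I projectors} is in hand; no step looks like a serious obstacle. The only mildly subtle point is verifying that the description of ${\rm rann}(xa)$ as $\varphi_a^{-1}(\mathcal{T})$ makes sense as a right ideal (which it is, since $\varphi_a$ is a group homomorphism that respects right multiplication by elements of $\mathcal{R}$), but this is routine and needs no further comment.
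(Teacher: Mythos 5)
Your proposal is correct and follows essentially the same route as the paper: both derive the projector identities from Theorem~\ref{T 2I projectors} together with the observations $ax\mathcal{R}=\varphi_{a}(\mathcal{S})$ and ${\rm rann}(xa)=\varphi_{a}^{-1}(\mathcal{T})$, and both obtain uniqueness from the fact that these projectors depend only on $a$, $\mathcal{S}$, $\mathcal{T}$. Your uniqueness chain (evaluating the equal endomorphisms at $1$ to get $ax_{1}=ax_{2}$, $x_{1}a=x_{2}a$, then using $x_{i}=x_{i}ax_{i}$) is only a cosmetic variant of the paper's computation $x_{1}=\rho_{\mathcal{S},\varphi_{a}^{-1}(\mathcal{T})}(x_{1})=x_{2}ax_{1}=x_{2}\rho_{\varphi_{a}(\mathcal{S}),\mathcal{T}}(1)=x_{2}$.
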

\begin{proof}
Assume that $x \in a\{2\}$, $x\mathcal{R}=\mathcal{S}$ and ${\rm
rann}(x)=\mathcal{T}$. We have
$ax\mathcal{R}=\varphi_{a}(\mathcal{S})$ and ${\rm
rann}(xa)=\varphi_{a}^{-1}(\mathcal{T})$. Thus, by Theorem~\ref{T 2I
projectors},
$\varphi_{ax}=\rho_{\varphi_{a}(\mathcal{S}),\mathcal{T}}$ and
$\varphi_{xa}=\rho_{\mathcal{S},\varphi_{a}^{-1}(\mathcal{T})}$.
Assume that $x_{1}, x_{2} \in a\{2\}$,
$x_{1}\mathcal{R}=x_{2}\mathcal{R}=\mathcal{S}$ and ${\rm
rann}(x_{1})={\rm rann}(x_{2})=\mathcal{T}$. Using Lemma~\ref{L ro a
= a and a ro = a}(1) we get,
$x_{1}=\rho_{\mathcal{S},\varphi_{a}^{-1}(\mathcal{T})}(x_{1})=x_{2}ax_{1}=x_{2}\rho_{\varphi_{a}(\mathcal{S}),\mathcal{T}}(1)=x_{2}$.
This shows that $x$ is unique.
\end{proof}
Using Theorem~\ref{T 2I projectors} and Lemma~\ref{L ro a = a and a
ro = a}(2), we analogously obtain:
\begin{theorem}\label{T 2I Rx 0x unique}
Let $a \in \mathcal{R}$ and let $\mathcal{S}$, $\mathcal{T}$ be left
ideals of $\mathcal{R}$. If $a$ has a $\{2\}$-inverse $x$ such that
$\mathcal{R}x=\mathcal{S}$, and ${\rm lann}(x)=\mathcal{T}$, then
$\,_{xa}\varphi=\rho_{\,_{a}\varphi(\mathcal{S}),\mathcal{T}}$ and
$\,_{ax}\varphi=\rho_{\mathcal{S},\,_{a}\varphi^{-1}(\mathcal{T})}$.
If there exists, this $\{2\}$-inverse $x$ is unique and will be
denoted by $a^{(2)}_{{\rm lprin}=\mathcal{S},{\rm
lann}=\mathcal{T}}$.
\end{theorem}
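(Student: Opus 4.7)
The plan is to mirror the proof of Theorem~\ref{T 2I xR x0 unique}, trading right-module operations for left-module ones and using parts (4) and (5) of Theorem~\ref{T 2I projectors} together with part (2) of Lemma~\ref{L ro a = a and a ro = a}. First I translate the hypotheses $\mathcal{R}x=\mathcal{S}$ and ${\rm lann}(x)=\mathcal{T}$ into information about $xa$ and $ax$: directly, $\mathcal{R}xa=\,_{a}\varphi(\mathcal{R}x)=\,_{a}\varphi(\mathcal{S})$, and $r\in{\rm lann}(ax)\Leftrightarrow ra\in{\rm lann}(x)=\mathcal{T}\Leftrightarrow r\in\,_{a}\varphi^{-1}(\mathcal{T})$. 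Substituting into Theorem~\ref{T 2I projectors}(5) and~(4) yields the two projector identities $\,_{xa}\varphi=\rho_{\,_{a}\varphi(\mathcal{S}),\mathcal{T}}$ and $\,_{ax}\varphi=\rho_{\mathcal{S},\,_{a}\varphi^{-1}(\mathcal{T})}$; in particular, each of these formulas automatically certifies the required direct sum decompositions of $\mathcal{R}$, so the displayed projector expressions are well-defined.

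For uniqueness, I would suppose that $x_1,x_2\in a\{2\}$ both satisfy $\mathcal{R}x_i=\mathcal{S}$ and ${\rm lann}(x_i)=\mathcal{T}$, and construct the left-sided sandwich $x_1=x_1 a x_2=x_2$, in analogy with the chain $x_1=x_2 a x_1=x_2$ from the right-sided proof. Since $x_1=1\cdot x_1\in\mathcal{R}x_1=\mathcal{S}$, Lemma~\ref{L proy im ker}(3) gives $\rho_{\mathcal{S},\,_{a}\varphi^{-1}(\mathcal{T})}(x_1)=x_1$; invoking the first paragraph for $x_2$, the left-hand side equals $\,_{ax_2}\varphi(x_1)=x_1 a x_2$, so $x_1=x_1 a x_2$. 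Next, evaluating the formula for $\,_{x_1 a}\varphi$ at $1$ gives $x_1 a=\rho_{\,_{a}\varphi(\mathcal{S}),\mathcal{T}}(1)$, hence $x_1 a x_2=\rho_{\,_{a}\varphi(\mathcal{S}),\mathcal{T}}(1)\,x_2$, and Lemma~\ref{L ro a = a and a ro = a}(2), applied with $\mathcal{T}\subseteq{\rm lann}(x_2)$, reduces this to $x_2$. Chaining the equalities yields $x_1=x_2$.

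The argument is a faithful left-right reflection of the right-sided case, so no new ideas are required. The main obstacle is purely bookkeeping: one has to be careful to invoke the second (left-ideal) part of Lemma~\ref{L ro a = a and a ro = a}, and to multiply $\rho_{\,_{a}\varphi(\mathcal{S}),\mathcal{T}}(1)$ on the left of $x_2$ rather than on the right, so that the images and kernels of the left multiplication endomorphisms $\,_{ax}\varphi$ and $\,_{xa}\varphi$ align correctly with the prescribed ideals $\mathcal{S}$ and $\mathcal{T}$.
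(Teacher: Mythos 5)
Your proof is correct and follows exactly the route the paper intends: the paper proves the right-sided Theorem~\ref{T 2I xR x0 unique} in detail and obtains this statement "analogously" via Theorem~\ref{T 2I projectors}(4)(5) and Lemma~\ref{L ro a = a and a ro = a}(2), which is precisely the left-right reflection you carried out, including the correctly reversed sandwich $x_1=x_1ax_2=x_2$.
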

The next theorem gives necessary and sufficient conditions for the
existence of $a^{(2)}_{{\rm rprin}=\mathcal{S},{\rm
rann}=\mathcal{T}}$. The equivalence (1) $\Leftrightarrow$ (3) is a
generalization of \cite[Theorem 2.8]{Cao-Xue (2013)} in complex
Banach algebras and of \cite[Theorem 2.4]{Zhu (2018)} in rings. In
\cite[Theorem 2.8]{Cao-Xue (2013)}, $\mathcal{S}=p\mathcal{R}$ and
$\mathcal{T}=q\mathcal{R}$ with $p, q \in \mathcal{R}^{\bullet}$. In
\cite[Theorem 2.4]{Zhu (2018)}, $\mathcal{S}=b\mathcal{R}$ and
$\mathcal{T}={\rm rann}(c)$ with $b, c \in \mathcal{R}$.
\begin{theorem}\label{T 2I xR x0 existencia}
Let $a \in \mathcal{R}$ and let $\mathcal{S}$, $\mathcal{T}$ be
right ideals of $\mathcal{R}$. Then the following statements are
equivalent:
\begin{enumerate}
  \item $a^{(2)}_{{\rm rprin}=\mathcal{S},{\rm rann}=\mathcal{T}}$ exists.
  \item There exists $x \in \mathcal{S}$ such that
  $\varphi_{ax}=\rho_{a\mathcal{S},\mathcal{T}}$ and ${\rm rann}(a) \cap
  \mathcal{S}=\{0\}$.
  \item $\mathcal{R}=a\mathcal{S} \oplus
\mathcal{T}$ and ${\rm rann}(a) \cap \mathcal{S}=\{0\}$.
  \item There exists $x \in \mathcal{S}$ such that $xas=s$ for all $s \in \mathcal{S}$, $1-ax \in
  \mathcal{T}$, and $xt=0$ for all $t \in \mathcal{T}$.
\end{enumerate}
\end{theorem}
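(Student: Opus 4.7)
The plan is to establish the cycle $(1) \Rightarrow (2) \Rightarrow (3) \Rightarrow (4) \Rightarrow (1)$. The main tools will be Theorem~\ref{T 2I projectors}, which translates between $\{2\}$-inverses and projector identities, Lemma~\ref{L S T right left ideals sii proy}(1), which provides the right-multiplicativity of a projector onto a right ideal along a right ideal, and Lemma~\ref{L proy im ker}. I expect the bulk of the work to be in $(3) \Rightarrow (4)$, where one must construct a concrete $x \in \mathcal{S}$ out of the abstract direct-sum data and then verify three separate algebraic identities.

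For $(1) \Rightarrow (2)$, I set $x = a^{(2)}_{{\rm rprin}=\mathcal{S},{\rm rann}=\mathcal{T}}$. Then $x = x \cdot 1 \in x\mathcal{R} = \mathcal{S}$, and Theorem~\ref{T 2I projectors} gives $\varphi_{ax} = \rho_{ax\mathcal{R}, {\rm rann}(x)} = \rho_{a\mathcal{S}, \mathcal{T}}$. For the intersection condition, any $s \in \mathcal{S} \cap {\rm rann}(a)$ may be written $s = xr$, and then $xax = x$ yields $s = xr = (xax)r = xa(xr) = xas = 0$. Implication $(2) \Rightarrow (3)$ is immediate from the convention, recalled just before Corollary~\ref{C a idempotent varphia proy}, that writing $\rho_{a\mathcal{S}, \mathcal{T}}$ already encodes $\mathcal{R} = a\mathcal{S} \oplus \mathcal{T}$.

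For $(3) \Rightarrow (4)$, I first observe that $a\mathcal{S}$ is a right ideal (as $\mathcal{S}$ is), so by Lemma~\ref{L S T right left ideals sii proy}(1) the projector $\pi := \rho_{a\mathcal{S}, \mathcal{T}}$ satisfies $\pi(r_1 r_2) = \pi(r_1) r_2$. Put $y = \pi(1) \in a\mathcal{S}$ and choose $x \in \mathcal{S}$ with $ax = y$; existence is all that is needed, though ${\rm rann}(a) \cap \mathcal{S} = \{0\}$ makes $x$ unique. For $s \in \mathcal{S}$, we have $as \in a\mathcal{S} = {\rm im}(\pi)$, so $as = \pi(as) = \pi(1)\cdot as = a(xas)$; hence $xas - s \in \mathcal{S} \cap {\rm rann}(a) = \{0\}$, yielding $xas = s$. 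Next, $1 - ax = 1 - \pi(1) = \rho_{\mathcal{T}, a\mathcal{S}}(1) \in \mathcal{T}$ by Lemma~\ref{L proy im ker}(1). Finally, for $t \in \mathcal{T} = {\rm ker}(\pi)$, $a(xt) = \pi(1) \cdot t = \pi(t) = 0$, so $xt \in \mathcal{S} \cap {\rm rann}(a) = \{0\}$.

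For $(4) \Rightarrow (1)$, taking $s = x$ in the first condition gives $xax = x$, so $x \in a\{2\}$. The inclusion $x\mathcal{R} \subseteq \mathcal{S}$ is immediate from $x \in \mathcal{S}$, while for $s \in \mathcal{S}$ we have $s = xas \in x\mathcal{R}$, giving equality. The inclusion $\mathcal{T} \subseteq {\rm rann}(x)$ is hypothesized; conversely, if $xr = 0$, then $r = (1-ax)r + axr$ lies in $\mathcal{T}$ because $(1-ax)r \in \mathcal{T}$ (as $\mathcal{T}$ is a right ideal containing $1-ax$) and $axr = a(xr) = 0$. Thus ${\rm rann}(x) = \mathcal{T}$, and by Theorem~\ref{T 2I xR x0 unique} this forces $x = a^{(2)}_{{\rm rprin}=\mathcal{S},{\rm rann}=\mathcal{T}}$.
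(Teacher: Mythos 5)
Your proof is correct; all four implications check out, including the delicate points (that $xas-s$ and $xt$ land in $\mathcal{S}\cap{\rm rann}(a)$ because $\mathcal{S}$ is a right ideal, and the right-multiplicativity $\pi(r_1r_2)=\pi(r_1)r_2$ of $\rho_{a\mathcal{S},\mathcal{T}}$ from Lemma~\ref{L S T right left ideals sii proy}(1)). Your route is organized differently from the paper's. The paper does not run a single cycle: it proves (1)$\Leftrightarrow$(2) both ways, (2)$\Leftrightarrow$(3), and (1)$\Leftrightarrow$(4), and its hardest step is (2)$\Rightarrow$(1), where $xax=x$ is obtained exactly as you obtain it, but the inclusion $\mathcal{S}\subseteq x\mathcal{R}$ is proved by decomposing an arbitrary $s\in\mathcal{S}$ along $\mathcal{R}=x\mathcal{R}\oplus\varphi_a^{-1}(\mathcal{T})$ (via $\varphi_{xa}=\rho_{x\mathcal{R},\varphi_a^{-1}(\mathcal{T})}$) and cancelling $a$ on $\mathcal{S}$. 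You instead concentrate the work in (3)$\Rightarrow$(4): you construct $x\in\mathcal{S}$ with $ax=\rho_{a\mathcal{S},\mathcal{T}}(1)$ (the same element the paper produces in its (3)$\Rightarrow$(2) step) and verify the three element-wise conditions of (4) directly, after which (4)$\Rightarrow$(1) is a short computation; your verification of ${\rm rann}(x)=\mathcal{T}$ there is also slightly more hands-on than the paper's, which invokes ${\rm rann}(x)={\rm rann}(ax)=(1-ax)\mathcal{R}$. The trade-off: your argument is a clean cycle that never needs the projector $\varphi_{xa}$ or the auxiliary summand $\varphi_a^{-1}(\mathcal{T})$, relying only on right-multiplicativity of the projector and the cancellation ${\rm rann}(a)\cap\mathcal{S}=\{0\}$; the paper's arrangement yields the standalone implication (2)$\Rightarrow$(1), which it reuses elsewhere (e.g., in the proof of Theorem~\ref{T 12I xR x0 existencia}), whereas in your scheme (2) is only exploited through (3).
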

\begin{proof}
(2) $\Rightarrow$ (3) and (1) $\Rightarrow$ (4) are immediate.

(1) $\Rightarrow$ (2): Suppose that $a^{(2)}_{{\rm
rprin}=\mathcal{S},{\rm rann}=\mathcal{T}}$ exists and
$x=a^{(2)}_{{\rm rprin}=\mathcal{S},{\rm rann}=\mathcal{T}}$. Then
$x \in \mathcal{S}$ and, by Theorem~\ref{T 2I xR x0 unique},
$\varphi_{ax}=\rho_{a\mathcal{S},\mathcal{T}}$ and
$\varphi_{xa}=\rho_{\mathcal{S},\varphi_{a}^{-1}(\mathcal{T})}$. In
particular, $xas=s$ for all $s \in \mathcal{S}$. From this last
property, ${\rm rann}(a) \cap \mathcal{S}=\{0\}$.

(2) $\Rightarrow$ (1): Assume that there exists $x \in \mathcal{S}$
such that $\varphi_{ax}=\rho_{a\mathcal{S},\mathcal{T}}$ and ${\rm
rann}(a) \cap \mathcal{S}=\{0\}$. Then, $x\mathcal{R} \subseteq
\mathcal{S}$, ${\rm rann}(ax)=\mathcal{T}$ and $axax=ax$. Since $x
\in \mathcal{S}$, from the last equality we obtain, $xax-x \in {\rm
rann}(a) \cap \mathcal{S}$. Thus, $xax=x$.

By Theorem~\ref{T 2I xR x0 unique},
$\varphi_{xa}=\rho_{x\mathcal{R},\varphi_{a}^{-1}(\mathcal{T})}$.
Let $s \in \mathcal{S}$ and $r, y \in \mathcal{R}$ be such that
$s=xr+y$ is the unique decomposition of $s$ as a sum of an element
in $x\mathcal{R}$ and an element in $\varphi_{a}^{-1}(\mathcal{T})$.
Then $0=a(xr-s)+ay$ where $a(xr-s) \in a\mathcal{S}$ and $ay \in
\mathcal{T}$. Since $a\mathcal{S} \cap \mathcal{T}=\{0\}$, it
follows that $a(xr-s)=0$. From this last equality, using that $xr-s
\in \mathcal{S}$ and ${\rm rann}(a) \cap \mathcal{S}=\{0\}$, we
obtain $s=xr$. Thus, $\mathcal{S}\subseteq x\mathcal{R}$.

We have proved that $x=a^{(2)}_{{\rm rprin}=\mathcal{S},{\rm
rann}=\mathcal{T}}$.

(3) $\Rightarrow$ (2): Assume that $\mathcal{R}=a\mathcal{S} \oplus
\mathcal{T}$ and ${\rm rann}(a) \cap \mathcal{S}=\{0\}$. Then, there
exists a unique $x \in \mathcal{S}$ such that
$ax=\rho_{a\mathcal{S},\mathcal{T}}(1)$. Hence,
$\varphi_{ax}=\rho_{a\mathcal{S},\mathcal{T}}$.

(4) $\Rightarrow$ (1): Suppose that there exists $x \in \mathcal{S}$
such that $xas=s$ for all $s \in \mathcal{S}$, $1-ax \in
  \mathcal{T}$, and $xt=0$ for all $t \in \mathcal{T}$. Clearly, $x \in
  a\{2\}$. We also have, $x \in \mathcal{S} \Rightarrow x\mathcal{R} \subseteq \mathcal{S}$, $\{xas=s \text{ for all } s \in \mathcal{S}\} \Rightarrow \mathcal{S} \subseteq x\mathcal{R}$, $1-ax \in
  \mathcal{T} \Rightarrow (1-ax)\mathcal{R} \subseteq
  \mathcal{T}$, and $\{xt=0 \text{ for all } t \in \mathcal{T}\} \Rightarrow \mathcal{T} \subseteq
{\rm rann}(x)$. Since ${\rm rann}(x)={\rm
rann}(ax)=(1-ax)\mathcal{R}$, we conclude that (1) holds.
\end{proof}
Using Theorem~\ref{T 2I Rx 0x unique}, we obtain the next theorem
which is analogous to Theorem~\ref{T 2I xR x0 existencia}.
\begin{theorem}\label{T 2I Rx 0x existencia}
Let $a \in \mathcal{R}$ and let $\mathcal{S}$, $\mathcal{T}$ be left
ideals of $\mathcal{R}$. Then the following statements are
equivalent:
\begin{enumerate}
  \item $a^{(2)}_{{\rm lprin}=\mathcal{S},{\rm lann}=\mathcal{T}}$ exists.
  \item There exists $x \in \mathcal{S}$ such that $\,_{xa}\varphi=\rho_{\mathcal{S}a,\mathcal{T}}$ and ${\rm lann}(a) \cap
  \mathcal{S}=\{0\}$.
  \item $\mathcal{R}=\mathcal{S}a \oplus
  \mathcal{T}$ and ${\rm lann}(a) \cap \mathcal{S}=\{0\}$.
  \item There exists $x \in \mathcal{S}$ such that $sax=s$ for all $s \in \mathcal{S}$, $1-xa \in
  \mathcal{T}$, and $tx=0$ for all $t \in \mathcal{T}$.
\end{enumerate}
\end{theorem}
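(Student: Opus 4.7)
The plan is to mirror the four-way equivalence proof of Theorem~\ref{T 2I xR x0 existencia}, replacing every right-ideal ingredient by its left-ideal dual. Specifically, Theorem~\ref{T 2I Rx 0x unique} will play the role of Theorem~\ref{T 2I xR x0 unique}, part (2) of Lemma~\ref{L S T right left ideals sii proy} and part (2) of Lemma~\ref{L ro a = a and a ro = a} will replace their right-handed counterparts, and the group endomorphisms $\varphi_{ax},\varphi_{xa}$ will be replaced throughout by $\,_{ax}\varphi$ and $\,_{xa}\varphi$.

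The implications $(2)\Rightarrow(3)$ and $(1)\Rightarrow(4)$ I would dispose of first. The former holds because the equality $\,_{xa}\varphi=\rho_{\mathcal{S}a,\mathcal{T}}$ automatically encodes the direct sum $\mathcal{R}=\mathcal{S}a\oplus\mathcal{T}$. For the latter, write $x=a^{(2)}_{{\rm lprin}=\mathcal{S},{\rm lann}=\mathcal{T}}$; then $x \in \mathcal{R}x = \mathcal{S}$, $xax=x$ yields $sax=s$ for every $s \in \mathcal{R}x=\mathcal{S}$, while $(1-xa)x=0$ and the identity $\mathcal{T}={\rm lann}(x)$ supply the remaining two conditions. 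For $(1)\Rightarrow(2)$, Theorem~\ref{T 2I Rx 0x unique} delivers $\,_{xa}\varphi=\rho_{\,_a\varphi(\mathcal{S}),\mathcal{T}}=\rho_{\mathcal{S}a,\mathcal{T}}$, and the relation $\mathcal{S}\cap{\rm lann}(a)=\{0\}$ follows because $sa=0$ with $s \in \mathcal{S}=\mathcal{R}x$ forces $s=sax=0$. For $(3)\Rightarrow(2)$, the direct sum $\mathcal{R}=\mathcal{S}a\oplus\mathcal{T}$ exhibits $\rho_{\mathcal{S}a,\mathcal{T}}(1)$ as $xa$ for some $x \in \mathcal{S}$, uniquely determined by $\mathcal{S}\cap{\rm lann}(a)=\{0\}$; Lemma~\ref{L S T right left ideals sii proy}(2), valid because $\mathcal{S}a$ and $\mathcal{T}$ are left ideals, then yields $\rho_{\mathcal{S}a,\mathcal{T}}(r)=r\,\rho_{\mathcal{S}a,\mathcal{T}}(1)=r\cdot xa=\,_{xa}\varphi(r)$ for all $r$.

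The heart of the argument is $(2)\Rightarrow(1)$. Starting from $x\in\mathcal{S}$ satisfying $\,_{xa}\varphi=\rho_{\mathcal{S}a,\mathcal{T}}$ together with $\mathcal{S}\cap{\rm lann}(a)=\{0\}$, idempotence of the projector gives $xaxa=xa$, hence $(xax-x)a=0$; since $\mathcal{S}$ is a left ideal and $x\in\mathcal{S}$, the element $xax=(xa)x$ also lies in $\mathcal{S}$, so $xax-x\in\mathcal{S}\cap{\rm lann}(a)=\{0\}$, giving $x\in a\{2\}$. Theorem~\ref{T 2I projectors} then yields ${\rm lann}(x)={\rm lann}(xa)=\mathcal{T}$, while $\mathcal{R}x\subseteq\mathcal{S}$ is immediate from $x\in\mathcal{S}$. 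For the reverse inclusion, I reapply Theorem~\ref{T 2I Rx 0x unique} to obtain $\,_{xa}\varphi=\rho_{\mathcal{R}x,\,_a\varphi^{-1}(\mathcal{T})}$; then for $s\in\mathcal{S}$ I decompose $s=rx+y$ with $rx\in\mathcal{R}x$ and $ya\in\mathcal{T}$, multiply on the right by $a$, and use $\mathcal{S}a\cap\mathcal{T}=\{0\}$ followed by $\mathcal{S}\cap{\rm lann}(a)=\{0\}$ to conclude $s=rx$. Finally, $(4)\Rightarrow(1)$ is a direct verification: taking $s=x$ in $sax=s$ yields $xax=x$; the inclusion $\mathcal{S}\subseteq\mathcal{R}x$ follows from $s=(sa)x$; $\mathcal{T}\subseteq{\rm lann}(x)$ is given; and the reverse inclusion comes from the left-ideal identity $r=r(1-xa)+rxa=r(1-xa)\in\mathcal{T}$ whenever $rx=0$.

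The main difficulty is purely clerical: one must keep careful track of sides, noting that the direct sum now decomposes $\mathcal{R}$ into the \emph{left} ideals $\mathcal{S}a$ and $\mathcal{T}$, that the transversality condition involves ${\rm lann}(a)$ rather than ${\rm rann}(a)$, and that the projector identity invoked is the left-multiplicative one of Lemma~\ref{L S T right left ideals sii proy}(2). Once this dictionary is in place, no new idea beyond the right-handed argument is needed.
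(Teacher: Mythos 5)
Your proposal is correct and follows exactly the route the paper intends: the paper's own proof is just the remark that the statement follows from Theorem~\ref{T 2I Rx 0x unique} by the argument dual to that of Theorem~\ref{T 2I xR x0 existencia}, which is precisely the dictionary you carry out. One clerical slip: in the step $(2)\Rightarrow(1)$, reapplying Theorem~\ref{T 2I Rx 0x unique} (with $\mathcal{S}$ replaced by $\mathcal{R}x$ and $\mathcal{T}$ by ${\rm lann}(x)$) gives $\,_{ax}\varphi=\rho_{\mathcal{R}x,\,_{a}\varphi^{-1}(\mathcal{T})}$, not $\,_{xa}\varphi$ (whose image and kernel are $\mathcal{R}xa$ and ${\rm lann}(x)$); since the decomposition you actually use, $s=rx+y$ with $ya\in\mathcal{T}$, is the one attached to $\,_{ax}\varphi$, the argument itself goes through unchanged.
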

The next theorem is about the elements $x \in a\{2\}$ such that $x\mathcal{R}$
and $\mathcal{R}x$ are given. If $\mathcal{A} \subseteq \mathcal{R}$, we consider ${\rm rann}(\mathcal{A})=\{r \in \mathcal{R} : ar=0 \text{ for all } a \in \mathcal{A}\}$ and ${\rm lann}(\mathcal{A})=\{r \in \mathcal{R} : ra=0 \text{ for all } a \in \mathcal{A}\}$.
\begin{theorem}\label{T 2I xR Rx existencia}
Let $a \in \mathcal{R}$, $\mathcal{S}$ be a right ideal of
$\mathcal{R}$ and $\mathcal{S}'$ be a left ideal of $\mathcal{R}$.
Then the following statements are equivalent:
\begin{enumerate}
  \item There exists $x \in a\{2\}$ such that $x\mathcal{R}=\mathcal{S}$ and $\mathcal{R}x=\mathcal{S}'$.
  \item $\mathcal{R}=a\mathcal{S} \oplus
  {\rm rann}(\mathcal{S}')$ and $\mathcal{R}=\mathcal{S}'a
\oplus {\rm lann}(\mathcal{S})$.
  \item There exists $x \in \mathcal{S} \cap \mathcal{S}'$ such that $xas=s$ for all $s \in \mathcal{S}$ and $sax=s$ for all $s \in \mathcal{S}'$.
  \item There exists $x \in \mathcal{S} \cap \mathcal{S}'$ such that
  $\varphi_{ax}=\rho_{a\mathcal{S},{\rm rann}(\mathcal{S}')}$ and $\,_{xa}\varphi=\rho_{\mathcal{S}'a,{\rm lann}(\mathcal{S})}$.
\end{enumerate}
If the above $x$ exists, then it is unique. This generalized inverse
is equal to $a^{(2)}_{{\rm rprin}=\mathcal{S},{\rm rann}={\rm
rann}(\mathcal{S}')}$ (resp. $a^{(2)}_{{\rm lprin}=\mathcal{S}',{\rm
lann}={\rm lann}(\mathcal{S})}$) and will be denoted by
$a^{(2)}_{{\rm rprin}=\mathcal{S},{\rm lprin}=\mathcal{S}'}$.
\end{theorem}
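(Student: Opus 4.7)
My plan is to traverse the cycle $(1)\Rightarrow(2)\Rightarrow(3)\Rightarrow(1)$, obtain $(1)\Leftrightarrow(4)$ directly from Theorem~\ref{T 2I projectors}, and finally extract uniqueness and the two identifications from Theorems~\ref{T 2I xR x0 unique} and~\ref{T 2I Rx 0x unique}. A preliminary observation, used throughout, is that $\mathcal{R}x=\mathcal{S}'$ forces ${\rm rann}(x)={\rm rann}(\mathcal{S}')$ (the inclusion $\subseteq$ comes from taking $\rho=1$ in $\rho x r=0$), and symmetrically $x\mathcal{R}=\mathcal{S}$ forces ${\rm lann}(x)={\rm lann}(\mathcal{S})$.

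With this in hand, $(1)\Rightarrow(2)$ and the forward half of $(1)\Leftrightarrow(4)$ are essentially rewrites of Theorem~\ref{T 2I projectors}: $\varphi_{ax}=\rho_{ax\mathcal{R},{\rm rann}(x)}=\rho_{a\mathcal{S},{\rm rann}(\mathcal{S}')}$ (and symmetrically for $\,_{xa}\varphi$), so Lemma~\ref{L oblique projector sii idempotent} supplies the two direct sums of~(2), and $x\in x\mathcal{R}\cap\mathcal{R}x=\mathcal{S}\cap\mathcal{S}'$. For $(4)\Rightarrow(1)$, I would evaluate the two projector identities at $1$, obtaining $1-ax\in{\rm rann}(\mathcal{S}')$ and $1-xa\in{\rm lann}(\mathcal{S})$; pairing the first with $x\in\mathcal{S}'$ yields $xax=x$ (so $x\in a\{2\}$), while pairing the second with $s\in\mathcal{S}$ yields $s=xas=x(as)\in x\mathcal{R}$, forcing $x\mathcal{R}=\mathcal{S}$, and similarly $\mathcal{R}x=\mathcal{S}'$. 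The implication $(3)\Rightarrow(1)$ goes the same way, with $xas=s$ and $sax=s$ playing the roles of the projector identities (set $s=x$ to obtain $xax=x$).

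The substantive step is $(2)\Rightarrow(3)$, and this is where I expect the only genuine obstacle. I would first extract the ``crossed'' triviality ${\rm rann}(a)\cap\mathcal{S}=\{0\}$ from the \emph{second} direct sum of (2): if $s\in{\rm rann}(a)\cap\mathcal{S}$, writing $1=s_1 a+l_0$ with $s_1\in\mathcal{S}'$, $l_0\in{\rm lann}(\mathcal{S})$ gives $s=s_1 as+l_0 s=0$; symmetrically ${\rm lann}(a)\cap\mathcal{S}'=\{0\}$. These trivial intersections together with the corresponding direct sums are exactly condition~(3) of Theorem~\ref{T 2I xR x0 existencia} and of Theorem~\ref{T 2I Rx 0x existencia}, producing $x_1=a^{(2)}_{{\rm rprin}=\mathcal{S},{\rm rann}={\rm rann}(\mathcal{S}')}$ with $x_1\in\mathcal{S}$, $x_1 as=s$ on $\mathcal{S}$, $1-ax_1\in{\rm rann}(\mathcal{S}')$, and analogously $x_2=a^{(2)}_{{\rm lprin}=\mathcal{S}',{\rm lann}={\rm lann}(\mathcal{S})}$ with $x_2\in\mathcal{S}'$, $sax_2=s$ on $\mathcal{S}'$, $1-x_2 a\in{\rm lann}(\mathcal{S})$. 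The real work is then to fuse these into a single element by proving $x_1=x_2$, and my plan is to cross-pair: $x_2\in\mathcal{S}'$ with $1-ax_1\in{\rm rann}(\mathcal{S}')$ gives $x_2=x_2 ax_1$, while $x_1\in\mathcal{S}$ with $1-x_2 a\in{\rm lann}(\mathcal{S})$ gives $x_2 ax_1=x_1$; hence $x_1=x_2\in\mathcal{S}\cap\mathcal{S}'$, and this common element realises (3).

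Uniqueness and both identifications then fall out of Theorems~\ref{T 2I xR x0 unique} and~\ref{T 2I Rx 0x unique}: any $x$ realising (1) automatically satisfies $x\mathcal{R}=\mathcal{S}$ and ${\rm rann}(x)={\rm rann}(\mathcal{S}')$ by the preliminary identities, so it must equal $a^{(2)}_{{\rm rprin}=\mathcal{S},{\rm rann}={\rm rann}(\mathcal{S}')}$, and the left-sided identification with $a^{(2)}_{{\rm lprin}=\mathcal{S}',{\rm lann}={\rm lann}(\mathcal{S})}$ is symmetric.
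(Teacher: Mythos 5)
Your proposal is correct and follows essentially the same route as the paper: the substantive step (deriving ${\rm rann}(a)\cap\mathcal{S}=\{0\}$ and ${\rm lann}(a)\cap\mathcal{S}'=\{0\}$ from the opposite direct sums, invoking Theorems~\ref{T 2I xR x0 existencia} and~\ref{T 2I Rx 0x existencia} to produce the two one-sided inverses, and fusing them by the cross-annihilator pairing) is exactly the paper's argument for (2) $\Rightarrow$ (1), while your treatment of (3), (4), uniqueness, and the identifications matches the paper's use of Theorems~\ref{T 2I projectors}, \ref{T 2I xR x0 unique}, and~\ref{T 2I Rx 0x unique}. The only difference is the cosmetic rerouting of the implication cycle, landing the hard step at (3) instead of (1).
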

\begin{proof}
(1) $\Rightarrow$ (2)(4): Since ${\rm rann}(x)={\rm
rann}(\mathcal{R}x)$ and ${\rm lann}(x)={\rm lann}(x\mathcal{R})$,
the implications follow from Theorems~\ref{T 2I xR x0 existencia}
and \ref{T 2I Rx 0x existencia}.

(2) $\Rightarrow$ (1): Suppose that (2) holds. Assume that $s \in
\mathcal{S}$ and $as=0$. Let $z \in \mathcal{S}'$ such that
$\,_{za}\varphi=\rho_{\mathcal{S}'a,{\rm lann}(\mathcal{S})}$. Then
$s=1s=\rho_{\mathcal{S}'a,{\rm lann}(\mathcal{S})}(1)s=zas=0$. This
shows that ${\rm rann}(a)\cap\mathcal{S}=\{0\}$. Similarly, ${\rm
lann}(a)\cap\mathcal{S}'=\{0\}$. By Theorems~\ref{T 2I xR x0
existencia} and \ref{T 2I Rx 0x existencia}, $a^{(2)}_{{\rm
rprin}=\mathcal{S},{\rm rann}={\rm rann}(\mathcal{S}')}$ and
$a^{(2)}_{{\rm lprin}=\mathcal{S}',{\rm lann}={\rm
lann}(\mathcal{S})}$ exist. Denote them by $x$ and $y$,
respectively. Since ${\rm rann}(x)={\rm rann}(\mathcal{S}')={\rm
rann}(\mathcal{R}y)={\rm rann}(y)$ and ${\rm lann}(y)={\rm
lann}(\mathcal{S})={\rm lann}(x\mathcal{R})={\rm lann}(x)$, it
follows that $y(ax)=y\rho_{a\mathcal{S},{\rm
rann}(\mathcal{S}')}(1)=y$, and $(ya)x=\rho_{\mathcal{S}'a,{\rm
lann}(\mathcal{S})}(1)x=x$. Hence, $x=y=a^{(2)}_{{\rm
rprin}=\mathcal{S},{\rm rann}={\rm
rann}(\mathcal{S}')}=a^{(2)}_{{\rm lprin}=\mathcal{S}',{\rm
lann}={\rm lann}(\mathcal{S})}$. In particular, $x \in a\{2\}$,
$x\mathcal{R}=\mathcal{S}$, $\mathcal{R}x=\mathcal{S}'$, and by
Theorem~\ref{T 2I xR x0 unique} (or Theorem~\ref{T 2I Rx 0x
unique}), this generalized inverse is unique.

(1) $\Leftrightarrow$ (3): It is immediate.

(4)  $\Rightarrow$ (1): Since $x \in \mathcal{S} \cap \mathcal{S}'$,
we have $x\mathcal{R} \subseteq \mathcal{S}$, $\mathcal{R}x
\subseteq \mathcal{S}'$, ${\rm lann}(\mathcal{S}) \subseteq {\rm
lann}(x)$, ${\rm rann}(\mathcal{S}') \subseteq {\rm rann}(x)$,
$xax=x\rho_{a\mathcal{S},{\rm
rann}(\mathcal{S}')}(1)=\rho_{\mathcal{S}'a,{\rm
lann}(\mathcal{S})}(1)x=x$. If $s \in \mathcal{S}$ and $s' \in
\mathcal{S}'$, then $xas=\rho_{\mathcal{S}'a,{\rm
lann}(\mathcal{S})}(1)s=s$ and $s'ax=s'\rho_{a\mathcal{S},{\rm
rann}(\mathcal{S}')}(1)=s'$. Thus, $\mathcal{S} \subseteq
x\mathcal{R}$, $\mathcal{S}' \subseteq \mathcal{R}x$. We conclude
that (1) holds.
\end{proof}
Assume that there exist $b, c \in \mathcal{R}$ such that
$\mathcal{S}=b\mathcal{R}$ and $\mathcal{S}'=\mathcal{R}c$. In this
case, Theorem~\ref{T 2I xR Rx existencia}(2) coincides with
\cite[Proposition 2.7(ii)]{Drazin (2012)}, whereas Theorem~\ref{T 2I
xR Rx existencia}(3) coincides with the definition of the $(b,c)$
inverse (see \cite[Definition 1.3]{Drazin (2012)} and \cite[page
103]{Rakic (2017)}).

The following theorem is about $\{2\}$-inverses $x$ such that ${\rm
rann}(x)$ and ${\rm lann}(x)$ are given.
\begin{theorem}\label{T 2I 0x x0 existencia}
Let $a, x \in \mathcal{R}$, $\mathcal{T}$ be a right ideal of
$\mathcal{R}$, and $\mathcal{T}'$ be a left ideal of $\mathcal{R}$.
Then the following statements are equivalent:
\begin{enumerate}
  \item $x \in a\{2\}$, ${\rm rann}(x)=\mathcal{T}$, and ${\rm lann}(x)=\mathcal{T}'$.
    \item $\varphi_{ax}=\rho_{ax\mathcal{R},\mathcal{T}}$, $\,_{xa}\varphi=\rho_{\mathcal{R}xa,\mathcal{T}'}$, and ${\rm rann}(a) \cap
  x\mathcal{R}=\{0\}$.
      \item $\varphi_{ax}=\rho_{ax\mathcal{R},\mathcal{T}}$, $\,_{xa}\varphi=\rho_{\mathcal{R}xa,\mathcal{T}'}$, and ${\rm lann}(a) \cap
  \mathcal{R}x=\{0\}$.
      \item $\varphi_{ax}=\rho_{ax\mathcal{R},\mathcal{T}}$, $\,_{xa}\varphi=\rho_{\mathcal{R}xa,\mathcal{T}'}$, and $\mathcal{T} \subseteq {\rm rann}(x)$.
      \item $\varphi_{ax}=\rho_{ax\mathcal{R},\mathcal{T}}$, $\,_{xa}\varphi=\rho_{\mathcal{R}xa,\mathcal{T}'}$, and $\mathcal{T}' \subseteq {\rm lann}(x)$.

  \item $1-ax \in \mathcal{T}$, $xt=0$ for all $t \in \mathcal{T}$, $1-xa
\in \mathcal{T}'$, and $tx=0$ for all $t \in \mathcal{T}'$.
\end{enumerate}
If the above $x$ exists, then it is unique and will be denoted by
$a^{(2)}_{{\rm lann}=\mathcal{T}',{\rm rann}=\mathcal{T}}$.
\end{theorem}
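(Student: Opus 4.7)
The plan is to use Theorem~\ref{T 2I projectors} as the pivot: it characterises $x \in a\{2\}$ by either of the projector identities $\varphi_{ax} = \rho_{ax\mathcal{R},{\rm rann}(x)}$ and $\,_{xa}\varphi = \rho_{\mathcal{R}xa,{\rm lann}(x)}$. I will show that (1) implies each of (2)--(6), and that each of (2)--(6) recovers the identity $xax = x$; once that is in hand, Theorem~\ref{T 2I projectors} forces the kernels of the two projectors in (2)--(5) to coincide with ${\rm rann}(x)$ and ${\rm lann}(x)$, so the equalities ${\rm rann}(x) = \mathcal{T}$ and ${\rm lann}(x) = \mathcal{T}'$ come for free. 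Uniqueness is then immediate: if $x_1, x_2$ both satisfy (1), the relations $1-ax \in {\rm rann}(x)$ and $1-xa \in {\rm lann}(x)$ (valid for any $x \in a\{2\}$) yield $x_1 = x_1 a x_2 = x_2$.

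The implications (1) $\Rightarrow$ (2)--(6) are direct. Theorem~\ref{T 2I projectors} yields the projector equalities, and the inclusions in (4) and (5) are in fact equalities. For the intersection in (2): if $y = xr$ satisfies $ay = 0$, then $y = xr = (xax)r = x(axr) = 0$; (3) is symmetric on the other side. For (6) one has $x(1-ax) = x - xax = 0$ and $xt = 0$ for every $t \in {\rm rann}(x) = \mathcal{T}$, with the symmetric pair on the left. Conversely, (6) $\Rightarrow$ (1) is also short: taking $t = 1-ax$ in $xt = 0$ gives $xax = x$, and the reverse inclusion ${\rm rann}(x) \subseteq \mathcal{T}$ follows from the decomposition $y = axy + (1-ax)y$, since $xy = 0$ forces $y = (1-ax)y \in \mathcal{T}$; likewise on the left.

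The remaining converses (2) $\Rightarrow$ (1), (3) $\Rightarrow$ (1), (4) $\Rightarrow$ (1), and (5) $\Rightarrow$ (1) all reduce to producing $xax = x$. In (2) the projector hypothesis gives $axax = ax$, so $a(xax - x) = 0$; since $xax - x \in x\mathcal{R}$, the intersection condition ${\rm rann}(a) \cap x\mathcal{R} = \{0\}$ forces $xax = x$; (3) is analogous using $(xax - x)a = 0$ together with $\mathcal{R}x$. In (4), the identity $\varphi_{ax} = \rho_{ax\mathcal{R}, \mathcal{T}}$ gives $ax(1-ax) = 0$, so $1 - ax \in {\rm ker}(\varphi_{ax}) = \mathcal{T} \subseteq {\rm rann}(x)$, and left-multiplication by $x$ yields $xax = x$; (5) is symmetric on the left. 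The only mildly subtle step is spotting in (4) and (5) that the projector identity alone already embeds $1-ax$ (resp.\ $1-xa$) into $\mathcal{T}$ (resp.\ $\mathcal{T}'$), so the one-sided containment in the hypothesis is enough to close the gap to full outer-inverseness.
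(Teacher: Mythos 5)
Your proposal is correct, and its backbone coincides with the paper's: the converses (2)--(5) $\Rightarrow$ (1) are settled exactly as in the paper by manufacturing $xax=x$ (either from $xax-x$ lying in the trivial intersection ${\rm rann}(a)\cap x\mathcal{R}$ or ${\rm lann}(a)\cap\mathcal{R}x$, or from $1-ax\in{\rm ker}(\varphi_{ax})=\mathcal{T}\subseteq{\rm rann}(x)$ and its left analogue), after which Theorem~\ref{T 2I projectors} identifies the kernels with ${\rm rann}(x)$ and ${\rm lann}(x)$. Where you diverge is in how the remaining pieces are discharged: the paper delegates (1) $\Rightarrow$ (2)(3) and (6) $\Rightarrow$ (1) to Theorems~\ref{T 2I xR x0 existencia} and \ref{T 2I Rx 0x existencia}, and proves uniqueness via Lemma~\ref{L L 2.5 RDD (2014b)} (using that $x$, $y$ are regular since $a\in x\{1\}$) followed by Theorem~\ref{T 2I xR x0 unique}; you instead give short direct computations for each of these, and in particular your uniqueness argument $x_1=x_1ax_2=x_2$, based on $1-ax_2\in{\rm rann}(x_2)=\mathcal{T}$ and $1-x_1a\in{\rm lann}(x_1)=\mathcal{T}'$, is more elementary and bypasses the regularity lemma and the earlier uniqueness theorems entirely. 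The trade-off is self-containedness (your route) versus brevity and reuse of the paper's already-established machinery (its route); both are complete and correct.
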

\begin{proof}
The implications (1) $\Rightarrow$ (2)(3) follow from
Theorems~\ref{T 2I xR x0 existencia} and \ref{T 2I Rx 0x
existencia}, the equivalences (1) $\Leftrightarrow$ (4) and (1)
$\Leftrightarrow$ (5) follow from Theorem~\ref{T 2I projectors}, and
the implication (1) $\Rightarrow$ (6) is immediate.

Assume that (2) holds. Then $xax-x \in {\rm rann}(a) \cap
  x\mathcal{R}$. Taking into account that ${\rm rann}(a) \cap
  x\mathcal{R}=\{0\}$, we get $x\in a\{2\}$. Applying Theorem~\ref{T 2I projectors}, we conclude that (1) holds. The proof of (3) $\Rightarrow$ (1) is similar.

The proof of (6) $\Rightarrow$ (1) is similar to the proof of (3)
$\Rightarrow$ (1) in Theorems~\ref{T 2I xR x0 existencia} and \ref{T
2I Rx 0x existencia}.

Let $x, y \in a\{2\}$ be such that ${\rm rann}(x)={\rm
rann}(y)=\mathcal{T}$ and ${\rm lann}(x)={\rm
lann}(y)=\mathcal{T}'$. Since $x\{1\}\neq\emptyset$ and
$y\{1\}\neq\emptyset$, from Lemma~\ref{L L 2.5 RDD (2014b)},
$x\mathcal{R}=y\mathcal{R}$ and $x\mathcal{R}=\mathcal{R}y$. By
Theorem~\ref{T 2I xR x0 unique} (or Theorem~\ref{T 2I Rx 0x
unique}), $x=y$. This proves the uniqueness.
\end{proof}
We note that \cite[Proposition 3.1]{Zhu (2018)}, in which
$\mathcal{T}={\rm rann}(c)$ and $\mathcal{T}'={\rm lann}(b)$ for
some $b, c \in \mathcal{R}$, follows from Theorem~\ref{T 2I 0x x0
existencia}(1)(6).

In \cite{Drazin (2012)}, the connection of $(b,c)$ inverses with the
Mitsch $\mathcal{M}$ partial order \cite{Mitsch (1986)} in a
semigroup is established. Here, we consider $\mathcal{M}$ in
relation to the $\{2\}$-inverses considered in this section
obtaining results similar to \cite[Lemmas 4.2 and 6.5, Theorems 4.3
and 6.6]{Drazin (2012)}. We recall that if $y, z \in \mathcal{R}$,
then $y\mathcal{M}z$ if there exists $v, w \in \mathcal{R}$ such
that $vz=vy=y=yw=zw$. Let $\mathcal{S}$, $\mathcal{T}$ (resp.
$\mathcal{S}'$, $\mathcal{T}'$) be right (resp. left) ideals of
$\mathcal{R}$. Consider the pair of sets

\centerline{$Y_{a,\mathcal{S},\mathcal{S}'}=\{y \in \mathcal{R}:
y\in a\{2\}, y\in\mathcal{S}\cap\mathcal{S}'\}$,}

\centerline{$Z_{a,\mathcal{S},\mathcal{S}'}=\{z \in \mathcal{R}:
z\in a\{2\}, \mathcal{S}\subseteq z\mathcal{R},
\mathcal{S}'\subseteq\mathcal{R}z\}$,}

\smallskip

\centerline{$Y_{a,\mathcal{S},\mathcal{T}}=\{y \in \mathcal{R}: y\in
a\{2\}, y\in\mathcal{S}, \mathcal{T} \subseteq {\rm rann}(y)\}$,}

\centerline{$Z_{a,\mathcal{S},\mathcal{T}}=\{z \in \mathcal{R}: z\in
a\{2\}, \mathcal{S}\subseteq z\mathcal{R}, {\rm rann}(z) \subseteq
\mathcal{T}\}$,}

\smallskip

\centerline{$Y_{a,\mathcal{S}',\mathcal{T}'}=\{y \in \mathcal{R}:
y\in a\{2\}, y\in\mathcal{S}', \mathcal{T}' \subseteq {\rm
lann}(y)\}$,}

\centerline{$Z_{a,\mathcal{S}',\mathcal{T}'}=\{z \in \mathcal{R}:
z\in a\{2\}, \mathcal{S}'\subseteq \mathcal{R}z, {\rm lann}(z)
\subseteq \mathcal{T}'\}$,}

\smallskip

\centerline{$Y_{a,\mathcal{T},\mathcal{T}'}=\{y \in \mathcal{R}:
y\in a\{2\}, \mathcal{T} \subseteq {\rm rann}(y), \mathcal{T}'
\subseteq {\rm lann}(y)\}$,}

\centerline{$Z_{a,\mathcal{T},\mathcal{T}'}=\{z \in \mathcal{R}:
z\in a\{2\}, {\rm rann}(z) \subseteq \mathcal{T}, {\rm lann}(z)
\subseteq \mathcal{T}'\}$.}

\begin{lemma}\label{L 2I M}
Let $a \in \mathcal{R}$, and $\mathcal{S}$, $\mathcal{T}$ (resp.
$\mathcal{S}'$, $\mathcal{T}'$) be right (resp. left) ideals of
$\mathcal{R}$. Then:
\begin{enumerate}
  \item $y\mathcal{M}z$ for each $y \in Y_{a,\mathcal{S},\mathcal{S}'}$ and $z \in Z_{a,\mathcal{S},\mathcal{S}'}$.
  \item $y\mathcal{M}z$ for each $y \in Y_{a,\mathcal{S},\mathcal{T}}$ and $z \in Z_{a,\mathcal{S},\mathcal{T}}$.
  \item $y\mathcal{M}z$ for each $y \in Y_{a,\mathcal{S}',\mathcal{T}'}$ and $z \in Z_{a,\mathcal{S}',\mathcal{T}'}$.
  \item $y\mathcal{M}z$ for each $y \in Y_{a,\mathcal{T},\mathcal{T}'}$ and $z \in Z_{a,\mathcal{T},\mathcal{T}'}$.
\end{enumerate}
\end{lemma}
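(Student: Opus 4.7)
The plan is to exhibit, in every one of the four cases, an explicit pair $(v,w)$ that witnesses the Mitsch relation $y\mathcal{M}z$, i.e., elements with $vz=vy=y=yw=zw$. The natural choice is $v=ya$ and $w=ay$. Since $y\in a\{2\}$ gives $yay=y$, the equalities $vy=yay=y$ and $yw=yay=y$ come for free, so the whole argument reduces to establishing the two identities
\[ yaz=y \qquad \text{and} \qquad zay=y. \]

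To organize the work, I would first record two pairs of lemmatic observations that cover all the possible hypotheses appearing in the four cases. For $yaz=y$: (a) if $y\in\mathcal{R}z$, write $y=sz$ and then $yaz=s(zaz)=sz=y$ using $z\in a\{2\}$; or (b) if $\mathrm{rann}(z)\subseteq\mathrm{rann}(y)$, then $1-az\in\mathrm{rann}(z)\subseteq\mathrm{rann}(y)$ gives $y(1-az)=0$, i.e.\ $y=yaz$. Symmetrically for $zay=y$: (a$'$) if $y\in z\mathcal{R}$, write $y=zr$ and use $zay=(zaz)r=zr=y$; or (b$'$) if $\mathrm{lann}(z)\subseteq\mathrm{lann}(y)$, then $1-za\in\mathrm{lann}(z)\subseteq\mathrm{lann}(y)$ yields $y=zay$.

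With these four micro-arguments in hand, each case is a matter of matching the hypotheses to one condition of each type. In case (1), $y\in\mathcal{S}'\subseteq\mathcal{R}z$ supplies (a) and $y\in\mathcal{S}\subseteq z\mathcal{R}$ supplies (a$'$). In case (2), $\mathrm{rann}(z)\subseteq\mathcal{T}\subseteq\mathrm{rann}(y)$ gives (b) while $y\in\mathcal{S}\subseteq z\mathcal{R}$ gives (a$'$). In case (3), (a) comes from $y\in\mathcal{S}'\subseteq\mathcal{R}z$ and (b$'$) from $\mathrm{lann}(z)\subseteq\mathcal{T}'\subseteq\mathrm{lann}(y)$. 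In case (4), (b) and (b$'$) both follow from the annihilator inclusions. Thus the same witnesses $v=ya$, $w=ay$ work uniformly, and the four parts of the lemma can be presented either as one proof with four branches or as four nearly identical short arguments.

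There is no real obstacle here; the only thing worth checking carefully is that I am genuinely using $z\in a\{2\}$ (i.e.\ $zaz=z$) in both the $z\mathcal{R}$/$\mathcal{R}z$ branches and in the annihilator branches (where it supplies $1-az\in\mathrm{rann}(z)$ and $1-za\in\mathrm{lann}(z)$), and that the chain of inclusions $\mathrm{rann}(z)\subseteq\mathcal{T}\subseteq\mathrm{rann}(y)$ (and its left analogue) is being applied in the right direction. The symmetry between the two halves of each case mirrors the right/left duality already pervasive in Section~\ref{S Preliminaries}, so I would simply prove (1) and (2) in detail and indicate that (3) and (4) follow by the analogous argument with left/right roles exchanged.
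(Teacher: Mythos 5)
Your proposal is correct and follows essentially the same route as the paper: the Mitsch relation is witnessed by $v=ya$, $w=ay$ (as in Drazin's argument), and everything reduces to the two identities $yaz=y$ and $zay=y$ obtained from the ideal inclusions defining $Y$ and $Z$. The only difference is cosmetic: you verify those identities by direct computation (writing $y=sz$ or $y=zr$, or using $1-az\in{\rm rann}(z)$ and $1-za\in{\rm lann}(z)$), whereas the paper deduces them from the projector characterization of $\{2\}$-inverses in Theorem~\ref{T 2I projectors}.
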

\begin{proof}
(1): Let $y \in Y_{a,\mathcal{S},\mathcal{S}'}$ and $z \in
Z_{a,\mathcal{S},\mathcal{S}'}$. Then $y, z \in \{2\}$,
$\varphi_{za}=\rho_{z\mathcal{R},{\rm rann}(za)}$,
$\,_{az}\varphi=\rho_{\mathcal{R}z,{\rm lann}(az)}$, $y \in
\mathcal{S} \subseteq z\mathcal{R} = za\mathcal{R}$, and $y \in
\mathcal{S}' \subseteq \mathcal{R}z = \mathcal{R}az$. Therefore,
$zay=\rho_{z\mathcal{R},{\rm rann}(za)}(y)=y$ and
$yaz=\rho_{\mathcal{R}z,{\rm lann}(az)}(y)=y$. Now, as in the proof
of \cite[Lemma 4.2]{Drazin (2012)}, $(ya)z=(ya)y=y=y(ay)=z(ay)$.

The proofs of (2), (3), and (4) are similar to the proof of (1).
\end{proof}
\begin{theorem}
Let $a \in \mathcal{R}$, and $\mathcal{S}$, $\mathcal{T}$ (resp.
$\mathcal{S}, \mathcal{T}'$) be right (resp. left) ideals of
$\mathcal{R}$. Then:
\begin{enumerate}
  \item $x=a^{(2)}_{{\rm rprin}=\mathcal{S},{\rm lprin}=\mathcal{S}'} \Leftrightarrow x\in Y_{a,\mathcal{S},\mathcal{S}'} \cap Z_{a,\mathcal{S},\mathcal{S}'} \Leftrightarrow x={\rm max}_{\mathcal{M}}Y_{a,\mathcal{S},\mathcal{S}'} = {\rm min}_{\mathcal{M}}Z_{a,\mathcal{S},\mathcal{S}'}$.
  \item $x=a^{(2)}_{{\rm rprin}=\mathcal{S},{\rm rann}=\mathcal{T}} \Leftrightarrow x\in Y_{a,\mathcal{S},\mathcal{T}} \cap Z_{a,\mathcal{S},\mathcal{T}} \Leftrightarrow x={\rm max}_{\mathcal{M}}Y_{a,\mathcal{S},\mathcal{T}} = {\rm min}_{\mathcal{M}}Z_{a,\mathcal{S},\mathcal{T}}$.
  \item $x=a^{(2)}_{{\rm lprin}=\mathcal{S}',{\rm lann}=\mathcal{T}'} \Leftrightarrow x\in Y_{a,\mathcal{S}',\mathcal{T}'} \cap Z_{a,\mathcal{S}',\mathcal{T}'} \Leftrightarrow x={\rm max}_{\mathcal{M}}Y_{a,\mathcal{S}',\mathcal{T}'} = {\rm min}_{\mathcal{M}}Z_{a,\mathcal{S}',\mathcal{T}'}$.
  \item $x=a^{(2)}_{{\rm rann}=\mathcal{T},{\rm lann}=\mathcal{T}'} \Leftrightarrow x\in Y_{a,\mathcal{T},\mathcal{T}'} \cap Z_{a,\mathcal{T},\mathcal{T}'} \Leftrightarrow x={\rm max}_{\mathcal{M}}Y_{a,\mathcal{T},\mathcal{T}'} = {\rm min}_{\mathcal{M}}Z_{a,\mathcal{T},\mathcal{T}'}$.
\end{enumerate}
\end{theorem}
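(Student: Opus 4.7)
The plan is to treat all four parts by the same template, since each has an identical logical structure: it chains a definitional equivalence (\emph{$x = a^{(2)}_{\ldots} \Leftrightarrow x \in Y \cap Z$}) with an order-theoretic equivalence (\emph{$x \in Y \cap Z \Leftrightarrow x = \max_{\mathcal{M}} Y = \min_{\mathcal{M}} Z$}). The first is just bookkeeping against the corresponding existence theorem in this section; the second is driven entirely by Lemma~\ref{L 2I M} together with the antisymmetry of the Mitsch order.

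For the first equivalence in each part, I would set up a small dictionary once and reuse it: the conjunction ``$x \in \mathcal{S}$ and $\mathcal{S} \subseteq x\mathcal{R}$'' is exactly $x\mathcal{R} = \mathcal{S}$ (the forward implication uses $x = x\cdot 1 \in x\mathcal{R}$), and symmetrically for left principal ideals; likewise ``$\mathcal{T} \subseteq {\rm rann}(x)$ and ${\rm rann}(x) \subseteq \mathcal{T}$'' is exactly ${\rm rann}(x) = \mathcal{T}$, and symmetrically for left annihilators. Under this dictionary, the intersection $Y \cap Z$ unpacks, in each of the four cases, to precisely the set of $\{2\}$-inverses of $a$ satisfying the two defining equalities of the corresponding $a^{(2)}_{\ldots}$: for (1) use Theorem~\ref{T 2I xR Rx existencia}, for (2) use Theorem~\ref{T 2I xR x0 existencia}, for (3) use Theorem~\ref{T 2I Rx 0x existencia}, and for (4) use Theorem~\ref{T 2I 0x x0 existencia}. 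The uniqueness statements in those theorems then collapse $Y \cap Z$ to at most one element, namely $a^{(2)}_{\ldots}$ when it exists.

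For the second equivalence I would apply Lemma~\ref{L 2I M} directly. If $x \in Y \cap Z$, then taking $z = x$ in the lemma gives $y\,\mathcal{M}\,x$ for every $y \in Y$, and taking $y = x$ gives $x\,\mathcal{M}\,z$ for every $z \in Z$; since $x$ itself lies in both $Y$ and $Z$, antisymmetry of $\mathcal{M}$ forces $x = \max_{\mathcal{M}} Y = \min_{\mathcal{M}} Z$. The converse is tautological: any element that is a maximum of $Y$ lies in $Y$, and any minimum of $Z$ lies in $Z$, so the stated extremum necessarily lies in $Y \cap Z$.

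The main obstacle I expect is not conceptual but organisational: keeping the four cases aligned with the four different existence theorems, and checking that in each case $Y$ and $Z$ really do complement each other so that $Y \cap Z$ encodes the full pair of defining equalities (and not just one of them). Once this dictionary is verified once, the four items collapse into a single uniform argument; the substance of the result is already contained in Lemma~\ref{L 2I M}.
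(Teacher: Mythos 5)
Your proposal is correct and follows essentially the same route as the paper: the definitional equivalence (which the paper calls ``immediate'') is exactly your dictionary unpacking $Y\cap Z$ via the uniqueness statements of Theorems~\ref{T 2I xR Rx existencia}, \ref{T 2I xR x0 existencia}, \ref{T 2I Rx 0x existencia}, and \ref{T 2I 0x x0 existencia}, and the nontrivial implication $x\in Y\cap Z \Rightarrow x={\rm max}_{\mathcal{M}}Y={\rm min}_{\mathcal{M}}Z$ is obtained, as in the paper, from Lemma~\ref{L 2I M} with the converse being tautological.
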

\begin{proof}
(1): The equivalence $x=a^{(2)}_{{\rm rprin}=\mathcal{S},{\rm
lprin}=\mathcal{S}'} \Leftrightarrow x\in
Y_{a,\mathcal{S},\mathcal{S}'} \cap Z_{a,\mathcal{S},\mathcal{S}'}$
and the implication $x={\rm
max}_{\mathcal{M}}Y_{a,\mathcal{S},\mathcal{S}'} = {\rm
min}_{\mathcal{M}}Z_{a,\mathcal{S},\mathcal{S}'} \Rightarrow x\in
Y_{a,\mathcal{S},\mathcal{S}'} \cap Z_{a,\mathcal{S},\mathcal{S}'}$
are immediate. The implication $x\in Y_{a,\mathcal{S},\mathcal{S}'}
\cap Z_{a,\mathcal{S},\mathcal{S}'} \Rightarrow x={\rm
max}_{\mathcal{M}}Y_{a,\mathcal{S},\mathcal{S}'} = {\rm
min}_{\mathcal{M}}Z_{a,\mathcal{S},\mathcal{S}'}$ follows from
Lemma~\ref{L 2I M}(1).

The proofs of (2), (3), and (4) are similar to the proof of (1).
\end{proof}

\section{$\{1,2\}$-inverses with prescribed principal and annihilator ideals}\label{S 12I with prescribed principal and annihilator ideals}

Let $\mathcal{S}$, $\mathcal{T}$ be right ideals of $\mathcal{R}$.
If  $x=a^{(2)}_{{\rm rprin}=\mathcal{S},{\rm rann}=\mathcal{T}} \in
a\{1\}$, then we write $x=a^{(1,2)}_{{\rm rprin}=\mathcal{S},{\rm
rann}=\mathcal{T}}$. Similar meaning will have $a^{(1,2)}_{{\rm
lprin}=\mathcal{S}',{\rm lann}=\mathcal{T}'}$, $a^{(1,2)}_{{\rm
rprin}=\mathcal{S},{\rm lprin}=\mathcal{S}'}$, and $a^{(1,2)}_{{\rm
lann}=\mathcal{T}',{\rm rann}=\mathcal{T}}$, where $\mathcal{S}'$ and
$\mathcal{T}'$ are left ideals of $\mathcal{R}$.

Theorem~\ref{T 12I xR x0 expression} below characterizes
$a^{(1,2)}_{{\rm rprin}=\mathcal{S},{\rm rann}=\mathcal{T}}$ for
right ideals $\mathcal{S}$, $\mathcal{T}$ of
  $\mathcal{R}$. The equivalences (1) $\Leftrightarrow$ (4) $\Leftrightarrow$ (8) are a
generalization of \cite[Theorem 2.12(c) and Ex.
2.37]{Ben-Israel-Greville (2003)} for finite complex matrices.
\begin{theorem}\label{T 12I xR x0 expression}
Let $a, x \in \mathcal{R}$ and $\mathcal{S}$, $\mathcal{T}$ be right
ideals of $\mathcal{R}$. Then the following assertions are
equivalent:
\begin{enumerate}
  \item $x=a^{(1,2)}_{{\rm rprin}=\mathcal{S},{\rm rann}=\mathcal{T}}$.
  \item $\varphi_{ax}=\rho_{a\mathcal{R},\mathcal{T}}$, $\varphi_{xa}=\rho_{\mathcal{S},{\rm rann}(a)}$, and
  $x \in \mathcal{S}$.
  \item $\varphi_{ax}=\rho_{a\mathcal{R},\mathcal{T}}$, $\varphi_{xa}=\rho_{\mathcal{S},{\rm rann}(a)}$, and
  ${\rm lann}(\mathcal{S}) \subseteq {\rm lann}(x)$.
  \item $\varphi_{ax}=\rho_{a\mathcal{R},\mathcal{T}}$, $\varphi_{xa}=\rho_{\mathcal{S},{\rm rann}(a)}$, and
  $\mathcal{T} \subseteq {\rm rann}(x)$.
  \item $x \in a\{1\}$, $xa\mathcal{R}=\mathcal{S}$, ${\rm rann}(ax)=\mathcal{T}$, and $x \in \mathcal{S}$.
  \item $x \in a\{1\}$, $xa\mathcal{R}=\mathcal{S}$, ${\rm rann}(ax)=\mathcal{T}$, and ${\rm lann}(\mathcal{S}) \subseteq {\rm lann}(x)$.
  \item $x \in a\{1\}$, $xa\mathcal{R}=\mathcal{S}$, ${\rm rann}(ax)=\mathcal{T}$, and $\mathcal{T} \subseteq {\rm rann}(x)$.
  \item $x=\rho_{\mathcal{S},{\rm rann}(a)}(1)a^{(1)}\rho_{a\mathcal{R},\mathcal{T}}(1)$ where $a^{(1)} \in a\{1\}$.
\end{enumerate}
\end{theorem}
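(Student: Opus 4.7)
The plan is to exploit Theorem~\ref{T 1I xaR ax0 expression}, which says that the projector identities $\varphi_{ax}=\rho_{a\mathcal{R},\mathcal{T}}$ and $\varphi_{xa}=\rho_{\mathcal{S},{\rm rann}(a)}$ are equivalent to the ideal-theoretic statement that $x \in a\{1\}$ with $xa\mathcal{R}=\mathcal{S}$ and ${\rm rann}(ax)=\mathcal{T}$. This makes the pairs (2)$\Leftrightarrow$(5), (3)$\Leftrightarrow$(6), (4)$\Leftrightarrow$(7) immediate, and reduces the real work to comparing the three extra hypotheses — $x\in\mathcal{S}$, ${\rm lann}(\mathcal{S})\subseteq{\rm lann}(x)$, and $\mathcal{T}\subseteq{\rm rann}(x)$ — to each other and to condition (1), and then tying in (8).

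First, under the common assumption ``$x\in a\{1\}$, $xa\mathcal{R}=\mathcal{S}$, ${\rm rann}(ax)=\mathcal{T}$'', I would show each of the three extra conditions is equivalent to $xax=x$. For (5): since $\mathcal{S}$ is a right ideal, $x\in\mathcal{S}$ iff $x\mathcal{R}\subseteq\mathcal{S}$; Lemma~\ref{L ro a = a and a ro = a}(1), applied to the projector $\varphi_{xa}=\rho_{\mathcal{S},{\rm rann}(a)}$ coming from Theorem~\ref{T 1I projectors}, rewrites $x\mathcal{R}\subseteq\mathcal{S}$ as $(xa)x=x$. For (6): note $xa$ is idempotent (as $(xa)^{2}=x(axa)=xa$), so $xa\{1\}\neq\emptyset$; since $\mathcal{S}=xa\mathcal{R}$ yields ${\rm lann}(\mathcal{S})={\rm lann}(xa)$, Lemma~\ref{L L 2.5 RDD (2014b)}(2) turns ${\rm lann}(xa)\subseteq{\rm lann}(x)$ into $x\mathcal{R}\subseteq xa\mathcal{R}=\mathcal{S}$, reducing (6) to (5). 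For (7): we always have ${\rm rann}(x)\subseteq{\rm rann}(ax)$ and $1-ax\in{\rm rann}(ax)=\mathcal{T}$ (because $ax$ is idempotent), so $\mathcal{T}\subseteq{\rm rann}(x)$ forces $x(1-ax)=0$, i.e.\ $xax=x$; conversely $xax=x$ gives $axr=0\Rightarrow xr=x(axr)=0$.

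Next I would link this common characterization to (1). If $x=a^{(1,2)}_{{\rm rprin}=\mathcal{S},{\rm rann}=\mathcal{T}}$, then by definition $x\in a\{1,2\}$, $x\mathcal{R}=\mathcal{S}$, ${\rm rann}(x)=\mathcal{T}$; from $xax=x$ we get $x\in xa\mathcal{R}\subseteq x\mathcal{R}$, so $xa\mathcal{R}=x\mathcal{R}=\mathcal{S}$, and since $x\in a\{2\}$ we have ${\rm rann}(ax)={\rm rann}(x)=\mathcal{T}$, while $x\in\mathcal{S}$ is automatic — this is (5). Conversely, (5) together with the previous paragraph forces $xax=x$, hence $x\in a\{1,2\}$; then $x\mathcal{R}=xax\mathcal{R}\subseteq xa\mathcal{R}\subseteq x\mathcal{R}$ gives $x\mathcal{R}=\mathcal{S}$, and ${\rm rann}(x)={\rm rann}(ax)=\mathcal{T}$, so (1) holds.

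Finally I would handle (8). For (8)$\Rightarrow$(5): Theorem~\ref{T 1I xaR ax0 expression}(3) specialized to $y=0$ yields $x\in a\{1\}$, $xa\mathcal{R}=\mathcal{S}$, ${\rm rann}(ax)=\mathcal{T}$; the leftmost factor $\rho_{\mathcal{S},{\rm rann}(a)}(1)$ lies in $\mathcal{S}$ by Lemma~\ref{L proy im ker}(2), and $\mathcal{S}$ being a right ideal gives $x\in\mathcal{S}$. For (1)$\Rightarrow$(8): take $a^{(1)}=x$ in the formula; the projector identifications $\rho_{\mathcal{S},{\rm rann}(a)}(1)=\varphi_{xa}(1)=xa$ and $\rho_{a\mathcal{R},\mathcal{T}}(1)=\varphi_{ax}(1)=ax$ (already available from (5)$\Leftrightarrow$(2)) make the right-hand side collapse to $(xa)\,x\,(ax)=(xax)(ax)=x(ax)=xax=x$. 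The only real obstacle is keeping the many projector/ideal identifications in sync; no ideas are needed beyond the preliminary lemmas together with Theorems~\ref{T 1I projectors} and \ref{T 1I xaR ax0 expression}.
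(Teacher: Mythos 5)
Your proposal is correct, but it is organized around a different pivot than the paper's proof. The paper anchors the whole theorem at the equivalences (1)$\Leftrightarrow$(2),(3),(4), obtained directly from the $\{1,2\}$-projector characterization (Theorem~\ref{T 12I projectors}) together with the observation that $1-xa\in{\rm lann}(\mathcal{S})$, and then gets (5)--(7) by the one-way implications (1)$\Rightarrow$(5),(6),(7) and (5)$\Rightarrow$(2), (6)$\Rightarrow$(3), (7)$\Rightarrow$(4) via Theorem~\ref{T 1I projectors}; for (8) it invokes Theorem~\ref{T 1I xaR ax0 expression} to produce a representation $x=\rho_{\mathcal{S},{\rm rann}(a)}(1)a^{(1)}\rho_{a\mathcal{R},\mathcal{T}}(1)+(1-a^{(1)}a)y(1-aa^{(1)})$ and then kills the second summand using $x\in a\{2\}$. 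You instead never use Theorem~\ref{T 12I projectors}: you identify the common cores of (2)--(4) and (5)--(7) through Theorem~\ref{T 1I xaR ax0 expression}(1)$\Leftrightarrow$(2), reduce all three tail conditions, under that common core, to the single equation $xax=x$ (via Lemma~\ref{L ro a = a and a ro = a}(1), Lemma~\ref{L L 2.5 RDD (2014b)} with the idempotent $xa$, and a direct computation with $1-ax\in\mathcal{T}$), and then match this with (1) straight from the definition of $a^{(1,2)}_{{\rm rprin}=\mathcal{S},{\rm rann}=\mathcal{T}}$ plus the standard identities $xa\mathcal{R}=x\mathcal{R}$ and ${\rm rann}(ax)={\rm rann}(x)$ for a $\{1,2\}$-inverse. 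Your treatment of (8) is also leaner: $(8)\Rightarrow(5)$ by specializing $y=0$ in Theorem~\ref{T 1I xaR ax0 expression}(3) and noting the product lies in the right ideal $\mathcal{S}$, and $(1)\Rightarrow(8)$ by simply taking $a^{(1)}=x$, so the expression collapses to $(xa)x(ax)=x$, avoiding the paper's manipulation of a general $a^{(1)}$ and the extra term. What the paper's route buys is uniformity with the surrounding results (the same Theorem~\ref{T 12I projectors} template is reused in Theorems~\ref{T 12I Rx 0x expression}--\ref{T 12I 0x expression}); what yours buys is a more elementary, computation-level argument with a single unifying criterion ($xax=x$) for the three tail conditions. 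The only step you leave implicit, that the condition $x\in\mathcal{S}$ yields ${\rm lann}(\mathcal{S})\subseteq{\rm lann}(x)$ (needed so that (6) is not just reduced to (5) but also recovered from it), is immediate, so there is no gap.
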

\begin{proof}
The equivalences (1) $\Leftrightarrow$ (2) and (1) $\Leftrightarrow$
(4) follow from Theorem~\ref{T 12I projectors}. Note that if
$\varphi_{xa}=\rho_{\mathcal{S},{\rm rann}(a)}$, then $1-xa \in {\rm
lann}(\mathcal{S})$. Hence, by Theorem~\ref{T 12I projectors}, (1)
$\Leftrightarrow$ (3).

The implications (1) $\Rightarrow$ (5)(6)(7) are immediate. The
implications (5) $\Rightarrow$ (2), (6) $\Rightarrow$ (3), and (7)
$\Rightarrow$ (4) follow from Theorem~\ref{T 1I projectors}.

(1) $\Rightarrow$ (8): By Theorem~\ref{T 1I xaR ax0 expression},
$x=\rho_{\mathcal{S},{\rm
rann}(a)}(1)a^{(1)}\rho_{a\mathcal{R},\mathcal{T}}(1)+(1-a^{(1)}a)y(1-aa^{(1)})$ where $a^{(1)} \in a\{1\}$ and $y \in \mathcal{R}$.
Then $ax=aa^{(1)}\rho_{a\mathcal{R},\mathcal{T}}(1)$ and
$xa=\rho_{\mathcal{S},{\rm rann}(a)}(1)a^{(1)}a$. Let $r \in
\mathcal{R}$ be such that
$\rho_{a\mathcal{R},\mathcal{T}}(1)=ar$. Since $x \in a\{2\}$, we get $x=x(ax)=(xa)a^{(1)}\rho_{a\mathcal{R},\mathcal{T}}(1)=\rho_{\mathcal{S},{\rm rann}(a)}(1)a^{(1)}aa^{(1)}ar
=\rho_{\mathcal{S},{\rm rann}(a)}(1)a^{(1)}ar=\rho_{\mathcal{S},{\rm
rann}(a)}(1)a^{(1)}\rho_{a\mathcal{R},\mathcal{T}}(1)$. This shows
that (8) holds.

(8) $\Rightarrow$ (1): By Theorem~\ref{T 1I xaR ax0 expression}, $x
\in a\{1\}$, $xa\mathcal{R}=\mathcal{S}$ and ${\rm
rann}(ax)=\mathcal{T}$. As in the proof of (1) $\Rightarrow$ (8),
$xax=\rho_{\mathcal{S},{\rm
rann}(a)}(a^{(1)}\rho_{a\mathcal{R},\mathcal{T}}(1))=x$. Hence, (1)
holds.
\end{proof}
Using Theorems~\ref{T 1I projectors}, \ref{T 12I projectors}, and
\ref{T 1I Rax 0xa expression}, we analogously obtain:
\begin{theorem}\label{T 12I Rx 0x expression}
Let $a, x \in \mathcal{R}$ and $\mathcal{S}$, $\mathcal{T}$ be left
ideals of $\mathcal{R}$. Then the following assertions are
equivalent:
\begin{enumerate}
  \item $x = a^{(1,2)}_{{\rm lprin}=\mathcal{S}',{\rm lann}=\mathcal{T}'}$.
  \item $\,_{ax}\varphi=\rho_{\mathcal{S},{\rm lann}(a)}$, $\,_{xa}\varphi=\rho_{\mathcal{R}a,\mathcal{T}}$ and
  $x \in \mathcal{S}$.
  \item $\,_{ax}\varphi=\rho_{\mathcal{S},{\rm lann}(a)}$, $\,_{xa}\varphi=\rho_{\mathcal{R}a,\mathcal{T}}$ and
  ${\rm rann}(\mathcal{S}) \subseteq {\rm rann}(x)$.
  \item $\,_{ax}\varphi=\rho_{\mathcal{S},{\rm lann}(a)}$, $\,_{xa}\varphi=\rho_{\mathcal{R}a,\mathcal{T}}$ and
  $\mathcal{T} \subseteq {\rm lann}(x)$.
  \item $x \in a\{1\}$, $\mathcal{R}ax=\mathcal{S}$, ${\rm lann}(ax)=\mathcal{T}$, $x \in \mathcal{S}$.
  \item $x \in a\{1\}$, $\mathcal{R}ax=\mathcal{S}$, ${\rm lann}(ax)=\mathcal{T}$, ${\rm rann}(\mathcal{S}) \subseteq {\rm rann}(x)$.
  \item $x \in a\{1\}$, $\mathcal{R}ax=\mathcal{S}$, ${\rm lann}(ax)=\mathcal{T}$, $\mathcal{T} \subseteq {\rm lann}(x)$.
  \item $x=\rho_{\mathcal{R}a,\mathcal{T}}(1)a^{(1)}\rho_{\mathcal{S},{\rm lann}(a)}(1)$ for some $a^{(1)} \in a\{1\}$.
\end{enumerate}
\end{theorem}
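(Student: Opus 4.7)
The plan is to mirror the proof of Theorem~\ref{T 12I xR x0 expression} step by step, swapping right ideals for left ideals, replacing $\varphi_{ax}, \varphi_{xa}$ by $\,_{ax}\varphi, \,_{xa}\varphi$, and invoking Theorem~\ref{T 1I Rax 0xa expression} in place of Theorem~\ref{T 1I xaR ax0 expression}. The structure (1)$\Leftrightarrow$(2)--(4), then (1)$\Rightarrow$(5),(6),(7) with converses via Theorem~\ref{T 1I projectors}, and finally (1)$\Leftrightarrow$(8), will be reproduced verbatim with the left/right roles interchanged.

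First I would obtain (1)$\Leftrightarrow$(2) and (1)$\Leftrightarrow$(4) as immediate corollaries of Theorem~\ref{T 12I projectors}. For (1)$\Leftrightarrow$(3), I would observe that if $\,_{ax}\varphi=\rho_{\mathcal{S},{\rm lann}(a)}$, then $1-ax\in{\rm rann}(\mathcal{S})$ (by Lemma~\ref{L ro a = a and a ro = a}(2) applied to $1-\rho_{\mathcal{S},{\rm lann}(a)}$), so condition (3) is equivalent to (2) once $x\in a\{1,2\}$ is secured. Implications (1)$\Rightarrow$(5),(6),(7) are immediate from the definitions, and the converses (5)$\Rightarrow$(2), (6)$\Rightarrow$(3), (7)$\Rightarrow$(4) follow because Theorem~\ref{T 1I projectors} converts the ideal conditions on $ax,xa$ into the projector equalities of (2)--(4).

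For (1)$\Rightarrow$(8), I would start by invoking Theorem~\ref{T 1I Rax 0xa expression} to write
\[
x=\rho_{\mathcal{R}a,\mathcal{T}}(1)\,a^{(1)}\,\rho_{\mathcal{S},{\rm lann}(a)}(1)+(1-a^{(1)}a)y(1-aa^{(1)})
\]
for some $a^{(1)}\in a\{1\}$ and $y\in\mathcal{R}$. Then I would use $xax=x$ together with the identities $ax=\rho_{\mathcal{S},{\rm lann}(a)}(1)$ read on the right and $xa=\rho_{\mathcal{R}a,\mathcal{T}}(1)a^{(1)}a$ read on the left, choosing $\ell\in\mathcal{R}$ with $\rho_{\mathcal{S},{\rm lann}(a)}(1)=\ell a$, to absorb the $(1-a^{(1)}a)y(1-aa^{(1)})$ term and collapse the expression to $x=\rho_{\mathcal{R}a,\mathcal{T}}(1)\,a^{(1)}\,\rho_{\mathcal{S},{\rm lann}(a)}(1)$. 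Conversely, for (8)$\Rightarrow$(1), Theorem~\ref{T 1I Rax 0xa expression} (taking $y=0$) gives $x\in a\{1\}$ with $\mathcal{R}ax=\mathcal{S}$ and ${\rm lann}(xa)=\mathcal{T}$, and a direct computation $xax=\rho_{\mathcal{R}a,\mathcal{T}}(a^{(1)}\rho_{\mathcal{S},{\rm lann}(a)}(1))=x$ (using that $a^{(1)}\rho_{\mathcal{S},{\rm lann}(a)}(1)=a^{(1)}\ell a\in\mathcal{R}a$) yields $x\in a\{1,2\}$, hence $x=a^{(1,2)}_{{\rm lprin}=\mathcal{S},{\rm lann}=\mathcal{T}}$.

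The only real obstacle is bookkeeping: making sure the element $\rho_{\mathcal{S},{\rm lann}(a)}(1)$ is ``pulled out on the left as a multiple of $a$'' (the dual of the step in the previous proof where $\rho_{a\mathcal{R},\mathcal{T}}(1)=ar$), which requires noticing that $\rho_{\mathcal{S},{\rm lann}(a)}(1)\in\mathcal{R}a$ because $\mathcal{S}\subseteq\mathcal{R}a$ whenever the projector data are consistent with $x\in a\{1\}$. Beyond that, everything is a mechanical left/right transposition of the argument already carried out for Theorem~\ref{T 12I xR x0 expression}.
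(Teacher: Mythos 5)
Your overall plan --- transposing the proof of Theorem~\ref{T 12I xR x0 expression} left-to-right via Theorems~\ref{T 12I projectors}, \ref{T 1I projectors} and \ref{T 1I Rax 0xa expression} --- is exactly the route the paper intends (its proof is given only as ``analogously obtain''), and your treatment of (1)$\Leftrightarrow$(2)--(4) and of (5),(6),(7) is a correct transposition. The problem is precisely the step you flag as ``the only real obstacle'': you resolve it with a false claim. You assert $\rho_{\mathcal{S},{\rm lann}(a)}(1)\in\mathcal{R}a$ on the grounds that $\mathcal{S}\subseteq\mathcal{R}a$ whenever the data are consistent with $x\in a\{1\}$. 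That containment fails in general: under (1) one has $\mathcal{S}=\mathcal{R}x$ with $x\in a\{1,2\}$, and $\mathcal{R}x$ need not lie in $\mathcal{R}a$; for instance in a matrix ring with $x=a^{\dag}$ one gets $\mathcal{S}=\mathcal{R}a^{\ast}$ and $\rho_{\mathcal{S},{\rm lann}(a)}(1)=aa^{\dag}$, which is generally not a left multiple of $a$. The correct dual of the paper's step ``$\rho_{a\mathcal{R},\mathcal{T}}(1)=ar$'' is ``$\rho_{\mathcal{R}a,\mathcal{T}}(1)=\ell a$'', which is automatic because that projector has image $\mathcal{R}a$. With this the collapse in (1)$\Rightarrow$(8) works: $ax=aa^{(1)}\rho_{\mathcal{S},{\rm lann}(a)}(1)$ and $xa=\rho_{\mathcal{R}a,\mathcal{T}}(1)a^{(1)}a$, so $x=x(ax)=(xa)a^{(1)}\rho_{\mathcal{S},{\rm lann}(a)}(1)=\ell aa^{(1)}aa^{(1)}\rho_{\mathcal{S},{\rm lann}(a)}(1)=\ell aa^{(1)}\rho_{\mathcal{S},{\rm lann}(a)}(1)=\rho_{\mathcal{R}a,\mathcal{T}}(1)a^{(1)}\rho_{\mathcal{S},{\rm lann}(a)}(1)$.

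Your justification of (8)$\Rightarrow$(1) has the same mis-transposition, plus a formal slip: $\,_{xa}\varphi=\rho_{\mathcal{R}a,\mathcal{T}}$ acts by \emph{right} multiplication by $xa$, so $\rho_{\mathcal{R}a,\mathcal{T}}\bigl(a^{(1)}\rho_{\mathcal{S},{\rm lann}(a)}(1)\bigr)$ is not $xax$, and your parenthetical again invokes $\rho_{\mathcal{S},{\rm lann}(a)}(1)=\ell a$. The correct dual argument is $xax=\,_{ax}\varphi(x)=\rho_{\mathcal{S},{\rm lann}(a)}(x)=x$, which holds because $\mathcal{S}$ is a \emph{left} ideal and the rightmost factor $\rho_{\mathcal{S},{\rm lann}(a)}(1)$ of $x$ lies in $\mathcal{S}$, forcing $x\in\mathcal{S}$ (this replaces the ``leftmost factor'' argument of the right-handed proof rather than repeating it). So the strategy is the intended one and the proof is repairable by a routine change, but as written the key reduction rests on a containment that is false.
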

Analogously we have the following six theorems with proofs similar
to the proof of Theorem~\ref{T 12I xR x0 expression}.  As in
Section~\ref{S 1I with prescribed principal and annihilator ideals},
in the next results, we can replace the hypothesis
$a\{1\}\neq\emptyset$ by the condition $\rho_{\mathcal{S},{\rm
rann}(a)}(1) \in a\mathcal{R}$ (resp. $\rho_{\mathcal{S},{\rm
lann}(a)}(1) \in \mathcal{R}a$).
\begin{theorem}\label{T 12I xR Rx expression}
Let $a, x \in \mathcal{R}$, $\mathcal{S}$ be a right ideal of
$\mathcal{R}$, and $\mathcal{S}'$ be left a ideal of $\mathcal{R}$.
Then the following assertions are equivalent:
\begin{enumerate}
  \item $x=a^{(1,2)}_{{\rm rprin}=\mathcal{S},{\rm lprin}=\mathcal{S}'}$.
  \item $\varphi_{xa}=\rho_{\mathcal{S},{\rm rann}(a)}$, $\,_{ax}\varphi=\rho_{\mathcal{S}',{\rm lann}(a)}$ and
  $x \in \mathcal{S} \cup \mathcal{S}'$.
  \item $\varphi_{xa}=\rho_{\mathcal{S},{\rm rann}(a)}$, $\,_{ax}\varphi=\rho_{\mathcal{S}',{\rm lann}(a)}$ and
  ${\rm lann}(\mathcal{S}) \subseteq {\rm lann}(x)$.
  \item $\varphi_{xa}=\rho_{\mathcal{S},{\rm rann}(a)}$, $\,_{ax}\varphi=\rho_{\mathcal{S}',{\rm lann}(a)}$ and
  ${\rm rann}(\mathcal{S}') \subseteq {\rm rann}(x)$.
  \item $x \in a\{1\}$, $xa\mathcal{R}=\mathcal{S}$, $\mathcal{R}ax=\mathcal{S}'$, $x \in \mathcal{S} \cup \mathcal{S}'$.
  \item $x \in a\{1\}$, $xa\mathcal{R}=\mathcal{S}$, $\mathcal{R}ax=\mathcal{S}'$, ${\rm lann}(\mathcal{S}) \subseteq {\rm lann}(x)$.
  \item $x \in a\{1\}$, $xa\mathcal{R}=\mathcal{S}$, $\mathcal{R}ax=\mathcal{S}'$, ${\rm rann}(\mathcal{S}') \subseteq {\rm rann}(x)$.
  \item $a\{1\}\neq\emptyset$ and $x=\rho_{\mathcal{S},{\rm rann}(a)}(1)a^{(1)}\rho_{\mathcal{S}',{\rm lann}(a)}(1)$ where $a^{(1)} \in a\{1\}$.
\end{enumerate}
\end{theorem}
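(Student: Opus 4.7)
The plan is to adapt the proof of Theorem~\ref{T 12I xR x0 expression} to this left/right principal ideal setting, combining Theorem~\ref{T 12I projectors} (which characterizes $\{1,2\}$-inverses via projectors), Theorem~\ref{T 1I projectors} (to pass between the projector and ideal formulations), and Theorem~\ref{T 1I xaR Rax expression} (for the closed form in (8)).

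For the equivalence of (1) with (2)--(4), I would start from (1): Theorem~\ref{T 12I projectors} gives $\varphi_{xa}=\rho_{xa\mathcal{R},{\rm rann}(a)}=\rho_{\mathcal{S},{\rm rann}(a)}$ and $\,_{ax}\varphi=\rho_{\mathcal{R}ax,{\rm lann}(a)}=\rho_{\mathcal{S}',{\rm lann}(a)}$, and the identity $x=xax$ yields $x \in xa\mathcal{R} \cap \mathcal{R}ax = \mathcal{S} \cap \mathcal{S}' \subseteq \mathcal{S} \cup \mathcal{S}'$, hence (2). For the containment versions, I would note ${\rm lann}(\mathcal{S})={\rm lann}(xa)$ and ${\rm rann}(\mathcal{S}')={\rm rann}(ax)$: if $r \in {\rm lann}(\mathcal{S})$ then $rxa=0$, whence $rx=rxax=0$, giving (3); symmetrically (4). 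Conversely, the two projector equations already imply $x \in a\{1\}$ via Theorem~\ref{T 1I projectors}, so it only remains to produce $xax=x$. Each supplementary clause does this: if $x \in \mathcal{S}$, applying $\rho_{\mathcal{S},{\rm rann}(a)}$ gives $xax=x$ (and the $\mathcal{S}'$ case is symmetric on the left); if ${\rm lann}(\mathcal{S}) \subseteq {\rm lann}(x)$, then since $1-xa \in {\rm lann}(xa\mathcal{R})={\rm lann}(\mathcal{S})$ we obtain $(1-xa)x=0$; and the case ${\rm rann}(\mathcal{S}') \subseteq {\rm rann}(x)$ mirrors this on the right.

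The equivalences (2)$\Leftrightarrow$(5), (3)$\Leftrightarrow$(6), and (4)$\Leftrightarrow$(7) then follow by a direct application of Theorem~\ref{T 1I projectors}, which converts each projector equation into its equivalent statement about principal ideals together with $x \in a\{1\}$.

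Finally, for (1)$\Leftrightarrow$(8): assuming (1), Theorem~\ref{T 1I xaR Rax expression} yields $a^{(1)} \in a\{1\}$ and $y \in \mathcal{R}$ with $x=\rho_{\mathcal{S},{\rm rann}(a)}(1)a^{(1)}\rho_{\mathcal{S}',{\rm lann}(a)}(1)+(1-a^{(1)}a)y(1-aa^{(1)})$. Using Lemma~\ref{L ro a = a and a ro = a}, I would compute $ax=aa^{(1)}\rho_{\mathcal{S}',{\rm lann}(a)}(1)$ and $xa=\rho_{\mathcal{S},{\rm rann}(a)}(1)a^{(1)}a$, so the residual term drops out of $xax$, leaving $xax=\rho_{\mathcal{S},{\rm rann}(a)}(1)a^{(1)}\rho_{\mathcal{S}',{\rm lann}(a)}(1)$. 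Since $x=xax$, this is the stated closed form. Conversely, the closed form in (8) has the shape produced by Theorem~\ref{T 1I xaR Rax expression} with $y=0$, so it lies in $a\{1\}$ with the prescribed principal ideals, and the same calculation shows $xax=x$. I expect the only subtle point to be verifying cleanly that each of the three alternative supplementary clauses in (2)--(4) individually suffices to upgrade a $\{1\}$-inverse with the right projector data to a $\{1,2\}$-inverse; once one records ${\rm lann}(xa\mathcal{R})={\rm lann}(xa)$ and ${\rm rann}(\mathcal{R}ax)={\rm rann}(ax)$, each case collapses to a single line.
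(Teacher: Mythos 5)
Your handling of the equivalences (1)--(7) is correct and is exactly the intended adaptation of Theorem~\ref{T 12I xR x0 expression}: the two projector identities force $x\in a\{1\}$ with $xa\mathcal{R}=\mathcal{S}$, $\mathcal{R}ax=\mathcal{S}'$, ${\rm rann}(xa)={\rm rann}(a)$, ${\rm lann}(ax)={\rm lann}(a)$, and each supplementary clause upgrades this to $xax=x$ (via $\rho_{\mathcal{S},{\rm rann}(a)}(x)=x$, resp. $(1-xa)x=0$, resp. $x(1-ax)=0$), after which $x\mathcal{R}=xa\mathcal{R}=\mathcal{S}$ and $\mathcal{R}x=\mathcal{R}ax=\mathcal{S}'$ give (1); the passage to (5)--(7) via Theorem~\ref{T 1I projectors} is also fine.

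The gap is in (1)$\Leftrightarrow$(8). Write $p=\rho_{\mathcal{S},{\rm rann}(a)}(1)$ and $q=\rho_{\mathcal{S}',{\rm lann}(a)}(1)$, so $ap=a$ and $qa=a$. With either grouping, your computation gives $xax=pa^{(1)}aa^{(1)}q$, \emph{not} $pa^{(1)}q$: the middle factor $a^{(1)}aa^{(1)}$ does not reduce to $a^{(1)}$ for a general inner inverse. In the proof of Theorem~\ref{T 12I xR x0 expression} this reduction is achieved by writing $\rho_{a\mathcal{R},\mathcal{T}}(1)=ar\in a\mathcal{R}$, and that trick has no analogue here, since $q\in\mathcal{S}'$ need not lie in $\mathcal{R}a$; so ``the same calculation shows $xax=x$'' is not yet a proof, and this is precisely the outer-inverse property that (8)$\Rightarrow$(1) must establish (it is what separates this theorem from Theorem~\ref{T 1I xaR Rax expression}). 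The fix is one line: by Corollary~\ref{C a idempotent varphia proy}(2), $\,_{q}\varphi=\rho_{\mathcal{S}',{\rm lann}(a)}$, so ${\rm lann}(q)={\rm lann}(a)$; since $pa^{(1)}(1-aa^{(1)})a=0$, we get $pa^{(1)}(1-aa^{(1)})q=0$, i.e.\ $pa^{(1)}aa^{(1)}q=pa^{(1)}q$, which yields $xax=x$ in (8)$\Rightarrow$(1) and completes (1)$\Rightarrow$(8) for the same $a^{(1)}$ (alternatively, in (1)$\Rightarrow$(8) note $a^{(1)}aa^{(1)}\in a\{1\}$, so $x=p\bigl(a^{(1)}aa^{(1)}\bigr)q$ already has the required form). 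Even shorter: for (8)$\Rightarrow$(1) observe $x=pa^{(1)}q\in p\mathcal{R}\cap\mathcal{R}q=\mathcal{S}\cap\mathcal{S}'$, so (8) implies (2) (whose equivalence with (1) you have already proved); and for (1)$\Rightarrow$(8) take $a^{(1)}=x$, since $x\in\mathcal{S}\cap\mathcal{S}'$ gives $pxq=x$.
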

\begin{theorem}\label{T 12I 0x x0 expression}
Let $a, x \in \mathcal{R}$, $\mathcal{T}$ be a right ideal of
$\mathcal{R}$, and $\mathcal{T}'$ be left a ideal of $\mathcal{R}$.
Then the following assertions are equivalent:
\begin{enumerate}
  \item $x=a^{(1,2)}_{{\rm lann}=\mathcal{T}',{\rm rann}=\mathcal{T}}$.
  \item $\varphi_{ax}=\rho_{a\mathcal{R},\mathcal{T}}$, $\,_{xa}\varphi=\rho_{\mathcal{R}a,\mathcal{T}'}$ and
  $\mathcal{T} \subseteq {\rm rann}(x)$.
  \item $\varphi_{ax}=\rho_{a\mathcal{R},\mathcal{T}}$, $\,_{xa}\varphi=\rho_{\mathcal{R}a,\mathcal{T}'}$ and
  $\mathcal{T}' \subseteq {\rm lann}(x)$.
  \item $x \in a\{1\}$, ${\rm rann}(ax)=\mathcal{T}$, ${\rm lann}(ax)=\mathcal{T}'$, $\mathcal{T} \subseteq {\rm rann}(x)$.
  \item $x \in a\{1\}$, ${\rm rann}(ax)=\mathcal{T}$, ${\rm lann}(ax)=\mathcal{T}'$, $\mathcal{T}' \subseteq {\rm lann}(x)$.
  \item $x=\rho_{\mathcal{R}a,\mathcal{T}'}(1)a^{(1)}\rho_{a\mathcal{R},\mathcal{T}}(1)$ where $a^{(1)} \in a\{1\}$.
\end{enumerate}
\end{theorem}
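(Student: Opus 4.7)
The plan is to mirror the proof of Theorem~\ref{T 12I xR x0 expression}, replacing its right-ideal bookkeeping with the bilateral version appropriate to the annihilator data. The three main tools will be Theorem~\ref{T 12I projectors} for the projector form of a $\{1,2\}$-inverse, Theorem~\ref{T 1I 0xa ax0 expression} for the explicit formula of a $\{1\}$-inverse with prescribed bilateral annihilators, and the identities ${\rm rann}(ax)={\rm rann}(x)$ and ${\rm lann}(xa)={\rm lann}(x)$, valid for every $x\in a\{1,2\}$ by a short check using $xax=x$.

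For the equivalences (1)$\Leftrightarrow$(2) and (1)$\Leftrightarrow$(3), the forward directions follow directly from Theorem~\ref{T 12I projectors} applied to $x\in a\{1,2\}$ with ${\rm rann}(x)=\mathcal{T}$ and ${\rm lann}(x)=\mathcal{T}'$. For (2)$\Rightarrow$(1), I would evaluate $\varphi_{ax}=\rho_{a\mathcal{R},\mathcal{T}}$ at $a\in a\mathcal{R}$ to get $axa=a$, so $x\in a\{1\}$; then the relation $1-ax=1-\rho_{a\mathcal{R},\mathcal{T}}(1)\in\mathcal{T}\subseteq{\rm rann}(x)$ yields $x(1-ax)=0$, i.e.\ $xax=x$, hence $x\in a\{1,2\}$. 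The identities recalled above force ${\rm rann}(x)={\rm rann}(ax)=\mathcal{T}$ and ${\rm lann}(x)={\rm lann}(xa)=\mathcal{T}'$, identifying $x$ with $a^{(1,2)}_{{\rm lann}=\mathcal{T}',{\rm rann}=\mathcal{T}}$ via Theorem~\ref{T 2I 0x x0 existencia}. The implication (3)$\Rightarrow$(1) is symmetric, using $1-xa\in\mathcal{T}'\subseteq{\rm lann}(x)$. For (4) and (5), the forward directions read off the prescribed annihilators; for the converses, I would apply Theorem~\ref{T 1I 0xa ax0 expression} to obtain $\varphi_{ax}=\rho_{a\mathcal{R},\mathcal{T}}$ and $\,_{xa}\varphi=\rho_{\mathcal{R}a,\mathcal{T}'}$, reducing them to (2) and (3) respectively.

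For (1)$\Leftrightarrow$(6), set $p=\rho_{\mathcal{R}a,\mathcal{T}'}(1)$ and $q=\rho_{a\mathcal{R},\mathcal{T}}(1)$. Assuming (1), the projector identifications give $xa=p$ and $ax=q$, and Theorem~\ref{T 1I 0xa ax0 expression}(3) writes $x=pa^{(1)}q+(1-a^{(1)}a)y(1-aa^{(1)})$ for some $a^{(1)}\in a\{1\}$ and $y\in\mathcal{R}$. Since $x=xax=(xa)x(ax)=pxq$ and the direct checks $p(1-a^{(1)}a)=p-xa\cdot a^{(1)}a=p-x(aa^{(1)}a)=p-xa=0$ and (dually) $(1-aa^{(1)})q=0$ hold, applying $p\cdot(\,\cdot\,)\cdot q$ to the formula annihilates the free correction and yields $x=pa^{(1)}q$. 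Conversely, assuming (6), Lemma~\ref{L ro a = a and a ro = a} gives $ap=a$ and $qa=a$, whence $axa=a\cdot a^{(1)}\cdot a=a$, and the chain $xax=pa^{(1)}(qa)pa^{(1)}q=pa^{(1)}(ap)a^{(1)}q=pa^{(1)}(aa^{(1)}q)=pa^{(1)}q=x$ (using that $q=as$ for some $s\in\mathcal{R}$, hence $aa^{(1)}q=q$) shows $x\in a\{1,2\}$; then $ax=q$ and $xa=p$, and Corollary~\ref{C a idempotent varphia proy}(2) recovers (2).

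The main obstacle I anticipate is bookkeeping rather than depth: in the $x=pa^{(1)}q$ reduction, tracking which of the identities $ap=a$, $qa=a$, $pa^{(1)}a=p$, $aa^{(1)}q=q$ is used at each step requires care, but all four are immediate consequences of $p\in\mathcal{R}a$, $q\in a\mathcal{R}$, and $aa^{(1)}a=a$. Once this is in hand, every remaining step is a direct application of the projector identities developed in Section~\ref{S Preliminaries Sub Projectors}.
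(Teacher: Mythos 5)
Your strategy is exactly the one the paper intends: the paper gives no separate proof of this theorem, stating only that it is analogous to Theorem~\ref{T 12I xR x0 expression}, and your use of Theorem~\ref{T 12I projectors} for (1)$\Leftrightarrow$(2)$\Leftrightarrow$(3) and of Theorem~\ref{T 1I 0xa ax0 expression} for the explicit formula reproduces that template. Those parts check out: the observation $1-ax\in\mathcal{T}\subseteq{\rm rann}(x)$ (resp.\ $1-xa\in\mathcal{T}'\subseteq{\rm lann}(x)$) gives $xax=x$, the annihilator identities for outer inverses give ${\rm rann}(x)=\mathcal{T}$ and ${\rm lann}(x)=\mathcal{T}'$, and your $p(\cdot)q$ sandwich in (1)$\Rightarrow$(6), with $p(1-a^{(1)}a)=0=(1-aa^{(1)})q$, is a harmless variant of the paper's computation $x=x(ax)=(xa)a^{(1)}\rho_{a\mathcal{R},\mathcal{T}}(1)$ in the model proof. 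In (6)$\Rightarrow$(1) you should add the one-line remark that $\mathcal{T}\subseteq{\rm rann}(x)$ (e.g.\ ${\rm rann}(x)={\rm rann}(ax)={\rm ker}(\varphi_{ax})=\mathcal{T}$ once $x\in a\{2\}$), so that (2) is fully verified.

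The one genuine mismatch is in items (4) and (5). As printed they impose ${\rm lann}(ax)=\mathcal{T}'$, and your argument does not address that condition: the forward direction does not simply ``read off'', because for $x\in a\{1\}$ one has ${\rm lann}(ax)={\rm lann}(a)$, whereas (1) prescribes $\mathcal{T}'={\rm lann}(x)={\rm lann}(xa)$, and these differ in general (in $\mathbb{F}^{2\times 2}$ take $a=E_{1,1}$ and $x=E_{1,1}+E_{2,1}$; then $x\in a\{1,2\}$ and $E_{1,2}\in{\rm lann}(ax)\setminus{\rm lann}(x)$); likewise the converse cannot invoke Theorem~\ref{T 1I 0xa ax0 expression}, whose hypothesis is ${\rm lann}(xa)=\mathcal{T}'$, not ${\rm lann}(ax)=\mathcal{T}'$. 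So what you have actually proved is the version of (4)--(5) with ${\rm lann}(xa)=\mathcal{T}'$, which is evidently the intended statement (it is the exact analogue of Theorem~\ref{T 1I 0xa ax0 expression}(1)), while the items as literally printed are not equivalent to (1). Make this correction explicit rather than treating (4)--(5) as immediate.
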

\begin{theorem}\label{T 12I xR expression}
Let $a, x \in \mathcal{R}$ and $\mathcal{S}$ be a right ideal of
$\mathcal{R}$. Then the following assertions are equivalent:
\begin{enumerate}
  \item $x \in a\{1,2\}$ and $x\mathcal{R}=\mathcal{S}$.
  \item $\varphi_{xa}=\rho_{\mathcal{S},{\rm rann}(a)}$ and
  $x \in \mathcal{S}$.
  \item $\varphi_{xa}=\rho_{\mathcal{S},{\rm rann}(a)}$ and
  ${\rm lann}(\mathcal{S}) \subseteq {\rm lann}(x)$.
  \item $x \in a\{1\}$, $xa\mathcal{R}=\mathcal{S}$, and $x \in \mathcal{S}$.
  \item $x \in a\{1\}$, $xa\mathcal{R}=\mathcal{S}$, and ${\rm lann}(\mathcal{S}) \subseteq {\rm lann}(x)$.
  \item $a\{1\}\neq\emptyset$ and $x=\rho_{\mathcal{S},{\rm rann}(a)}(1)a^{(1)}$ where $a^{(1)} \in a\{1\}$.
\end{enumerate}
\end{theorem}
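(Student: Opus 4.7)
The plan is to parallel the proof of Theorem~\ref{T 12I xR x0 expression}, adapting it to the present setting where only the right principal ideal $\mathcal{S}$ is prescribed. The main inputs are Theorem~\ref{T 12I projectors}(3), which characterizes $\{1,2\}$-inverses by the projector identity $\varphi_{xa}=\rho_{x\mathcal{R},{\rm rann}(a)}$; Theorem~\ref{T 1I xaR expression}, which identifies ``$x\in a\{1\}$ and $xa\mathcal{R}=\mathcal{S}$'' with the projector equality on $\varphi_{xa}$; and Lemma~\ref{L L 2.5 RDD (2014b)}. The subtle observation driving everything is that, once $\varphi_{xa}=\rho_{\mathcal{S},{\rm rann}(a)}$ holds, the strong prescription $x\mathcal{R}=\mathcal{S}$ can be relaxed to the membership $x\in\mathcal{S}$ or to its dual annihilator inclusion ${\rm lann}(\mathcal{S})\subseteq{\rm lann}(x)$.

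The heart is (1) $\Leftrightarrow$ (2). Forward, Theorem~\ref{T 12I projectors}(3) yields $\varphi_{xa}=\rho_{x\mathcal{R},{\rm rann}(a)}=\rho_{\mathcal{S},{\rm rann}(a)}$ and $x=xax\in x\mathcal{R}=\mathcal{S}$. Conversely, from $\varphi_{xa}=\rho_{\mathcal{S},{\rm rann}(a)}$ I obtain $axa=a$: by Lemma~\ref{L proy im ker}(1), $1-xa=\rho_{{\rm rann}(a),\mathcal{S}}(1)\in{\rm rann}(a)$, so $a(1-xa)=0$; and from $x\in\mathcal{S}$ I obtain $xax=\rho_{\mathcal{S},{\rm rann}(a)}(x)=x$. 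The identity $x\mathcal{R}=\mathcal{S}$ then follows because $x\in\mathcal{S}$ gives $x\mathcal{R}\subseteq\mathcal{S}$, while any $s\in\mathcal{S}$ satisfies $s=\rho_{\mathcal{S},{\rm rann}(a)}(s)=xas\in x\mathcal{R}$. The equivalence (2) $\Leftrightarrow$ (3) is Lemma~\ref{L L 2.5 RDD (2014b)}(1)(2) applied to $x$ and $xa$: the projector equality makes $xa$ idempotent with $xa\mathcal{R}=\mathcal{S}$, so ${\rm lann}(\mathcal{S})={\rm lann}(xa)$ and $xa\{1\}\neq\emptyset$, which interchanges $x\mathcal{R}\subseteq\mathcal{S}$ with ${\rm lann}(\mathcal{S})\subseteq{\rm lann}(x)$. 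The equivalences (1) $\Leftrightarrow$ (4) and (1) $\Leftrightarrow$ (5) then reduce to (2) and (3) via Theorem~\ref{T 1I xaR expression}.

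For (1) $\Leftrightarrow$ (6), set $p=\rho_{\mathcal{S},{\rm rann}(a)}(1)$; by Corollary~\ref{C a idempotent varphia proy}(2) together with Lemma~\ref{L proy im ker}(2), $p\in\mathcal{R}^{\bullet}$ with $p\mathcal{R}=\mathcal{S}$ and ${\rm rann}(p)={\rm rann}(a)$. Given $a^{(1)}\in a\{1\}$, compute $xa=pa^{(1)}a=p$ using $1-a^{(1)}a\in{\rm rann}(a)={\rm rann}(p)$, and note $x=pa^{(1)}\in p\mathcal{R}=\mathcal{S}$, so (2) is satisfied. Conversely, given (1), the choice $a^{(1)}=x\in a\{1\}$ works because $px=\rho_{\mathcal{S},{\rm rann}(a)}(x)=x$ when $x\in\mathcal{S}$. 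The main technical hurdle in the whole argument is the reverse direction of (1) $\Leftrightarrow$ (2), where one must extract both the full $\{1,2\}$-inverse property and the ideal identity $x\mathcal{R}=\mathcal{S}$ from the projector equality together with the minimal hypothesis $x\in\mathcal{S}$; this hinges on the Lemma~\ref{L proy im ker}(1) computation that places $1-xa$ inside ${\rm rann}(a)$ without any a priori control over $a$.
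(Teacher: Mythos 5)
Your proposal is correct and follows essentially the route the paper intends for this theorem, namely the same template as the proof of Theorem~\ref{T 12I xR x0 expression}: the projector characterizations (Theorems~\ref{T 12I projectors} and \ref{T 1I xaR expression}) for (1)--(5), and the explicit computation with $p=\rho_{\mathcal{S},{\rm rann}(a)}(1)$ for (6). The only (harmless) deviation is that you obtain (2) $\Leftrightarrow$ (3) from Lemma~\ref{L L 2.5 RDD (2014b)} applied to $x$ and the regular idempotent $xa$, where the paper's template instead uses the observation that $1-xa \in {\rm lann}(\mathcal{S})$; both arguments are valid and of comparable length.
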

\begin{theorem}\label{T 12I x0 expression}
Let $a, x \in \mathcal{R}$ and $\mathcal{T}$ be a right ideal of
$\mathcal{R}$. Then the following assertions are equivalent:
\begin{enumerate}
  \item $x \in a\{1,2\}$ and ${\rm rann}(x)=\mathcal{T}$.
  \item $\varphi_{ax}=\rho_{a\mathcal{R},\mathcal{T}}$ and
  $\mathcal{T} \subseteq {\rm rann}(x)$.
  \item $x \in a\{1\}$, ${\rm rann}(ax)=\mathcal{T}$, and $\mathcal{T} \subseteq {\rm rann}(x)$.
  \item $x=a^{(1)}\rho_{a\mathcal{R},\mathcal{T}}(1)$ where $a^{(1)} \in a\{1\}$.
\end{enumerate}
\end{theorem}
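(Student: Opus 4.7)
The plan is to follow the blueprint of Theorem~\ref{T 12I xR x0 expression} in the simpler setting where only a right annihilator ideal is prescribed. I would establish the cluster $(1)\Leftrightarrow(2)\Leftrightarrow(3)$ by combining Theorem~\ref{T 12I projectors} and Theorem~\ref{T 1I ax0 expression}, and then hook $(4)$ onto the chain using the explicit formula from Theorem~\ref{T 1I ax0 expression}.

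For the chain $(1)\Leftrightarrow(2)\Leftrightarrow(3)$: the implication $(1)\Rightarrow(2)$ is immediate from Theorem~\ref{T 12I projectors}, since $x\in a\{1,2\}$ gives $\varphi_{ax}=\rho_{a\mathcal{R},{\rm rann}(x)}$, which together with ${\rm rann}(x)=\mathcal{T}$ yields $\varphi_{ax}=\rho_{a\mathcal{R},\mathcal{T}}$ and the trivial inclusion $\mathcal{T}\subseteq{\rm rann}(x)$. The implication $(2)\Rightarrow(3)$ is the content of Theorem~\ref{T 1I ax0 expression}. The substantive step is $(3)\Rightarrow(1)$: one always has ${\rm rann}(x)\subseteq{\rm rann}(ax)$, so the hypotheses of $(3)$ force the equalities ${\rm rann}(x)={\rm rann}(ax)=\mathcal{T}$; and from $axa=a$ one gets $(ax)(1-ax)=ax-(axa)x=0$, so $1-ax\in{\rm rann}(ax)={\rm rann}(x)$, which gives $x(1-ax)=0$, i.e., $x=xax$. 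Therefore $x\in a\{1,2\}$ with ${\rm rann}(x)=\mathcal{T}$.

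For $(1)\Leftrightarrow(4)$: in the direction $(1)\Rightarrow(4)$, take $a^{(1)}=x\in a\{1\}$; by $(2)$, $\rho_{a\mathcal{R},\mathcal{T}}(1)=\varphi_{ax}(1)=ax$, so $a^{(1)}\rho_{a\mathcal{R},\mathcal{T}}(1)=x\cdot ax=xax=x$. In the direction $(4)\Rightarrow(3)$, the formula in $(4)$ is the special case $y=0$ of the expression in Theorem~\ref{T 1I ax0 expression}(3), so that theorem delivers $x\in a\{1\}$ and ${\rm rann}(ax)=\mathcal{T}$. Finally, for $t\in\mathcal{T}=\ker\rho_{a\mathcal{R},\mathcal{T}}$, Lemma~\ref{L S T right left ideals sii proy}(1) gives $\rho_{a\mathcal{R},\mathcal{T}}(1)\,t=\rho_{a\mathcal{R},\mathcal{T}}(t)=0$, so $xt=a^{(1)}\rho_{a\mathcal{R},\mathcal{T}}(1)\,t=0$, proving $\mathcal{T}\subseteq{\rm rann}(x)$, which is exactly the missing piece of $(3)$.

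The only step that is not routine bookkeeping is $(3)\Rightarrow(1)$, i.e., promoting a $\{1\}$-inverse to a $\{1,2\}$-inverse. The main idea there is to squeeze the trivial inclusion ${\rm rann}(x)\subseteq{\rm rann}(ax)$ between the two hypotheses ${\rm rann}(ax)=\mathcal{T}$ and $\mathcal{T}\subseteq{\rm rann}(x)$ so as to force the equality ${\rm rann}(ax)={\rm rann}(x)$, which is what allows the standard identity $(ax)(1-ax)=0$ to transfer to $x(1-ax)=0$.
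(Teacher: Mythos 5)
Your proposal is correct and follows essentially the route the paper intends: the paper omits the proof, declaring it analogous to that of Theorem~\ref{T 12I xR x0 expression}, and your argument uses exactly the same ingredients (Theorem~\ref{T 12I projectors}, Theorem~\ref{T 1I ax0 expression}, and the squeeze ${\rm rann}(x)\subseteq{\rm rann}(ax)=\mathcal{T}\subseteq{\rm rann}(x)$ to promote a $\{1\}$-inverse to a $\{1,2\}$-inverse). The only cosmetic difference is that for $(1)\Rightarrow(4)$ you instantiate $a^{(1)}=x$ directly instead of passing through the $y$-parameterized representation and showing the extra term vanishes, which is a harmless simplification.
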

\begin{theorem}\label{T 12I Rx expression}
Let $a, x \in \mathcal{R}$ and $\mathcal{S}$ be a left ideal of
$\mathcal{R}$. Then the following assertions are equivalent:
\begin{enumerate}
  \item $x \in a\{1,2\}$ and $\mathcal{R}x=\mathcal{S}$.
  \item $\,_{ax}\varphi=\rho_{\mathcal{S},{\rm lann}(a)}$ and
  $x \in \mathcal{S}$.
  \item $\,_{ax}\varphi=\rho_{\mathcal{S},{\rm lann}(a)}$ and
  ${\rm rann}(\mathcal{S}) \subseteq {\rm rann}(x)$.
  \item $x \in a\{1\}$, $\mathcal{R}ax=\mathcal{S}$, $x \in \mathcal{S}$.
  \item $x \in a\{1\}$, $\mathcal{R}ax=\mathcal{S}$, and ${\rm rann}(\mathcal{S}) \subseteq {\rm rann}(x)$.
  \item $a\{1\}\neq\emptyset$ and $x=a^{(1)}\rho_{\mathcal{S},{\rm lann}(a)}(1)$ for some $a^{(1)} \in a\{1\}$.
\end{enumerate}
\end{theorem}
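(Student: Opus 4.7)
The plan is to follow the proof of Theorem~\ref{T 12I xR x0 expression} with the obvious left--right symmetry swap throughout: replace $\varphi_{xa}$ and $\varphi_{ax}$ by $\,_{ax}\varphi$, right ideals by left ideals, and invoke Lemma~\ref{L ro a = a and a ro = a}(2) and Lemma~\ref{L S T right left ideals sii proy}(2) in place of their right-sided counterparts. The main tools are Theorem~\ref{T 12I projectors}(1)$\Leftrightarrow$(4), Theorem~\ref{T 1I projectors}(4), and Theorem~\ref{T 1I Rax expression}.

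First I would establish (1)$\Leftrightarrow$(2). The direct implication follows immediately from Theorem~\ref{T 12I projectors} together with $\mathcal{R}x=\mathcal{S}$ (and $x=1\cdot x\in\mathcal{S}$). Conversely, from $\,_{ax}\varphi=\rho_{\mathcal{S},{\rm lann}(a)}$ Lemma~\ref{L ro a = a and a ro = a}(2) gives $\rho_{\mathcal{S},{\rm lann}(a)}(1)\cdot a=a$, that is, $axa=a$; meanwhile $x\in\mathcal{S}={\rm im}(\,_{ax}\varphi)$ combined with Lemma~\ref{L proy im ker}(3) yields $\,_{ax}\varphi(x)=xax=x$. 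Then $\mathcal{R}x\subseteq\mathcal{S}$ and $\mathcal{S}=\mathcal{R}ax\subseteq\mathcal{R}x$ give $\mathcal{R}x=\mathcal{S}$. The key remark driving (3)$\Rightarrow$(1) (mirroring the parenthetical in Theorem~\ref{T 12I xR x0 expression}) is that $\,_{ax}\varphi=\rho_{\mathcal{S},{\rm lann}(a)}$ forces $1-ax\in{\rm rann}(\mathcal{S})$, because $s(1-ax)=s-\rho_{\mathcal{S},{\rm lann}(a)}(s)=0$ for every $s\in\mathcal{S}$; the hypothesis ${\rm rann}(\mathcal{S})\subseteq{\rm rann}(x)$ then gives $x(1-ax)=0$, i.e., $xax=x$, and the rest proceeds as before.

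The implications (1)$\Rightarrow$(4)(5) are immediate, and (4)$\Rightarrow$(2), (5)$\Rightarrow$(3) follow from Theorem~\ref{T 1I projectors}(4). For (1)$\Rightarrow$(6) I would take $a^{(1)}=x$: by Theorem~\ref{T 1I projectors}(4) one has $\,_{ax}\varphi=\rho_{\mathcal{R}ax,{\rm lann}(a)}=\rho_{\mathcal{S},{\rm lann}(a)}$, so $\rho_{\mathcal{S},{\rm lann}(a)}(1)=ax$ and hence $a^{(1)}\rho_{\mathcal{S},{\rm lann}(a)}(1)=xax=x$. For (6)$\Rightarrow$(1), Theorem~\ref{T 1I Rax expression} applied with $y=0$ already yields $x\in a\{1\}$ and $\mathcal{R}ax=\mathcal{S}$; writing $p=\rho_{\mathcal{S},{\rm lann}(a)}(1)$, Lemma~\ref{L ro a = a and a ro = a}(2) gives $pa=a$, and since $(1-aa^{(1)})p$ lies in both $\mathcal{S}$ and ${\rm lann}(a)$, hence in $\{0\}$, we obtain $ax=aa^{(1)}p=p$; together with $p^{2}=p$ this forces $xax=a^{(1)}(aa^{(1)}p)=a^{(1)}p=x$, so $x\in a\{2\}$, and finally $\mathcal{R}x=\mathcal{R}p=\mathcal{S}$.

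The main obstacle is the bookkeeping: recognizing that the natural analog of ``${\rm lann}(\mathcal{S})\subseteq{\rm lann}(x)$'' is ``${\rm rann}(\mathcal{S})\subseteq{\rm rann}(x)$'' (the annihilator side flips opposite to the handedness of the projector), and that the identities $pa=a$ and $aa^{(1)}p=p$ used in (6)$\Rightarrow$(1) rest squarely on the left-ideal version of Lemma~\ref{L ro a = a and a ro = a} and on the direct sum $\mathcal{R}=\mathcal{S}\oplus{\rm lann}(a)$ implicit in the existence of $\rho_{\mathcal{S},{\rm lann}(a)}$.
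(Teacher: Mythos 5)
Your proposal is correct and follows exactly the route the paper intends: it mirrors the proof of Theorem~\ref{T 12I xR x0 expression} via Theorems~\ref{T 12I projectors}, \ref{T 1I projectors}, and \ref{T 1I Rax expression}, together with Lemma~\ref{L ro a = a and a ro = a}(2), which is precisely what the paper means by ``analogously with similar proofs.'' The handedness bookkeeping (including that the annihilator condition flips to ${\rm rann}(\mathcal{S})\subseteq{\rm rann}(x)$ and that $1-ax\in{\rm rann}(\mathcal{S})$) is handled correctly.
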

\begin{theorem}\label{T 12I 0x expression}
Let $a, x \in \mathcal{R}$ and $\mathcal{T}$ be left a ideal of
$\mathcal{R}$. Then the following assertions are equivalent:
\begin{enumerate}
  \item $x \in a\{1,2\}$ and ${\rm lann}(x) = \mathcal{T}$.
  \item $\,_{xa}\varphi=\rho_{\mathcal{R}a,\mathcal{T}}$ and
  $\mathcal{T} \subseteq {\rm lann}(x)$.
  \item $x \in a\{1\}$, ${\rm lann}(ax)=\mathcal{T}$, $\mathcal{T} \subseteq {\rm lann}(x)$.
  \item $x=\rho_{\mathcal{R}a,\mathcal{T}}(1)a^{(1)}$ for some $a^{(1)} \in a\{1\}$.
\end{enumerate}
\end{theorem}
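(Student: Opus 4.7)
The plan is to establish the four-way equivalence by mirroring the argument of the right-sided analog Theorem~\ref{T 12I x0 expression}, exchanging $\varphi_{\cdot}$-notation for $\,_{\cdot}\varphi$-notation and right ideals for left ideals throughout. The main tools are Theorem~\ref{T 12I projectors}(5), Theorem~\ref{T 1I projectors}(5), Lemma~\ref{L ro a = a and a ro = a}(2), and Theorem~\ref{T 1I 0xa expression}. I read the first equality in (3) as ${\rm lann}(xa)=\mathcal{T}$, the natural dual of ${\rm rann}(ax)=\mathcal{T}$ appearing in Theorem~\ref{T 12I x0 expression}(3); the literal version ${\rm lann}(ax)=\mathcal{T}$ would merely pin down $\mathcal{T}={\rm lann}(a)$, since any $x\in a\{1\}$ satisfies ${\rm lann}(ax)={\rm lann}(aa^{(1)})={\rm lann}(a)$ by the property recalled at the end of Subsection~2.1.

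For (1) $\Leftrightarrow$ (2), the forward direction is immediate from Theorem~\ref{T 12I projectors}(5), giving $\,_{xa}\varphi=\rho_{\mathcal{R}a,{\rm lann}(x)}=\rho_{\mathcal{R}a,\mathcal{T}}$ together with the trivial inclusion $\mathcal{T}\subseteq{\rm lann}(x)$. Conversely, Lemma~\ref{L ro a = a and a ro = a}(2) applied to $a\in\mathcal{R}a$ forces $axa=a$, so $x\in a\{1\}$; idempotence of the projector yields $(xa)^{2}=xa$, hence $1-xa\in{\rm lann}(xa)=\mathcal{T}\subseteq{\rm lann}(x)$, which gives $xax=x$. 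A second application of Theorem~\ref{T 12I projectors}(5) then identifies ${\rm lann}(x)={\rm ker}(\,_{xa}\varphi)=\mathcal{T}$.

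For (1) $\Leftrightarrow$ (3), the implication (1) $\Rightarrow$ (3) rests on the identity ${\rm lann}(xa)={\rm lann}(x)$ valid for every $x\in a\{1,2\}$: one inclusion is immediate, and the other follows from $rx=rxax=(rxa)x=0$ whenever $rxa=0$. Conversely, Theorem~\ref{T 1I projectors}(5) turns $x\in a\{1\}$ with ${\rm lann}(xa)=\mathcal{T}$ into $\,_{xa}\varphi=\rho_{\mathcal{R}a,\mathcal{T}}$; combined with $\mathcal{T}\subseteq{\rm lann}(x)$ this is exactly (2), which I already showed implies (1).

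Finally, for (1) $\Leftrightarrow$ (4), Theorem~\ref{T 1I 0xa expression} already writes every $\{1\}$-inverse of $a$ with prescribed left annihilator $\mathcal{T}$ in the shape $\rho_{\mathcal{R}a,\mathcal{T}}(1)a^{(1)}+(1-a^{(1)}a)y(1-aa^{(1)})$. Imposing $xax=x$ and simplifying via $aa^{(1)}a=a$ together with Lemma~\ref{L ro a = a and a ro = a}(2) will kill the second summand, in exact analogy with the (1) $\Rightarrow$ (8) step of Theorem~\ref{T 12I xR x0 expression}; the reverse implication is a direct check that $x=\rho_{\mathcal{R}a,\mathcal{T}}(1)a^{(1)}$ satisfies $axa=a$, $xax=x$, and ${\rm lann}(xa)=\mathcal{T}$. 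The only delicate point is the (2) $\Rightarrow$ (1) step above---extracting $xax=x$ from $\mathcal{T}\subseteq{\rm lann}(x)$---and once one sees that $1-xa\in\mathcal{T}$ by the idempotence of $xa$, everything else is a routine translation in the dictionary developed in Sections~\ref{S Preliminaries}--\ref{S Basic relations}.
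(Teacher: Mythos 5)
Your proposal is correct and takes exactly the route the paper intends: the paper offers no separate argument for this theorem, only the remark that it follows ``analogously'' from the proof of Theorem~\ref{T 12I xR x0 expression}, and the tools you use --- Theorem~\ref{T 12I projectors}(5), Theorem~\ref{T 1I projectors}(5), Lemma~\ref{L ro a = a and a ro = a}(2) and Theorem~\ref{T 1I 0xa expression} --- are precisely that translation, with your sketched (1)~$\Rightarrow$~(4) step being the same ``kill the second summand'' computation displayed in the (1)~$\Rightarrow$~(8) part of that proof (and it does go through, since $\rho_{\mathcal{R}a,\mathcal{T}}(1)\in\mathcal{R}a$ gives $\rho_{\mathcal{R}a,\mathcal{T}}(1)(1-a^{(1)}a)=0$). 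Your reading of (3) with ${\rm lann}(xa)=\mathcal{T}$ is also the right correction of an evident misprint: as you observe, ${\rm lann}(ax)={\rm lann}(a)$ for every $x\in a\{1\}$, so the literal version would force $\mathcal{T}={\rm lann}(a)$ and destroy the equivalence with (1), whereas ${\rm lann}(xa)$ is the condition dual to ${\rm rann}(ax)$ in Theorem~\ref{T 12I x0 expression}(3) and matching Theorem~\ref{T 1I 0xa expression}.
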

If the $\{1,2\}$-inverses characterized in Theorems~\ref{T 12I xR
expression}-\ref{T 12I 0x expression} exist, then they are not
necessarily unique. To see this note that if $a \in \mathcal{R}^{\#}
\cap \mathcal{R}^{\dag} \cap \mathcal{R}^{\core} \cap
\mathcal{R}_{\core}$, then $a^{\#}, a^{\dag}, a^{\core}, a_{\core}
\in a\{1,2\}$,
$a^{\#}\mathcal{R}=a^{\core}\mathcal{R}=a\mathcal{R}$,
$a^{\dag}\mathcal{R}=a_{\core}\mathcal{R}=a^{\ast}\mathcal{R}$,
${\rm rann}(a^{\#})={\rm rann}(a_{\core})={\rm rann}(a)$, ${\rm
rann}(a^{\dag})={\rm rann}(a^{\core})={\rm rann}(a^{\ast})$,
$\mathcal{R}a^{\#}=\mathcal{R}a_{\core}=\mathcal{R}a$,
$\mathcal{R}a^{\dag}=\mathcal{R}a^{\core}=\mathcal{R}a^{\ast}$,
${\rm lann}(a^{\#})={\rm lann}(a^{\core})={\rm lann}(a)$, and ${\rm
lann}(a^{\dag})={\rm lann}(a_{\core})={\rm lann}(a^{\ast})$.

Theorems~\ref{T 12 xR x0 inverse group isomorphism} and \ref{T 12 Rx
0x inverse group isomorphism} give other characterizations of
$a^{(1,2)}_{{\rm rprin}=\mathcal{S},{\rm rann}=\mathcal{T}}$ and
$a^{(1,2)}_{{\rm lprin}=\mathcal{S},{\rm lann}=\mathcal{T}}$,
respectively.
\begin{theorem}\label{T 12 xR x0 inverse group isomorphism}
Let $a \in \mathcal{R}$. Let $\mathcal{S}$, $\mathcal{T}$ be right
ideals of $\mathcal{R}$ such that $\mathcal{R}=\mathcal{S}\oplus
{\rm rann}(a)$ and $\mathcal{R}=a\mathcal{R}\oplus\mathcal{T}$. Let
$(\varphi_{a})_{|\mathcal{S}}$ be the group isomorphism obtained by
the restriction of $\varphi_{a}$ from $\mathcal{S}$ to
$a\mathcal{R}$. Let $\psi: \mathcal{R} \rightarrow \mathcal{R}$ be
the group endomorphism given by
$\psi(r)=((\varphi_{a})_{|\mathcal{S}})^{-1}(\rho_{a\mathcal{R},\mathcal{T}}(r))$
for each $r\in\mathcal{R}$. Then the following assertions are
equivalent:
\begin{enumerate}
  \item $b$ is the unique element in $\mathcal{S}$ such that
$ab=\rho_{a\mathcal{R},\mathcal{T}}(1)$.
  \item $\psi=\varphi_{b}$.
  \item $b=a^{(1,2)}_{{\rm rprin}=\mathcal{S},{\rm rann}=\mathcal{T}}$.
\end{enumerate}
\end{theorem}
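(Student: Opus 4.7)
I would begin by verifying that $(\varphi_{a})_{|\mathcal{S}}$ really is a group isomorphism from $\mathcal{S}$ onto $a\mathcal{R}$, since this underlies the whole statement. Injectivity follows from $\ker(\varphi_{a}) = {\rm rann}(a)$ and $\mathcal{S} \cap {\rm rann}(a) = \{0\}$; surjectivity follows by writing any $ar \in a\mathcal{R}$ using the decomposition $r = s+t$ with $s \in \mathcal{S}$, $t \in {\rm rann}(a)$, so $ar = as \in \varphi_{a}(\mathcal{S})$. This immediately gives a unique $b \in \mathcal{S}$ with $ab = \rho_{a\mathcal{R},\mathcal{T}}(1)$, so (1) makes sense.

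For (1) $\Rightarrow$ (2), the key observation is that $a\mathcal{R}$ and $\mathcal{T}$ are both right ideals, so Lemma~\ref{L S T right left ideals sii proy}(1) gives $\rho_{a\mathcal{R},\mathcal{T}}(r) = \rho_{a\mathcal{R},\mathcal{T}}(1)\,r = ab\,r$ for every $r \in \mathcal{R}$. Since $b \in \mathcal{S}$ and $\mathcal{S}$ is a right ideal, $br \in \mathcal{S}$, and $a(br) = (ab)r = \rho_{a\mathcal{R},\mathcal{T}}(r)$, so $((\varphi_{a})_{|\mathcal{S}})^{-1}(\rho_{a\mathcal{R},\mathcal{T}}(r)) = br = \varphi_{b}(r)$, giving $\psi = \varphi_{b}$. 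Conversely, from $\psi = \varphi_{b}$ evaluated at $1$ we recover $b \in \mathcal{S}$ and $ab = \rho_{a\mathcal{R},\mathcal{T}}(1)$, so (2) $\Rightarrow$ (1).

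For (1) $\Rightarrow$ (3), the plan is to verify the conditions of Theorem~\ref{T 12I xR x0 expression}(5). From $ab = \rho_{a\mathcal{R},\mathcal{T}}(1)$ and Lemma~\ref{L S T right left ideals sii proy}(1) we get $\varphi_{ab} = \rho_{a\mathcal{R},\mathcal{T}}$, which gives $aba = \rho_{a\mathcal{R},\mathcal{T}}(a) = a$ (so $b \in a\{1\}$) and ${\rm rann}(ab) = \mathcal{T}$. To obtain $bab = b$, note that for $t \in \mathcal{T}$ we have $a(bt) = (ab)t = 0$, so $bt \in \mathcal{S} \cap {\rm rann}(a) = \{0\}$; hence $\mathcal{T} \subseteq {\rm rann}(b)$, which via Lemma~\ref{L ro a = a and a ro = a}(1) yields $b\,\rho_{a\mathcal{R},\mathcal{T}}(1) = b$, i.e.\ $bab = b$. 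Finally, for $ba\mathcal{R} = \mathcal{S}$: for any $s \in \mathcal{S}$, $a(bas - s) = \rho_{a\mathcal{R},\mathcal{T}}(as) - as = 0$ (since $as \in a\mathcal{R}$), and $bas - s \in \mathcal{S}$, so $bas = s \in ba\mathcal{R}$; the reverse inclusion is clear from $b \in \mathcal{S}$. Since $b \in \mathcal{S}$, Theorem~\ref{T 12I xR x0 expression}(5) then identifies $b = a^{(1,2)}_{{\rm rprin}=\mathcal{S},\,{\rm rann}=\mathcal{T}}$.

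For (3) $\Rightarrow$ (1), assume $b$ is this $\{1,2\}$-inverse. From $b = bab$ we get ${\rm rann}(b) = {\rm rann}(ab) = \mathcal{T}$, and Theorem~\ref{T 12I projectors}(2) yields $\varphi_{ab} = \rho_{a\mathcal{R},\mathcal{T}}$, hence $ab = \rho_{a\mathcal{R},\mathcal{T}}(1)$. Moreover $b = bab \in ba\mathcal{R} = \mathcal{S}$, so $b$ realizes the element described in (1), and uniqueness comes from the bijection established at the start. The main subtle step is showing $\mathcal{T} \subseteq {\rm rann}(b)$ in the (1) $\Rightarrow$ (3) direction, since this is what separates the $\{1\}$-inverse condition from the full $\{1,2\}$-inverse condition and is precisely where the hypothesis $\mathcal{S} \cap {\rm rann}(a) = \{0\}$ enters; everything else is routine book-keeping with the projector identities of Section~\ref{S Preliminaries}.
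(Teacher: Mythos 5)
Your proposal is correct and follows essentially the same route as the paper, which proves the cycle (1) $\Rightarrow$ (2) $\Rightarrow$ (3) $\Rightarrow$ (1) using the same projector identities; your only deviation is organizational, proving (1) $\Leftrightarrow$ (2) and (1) $\Leftrightarrow$ (3) separately and verifying the conditions of Theorem~\ref{T 12I xR x0 expression}(5) directly from the direct-sum hypotheses, where the paper instead reads $b\mathcal{R}={\rm im}(\psi)=\mathcal{S}$ and ${\rm rann}(b)={\rm ker}(\psi)=\mathcal{T}$ off statement (2). The key computations ($\rho_{a\mathcal{R},\mathcal{T}}(r)=abr$, the use of $\mathcal{S}\cap{\rm rann}(a)=\{0\}$, and Theorem~\ref{T 12I projectors} for (3) $\Rightarrow$ (1)) coincide with the paper's.
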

\begin{proof}
Note that ${\rm ker}(\psi)=\mathcal{T}$ and ${\rm
im}(\psi)=\mathcal{S}$. Since $\varphi_{a}(\rho_{\mathcal{S},{\rm
rann}(a)}(1))=\varphi_{a}(1)=\rho_{a\mathcal{R},\mathcal{T}}(a)$, we
obtain $\psi(a)=\rho_{\mathcal{S},{\rm rann}(a)}(1)$.

(1) $\Rightarrow$ (2): Let $r, s \in \mathcal{R}$ be such that
$\psi(r)=s$. We have $\psi(r)=s$ if and only if $s \in \mathcal{S}$
and
$(\varphi_{a})_{|\mathcal{S}}(s)=\rho_{a\mathcal{R},\mathcal{T}}(r)$.
Thus,
$as=(\varphi_{a})_{|\mathcal{S}}(s)=\rho_{a\mathcal{R},\mathcal{T}}(r)=\rho_{a\mathcal{R},\mathcal{T}}(1)r=abr$.
Then $s=br$. From here, $\psi=\varphi_{b}$.

(2) $\Rightarrow$ (3): Suppose that $\psi=\varphi_{b}$. Then
$b\mathcal{R}={\rm im}(\psi)=\mathcal{S}$, ${\rm rann}(b)={\rm
ker}(\psi)=\mathcal{T}$,
$ba=\varphi_{b}(a)=\psi(a)=\rho_{\mathcal{S},{\rm rann}(a)}(1)$,
$aba=a\rho_{\mathcal{S},{\rm rann}(a)}(1)=a$, and
$bab=\rho_{\mathcal{S},{\rm rann}(a)}(b)=b$. Therefore, (3) holds.

(3) $\Rightarrow$ (1): Assume that (3) holds. Then, $b \in
b\mathcal{R}=\mathcal{S}$ and, by Theorem~\ref{T 12I projectors},
$\varphi_{ab}=\rho_{a\mathcal{R},\mathcal{T}}$. In particular,
$ab=\rho_{a\mathcal{R},\mathcal{T}}(1)$. Since $\mathcal{S}\cap {\rm
rann}(a)=\{0\}$, there is a unique element that satisfies (1).
\end{proof}
Theorem~\ref{T 12 xR x0 inverse group isomorphism} can be viewed as
a generalization of \cite[Theorem 6.2.1]{Campbell-Meyer (2009)}
about the equivalence of three definitions of $\{1,2\}$-inverses of
linear transformations with prescribed range and null spaces.
Analogously to Theorem~\ref{T 12 xR x0 inverse group isomorphism},
we get:
\begin{theorem}\label{T 12 Rx 0x inverse group isomorphism}
Let $a \in \mathcal{R}$. Let $\mathcal{S}$, $\mathcal{T}$ be left
ideals of $\mathcal{R}$ such that $\mathcal{R}=\mathcal{S}\oplus
{\rm lann}(a)$ and $\mathcal{R}=\mathcal{R}a\oplus\mathcal{T}$. Let
$(\,_{a}\varphi)_{|\mathcal{S}}$ be the group isomorphism obtained
as the restriction of $\,_{a}\varphi$ from $\mathcal{S}$ to
$\mathcal{R}a$. Let $\psi: \mathcal{R} \rightarrow \mathcal{R}$ be
the group endomorphism given by
$\psi(r)=((\,_{a}\varphi)_{|\mathcal{S}})^{-1}(\rho_{\mathcal{R}a,\mathcal{T}}(r))$
for each $r\in\mathcal{R}$.  Then the following assertions are
equivalent:
\begin{enumerate}
  \item $b$ is the unique element in $\mathcal{S}$ such that
$ba=\rho_{\mathcal{R}a,\mathcal{T}}(1)$.
  \item $\psi=\,_{b}\varphi$.
  \item $b=a^{(1,2)}_{{\rm lprin}=\mathcal{S},{\rm lann}=\mathcal{T}}$.
\end{enumerate}
\end{theorem}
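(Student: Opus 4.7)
The plan is to mirror the proof of Theorem~\ref{T 12 xR x0 inverse group isomorphism} line by line, swapping left and right throughout. First I would verify the preliminary observations that make the rest mechanical: since $\mathcal{R} = \mathcal{S} \oplus {\rm lann}(a)$, the map $(\,_{a}\varphi)_{|\mathcal{S}} : \mathcal{S} \to \mathcal{R}a$ is indeed a group isomorphism (injectivity from $\mathcal{S} \cap {\rm lann}(a) = \{0\}$, surjectivity by decomposing any $r \in \mathcal{R}$ as $s + t$ with $s \in \mathcal{S}$, $t \in {\rm lann}(a)$, giving $ra = sa$). From the definition of $\psi$, it follows immediately that ${\rm im}(\psi) = \mathcal{S}$ and ${\rm ker}(\psi) = \mathcal{T}$, and since $\rho_{\mathcal{R}a, \mathcal{T}}(a) = a$ and $(\,_{a}\varphi)_{|\mathcal{S}}(\rho_{\mathcal{S},{\rm lann}(a)}(1)) = a$, we get $\psi(a) = \rho_{\mathcal{S},{\rm lann}(a)}(1)$.

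For (1) $\Rightarrow$ (2), I would take $r, s \in \mathcal{R}$ with $\psi(r) = s$, which by definition is equivalent to $s \in \mathcal{S}$ and $sa = \rho_{\mathcal{R}a, \mathcal{T}}(r)$. Here is the step where the left-ideal version of Lemma~\ref{L S T right left ideals sii proy}(2) enters: since $\mathcal{R}a$ and $\mathcal{T}$ are left ideals, $\rho_{\mathcal{R}a, \mathcal{T}}(r) = \rho_{\mathcal{R}a, \mathcal{T}}(r \cdot 1) = r\rho_{\mathcal{R}a, \mathcal{T}}(1) = rba$. Therefore $(s - rb)a = 0$; but $rb \in \mathcal{S}$ since $b \in \mathcal{S}$ and $\mathcal{S}$ is a left ideal, so $s - rb \in \mathcal{S} \cap {\rm lann}(a) = \{0\}$, giving $s = rb = \,_{b}\varphi(r)$.

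For (2) $\Rightarrow$ (3), assuming $\psi = \,_{b}\varphi$, I would read off $\mathcal{R}b = {\rm im}(\,_{b}\varphi) = {\rm im}(\psi) = \mathcal{S}$ and ${\rm lann}(b) = {\rm ker}(\,_{b}\varphi) = \mathcal{T}$, and $ab = \,_{b}\varphi(a) = \psi(a) = \rho_{\mathcal{S},{\rm lann}(a)}(1)$. Then $aba = \rho_{\mathcal{S},{\rm lann}(a)}(1)\,a = a$ by the same decomposition trick as above, and $bab = b\rho_{\mathcal{S},{\rm lann}(a)}(1) = \rho_{\mathcal{S},{\rm lann}(a)}(b) = b$ since $b \in \mathcal{S}$, so $b \in a\{1,2\}$ and $b = a^{(1,2)}_{{\rm lprin}=\mathcal{S},{\rm lann}=\mathcal{T}}$. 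Finally, for (3) $\Rightarrow$ (1), I would apply Theorem~\ref{T 12I projectors} to conclude $\,_{ba}\varphi = \rho_{\mathcal{R}a, {\rm lann}(b)} = \rho_{\mathcal{R}a, \mathcal{T}}$, hence $ba = \rho_{\mathcal{R}a, \mathcal{T}}(1)$, while $b = bab \in \mathcal{R}b = \mathcal{S}$. Uniqueness within $\mathcal{S}$ follows since $\mathcal{S} \cap {\rm lann}(a) = \{0\}$.

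The only real subtlety—what I would single out as the place to be careful—is the direction of multiplication when applying Lemma~\ref{L S T right left ideals sii proy}: part (2) must be used because $\mathcal{R}a$, $\mathcal{T}$, $\mathcal{S}$, and ${\rm lann}(a)$ are all left ideals here, so projectors pass to the right side of a product rather than the left. Once that is done correctly, the remaining manipulations are routine dualizations.
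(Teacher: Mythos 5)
Your proposal is correct and is exactly the route the paper intends: the paper proves Theorem~\ref{T 12 xR x0 inverse group isomorphism} and obtains this statement by the same left--right dualization you carry out, including the use of Lemma~\ref{L S T right left ideals sii proy}(2) for left ideals and Theorem~\ref{T 12I projectors} in the step (3) $\Rightarrow$ (1). Your explicit handling of $s-rb\in\mathcal{S}\cap{\rm lann}(a)=\{0\}$ just makes precise a step the paper leaves implicit; nothing differs in substance.
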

The next theorem generalizes \cite[Theorem 3.1]{Yu-Wang (2007)} for
finite matrices over an associative ring and \cite[Theorem
3.3]{Cao-Xue (2013)} in complex Banach algebras with
$\mathcal{S}=p\mathcal{R}$, $\mathcal{T}=q\mathcal{R}$, and $p, q
\in \mathcal{R}^{\bullet}$. The proof of (2) $\Rightarrow$ (3)
$\Rightarrow$ (1) is similar to the one presented for Theorem 3.3 in
\cite{Cao-Xue (2013)}. Here we use Theorem~\ref{T 2I xR x0
existencia} to prove (3) $\Rightarrow$ (1).
\begin{theorem}\label{T 12I xR x0 existencia}
Let $a \in \mathcal{R}$ and let $\mathcal{S}$, $\mathcal{T}$ be
right ideals of $\mathcal{R}$. Then the following statements are
equivalent:
\begin{enumerate}
  \item $a^{(1,2)}_{{\rm rprin}=\mathcal{S},{\rm rann}=\mathcal{T}}$ exists.
  \item $\mathcal{R}=a\mathcal{R}\oplus\mathcal{T}$ and $\mathcal{R}=\mathcal{S} \oplus {\rm rann}(a)$.
  \item $\mathcal{R}=a\mathcal{S}\oplus\mathcal{T}$, ${\rm rann}(a) \cap \mathcal{S}  =\{0\}$, and $a\mathcal{R}\cap\mathcal{T} =\{0\}$.
  \item There exists $x \in \mathcal{S}$ such that
  $\varphi_{ax}=\rho_{a\mathcal{S},\mathcal{T}}$, ${\rm rann}(a) \cap
  \mathcal{S}=\{0\}$, and $a\mathcal{R}\cap\mathcal{T} =\{0\}$.
\end{enumerate}
\end{theorem}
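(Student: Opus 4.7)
The plan is to cycle through the four conditions using the characterization of $\{1,2\}$-inverses in Theorem~\ref{T 12I xR x0 expression} and the existence criterion for $\{2\}$-inverses in Theorem~\ref{T 2I xR x0 existencia}, since the present statement is essentially the upgrade of the latter from $\{2\}$ to $\{1,2\}$.

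First I would prove (1)~$\Rightarrow$~(2). If $x = a^{(1,2)}_{{\rm rprin}=\mathcal{S},{\rm rann}=\mathcal{T}}$, then Theorem~\ref{T 12I xR x0 expression}(2) gives $\varphi_{ax}=\rho_{a\mathcal{R},\mathcal{T}}$ and $\varphi_{xa}=\rho_{\mathcal{S},{\rm rann}(a)}$. Applying Lemma~\ref{L oblique projector sii idempotent} to each of these projectors yields the two direct sum decompositions in~(2). Next, (2)~$\Rightarrow$~(3) is an easy algebraic observation: from $\mathcal{R}=\mathcal{S}\oplus{\rm rann}(a)$ one gets $a\mathcal{R}=a(\mathcal{S}+{\rm rann}(a))=a\mathcal{S}$, so $\mathcal{R}=a\mathcal{R}\oplus\mathcal{T}$ becomes $\mathcal{R}=a\mathcal{S}\oplus\mathcal{T}$; the two trivial intersections are immediate consequences of the two direct sum decompositions in~(2).

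For (3)~$\Rightarrow$~(4), since $\mathcal{R}=a\mathcal{S}\oplus\mathcal{T}$ the projector $\rho_{a\mathcal{S},\mathcal{T}}$ is defined; set $\rho_{a\mathcal{S},\mathcal{T}}(1)=as$ for some $s \in \mathcal{S}$, which is uniquely determined as an element of $\mathcal{S}$ because ${\rm rann}(a)\cap\mathcal{S}=\{0\}$. Taking $x=s$ we have $ax=\rho_{a\mathcal{S},\mathcal{T}}(1)$, hence $\varphi_{ax}=\rho_{a\mathcal{S},\mathcal{T}}$, and~(4) holds.

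The main content is (4)~$\Rightarrow$~(1), and the only subtlety is upgrading $x$ from a $\{2\}$-inverse to a $\{1,2\}$-inverse. By Theorem~\ref{T 2I xR x0 existencia}, the first and third conditions in~(4) already force $x=a^{(2)}_{{\rm rprin}=\mathcal{S},{\rm rann}=\mathcal{T}}$, so $x\in a\{2\}$, $x\mathcal{R}=\mathcal{S}$, and ${\rm rann}(x)=\mathcal{T}$. It remains to check $axa=a$. Here I would use the unexploited hypothesis $a\mathcal{R}\cap\mathcal{T}=\{0\}$ together with the direct sum decomposition $\mathcal{R}=a\mathcal{S}\oplus\mathcal{T}$ (which is built in via ${\rm im}(\varphi_{ax})=a\mathcal{S}$ and ${\rm ker}(\varphi_{ax})=\mathcal{T}$): write $a=u+v$ with $u\in a\mathcal{S}$ and $v\in\mathcal{T}$; then $v=a-u\in a\mathcal{R}$, so $v\in a\mathcal{R}\cap\mathcal{T}=\{0\}$, giving $a\in a\mathcal{S}$. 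Consequently $axa=\rho_{a\mathcal{S},\mathcal{T}}(a)=a$, so $x\in a\{1,2\}$ and~(1) follows. The only step requiring genuine thought is this last argument — everything else is bookkeeping with the already-established projector dictionary, and the role of the hypothesis $a\mathcal{R}\cap\mathcal{T}=\{0\}$ in distinguishing the $\{1,2\}$-case from the plain $\{2\}$-case of Theorem~\ref{T 2I xR x0 existencia} is precisely to guarantee $a\in a\mathcal{S}$.
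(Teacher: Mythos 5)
Your proof is correct and follows essentially the same route as the paper: the same cycle through the four conditions, the same use of Theorem~\ref{T 12I projectors}/Theorem~\ref{T 12I xR x0 expression} for (1)$\Rightarrow$(2), the observation $a\mathcal{R}=a\mathcal{S}$ for (2)$\Rightarrow$(3), and Theorem~\ref{T 2I xR x0 existencia} plus the hypothesis $a\mathcal{R}\cap\mathcal{T}=\{0\}$ to upgrade the $\{2\}$-inverse to a $\{1,2\}$-inverse. The only cosmetic difference is in the last step: you deduce $a\in a\mathcal{S}$ and then $axa=\rho_{a\mathcal{S},\mathcal{T}}(a)=a$, whereas the paper notes $axa-a\in a\mathcal{R}\cap\mathcal{T}=\{0\}$ directly; these are interchangeable uses of the same hypothesis.
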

\begin{proof}
(1) $\Rightarrow$ (2): It follows from the definition of
$a^{(1,2)}_{{\rm rprin}=\mathcal{S},{\rm rann}=\mathcal{T}}$ and
Theorem~\ref{T 12I projectors}.

(2) $\Rightarrow$ (3): Suppose that
$\mathcal{R}=a\mathcal{R}\oplus\mathcal{T}$ and
$\mathcal{R}=\mathcal{S} \oplus {\rm rann}(a)$. Then
$a\mathcal{R}\cap\mathcal{T} =\{0\}$ and ${\rm rann}(a) \cap
\mathcal{S} =\{0\}$. Clearly, $a\mathcal{S}\subseteq a\mathcal{R}$.
Let $t \in a\mathcal{R}$. Then there exists $s \in \mathcal{R}$ such
that $t=as=a\rho_{\mathcal{S},{\rm rann}(a)}(s)$. Hence,
$a\mathcal{R}\subseteq a\mathcal{S}$.

(3) $\Rightarrow$ (1): Assume that
$\mathcal{R}=a\mathcal{S}\oplus\mathcal{T}$, ${\rm rann}(a) \cap
\mathcal{S} =\{0\}$, and $a\mathcal{R}\cap\mathcal{T} =\{0\}$. By
Theorem~\ref{T 2I xR x0 existencia}, $a$ has a $\{2\}$-inverse $x$
such that $x\mathcal{R}=\mathcal{S}$ and ${\rm
rann}(x)=\mathcal{T}$. Then $axa-a \in a\mathcal{R} \cap
\mathcal{T}$, and consequently $axa=a$. Hence, $x=a^{(1,2)}_{{\rm
rprin}=\mathcal{S},{\rm rann}=\mathcal{T}}$.

(3) $\Leftrightarrow$ (4): It follows from Theorem~\ref{T 2I xR x0
existencia}.
\end{proof}
Using Theorems~\ref{T 12I projectors} and \ref{T 2I Rx 0x
existencia}, we obtain the following result.
\begin{theorem}\label{T 12I Rx 0x existencia}
Let $a \in \mathcal{R}$ and let $\mathcal{S}$, $\mathcal{T}$ be left
ideals of $\mathcal{R}$. Then the following statements are
equivalent:
\begin{enumerate}
  \item $a^{(1,2)}_{{\rm lprin}=\mathcal{S},{\rm lann}=\mathcal{T}}$ exists.
  \item $\mathcal{R}=\mathcal{R}a\oplus\mathcal{T}$ and $\mathcal{R}=\mathcal{S} \oplus {\rm lann}(a)$.
  \item $\mathcal{R}=\mathcal{S}a\oplus\mathcal{T}$, ${\rm lann}(a) \cap \mathcal{S}  =\{0\}$, and $\mathcal{R}a\cap\mathcal{T} =\{0\}$.
  \item There exists $x \in \mathcal{S}$ such that $\,_{xa}\varphi=\rho_{\mathcal{S}a,\mathcal{T}}$, ${\rm lann}(a) \cap
  \mathcal{S}=\{0\}$, and $\mathcal{R}a\cap\mathcal{T} =\{0\}$.
\end{enumerate}
\end{theorem}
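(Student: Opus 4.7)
The plan is to follow the strategy used for the right-ideal analogue Theorem~\ref{T 12I xR x0 existencia}, simply dualizing every argument. Since Theorem~\ref{T 2I Rx 0x existencia} already provides the $\{2\}$-inverse part of the story on the left, the job reduces to detecting, via Theorem~\ref{T 12I projectors}, when that $\{2\}$-inverse is automatically also a $\{1\}$-inverse.

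First I would handle (1) $\Rightarrow$ (2). If $x=a^{(1,2)}_{{\rm lprin}=\mathcal{S},{\rm lann}=\mathcal{T}}$ exists, then by Theorem~\ref{T 12I projectors} we get $\,_{ax}\varphi=\rho_{\mathcal{S},{\rm lann}(a)}$ and $\,_{xa}\varphi=\rho_{\mathcal{R}a,\mathcal{T}}$, which by Lemma~\ref{L proy im ker}(2) gives $\mathcal{R}=\mathcal{R}a\oplus\mathcal{T}$ and $\mathcal{R}=\mathcal{S}\oplus{\rm lann}(a)$. Next, for (2) $\Rightarrow$ (3): the decomposition $\mathcal{R}=\mathcal{S}\oplus{\rm lann}(a)$ gives $\mathcal{S}\cap{\rm lann}(a)=\{0\}$, and $\mathcal{R}=\mathcal{R}a\oplus\mathcal{T}$ gives $\mathcal{R}a\cap\mathcal{T}=\{0\}$. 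To upgrade $\mathcal{R}a$ to $\mathcal{S}a$, I note $\mathcal{S}a\subseteq\mathcal{R}a$ trivially, and conversely any $ra\in\mathcal{R}a$ can be written as $ra=\rho_{\mathcal{S},{\rm lann}(a)}(r)a\in\mathcal{S}a$ since the ${\rm lann}(a)$-component kills $a$ on the right. Thus $\mathcal{R}=\mathcal{S}a\oplus\mathcal{T}$.

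The main step is (3) $\Rightarrow$ (1). From $\mathcal{R}=\mathcal{S}a\oplus\mathcal{T}$ together with $\mathcal{S}\cap{\rm lann}(a)=\{0\}$, Theorem~\ref{T 2I Rx 0x existencia} produces an $x\in a\{2\}$ with $\mathcal{R}x=\mathcal{S}$ and ${\rm lann}(x)=\mathcal{T}$. The remaining task is to show $axa=a$; for this I use the second hypothesis $\mathcal{R}a\cap\mathcal{T}=\{0\}$. Since $x\in a\{2\}$, the element $axa-a$ satisfies $x(axa-a)=xa\cdot xa-xa=0$ (using $xax=x$), so $axa-a\in{\rm lann}(x)=\mathcal{T}$; on the other hand, $axa-a\in\mathcal{R}a$. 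Therefore $axa-a\in\mathcal{R}a\cap\mathcal{T}=\{0\}$, giving $axa=a$ and hence $x=a^{(1,2)}_{{\rm lprin}=\mathcal{S},{\rm lann}=\mathcal{T}}$.

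Finally, (3) $\Leftrightarrow$ (4) is a direct quotation of the corresponding equivalence in Theorem~\ref{T 2I Rx 0x existencia} (the $\mathcal{R}a\cap\mathcal{T}=\{0\}$ condition is appended to both sides). The only step requiring genuine care is verifying $axa-a\in\mathcal{R}a$ lies inside ${\rm lann}(x)$ in the (3) $\Rightarrow$ (1) argument; once that left-sided identity is in hand, the two intersection conditions from (3) close everything, exactly paralleling the right-sided proof.
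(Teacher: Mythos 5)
Your proposal follows the paper's own route: the paper proves the right-sided Theorem~\ref{T 12I xR x0 existencia} in detail and obtains the present statement by dualizing it via Theorems~\ref{T 12I projectors} and \ref{T 2I Rx 0x existencia}, which is exactly what you do (same chain (1)$\Rightarrow$(2)$\Rightarrow$(3)$\Rightarrow$(1), same use of the $\{2\}$-inverse theorem, same ``difference lies in the zero intersection'' trick). One slip in the key step, though: you compute $x(axa-a)=xaxa-xa=0$, but that shows $axa-a\in{\rm rann}(x)$, not ${\rm lann}(x)$; since here $\mathcal{T}={\rm lann}(x)$, the computation you need is $(axa-a)x=a(xax)-ax=ax-ax=0$, which does give $axa-a\in{\rm lann}(x)=\mathcal{T}$, and then $axa-a\in\mathcal{R}a\cap\mathcal{T}=\{0\}$ closes the argument as you intended. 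With that one-line correction the proof is complete and matches the paper's.
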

We note that (2) $\Rightarrow$ (1) in Theorem~\ref{T 12I xR x0
existencia} (resp. Theorem~\ref{T 12I Rx 0x existencia}) follows
also from Theorem~\ref{T 12 xR x0 inverse group isomorphism} (resp.
Theorem~\ref{T 12 Rx 0x inverse group isomorphism}). Now we obtain a
theorem that gives necessary and sufficient conditions for the
existence of $\{1,2\}$-inverses with given right and left principal
ideals.
\begin{theorem}\label{T 12I xR Rx existencia}
Let $a \in \mathcal{R}$, let $\mathcal{S}$ be a right ideal of
$\mathcal{R}$, and $\mathcal{S}'$ be a left ideal of $\mathcal{R}$.
Then the following statements are equivalent:
\begin{enumerate}
  \item $a^{(1,2)}_{{\rm rprin}=\mathcal{S},{\rm lprin}=\mathcal{S}'}$ exists.
  \item $\mathcal{R}=a\mathcal{R}\oplus{\rm rann}(\mathcal{S}')$, $\mathcal{R}=\mathcal{S} \oplus {\rm rann}(a)$, $\mathcal{R}=\mathcal{R}a\oplus{\rm lann}(\mathcal{S})$, and $\mathcal{R}=\mathcal{S}' \oplus {\rm lann}(a)$.
  \item $\mathcal{R}=a\mathcal{S}\oplus{\rm rann}(\mathcal{S}')$, $a\mathcal{R}\cap{\rm rann}(\mathcal{S}') =\{0\}$, $\mathcal{R}=\mathcal{S}a\oplus{\rm lann}(\mathcal{S})$, and $\mathcal{R}a\cap{\rm lann}(\mathcal{S}) =\{0\}$.
  \item There exists $x \in \mathcal{S}\cap\mathcal{S}'$ such that
  $\varphi_{ax}=\rho_{a\mathcal{S},{\rm rann}(\mathcal{S}')}$, $a\mathcal{R}\cap{\rm rann}(\mathcal{S}') =\{0\}$, $\,_{xa}\varphi=\rho_{\mathcal{S}'a,{\rm lann}(\mathcal{S})}$, and $\mathcal{R}a\cap{\rm lann}(\mathcal{S}) =\{0\}$.
\end{enumerate}
\end{theorem}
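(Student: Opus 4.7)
The plan is to follow the template established by Theorem~\ref{T 12I xR x0 existencia}, substituting the two-sided existence result Theorem~\ref{T 2I xR Rx existencia} for the one-sided Theorem~\ref{T 2I xR x0 existencia} wherever the latter was invoked. The natural cycle is (1) $\Rightarrow$ (2) $\Rightarrow$ (3) $\Rightarrow$ (1), with (3) $\Leftrightarrow$ (4) handled by a direct appeal to Theorem~\ref{T 2I xR Rx existencia}.

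For (1) $\Rightarrow$ (2), I would take $x = a^{(1,2)}_{{\rm rprin}=\mathcal{S},{\rm lprin}=\mathcal{S}'}$ which by definition lies in $a\{1,2\}$ with $x\mathcal{R}=\mathcal{S}$ and $\mathcal{R}x=\mathcal{S}'$. Using the always-valid identities ${\rm rann}(x)={\rm rann}(\mathcal{R}x)={\rm rann}(\mathcal{S}')$ and ${\rm lann}(x)={\rm lann}(x\mathcal{R})={\rm lann}(\mathcal{S})$, Theorem~\ref{T 12I projectors} exhibits $\varphi_{ax}$, $\varphi_{xa}$, $\,_{ax}\varphi$, $\,_{xa}\varphi$ as the four projectors whose image-kernel pairs coincide with the four direct-sum decompositions listed in~(2).

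For (2) $\Rightarrow$ (3), I would replay the argument used in the proof of Theorem~\ref{T 12I xR x0 existencia} on each side. From $\mathcal{R}=\mathcal{S}\oplus{\rm rann}(a)$, any $ar\in a\mathcal{R}$ equals $a\rho_{\mathcal{S},{\rm rann}(a)}(r)\in a\mathcal{S}$, so $a\mathcal{R}=a\mathcal{S}$; then $\mathcal{R}=a\mathcal{R}\oplus{\rm rann}(\mathcal{S}')$ rewrites as $\mathcal{R}=a\mathcal{S}\oplus{\rm rann}(\mathcal{S}')$, and in particular $a\mathcal{R}\cap{\rm rann}(\mathcal{S}')=\{0\}$. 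The symmetric argument on the left, using $\mathcal{R}=\mathcal{S}'\oplus{\rm lann}(a)$ to obtain $\mathcal{R}a=\mathcal{S}'a$, yields the remaining two clauses of~(3). For (3) $\Rightarrow$ (1), the first and third clauses of~(3) are exactly condition~(2) of Theorem~\ref{T 2I xR Rx existencia}, so that theorem furnishes an $x\in\mathcal{S}\cap\mathcal{S}'$ belonging to $a\{2\}$ with $x\mathcal{R}=\mathcal{S}$, $\mathcal{R}x=\mathcal{S}'$, and $\varphi_{ax}=\rho_{a\mathcal{S},{\rm rann}(\mathcal{S}')}$. To promote $x$ to a $\{1\}$-inverse I would note that $1-ax\in{\rm rann}(\mathcal{S}')$, which is a right ideal, so $(1-ax)a\in{\rm rann}(\mathcal{S}')$; since $a-axa=(1-ax)a\in a\mathcal{R}$ as well, the hypothesis $a\mathcal{R}\cap{\rm rann}(\mathcal{S}')=\{0\}$ forces $axa=a$, so $x=a^{(1,2)}_{{\rm rprin}=\mathcal{S},{\rm lprin}=\mathcal{S}'}$. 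Finally, (3) $\Leftrightarrow$ (4) is immediate: the two trivial-intersection clauses appear verbatim in both, while the equivalence of the remaining clauses is exactly the equivalence (2) $\Leftrightarrow$ (4) in Theorem~\ref{T 2I xR Rx existencia}.

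The main obstacle is (2) $\Rightarrow$ (3), specifically the substitution of $a\mathcal{R}$ by $a\mathcal{S}$ (and of $\mathcal{R}a$ by $\mathcal{S}'a$) via the \emph{other} direct-sum decomposition on the same side; everything else then reduces to short projector-bookkeeping and one right-ideal absorption step to verify $axa=a$.
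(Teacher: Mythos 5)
Your proof is correct and follows essentially the same route as the paper: the cycle (1) $\Rightarrow$ (2) $\Rightarrow$ (3) $\Rightarrow$ (1) together with the link to (4), resting on Theorem~\ref{T 12I projectors} and the earlier existence theorems. The only genuine variation is in (3) $\Rightarrow$ (1): the paper assembles the inverse from the two one-sided results (Theorems~\ref{T 12I xR x0 existencia} and \ref{T 12I Rx 0x existencia}), arguing as in (2) $\Rightarrow$ (1) of Theorem~\ref{T 2I xR Rx existencia} that $a^{(1,2)}_{{\rm rprin}=\mathcal{S},{\rm rann}={\rm rann}(\mathcal{S}')}$ and $a^{(1,2)}_{{\rm lprin}=\mathcal{S}',{\rm lann}={\rm lann}(\mathcal{S})}$ exist and coincide, whereas you invoke the two-sided Theorem~\ref{T 2I xR Rx existencia} once to get the $\{2\}$-inverse with prescribed principal ideals and then upgrade it to a $\{1,2\}$-inverse by the absorption step $a-axa=(1-ax)a\in a\mathcal{R}\cap{\rm rann}(\mathcal{S}')=\{0\}$; both arguments are valid and of comparable length, yours mirroring the (3) $\Rightarrow$ (1) step of Theorem~\ref{T 12I xR x0 existencia}. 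One remark: you (correctly) read the third direct sum in clause (3) as $\mathcal{R}=\mathcal{S}'a\oplus{\rm lann}(\mathcal{S})$; the printed $\mathcal{S}a$ must be a typo, as clause (4) and Theorem~\ref{T 2I xR Rx existencia}(2) indicate, so your derivation matches the intended statement.
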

\begin{proof}
From Theorems~\ref{T 12I xR x0 existencia} and \ref{T 12I Rx 0x
existencia}, we get (1) $\Rightarrow$ (2) $\Rightarrow$ (3). The
implication (1) $\Rightarrow$ (4) follows from Theorem~\ref{T 12I
projectors}. The proof of (3) $\Rightarrow$ (1) is similar to the
proof of (2) $\Rightarrow$ (1) in Theorem~\ref{T 2I xR Rx
existencia} and is based on Theorems~\ref{T 12I xR x0 existencia}
and \ref{T 12I Rx 0x existencia}. The implication (4) $\Rightarrow$
(3) is immediate.
\end{proof}

From previous results, we derive the next sufficient conditions for
right/left ideal of $\mathcal{R}$ to be principal/annhililator
ideals of idempotent elements of $\mathcal{R}$.
\begin{corollary}\label{C 2I 12I ideales p q}
Let $a \in \mathcal{R}$, $\mathcal{S}$, $\mathcal{T}$ be right
ideals of $\mathcal{R}$ and $\mathcal{S}'$, $\mathcal{T}'$ be left
ideals of $\mathcal{R}$. The following assertions hold:
\begin{enumerate}
  \item If any of the equivalent statements (1)-(4)
of Theorem~\ref{T 2I xR x0 existencia} (or Theorem~\ref{T 12I xR x0
existencia}) holds, then there exist $p, q \in
\mathcal{R}^{\bullet}$ such that $\mathcal{S}=p\mathcal{R}$ and
$\mathcal{T}={\rm rann}(q)$.
  \item If any of the equivalent statements (1)-(4)
of Theorem~\ref{T 2I Rx 0x existencia} (or Theorem~\ref{T 12I Rx 0x
existencia}) holds, then there exist $p, q \in
\mathcal{R}^{\bullet}$ such that $\mathcal{S}=\mathcal{R}p$ and
$\mathcal{T}={\rm lann}(q)$.
  \item If any of the equivalent statements (1)-(4) of Theorem~\ref{T 2I xR
Rx existencia} (or Theorem~\ref{T 12I xR Rx existencia}) holds, then
there exist $p, q \in \mathcal{R}^{\bullet}$ such that
$\mathcal{S}=p\mathcal{R}$ and $\mathcal{S}'=\mathcal{R}q$.
  \item If any of the equivalent statements (1)-(6) of Theorem~\ref{T 2I 0x x0 existencia} holds, then
there exist $p, q \in \mathcal{R}^{\bullet}$ such that
$\mathcal{T}={\rm rann}(p)$ and $\mathcal{T}'={\rm lann}(q)$.
\end{enumerate}
\end{corollary}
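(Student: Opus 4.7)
The plan is to observe that in each of the four theorems referenced (Theorems~\ref{T 2I xR x0 existencia}, \ref{T 2I Rx 0x existencia}, \ref{T 2I xR Rx existencia}, \ref{T 2I 0x x0 existencia}, and their $\{1,2\}$-analogues \ref{T 12I xR x0 existencia}, \ref{T 12I Rx 0x existencia}, \ref{T 12I xR Rx existencia}), the first equivalent statement guarantees the existence of an outer inverse $x \in a\{2\}$ whose right/left principal and/or annihilator ideals are precisely the prescribed $\mathcal{S}$, $\mathcal{T}$, $\mathcal{S}'$, $\mathcal{T}'$. Since the equivalent statements (1)--(4) (or (1)--(6)) in each theorem all yield such an $x$, it suffices to construct the required idempotents from this $x$.

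The key computational observation is that whenever $x \in a\{2\}$, i.e., $xax = x$, both products $xa$ and $ax$ are idempotents: indeed, $(xa)^2 = (xax)a = xa$ and $(ax)^2 = a(xax) = ax$. Combined with the identities from item (2) of the generalized inverses subsection in Section~\ref{S Preliminaries}, namely $a^{(2)}a\mathcal{R} = a^{(2)}\mathcal{R}$, $\mathcal{R}aa^{(2)} = \mathcal{R}a^{(2)}$, ${\rm rann}(aa^{(2)}) = {\rm rann}(a^{(2)})$, and ${\rm lann}(a^{(2)}a) = {\rm lann}(a^{(2)})$, these two idempotents realize all four kinds of prescribed ideals.

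Concretely, for part (1) I would set $p := xa$ and $q := ax$, so that $\mathcal{S} = x\mathcal{R} = xa\mathcal{R} = p\mathcal{R}$ and $\mathcal{T} = {\rm rann}(x) = {\rm rann}(ax) = {\rm rann}(q)$. For part (2) I would reverse the roles, taking $p := ax$ and $q := xa$ and using $\mathcal{S} = \mathcal{R}x = \mathcal{R}ax = \mathcal{R}p$ together with $\mathcal{T} = {\rm lann}(x) = {\rm lann}(xa) = {\rm lann}(q)$. Part (3) uses $p := xa$ and $q := ax$ exactly as in part (1) on the right and part (2) on the left, combining the two identifications. Part (4) uses $p := ax$ with $\mathcal{T} = {\rm rann}(x) = {\rm rann}(p)$, and $q := xa$ with $\mathcal{T}' = {\rm lann}(x) = {\rm lann}(q)$.

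There is no real obstacle here: the statement is a direct corollary once one reads the prescribed ideals through the identities of item (2) of Subsection~2.1. The only mild care required is to keep the bookkeeping straight, matching which of $xa$ or $ax$ governs the right/left principal ideal versus the right/left annihilator, and to remember that any statement from the $\{1,2\}$-existence theorems also supplies an element of $a\{2\}$, so the same construction of $p$ and $q$ applies verbatim in those cases.
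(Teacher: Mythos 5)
Your proposal is correct and follows the paper's own argument: the paper likewise invokes the existence theorems to get the $\{2\}$- (or $\{1,2\}$-) inverse $x$ with the prescribed ideals and then takes $p=xa$, $q=ax$ (swapping the roles for the left-sided and annihilator cases), using the standard identities $xa\mathcal{R}=x\mathcal{R}$, $\mathcal{R}ax=\mathcal{R}x$, ${\rm rann}(ax)={\rm rann}(x)$, ${\rm lann}(xa)={\rm lann}(x)$. Your bookkeeping of which of $xa$, $ax$ serves as $p$ or $q$ in each part matches what the paper leaves implicit under ``the rest of the proof is similar.''
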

\begin{proof}
(1): By Theorem~\ref{T 2I xR x0 existencia} (resp. Theorem~\ref{T
12I xR x0 existencia}), $x=a^{(2)}_{{\rm rprin}=\mathcal{S},{\rm
rann}=\mathcal{T}}$ (resp. $x=a^{(1,2)}_{{\rm
rprin}=\mathcal{S},{\rm rann}=\mathcal{T}}$) exists and the
conclusion follows taking $p=xa$ and $q=ax$. The rest of the proof
is similar.
\end{proof}

\section{Particular classes of $\{1\}$, $\{2\}$, and $\{1,2\}$-inverses}\label{S Particular classes of 1 2 and 12 inverses}

In this section, we apply previous results to study particular
classes of $\{1\}$, $\{2\}$, and $\{1,2\}$-inverses. We also give an
illustrative example with a matrix over a field.

\subsection{$\{1,3\}$, $\{1,4\}$, $\{1,3,4\}$, $\{1,3,6\}$, $\{1,4,8\}$, $\{1,3,7\}$, and $\{1,4,9\}$-inverses}\label{S 13I 14I}

For $\{1,3\}$-inverses we have:
\begin{theorem}\label{T 13 orthogonal projector}
Let $\mathcal{R}$ be a $\ast$-ring and $a, x \in \mathcal{R}$. Then
the following assertions are equivalent:
\begin{enumerate}
  \item $x \in
a\{1,3\}$.
  \item $\varphi_{ax} =
\rho_{a\mathcal{R},{\rm rann}(a^{\ast})}$.
  \item $\,_{ax}\varphi = \rho_{\mathcal{R}a^{\ast},{\rm lann}(a)}$.
\end{enumerate}
\end{theorem}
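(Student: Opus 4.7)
The plan is to establish $(1) \Leftrightarrow (2)$ directly, and then obtain $(1) \Leftrightarrow (3)$ by an entirely analogous argument, swapping $\varphi_{ax}$ for $\,_{ax}\varphi$ and replacing Lemma~\ref{L aR a0 ortogonales} by its left counterpart Lemma~\ref{L aR a0 ortogonales b}. The heart of the matter is a clean identification of the kernel of $\varphi_{ax}$ under the symmetry hypothesis $(ax)^{\ast} = ax$; once that is done, everything reduces to the general projector framework developed in Theorem~\ref{T 1I projectors} and the orthogonality lemmas of Section~\ref{S Preliminaries}.

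For $(1) \Rightarrow (2)$, starting from $x \in a\{1\}$ I would invoke Theorem~\ref{T 1I projectors} to obtain $\varphi_{ax} = \rho_{a\mathcal{R},{\rm rann}(ax)}$, so it suffices to prove ${\rm rann}(ax) = {\rm rann}(a^{\ast})$. The inclusion ${\rm rann}(a^{\ast}) \subseteq {\rm rann}(ax)$ follows because $(ax)^{\ast} = ax$ rewrites $ax$ as $x^{\ast}a^{\ast}$. For the reverse, taking adjoints of $axa = a$ gives $a^{\ast} = a^{\ast}(ax)^{\ast} = a^{\ast}(ax)$, and hence $axr = 0$ forces $a^{\ast}r = a^{\ast}(ax)r = 0$.

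For $(2) \Rightarrow (1)$, assuming $\varphi_{ax} = \rho_{a\mathcal{R},{\rm rann}(a^{\ast})}$, I would first apply Lemma~\ref{L proy im ker}(3) to $a \in a\mathcal{R}$ to conclude $axa = \varphi_{ax}(a) = a$, so $x \in a\{1\}$ and $ax \in \mathcal{R}^{\bullet}$. The identity $axa = a$ together with the trivial inclusion $ax\mathcal{R} \subseteq a\mathcal{R}$ yields $ax\mathcal{R} = a\mathcal{R}$, while the hypothesis directly gives ${\rm rann}(ax) = {\rm rann}(a^{\ast})$. Lemma~\ref{L aR a0 ortogonales}(1) then ensures $ax\mathcal{R} \perp_{r} {\rm rann}(ax)$, so Lemma~\ref{L aR a0 ortogonales}(3) delivers $(ax)^{\ast} = ax$, completing $x \in a\{1,3\}$.

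The only delicate point is the $(2) \Rightarrow (1)$ direction: idempotency of $ax$ does not by itself imply symmetry, and the extra information must be extracted from the specific form of the kernel, namely ${\rm rann}(a^{\ast})$ rather than an arbitrary complement of $a\mathcal{R}$. This is precisely what the interplay of Lemmas~\ref{L aR a0 ortogonales}(1) and \ref{L aR a0 ortogonales}(3) is designed to capture in a $\ast$-ring, and it is what forces the proof of $(1) \Leftrightarrow (3)$ to rely on the mirror Lemma~\ref{L aR a0 ortogonales b} instead.
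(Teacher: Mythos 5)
Your proposal is correct and follows essentially the same route as the paper: for (1)$\Rightarrow$(2) it reduces to Theorem~\ref{T 1I projectors} plus the identity ${\rm rann}(ax)={\rm rann}(a^{\ast})$ (which you verify explicitly, while the paper cites it as standard), and for (2)$\Rightarrow$(1) it extracts $x\in a\{1\}$, $ax\in\mathcal{R}^{\bullet}$, $ax\mathcal{R}=a\mathcal{R}$, ${\rm rann}(ax)={\rm rann}(a^{\ast})$ and then applies Lemma~\ref{L aR a0 ortogonales}(1)(3) to get symmetry of $ax$, exactly as the paper does, with the mirror Lemma~\ref{L aR a0 ortogonales b} handling (1)$\Leftrightarrow$(3).
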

\begin{proof}
(1) $\Rightarrow$ (2): It follows from Theorem~\ref{T 1I projectors}
and the equality ${\rm rann}(ax)={\rm rann}(a^{\ast})$.

(2) $\Rightarrow$ (1): If $\varphi_{ax} = \rho_{a\mathcal{R},{\rm
rann}(a^{\ast})}$, then $x \in a\{1\}$, $ax \in R^{\bullet}$,
$ax\mathcal{R}=a\mathcal{R}$, and ${\rm rann}(ax)={\rm
rann}(a^{\ast})$. By Lemma~\ref{L aR a0 ortogonales}, $ax \in
R^{{\rm sym}}$, i.e, $x \in a\{3\}$.

(1) $\Leftrightarrow$ (3): It is analogous to the proof of (1)
$\Leftrightarrow$ (2).
\end{proof}
Similar to Theorem~\ref{T 13 orthogonal projector}, we obtain:
\begin{theorem}\label{T 14 orthogonal projector}
Let $\mathcal{R}$ be a $\ast$-ring and $a, x \in \mathcal{R}$. Then
the following assertions are equivalent:
\begin{enumerate}
  \item $x \in a\{1,4\}$.
  \item $\varphi_{xa} =
\rho_{a^{\ast}\mathcal{R},{\rm rann}(a)}$.
  \item $\,_{xa}\varphi = \rho_{\mathcal{R}a,{\rm lann}(a^{\ast})}$.
\end{enumerate}
\end{theorem}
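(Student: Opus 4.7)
The plan is to mirror the proof of Theorem~\ref{T 13 orthogonal projector}, interchanging left and right multiplication throughout. All three equivalences split into two parallel arguments, (1) $\Leftrightarrow$ (2) and (1) $\Leftrightarrow$ (3), each being the dual of the corresponding step in the $\{1,3\}$-case.

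For (1) $\Rightarrow$ (2), I would start from $x \in a\{1\}$ and invoke Theorem~\ref{T 1I projectors}(3) to obtain $\varphi_{xa} = \rho_{xa\mathcal{R},{\rm rann}(a)}$; only the identification $xa\mathcal{R} = a^{\ast}\mathcal{R}$ remains. The inclusion $xa\mathcal{R} \subseteq a^{\ast}\mathcal{R}$ is immediate from $(xa)^{\ast} = xa$, since $xa = a^{\ast}x^{\ast} \in a^{\ast}\mathcal{R}$; the reverse inclusion follows by taking adjoints of $axa = a$, which gives $a^{\ast} = a^{\ast}x^{\ast}a^{\ast} = (xa)a^{\ast} \in xa\mathcal{R}$.

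For (2) $\Rightarrow$ (1), assume $\varphi_{xa} = \rho_{a^{\ast}\mathcal{R},{\rm rann}(a)}$. By Lemma~\ref{L oblique projector sii idempotent}, the element $xa$ is idempotent, with $xa\mathcal{R} = a^{\ast}\mathcal{R}$ and ${\rm rann}(xa) = {\rm rann}(a)$. From $1 - xa \in {\rm rann}(xa) = {\rm rann}(a)$ I would get $a(1-xa)=0$, hence $axa = a$, so $x \in a\{1\}$. Then Lemma~\ref{L aR a0 ortogonales}(1) yields $a^{\ast}\mathcal{R} \perp_{r} {\rm rann}(a)$, so $xa\mathcal{R} \perp_{r} {\rm rann}(xa)$, and Lemma~\ref{L aR a0 ortogonales}(3) applied to the idempotent $xa$ produces $(xa)^{\ast} = xa$, i.e., $x \in a\{4\}$.

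The equivalence (1) $\Leftrightarrow$ (3) is the genuine left-right dual of the above, replacing Theorem~\ref{T 1I projectors}(3) with (5) and Lemma~\ref{L aR a0 ortogonales} with Lemma~\ref{L aR a0 ortogonales b}; one uses $xa\mathcal{R}=a^{\ast}\mathcal{R}$ to transfer to ${\rm lann}(xa)={\rm lann}(a^{\ast})$, and the image identity $\mathcal{R}xa=\mathcal{R}a$ follows by combining $a \in \mathcal{R}xa$ with idempotency of $xa$. I do not expect a genuine obstacle here; the only point requiring care will be matching the correct annihilator side with the adjoint hypotheses of the orthogonality lemmas, exactly the bookkeeping already present in Theorem~\ref{T 13 orthogonal projector}.
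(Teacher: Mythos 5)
Your argument is correct and is essentially the paper's own route: the paper proves the $\{1,3\}$ case via Theorem~\ref{T 1I projectors} together with Lemma~\ref{L aR a0 ortogonales} and then obtains Theorem~\ref{T 14 orthogonal projector} by exactly the left--right/adjoint dual you spell out (Theorem~\ref{T 1I projectors}(3)(5), $xa\mathcal{R}=a^{\ast}\mathcal{R}$, ${\rm lann}(xa)={\rm lann}(a^{\ast})$, and Lemmas~\ref{L aR a0 ortogonales} and \ref{L aR a0 ortogonales b} applied to the idempotent $xa$). The only cosmetic point is that in the converse directions the paper reads $x\in a\{1\}$ off from Theorem~\ref{T 1I projectors} directly, whereas you rederive $axa=a$ from $1-xa\in{\rm rann}(a)$; both are fine.
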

As a consequence of Theorems~\ref{T 13 orthogonal projector} and
\ref{T 14 orthogonal projector}, we obtain the next theorem.
\begin{theorem}\label{T 134 orthogonal projector}
Let $\mathcal{R}$ be a $\ast$-ring and $a, x \in \mathcal{R}$. Then
the following assertions are equivalent:
\begin{enumerate}
  \item $x \in a\{1,3,4\}$.
  \item $\varphi_{ax} = \rho_{a\mathcal{R},{\rm rann}(a^{\ast})}$ and $\varphi_{xa} = \rho_{a^{\ast}\mathcal{R},{\rm rann}(a)}$.
  \item $\varphi_{ax} = \rho_{a\mathcal{R},{\rm rann}(a^{\ast})}$ and $\,_{xa}\varphi = \rho_{\mathcal{R}a,{\rm lann}(a^{\ast})}$.
  \item $\,_{ax}\varphi = \rho_{\mathcal{R}a^{\ast},{\rm lann}(a)}$ and $\varphi_{xa} = \rho_{a^{\ast}\mathcal{R},{\rm rann}(a)}$.
  \item $\,_{ax}\varphi = \rho_{\mathcal{R}a^{\ast},{\rm lann}(a)}$ and $\,_{xa}\varphi = \rho_{\mathcal{R}a,{\rm lann}(a^{\ast})}$.
\end{enumerate}
\end{theorem}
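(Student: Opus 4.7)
The plan is to reduce the statement to the two immediately preceding results, Theorem~\ref{T 13 orthogonal projector} and Theorem~\ref{T 14 orthogonal projector}, via the obvious identity $a\{1,3,4\} = a\{1,3\} \cap a\{1,4\}$. This identity holds because the conditions defining $a\{1,3,4\}$ are the equations $axa = a$, $(ax)^{\ast}=ax$, $(xa)^{\ast}=xa$, which is exactly the simultaneous requirement that $x$ satisfy the defining equations of both $a\{1,3\}$ and $a\{1,4\}$.

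I would then observe that each of the four statements (2)--(5) pairs one equation characterizing $a\{1,3\}$-membership with one equation characterizing $a\{1,4\}$-membership. More precisely, Theorem~\ref{T 13 orthogonal projector} tells us that $x \in a\{1,3\}$ is equivalent to either $\varphi_{ax}=\rho_{a\mathcal{R},{\rm rann}(a^{\ast})}$ or $\,_{ax}\varphi=\rho_{\mathcal{R}a^{\ast},{\rm lann}(a)}$, and Theorem~\ref{T 14 orthogonal projector} tells us that $x \in a\{1,4\}$ is equivalent to either $\varphi_{xa}=\rho_{a^{\ast}\mathcal{R},{\rm rann}(a)}$ or $\,_{xa}\varphi=\rho_{\mathcal{R}a,{\rm lann}(a^{\ast})}$. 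Inspecting (2)--(5), the first identity in each always corresponds to one of the two equivalent forms characterizing $a\{1,3\}$, and the second identity always corresponds to one of the two equivalent forms characterizing $a\{1,4\}$. Hence each of (2)--(5) is equivalent to the conjunction ``$x \in a\{1,3\}$ and $x \in a\{1,4\}$'', which is (1).

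Consequently, the forward implication (1) $\Rightarrow$ (2)--(5) is immediate: $x \in a\{1,3,4\}$ yields $x \in a\{1,3\}$ and $x \in a\{1,4\}$, and applying both halves of Theorems~\ref{T 13 orthogonal projector} and~\ref{T 14 orthogonal projector} gives each of the paired identities simultaneously. For any of the converses (2)--(5) $\Rightarrow$ (1), one reads off $x \in a\{1,3\}$ from the first identity and $x \in a\{1,4\}$ from the second, and combines them into $x \in a\{1,3,4\}$.

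There is no substantive obstacle here: the proposition is a direct corollary of Theorems~\ref{T 13 orthogonal projector} and~\ref{T 14 orthogonal projector}, and the only care required is the bookkeeping of checking that each listed pair really splits into one $\{1,3\}$-characterizing equation and one $\{1,4\}$-characterizing equation. No further ring-theoretic computation, no use of Lemma~\ref{L aR a0 ortogonales} beyond what is already encapsulated in the two earlier theorems, and no appeal to $\{1,2\}$-inverse results is needed.
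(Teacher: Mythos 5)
Your proposal is correct and follows exactly the route the paper intends: the paper states the theorem as an immediate consequence of Theorems~\ref{T 13 orthogonal projector} and~\ref{T 14 orthogonal projector}, and your splitting of each pair (2)--(5) into one $\{1,3\}$-characterizing identity and one $\{1,4\}$-characterizing identity, together with $a\{1,3,4\}=a\{1,3\}\cap a\{1,4\}$, is precisely that argument. Nothing further is needed.
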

In the next two theorems, we give the projectors associated with
$\{1,3,6\}$ and $\{1,4,8\}$-inverses.
\begin{theorem}\label{T 136 orthogonal projector}
Let $\mathcal{R}$ be a $\ast$-ring and $a, x \in \mathcal{R}$. Each
of the assertions
\begin{enumerate}
  \item $\varphi_{ax} = \rho_{a\mathcal{R},{\rm rann}(a^{\ast})}$ and $\varphi_{xa} = \rho_{a\mathcal{R},{\rm rann}(a)}$,
  \item $\varphi_{ax} = \rho_{a\mathcal{R},{\rm rann}(a^{\ast})}$ and $\,_{xa}\varphi = \rho_{\mathcal{R}a,{\rm lann}(a)}$,
  \item $\,_{ax}\varphi = \rho_{\mathcal{R}a^{\ast},{\rm lann}(a)}$ and $\varphi_{xa} = \rho_{a\mathcal{R},{\rm rann}(a)}$,
  \item $\,_{ax}\varphi = \rho_{\mathcal{R}a^{\ast},{\rm lann}(a)}$ and $\,_{xa}\varphi = \rho_{\mathcal{R}a,{\rm lann}(a)}$,
\end{enumerate}
implies $x \in a\{1,3,6\}$.
\end{theorem}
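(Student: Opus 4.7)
The plan is to show, in each of the four cases, that $x\in a\{1,3\}$ and additionally that $xa^{2}=a$; these together will give $x\in a\{1,3,6\}$.

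For the $\{1,3\}$ part I would appeal directly to Theorem~\ref{T 13 orthogonal projector}: in cases (1) and (2) the hypothesis on $\varphi_{ax}$ is exactly part~(2) of that theorem, while in cases (3) and (4) the hypothesis on $\,_{ax}\varphi$ is exactly its part~(3). So $x\in a\{1,3\}$ drops out immediately, and in particular $axa=a$ and $(ax)^{\ast}=ax$.

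For $xa^{2}=a$ the argument splits according to the form of the second hypothesis. In cases (1) and (3), $\varphi_{xa}=\rho_{a\mathcal{R},{\rm rann}(a)}$ has image $a\mathcal{R}$, and since $a\in a\mathcal{R}$, Lemma~\ref{L proy im ker}(3) gives $\varphi_{xa}(a)=a$, which is exactly $xa^{2}=a$. In cases (2) and (4), $\,_{xa}\varphi=\rho_{\mathcal{R}a,{\rm lann}(a)}$ is a projector, so by Lemma~\ref{L oblique projector sii idempotent} it is idempotent as a map; evaluating at $1$ yields $(xa)^{2}=xa$, and the kernel identification gives ${\rm lann}(xa)={\rm lann}(a)$. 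Thus $(1-xa)(xa)=xa-(xa)^{2}=0$, so $1-xa\in{\rm lann}(xa)={\rm lann}(a)$, whence $(1-xa)a=0$, i.e., $xa^{2}=a$.

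There is no real obstacle here; the only wrinkle worth noting is that in the $\,_{xa}\varphi$-cases one must extract $xa^{2}=a$ from a kernel condition rather than directly from an image condition, which is why the elementary idempotent computation $1-xa\in{\rm lann}(xa)$ needs to be inserted before one can invoke the equality of annihilators.
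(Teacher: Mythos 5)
Your proposal is correct and follows essentially the same route as the paper: Theorem~\ref{T 13 orthogonal projector} gives $x\in a\{1,3\}$, and the second projector hypothesis yields $xa^{2}=a$ (the paper writes $xa^{2}=\rho_{a\mathcal{R},{\rm rann}(a)}(a)=a$ for case (1) and leaves the rest as ``similar''). Your handling of cases (2) and (4) via $1-xa\in{\rm lann}(xa)={\rm lann}(a)$ is a legitimate filling-in of that ``similar'' step, equivalent to invoking Lemma~\ref{L ro a = a and a ro = a}(2).
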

\begin{proof}
Suppose that (1) holds. By Theorem~\ref{T 13 orthogonal projector},
$x \in a\{1,3\}$. Since $xa^{2}=\rho_{a\mathcal{R},{\rm
rann}(a)}(a)=a$, we have $x \in a\{6\}$. The remainder of the
implications can be similarly proved.
\end{proof}
We analogously have:
\begin{theorem}\label{T 148 orthogonal projector}
Let $\mathcal{R}$ be a $\ast$-ring and $a, x \in \mathcal{R}$. Each
of the assertions
\begin{enumerate}
  \item $\varphi_{ax} = \rho_{a\mathcal{R}, {\rm rann}(a)}$ and $\varphi_{xa} = \rho_{a^{\ast}\mathcal{R}, {\rm rann}(a)}$,
  \item $\varphi_{ax} = \rho_{a\mathcal{R}, {\rm rann}(a)}$ and $\,_{xa}\varphi = \rho_{\mathcal{R}a,{\rm lann}(a^{\ast})}$,
  \item $\,_{ax}\varphi = \rho_{\mathcal{R}a,{\rm lann}(a)}$ and $\varphi_{xa} = \rho_{a^{\ast}\mathcal{R}, {\rm rann}(a)}$,
  \item $\,_{ax}\varphi = \rho_{\mathcal{R}a,{\rm lann}(a)}$ and $\,_{xa}\varphi = \rho_{\mathcal{R}a,{\rm lann}(a^{\ast})}$,
\end{enumerate}
implies $x \in a\{1,4,8\}$.
\end{theorem}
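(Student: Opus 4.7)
The plan is to mirror the proof of Theorem~\ref{T 136 orthogonal projector}: in each of the four hypotheses one of the two projector identities is precisely the characterization of $\{1,4\}$-inverses given by Theorem~\ref{T 14 orthogonal projector}, and the other is used to extract the remaining equation $a^{2}x=a$ (i.e., the relation $\{8\}$). So my template is: first invoke Theorem~\ref{T 14 orthogonal projector} to obtain $x \in a\{1,4\}$, then squeeze $\{8\}$ out of the remaining projector identity.

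For hypotheses (1) and (2), the $\{1,4\}$ part comes directly from Theorem~\ref{T 14 orthogonal projector}, applied to $\varphi_{xa}$ in (1) and to $\,_{xa}\varphi$ in (2). To obtain $\{8\}$ I would use that $\varphi_{ax}=\rho_{a\mathcal{R},{\rm rann}(a)}$ is in particular an idempotent group endomorphism (Lemma~\ref{L oblique projector sii idempotent}), so $(ax)^{2}=ax$; hence $1-ax \in {\rm ker}(\varphi_{ax}) = {\rm rann}(a)$, and left-multiplying by $a$ gives $a(1-ax)=0$, that is, $a^{2}x=a$. For hypotheses (3) and (4), Theorem~\ref{T 14 orthogonal projector} again gives $\{1,4\}$ (via $\varphi_{xa}$ or $\,_{xa}\varphi$), and the $\{8\}$ step is even more direct: since $a\in\mathcal{R}a$, evaluating $\,_{ax}\varphi=\rho_{\mathcal{R}a,{\rm lann}(a)}$ at $a$ yields $a^{2}x=\,_{ax}\varphi(a)=\rho_{\mathcal{R}a,{\rm lann}(a)}(a)=a$, in exact analogy with the computation $xa^{2}=\rho_{a\mathcal{R},{\rm rann}(a)}(a)=a$ used for $\{6\}$ in Theorem~\ref{T 136 orthogonal projector}.

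No substantive obstacle is expected: the mechanics run entirely parallel to Theorem~\ref{T 136 orthogonal projector}. The only point worth being careful about is that the extraction of $\{8\}$ takes two slightly different guises depending on whether the auxiliary projector identity is expressed through $\varphi_{ax}$ (read $\{8\}$ off the kernel ${\rm rann}(a)$) or through $\,_{ax}\varphi$ (read $\{8\}$ off the fact that $a\in\mathcal{R}a={\rm im}(\,_{ax}\varphi)$). Once this dichotomy is spelled out in case~(1), one can safely write, as the author does after Theorem~\ref{T 136 orthogonal projector}, that \emph{the remainder of the implications can be similarly proved}.
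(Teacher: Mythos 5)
Your proposal is correct and follows essentially the same route as the paper: the paper proves Theorem~\ref{T 148 orthogonal projector} ``analogously'' to Theorem~\ref{T 136 orthogonal projector}, i.e., one projector identity yields $x\in a\{1,4\}$ via Theorem~\ref{T 14 orthogonal projector}, and the other yields $a^{2}x=a$, exactly as you do (reading $\{8\}$ off the kernel ${\rm rann}(a)$ when the identity involves $\varphi_{ax}$, or off $a\in\mathcal{R}a$ when it involves $\,_{ax}\varphi$). Your explicit treatment of that dichotomy matches what the paper leaves to the phrase ``the remainder of the implications can be similarly proved.''
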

From Theorems~\ref{T 13 orthogonal projector} and \ref{T 14
orthogonal projector}, we obtain the next two theorems.
\begin{theorem}\label{T 137 orthogonal projector}
Let $\mathcal{R}$ be a $\ast$-ring and $a, x \in \mathcal{R}$. Then
the following assertions are equivalent:
\begin{enumerate}
  \item $x \in a\{1,3,7\}$.
  \item $\varphi_{ax} = \rho_{a\mathcal{R},{\rm rann}(a^{\ast})}$ and $x\in a\mathcal{R}$.
  \item $\,_{ax}\varphi = \rho_{\mathcal{R}a^{\ast},{\rm lann}(a)}$ and ${\rm lann}(a)\subseteq{\rm lann}(x)$.
\end{enumerate}
\end{theorem}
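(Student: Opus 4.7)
The plan is to build on Theorem~\ref{T 13 orthogonal projector}, which already supplies the equivalence between $x \in a\{1,3\}$ and each of the two projector equations appearing in (2) and (3). Thus the only real content is to verify that, in the presence of $x \in a\{1,3\}$, the extra condition (7), namely $ax^{2}=x$, is captured exactly by ``$x\in a\mathcal{R}$'' on the right-sided side and by ``${\rm lann}(a)\subseteq {\rm lann}(x)$'' on the left-sided side.

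For (1) $\Leftrightarrow$ (2), I would argue as follows. If $x \in a\{7\}$, then $x=ax^{2}\in a\mathcal{R}$, giving the second half of (2) immediately. Conversely, assume the projector equation of (2), which by Theorem~\ref{T 13 orthogonal projector} gives $x \in a\{1,3\}$, and assume $x=ar$ for some $r\in\mathcal{R}$. Then $ax\cdot x = ax\cdot ar=(axa)r=ar=x$, using $axa=a$; this is precisely $x \in a\{7\}$. Combined with Theorem~\ref{T 13 orthogonal projector}, this gives (1) $\Leftrightarrow$ (2).

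For (1) $\Leftrightarrow$ (3), the same Theorem~\ref{T 13 orthogonal projector} reduces everything to showing that, assuming $x\in a\{1,3\}$, condition (7) is equivalent to ${\rm lann}(a)\subseteq{\rm lann}(x)$. If $ax^{2}=x$ and $ya=0$, then $yx=y(ax^{2})=(ya)x^{2}=0$, so ${\rm lann}(a)\subseteq{\rm lann}(x)$. Conversely, from $axa=a$ we get $(ax-1)a=0$, so $ax-1\in {\rm lann}(a)\subseteq {\rm lann}(x)$, whence $(ax-1)x=0$, i.e.\ $ax^{2}=x$. This closes the cycle.

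No step is a serious obstacle; the only thing one has to notice is the little observation that $axa=a$ rewrites as $ax-1\in {\rm lann}(a)$, which is what lets ``${\rm lann}(a)\subseteq {\rm lann}(x)$'' be pushed from $a$ to $x$ and recover equation (7). Everything else is a direct application of Theorem~\ref{T 13 orthogonal projector} together with the definitions of $\varphi_{ax}$ and $\,_{ax}\varphi$.
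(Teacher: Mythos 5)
Your proof is correct and follows exactly the route the paper intends: it derives the theorem as a consequence of Theorem~\ref{T 13 orthogonal projector}, with the extra condition (7), $ax^{2}=x$, translated into $x\in a\mathcal{R}$ (via $x=ax\cdot ar=(axa)r=x$) and into ${\rm lann}(a)\subseteq{\rm lann}(x)$ (via $ax-1\in{\rm lann}(a)$). Nothing further is needed.
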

We note that the elements of $a\{1,3,7\}$ are the \emph{right core
inverses} of $a$ which are a particular case of right $(b,c)$
inverse of $a$ (see \cite{Drazin (2016), Wang-Mosic (2021),
Wang-Mosic-Gao (2019)}, in particular, \cite[Theorem 5.1]{Wang-Mosic
(2021)}).
\begin{theorem}\label{T 149 orthogonal projector}
Let $\mathcal{R}$ be a $\ast$-ring and $a, x \in \mathcal{R}$. Then
the following assertions are equivalent:
\begin{enumerate}
  \item $x \in a\{1,4,9\}$.
  \item $\varphi_{xa} = \rho_{a^{\ast}\mathcal{R},{\rm rann}(a)}$ and ${\rm rann}(a) \subseteq {\rm rann}(x)$.
  \item $\,_{xa}\varphi = \rho_{\mathcal{R}a,{\rm lann}(a^{\ast})}$ and $x\in\mathcal{R}a$.
\end{enumerate}
\end{theorem}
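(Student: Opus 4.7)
The plan is to model the proof on that of Theorem~\ref{T 137 orthogonal projector}: handle the $\{1,4\}$ part via Theorem~\ref{T 14 orthogonal projector} and treat the extra equation (9), $x^{2}a=x$, by exploiting the kernel-containment (for the equivalence (1)$\Leftrightarrow$(2)) or the image-membership (for (1)$\Leftrightarrow$(3)) built into the corresponding statement.

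For (1)$\Rightarrow$(2), I would note that $x\in a\{1,4\}$ immediately yields $\varphi_{xa}=\rho_{a^{\ast}\mathcal{R},\,{\rm rann}(a)}$ by Theorem~\ref{T 14 orthogonal projector}. The inclusion ${\rm rann}(a)\subseteq{\rm rann}(x)$ is then obtained from equation (9): if $ar=0$, then $xr=x^{2}ar=0$. For (2)$\Rightarrow$(1), Theorem~\ref{T 14 orthogonal projector} again gives $x\in a\{1,4\}$; it only remains to derive (9). Since $axa=a$, the element $1-xa$ lies in ${\rm rann}(a)$, hence in ${\rm rann}(x)$ by hypothesis, and therefore $x-x^{2}a=x(1-xa)=0$, as required.

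For (1)$\Rightarrow$(3), Theorem~\ref{T 14 orthogonal projector} yields $\,_{xa}\varphi=\rho_{\mathcal{R}a,\,{\rm lann}(a^{\ast})}$, and (9) directly gives $x=x^{2}a\in\mathcal{R}a$. For (3)$\Rightarrow$(1), Theorem~\ref{T 14 orthogonal projector} provides $x\in a\{1,4\}$; to get (9), I apply Lemma~\ref{L proy im ker}(2)(3) with $\varphi=\,_{xa}\varphi$: since ${\rm im}(\,_{xa}\varphi)=\mathcal{R}a$ and $x\in\mathcal{R}a$ by hypothesis, we have $\,_{xa}\varphi(x)=x$, i.e., $x\cdot xa=x$, which is exactly $x^{2}a=x$.

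No step is truly hard: everything reduces to the already established Theorem~\ref{T 14 orthogonal projector} plus a one-line manipulation using either ${\rm rann}(a)\subseteq{\rm rann}(x)$ together with $1-xa\in{\rm rann}(a)$, or $x\in\mathcal{R}a={\rm im}(\,_{xa}\varphi)$ together with Lemma~\ref{L proy im ker}(3). The only point requiring a little care is keeping the projector data straight, since here $xa$ (not $ax$) is symmetric and equations (1) and (9) are the ones that connect kernels/images of $\,_{xa}\varphi$ and $\varphi_{xa}$ to the condition $x^{2}a=x$; the parallel between this theorem and Theorem~\ref{T 137 orthogonal projector} makes the bookkeeping essentially automatic once one reads the latter proof with the substitutions $a\leftrightarrow a^{\ast}$ and left/right swapped.
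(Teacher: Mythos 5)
Your proposal is correct and follows exactly the route the paper intends: Theorem~\ref{T 14 orthogonal projector} settles the $\{1,4\}$ part, and equation (9) is extracted either from $1-xa\in{\rm rann}(a)\subseteq{\rm rann}(x)$ or from $x\in\mathcal{R}a={\rm im}(\,_{xa}\varphi)$ via Lemma~\ref{L proy im ker}(3). The paper leaves this verification implicit (it only states that the result follows from Theorems~\ref{T 13 orthogonal projector} and \ref{T 14 orthogonal projector}), and your bookkeeping matches the analogous argument given for Theorem~\ref{T 136 orthogonal projector}.
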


In Section~\ref{S 1I with prescribed principal and annihilator
ideals}, we studied $\{1\}$-inverses with given principal and
annihilator ideals. Now, we consider some examples of sets of these
$\{1\}$-inverses. Let $a \in \mathcal{R}$. From Theorems~\ref{T 1I
projectors} and \ref{T 15 projector},
\begin{align*}
  a\{1,5\} &= \{x \in a\{1\} : xa\mathcal{R}=a\mathcal{R} \text{ and }
{\rm rann}(ax)={\rm rann}(a)\} \\
   &= \{x \in a\{1\}: \mathcal{R}ax=\mathcal{R}a \text{ and }
{\rm lann}(xa)={\rm lann}(a)\}.
\end{align*}
Let $\mathcal{R}$ be a $\ast$-ring and $a \in \mathcal{R}$. By Theorems~\ref{T 1I ax0 expression}, \ref{T 1I Rax expression}
and \ref{T 13 orthogonal projector},
\[a\{1,3\}=\{x \in a\{1\}: {\rm rann}(ax)={\rm rann}(a^{\ast})\}=\{x \in a\{1\}: \mathcal{R}ax=\mathcal{R}a^{\ast}\},\] and by Theorems~\ref{T 1I xaR expression}, \ref{T 1I 0xa expression} and \ref{T 14 orthogonal projector},
\[a\{1,4\} = \{x \in a\{1\}: xa\mathcal{R}=a^{\ast}\mathcal{R}\}=\{x \in a\{1\}: {\rm lann}(xa)={\rm lann}(a^{\ast})\}.\] From the above equalities, we get
\begin{align*}
a\{1,3,4\} &= \{x \in a\{1\}: xa\mathcal{R}=a^{\ast}\mathcal{R} \text{ and } {\rm rann}(ax)={\rm rann}(a^{\ast})\}\\
&= \{x \in a\{1\}: {\rm lann}(xa)={\rm lann}(a^{\ast}) \text{ and } {\rm rann}(ax)={\rm rann}(a^{\ast})\} \\
&= \{x \in a\{1\}: xa\mathcal{R}=a^{\ast}\mathcal{R} \text{ and }
\mathcal{R}ax=\mathcal{R}a^{\ast}\} = \{x \in a\{1\}:
\mathcal{R}ax=\mathcal{R}a^{\ast} \text{ and } {\rm lann}(xa)={\rm
lann}(a^{\ast})\}.
\end{align*}
From Theorems~\ref{T 1I xaR ax0 expression} and \ref{T 136
orthogonal projector},
\begin{align}\label{E 136 inverse xaR ax0}
a\{1,3,6\} &\supseteq \{x \in a\{1\}: xa\mathcal{R}=a\mathcal{R} \text{ and } {\rm rann}(ax)={\rm rann}(a^{\ast})\}\nonumber\\
&= \{x \in a\{1\}: {\rm lann}(xa)={\rm lann}(a) \text{ and } {\rm
rann}(ax)={\rm rann}(a^{\ast})\}\nonumber\\ &= \{x \in a\{1\}:
xa\mathcal{R}=a\mathcal{R} \text{ and }
\mathcal{R}ax=\mathcal{R}a^{\ast}\} = \{x \in a\{1\}:
\mathcal{R}ax=\mathcal{R}a^{\ast} \text{ and } {\rm lann}(xa)={\rm
lann}(a))\}
\end{align} and from Theorems~\ref{T 1I Rax 0xa expression} and \ref{T 148 orthogonal projector},
\begin{align*}
a\{1,4,8\} &\supseteq \{x \in a\{1\}: xa\mathcal{R}=a^{\ast}\mathcal{R} \text{ and } {\rm rann}(ax)={\rm rann}(a)\}\\
&= \{x \in a\{1\}: {\rm lann}(xa)={\rm lann}(a^{\ast}) \text{ and } {\rm rann}(ax)={\rm rann}(a)\} \\
&= \{x \in a\{1\}: xa\mathcal{R}=a^{\ast}\mathcal{R} \text{ and }
\mathcal{R}ax=\mathcal{R}a\} = \{x \in a\{1\}:
\mathcal{R}ax=\mathcal{R}a \text{ and } {\rm lann}(xa)={\rm
lann}(a^{\ast})\}.
\end{align*}
We comment on the previous inclusions. Let $\mathcal{R}$ be a
$\ast$-ring and $a, x \in \mathcal{R}$ be such that $x\in
a\{1,3,6\}$. Then $a\mathcal{R} \subseteq xa\mathcal{R}$ and ${\rm
rann}(ax)={\rm rann}(x^{\ast}a^{\ast})={\rm rann}(a^{\ast})$.
Depending on the ring $\mathcal{R}$, we can always have
$a\mathcal{R} = xa\mathcal{R}$ or not; consequently, the equality in
(\ref{E 136 inverse xaR ax0}) is always satisfied or not. We next
consider two examples. Let $\mathcal{R}=\mathbb{C}^{n \times n}$ and
$A, X \in \mathbb{C}^{n \times n}$ be such that $X \in A\{1,3,6\}$.
Then ${\rm R}(A) \subseteq {\rm R}(XA)$ and ${\rm dim}({\rm
R}(XA))=n-{\rm dim}({\rm N}(XA))=n-{\rm dim}({\rm N}(A))={\rm
dim}({\rm R}(A))$. Thus, ${\rm R}(XA) = {\rm R}(A)$ and the equality
in (\ref{E 136 inverse xaR ax0}) holds for $A$. Let now
$\ell^{2}(\mathbb{N})$ be the Hilbert space of the complex sequences
$x=(x_{i})_{i \in \mathbb{N}}$ such that
$\sum_{i=1}^{\infty}|x_{i}|^{2} < \infty$ with inner product
$\langle x, y \rangle=\sum_{i=1}^{\infty}x_{i}\overline{y_{i}}$. Let
$\mathcal{R}=\mathcal{B}(\ell^{2}(\mathbb{N}))$ be the ring of all
bounded linear operators from $\ell^{2}(\mathbb{N})$ to
$\ell^{2}(\mathbb{N})$. Let $A, X \in
\mathcal{B}(\ell^{2}(\mathbb{N}))$ defined by $A(x_{1}, x_{2},
\ldots)=(0, x_{1}, x_{2}, \ldots)$ and $X(x_{1}, x_{2},
\ldots)=(x_{2}, x_{3}, \ldots)$. These operators were considered in
\cite[Remark 3.1]{Rakic-Dincic-Djordjevic (2014a)} and satisfy $X
\in A\{1,3,6\}$ and ${\rm R}(A) \varsubsetneq {\rm
R}(XA)=\ell^{2}(\mathbb{N})$. Therefore, the strict inclusion in
(\ref{E 136 inverse xaR ax0}) holds for $A$. Similar considerations
are valid for the other inclusion.

By Theorems~\ref{T 1I ax0
expression}, \ref{T 1I Rax expression} and \ref{T 137 orthogonal projector},
\begin{align*}
a\{1,3,7\} &= \{x \in a\{1\}: {\rm rann}(ax)={\rm rann}(a^{\ast}) \text{ and } x \in a\mathcal{R}\}\\
&= \{x \in a\{1\}: \mathcal{R}ax=\mathcal{R}a^{\ast} \text{ and }
{\rm lann}(a)\subseteq{\rm lann}(x)\}
\end{align*} and by Theorems~\ref{T 1I xaR expression}, \ref{T 1I 0xa expression} and \ref{T 149 orthogonal projector},
\begin{align*}
a\{1,4,9\} &= \{x \in a\{1\}: xa\mathcal{R}=a^{\ast}\mathcal{R} \text{ and } {\rm rann}(a)\subseteq{\rm rann}(x)\}\\
&= \{x \in a\{1\}: {\rm lann}(xa)={\rm lann}(a^{\ast}) \text{ and }
x\in\mathcal{R}a\}.
\end{align*}

\subsection{Generalizations of Moore-Penrose, core, and dual core inverses}\label{S Moore-Penrose core and dual core inverses}

In this section, we consider some generalizations of
Moore-Penrose, core, and dual core inverses. As in Section~\ref{S
13I 14I},  the focus is on their relation to projectors.

\subsubsection{The $(e,f)$ Moore-Penrose inverse}\label{S ef MPI}

Let $\mathcal{R}$ be a $\ast$-ring. Let $a \in \mathcal{R}$ and $e,
f \in \mathcal{R}^{-1} \cap \mathcal{R}^{\rm sym}$. In
\cite{Mosic-Djordjevic (2011)}, if  $x \in a\{1,2\}$,
$(eax)^{\ast}=eax$ and $(fxa)^{\ast}=fxa$, then $x \in \mathcal{R}$
is called the \emph{(weighted) $(e,f)$ Moore-Penrose inverse} of
$a$. This generalized inverse is denoted by $a^{\dag}_{e,f}$ and
$a^{\dag}=a^{\dag}_{1,1}$. If $x=a^{\dag}_{e,f}$, then $xe^{-1} \in
(ea)\{1\}$ and $fx \in (af^{-1})\{1\}$.  More details can be found
in, e.g., \cite{Mosic-Djordjevic (2011), Song-Zhu (2020), Zhu-Wang
(2021)}. If $a^{\dag}_{e,f}$ exists, then
\[a^{\dag}_{e,f}=a^{(1,2)}_{{\rm
rprin}=f^{-1}a^{\ast}\mathcal{R},{\rm rann}={\rm
rann}(a^{\ast}e)}=a^{(1,2)}_{{\rm lprin}=\mathcal{R}a^{\ast}e,{\rm
lann}={\rm lann}(f^{-1}a^{\ast})}=a^{(1,2)}_{{\rm
rprin}=f^{-1}a^{\ast}\mathcal{R},{\rm
lprin}=\mathcal{R}a^{\ast}e}=a^{(1,2)}_{{\rm lann}={\rm
lann}(f^{-1}a^{\ast}),{\rm rann}={\rm rann}(a^{\ast}e)}.\] Consider
the conditions

\begin{flushleft}
\setlength{\abovedisplayskip}{0pt}
\setlength{\belowdisplayskip}{0pt}
\begin{subequations}
\begin{minipage}{.54\textwidth}
\begin{align}
&\varphi_{ax} = \rho_{a\mathcal{R}, {\rm rann}(a^{\ast}e)}, \varphi_{xa} = \rho_{f^{-1}a^{\ast}\mathcal{R}, {\rm rann}(a)}.\label{E efMPI 1p}\\
&\,_{ax}\varphi = \rho_{\mathcal{R}a^{\ast}e,{\rm lann}(a)}, \,_{xa}\varphi = \rho_{\mathcal{R}a,{\rm lann}(f^{-1}a^{\ast})}.\\
&\varphi_{ax} = \rho_{a\mathcal{R}, {\rm rann}(a^{\ast}e)}, \,_{xa}\varphi = \rho_{\mathcal{R}a,{\rm lann}(f^{-1}a^{\ast})}.\\
&\,_{ax}\varphi =
\rho_{\mathcal{R}a^{\ast}e,{\rm lann}(a)}, \varphi_{xa} =
\rho_{f^{-1}a^{\ast}\mathcal{R}, {\rm rann}(a)}.\label{E efMPI 4p}
\end{align}
\end{minipage}
\begin{minipage}{.01\textwidth}
\end{minipage}
\begin{minipage}{.45\textwidth}
\begin{align}
&x\mathcal{R} \subseteq f^{-1}a^{\ast}\mathcal{R}.\label{E efMPI 1a}\\
&{\rm lann}(f^{-1}a^{\ast}) \subseteq {\rm lann}(x).\\
&\mathcal{R}x \subseteq \mathcal{R}a^{\ast}e.\\
&{\rm rann}(a^{\ast}e) \subseteq {\rm rann}(x).\label{E efMPI 4a}
\end{align}
\end{minipage}
\end{subequations}
\end{flushleft}

From Theorems~\ref{T 12I xR x0 expression}-\ref{T 12I 0x x0
expression}, $a^{\dag}_{e,f}$ exists and $x=a^{\dag}_{e,f}$ if and
only if one of the conditions (\ref{E efMPI 1p})-(\ref{E efMPI 4p})
holds and one of the conditions (\ref{E efMPI 1a})-(\ref{E efMPI
4a}) holds. Let now $e=f=1$. For Hilbert space operators with closed
range, conditions (\ref{E efMPI 1p}) and (\ref{E efMPI 1a}) coincide
with the conditions (iii) of \cite[Theorem 1]{Petryshyn (1967)}.
Conditions (\ref{E efMPI 1p}) and (\ref{E efMPI 4a}) are a
generalization of the conditions of \cite[Ex.
2.58]{Ben-Israel-Greville (2003)}. The relations of generalized
inverses to projectors can be used to extend to any ring results of
matrices and operators. In particular, the characterizations of the
Moore-Penrose inverse using orthogonal projectors given in
Section~\ref{S ef MPI} can be used to generalize results of
\cite[Section 2]{Morillas (2023)} to $\ast$-rings.

\subsubsection{The $e$-core and the $f$-dual core inverses}

Let $\mathcal{R}$ be a $\ast$-ring. Let $a, x \in \mathcal{R}$ and
$e, f \in \mathcal{R}^{-1} \cap \mathcal{R}^{\rm sym}$. Then $x$ is
the \emph{$e$-core inverse} of $a$ if $x\in a\{1\}$,
$x\mathcal{R}=a\mathcal{R}$, and
$\mathcal{R}x=\mathcal{R}a^{\ast}e$, whereas $x$ is the
\emph{$f$-dual core inverse} of $a$ if $x\in a\{1\}$,
$x\mathcal{R}=f^{-1}a^{\ast}\mathcal{R}$, and
$\mathcal{R}x=\mathcal{R}a$. These generalized inverses were defined
and studied in \cite{Mosic-Deng-Ma (2018)} (see also, e.g.,
\cite{Zhu-Wang (2021)} for more properties). If they exist, then
they are unique, and we denote them with $a^{\core,e}$ and
$a_{\core,f}$, respectively. We have $a^{\core,1}=a^{\core}$ and
$a_{\core,1}=a_{\core}$. By \cite[Theorems 2.1 and
2.2]{Mosic-Deng-Ma (2018)}, $a^{\core,e}, a_{\core,f} \in a\{1,2\}$.

If $a^{\core,e}$ exists, then \[a^{\core,e}=a^{(1,2)}_{{\rm
rprin}=a\mathcal{R},{\rm
lprin}=\mathcal{R}a^{\ast}e}=a^{(1,2)}_{{\rm
rprin}=a\mathcal{R},{\rm rann}={\rm
rann}(a^{\ast}e)}=a^{(1,2)}_{{\rm lprin}=\mathcal{R}a^{\ast}e,{\rm
lann}={\rm lann}(a)}=a^{(1,2)}_{{\rm lann}={\rm lann}(a),{\rm
rann}={\rm rann}(a^{\ast}e)}.\] If $a_{\core,f}$ exists, then
\[a_{\core,f}=a^{(1,2)}_{{\rm rprin}=f^{-1}a^{\ast}\mathcal{R},{\rm
lprin}=\mathcal{R}a}=a^{(1,2)}_{{\rm
rprin}=f^{-1}a^{\ast}\mathcal{R},{\rm rann}={\rm
rann}(a)}=a^{(1,2)}_{{\rm lprin}=\mathcal{R}a,{\rm lann}={\rm
lann}(f^{-1}a^{\ast})}=a^{(1,2)}_{{\rm lann}={\rm
lann}(f^{-1}a^{\ast}),{\rm rann}={\rm rann}(a)}.\] As a consequence
of Theorems~\ref{T 12I xR x0 expression}-\ref{T 12I 0x x0
expression}, $a^{\core,e}$ exists and $x=a^{\core,e}$ if and only if
one of the conditions (\ref{E eCI 1p})-(\ref{E eCI 4p}) holds and
one of the conditions (\ref{E eCI 1a})-(\ref{E eCI 4a}) holds, where
the conditions are
\begin{flushleft}
\setlength{\abovedisplayskip}{0pt}
\setlength{\belowdisplayskip}{0pt}
\begin{subequations}
\begin{minipage}{.54\textwidth}
\begin{align}
&\varphi_{ax} = \rho_{a\mathcal{R}, {\rm rann}(a^{\ast}e)}, \varphi_{xa} = \rho_{a\mathcal{R}, {\rm rann}(a)}.\label{E eCI 1p}\\
&\,_{ax}\varphi = \rho_{\mathcal{R}a^{\ast}e,{\rm lann}(a)}, \,_{xa}\varphi = \rho_{\mathcal{R}a,{\rm lann}(a)}.\\
&\varphi_{ax} = \rho_{a\mathcal{R}, {\rm rann}(a^{\ast}e)}, \,_{xa}\varphi = \rho_{\mathcal{R}a,{\rm lann}(a)}.\\
&\,_{ax}\varphi =
\rho_{\mathcal{R}a^{\ast}e,{\rm lann}(a)}, \varphi_{xa} =
\rho_{a\mathcal{R}, {\rm rann}(a)}.\label{E eCI 4p}
\end{align}
\end{minipage}
\begin{minipage}{.01\textwidth}
\end{minipage}
\begin{minipage}{.45\textwidth}
\begin{align}
&x\mathcal{R} \subseteq a\mathcal{R}.\label{E eCI 1a}\\
&{\rm lann}(a) \subseteq {\rm lann}(x).\\
&\mathcal{R}x \subseteq \mathcal{R}a^{\ast}e.\\
&{\rm rann}(a^{\ast}e) \subseteq {\rm rann}(x).\label{E eCI 4a}
\end{align}
\end{minipage}
\end{subequations}
\end{flushleft}
Similarly, $a_{\core,f}$ exists and $x=a_{\core,f}$ if and only if
one of the conditions (\ref{E fDCI 1p})-(\ref{E fDCI 4p}) holds and
one of the conditions (\ref{E fDCI 1a})-(\ref{E fDCI 4a}) holds,
where the conditions are
\begin{flushleft}
\setlength{\abovedisplayskip}{0pt}
\setlength{\belowdisplayskip}{0pt}
\begin{subequations}
\begin{minipage}{0.54\textwidth}
\begin{align}
&\varphi_{ax} = \rho_{a\mathcal{R}, {\rm rann}(a)}, \varphi_{xa} = \rho_{f^{-1}a^{\ast}\mathcal{R}, {\rm rann}(a)},\label{E fDCI 1p}\\
&\,_{ax}\varphi = \rho_{\mathcal{R}a,{\rm lann}(a)}, \,_{xa}\varphi = \rho_{\mathcal{R}a,{\rm lann}(f^{-1}a^{\ast})},\\
&\varphi_{ax} = \rho_{a\mathcal{R}, {\rm rann}(a)}, \,_{xa}\varphi = \rho_{\mathcal{R}a,{\rm lann}(f^{-1}a^{\ast})},\\
&\,_{ax}\varphi =
\rho_{\mathcal{R}a,{\rm lann}(a)}, \varphi_{xa} =
\rho_{f^{-1}a^{\ast}\mathcal{R}, {\rm rann}(a)},\label{E fDCI 4p}
\end{align}
\end{minipage}
\begin{minipage}{0.01\textwidth}
\end{minipage}
\begin{minipage}{0.45\textwidth}
\begin{align}
&x\mathcal{R} \subseteq f^{-1}a^{\ast}\mathcal{R},\label{E fDCI 1a}\\
&{\rm lann}(f^{-1}a^{\ast}) \subseteq {\rm lann}(x),\\
&\mathcal{R}x \subseteq \mathcal{R}a,\\
&{\rm rann}(a) \subseteq {\rm rann}(x).\label{E fDCI 4a}
\end{align}
\end{minipage}
\end{subequations}
\end{flushleft}

\subsubsection{The $w$-core and the dual $v$-core inverses}

Let $\mathcal{R}$ be a $\ast$-ring and $a, x, w, v \in \mathcal{R}$.
Then $x$ is the \emph{$w$-core inverse} of $a$ if
$(awx)^{\ast}=awx$, $xawa=a$, and $awx^{2}=x$. Similarly, $x$ is the
\emph{ dual $v$-core inverse} of $a$ if $(xva)^{\ast}=xva$, $avax=a$,
and $x^{2}va=x$. The $w$-core and the dual $v$-core inverses of $a$
are unique if they exist, and are denoted by $a^{\core,w}$ and
$a_{\core,v}$, respectively. We have $a^{\core}_{1}=a^{\core}$ and
$a_{\core,1}=a_{\core}$. We refer the reader to, e.g.
\cite{Jin-Zhu-Wu (2023), Zhu-Wu-Chen (2023)} for more details. We
note that the equivalence (1) $\Leftrightarrow$ (2) of the next
proposition was given in \cite[Theorem 2.10]{Zhu-Wu-Chen (2023)}.
Here, we present a proof based on Theorem~\ref{T 12I projectors}.
\begin{proposition}\label{P wCI CI}
Let $\mathcal{R}$ be a $\ast$-ring and $a, w, x \in \mathcal{R}$.
The following assertions are equivalent:
\begin{enumerate}
  \item $x=a^{\core,w}$.
  \item $x=(aw)^{\core}$ and $a\mathcal{R} \subseteq aw\mathcal{R}$.
  \item $x=(aw)^{\core}$ and ${\rm lann}(aw) \subseteq {\rm lann}(a)$.
\end{enumerate}
\end{proposition}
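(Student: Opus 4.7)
The plan is to reduce each of (1), (2), (3) to the three-equation characterization of the core inverse together with a comparison between $a\mathcal{R}$ and $aw\mathcal{R}$. Specifically, since $b\{6,7\}\subseteq b\{1,2\}$ (the Xu--Chen--Zhang fact cited in Section~\ref{S Preliminaries}), the equation $y=b^{\core}$ is equivalent to the three conditions $(by)^{\ast}=by$, $yb^{2}=b$, $by^{2}=y$. Applied with $b=aw$ and $y=x$, this gives that $x=(aw)^{\core}$ is equivalent to $(awx)^{\ast}=awx$, $x(aw)^{2}=aw$, and $awx^{2}=x$. The definition of $x=a^{\core,w}$ uses the nearby trio $(awx)^{\ast}=awx$, $xawa=a$, $awx^{2}=x$, so (1) and (2) differ only in replacing $xawa=a$ by the pair ``$x(aw)^{2}=aw$ and $a\mathcal{R}\subseteq aw\mathcal{R}$''.

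For (1)$\Rightarrow$(2) I would right-multiply $xawa=a$ by $w$ to get $x(aw)^{2}=aw$, which together with the two common equations yields $x=(aw)^{\core}$. For the inclusion, from $a=xawa$ one obtains $a\mathcal{R}\subseteq xaw\mathcal{R}$, and from $x=awx^{2}$ one obtains $xaw\mathcal{R}\subseteq aw\mathcal{R}$; chaining the two gives $a\mathcal{R}\subseteq aw\mathcal{R}$. For (2)$\Rightarrow$(1) only $xawa=a$ needs justification: using $a\mathcal{R}\subseteq aw\mathcal{R}$ I pick $r\in\mathcal{R}$ with $a=awr$, and then compute $xawa=xaw(awr)=x(aw)^{2}r=(aw)r=a$ by invoking the core-inverse identity $x(aw)^{2}=aw$. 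Finally, (2)$\Leftrightarrow$(3) follows because $x$ is in particular a $\{1\}$-inverse of $aw$, so Lemma~\ref{L ab ab1 a y b ab1 ab}(1) gives $aw\mathcal{R}=a\mathcal{R}\Leftrightarrow {\rm lann}(aw)={\rm lann}(a)$, and since the reverse of each stated inclusion is automatic, the two conditions coincide.

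The main obstacle is the derivation of $xawa=a$ in (2)$\Rightarrow$(1). The direct application of Lemma~\ref{L ab ab1 a y b ab1 ab}(1), using $x$ merely as a $\{1\}$-inverse of $aw$, only produces $awxa=a$, which has the factors in the wrong order. The key maneuver is to use instead the condition-6 identity $x(aw)^{2}=aw$, which is strictly part of the core inverse data beyond inner invertibility, and to feed into it a factorization $a=awr$ supplied by the hypothesized inclusion. Everything else in the argument is straightforward bookkeeping with the two defining systems and one invocation of Lemma~\ref{L ab ab1 a y b ab1 ab}(1).
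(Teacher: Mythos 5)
Your argument is correct, but it takes a genuinely different route from the paper's. You work purely equationally with the $(3,6,7)$ characterization of the core inverse: you get $x(aw)^{2}=aw$ by right-multiplying $xawa=a$ by $w$, obtain the inclusion in (2) from the chain $a\mathcal{R}\subseteq xaw\mathcal{R}\subseteq aw\mathcal{R}$ (using $a=xawa$ and $x=awx^{2}$), recover $xawa=a$ in the converse by substituting a factorization $a=awr$ into $x(aw)^{2}=aw$, and settle (2)$\Leftrightarrow$(3) with Lemma~\ref{L ab ab1 a y b ab1 ab}(1), which applies because $x\in(aw)\{1\}$; your closing remark correctly identifies why that lemma alone only yields $awxa=a$ and why the condition-(6) identity is the needed extra input. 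The paper instead deliberately routes everything through the projector formalism (it explicitly says the point is to give a proof based on Theorem~\ref{T 12I projectors}, the equivalence (1)$\Leftrightarrow$(2) being already in the Zhu--Wu--Chen reference): for (1)$\Rightarrow$(2) it reads $a=xawa=\rho_{aw\mathcal{R},{\rm rann}(aw)}(a)$, which forces $a\in aw\mathcal{R}$, and for (3)$\Rightarrow$(1) it goes directly, without passing through (2), by writing $xawa=\rho_{\mathcal{R}aw,{\rm lann}(aw)}(1)a$ and invoking Lemma~\ref{L ro a = a and a ro = a}(2) with the hypothesis ${\rm lann}(aw)\subseteq{\rm lann}(a)$. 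Your version is more elementary and self-contained (essentially the projector proof unwound into explicit identities), at the cost of detouring (3)$\Rightarrow$(1) through (2) and an extra appeal to inner invertibility of $aw$; the paper's version is slightly less computational and showcases the projector machinery that is the theme of the article.
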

\begin{proof}
(1) $\Rightarrow$ (2): If $x=a^{\core,w}$, then $x=(aw)^{\core}$
and, by Theorem~\ref{T 12I projectors},
$a=xawa=\rho_{aw\mathcal{R},{\rm rann}(aw)}(a)$. This last equality
implies that $a\mathcal{R} \subseteq aw\mathcal{R}$.

(2) $\Rightarrow$ (3) is immediate.

(3) $\Rightarrow$ (1): If $x=(aw)^{\core}$ and ${\rm lann}(aw)
\subseteq {\rm lann}(a)$, then $(awx)^{\ast}=awx$, $awx^{2}=x$ and,
by Theorem~\ref{T 12I projectors}, $xawa=\rho_{\mathcal{R}aw,{\rm
lann}(aw)}(1)a=a$.
\end{proof}

For the dual $v$-core inverse we analogously have:
\begin{proposition}\label{P vDCI DCI}
Let $\mathcal{R}$ be a $\ast$-ring and $a, v, x \in \mathcal{R}$.
The following assertions are equivalent:
\begin{enumerate}
  \item $x=a_{\core,v}$.
  \item $x=(va)_{\core}$ and $\mathcal{R}a \subseteq \mathcal{R}va$.
  \item $x=(va)_{\core}$ and ${\rm rann}(va) \subseteq {\rm rann}(a)$.
\end{enumerate}
\end{proposition}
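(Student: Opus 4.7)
The plan is to follow exactly the template of Proposition~\ref{P wCI CI}, replacing right-sided projector identities with their left-sided analogues throughout, since the dual core is the ``left-handed'' cousin of the core. I will prove the three implications $(1) \Rightarrow (2) \Rightarrow (3) \Rightarrow (1)$.

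For $(1) \Rightarrow (2)$: starting from $x = a_{\core,v}$, I first check that $x = (va)_{\core}$. Multiplying $avax = a$ on the left by $v$ yields $(va)^{2} x = va$, which is condition $(8)$ applied to $va$; the identities $x^{2}va = x$ and $(xva)^{\ast} = xva$ are conditions $(9)$ and $(4)$ applied to $va$. Since $a_{\core} = a^{(4,8,9)}$ (as recalled in the Preliminaries), this gives $x = (va)_{\core}$. To obtain the inclusion $\mathcal{R}a \subseteq \mathcal{R}va$, I apply Theorem~\ref{T 12I projectors} to $x \in (va)\{1,2\}$, which yields $\,_{(va)x}\varphi = \rho_{\mathcal{R}x,\,\mathrm{lann}(va)}$. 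Then $a = avax = a(va)x = \,_{(va)x}\varphi(a) = \rho_{\mathcal{R}x,\,\mathrm{lann}(va)}(a)$, forcing $a \in \mathcal{R}x$. A short computation shows $\mathcal{R}x = \mathcal{R}(va)$: from $x = x^{2}(va)$ we get $\mathcal{R}x \subseteq \mathcal{R}(va)$, and from $va = (va)^{2}x$ we get the reverse inclusion. Hence $\mathcal{R}a \subseteq \mathcal{R}x = \mathcal{R}(va)$.

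The implication $(2) \Rightarrow (3)$ is immediate from Lemma~\ref{L L 2.5 RDD (2014b)}(3).

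For $(3) \Rightarrow (1)$: assume $x = (va)_{\core}$ and $\mathrm{rann}(va) \subseteq \mathrm{rann}(a)$. The identities $(xva)^{\ast} = xva$ and $x^{2}va = x$ come for free from $x = (va)_{\core}$, so only $avax = a$ must be verified. Using again Theorem~\ref{T 12I projectors}(4) applied to $x \in (va)\{1,2\}$, together with $\mathcal{R}x = \mathcal{R}(va)$ (shown above), I obtain $\,_{(va)x}\varphi = \rho_{\mathcal{R}(va),\,\mathrm{lann}(va)}$, so that
\[
avax = a(va)x = \,_{(va)x}\varphi(a) = \rho_{\mathcal{R}(va),\,\mathrm{lann}(va)}(a).
\]
By Lemma~\ref{L S T right left ideals sii proy}(2) this equals $a\,\rho_{\mathcal{R}(va),\,\mathrm{lann}(va)}(1)$, and Lemma~\ref{L ro a = a and a ro = a}(2) tells me this equals $a$ if and only if $\mathcal{R}a \subseteq \mathcal{R}(va)$. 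The latter is guaranteed by the hypothesis $\mathrm{rann}(va) \subseteq \mathrm{rann}(a)$ together with $(va)\{1\} \neq \emptyset$ (since $x \in (va)\{1\}$), via Lemma~\ref{L L 2.5 RDD (2014b)}(4). This finishes the proof.

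No step looks genuinely difficult; the only delicate point is to orient every projector correctly, since the analog of Proposition~\ref{P wCI CI} swaps $\varphi_{(\cdot)}$ (left multiplication) for $\,_{(\cdot)}\varphi$ (right multiplication) and, correspondingly, right ideals for left ideals, right annihilators for left annihilators, and Lemma~\ref{L L 2.5 RDD (2014b)}(2) for Lemma~\ref{L L 2.5 RDD (2014b)}(4). Keeping that dictionary straight is the main bookkeeping hazard.
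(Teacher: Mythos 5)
Your proposal is correct and takes essentially the same approach as the paper: the paper proves only the $w$-core case (Proposition~\ref{P wCI CI}) via Theorem~\ref{T 12I projectors} and Lemma~\ref{L ro a = a and a ro = a} and then declares the dual statement analogous, and your argument is precisely that left-handed dualization. The only cosmetic deviation is in $(3)\Rightarrow(1)$, where you first convert ${\rm rann}(va)\subseteq{\rm rann}(a)$ into $\mathcal{R}a\subseteq\mathcal{R}va$ via Lemma~\ref{L L 2.5 RDD (2014b)}(4) before invoking Lemma~\ref{L ro a = a and a ro = a}(2), whereas the direct analogue of the paper's proof feeds the annihilator inclusion straight into the projector identity; both routes are one line and rest on the same tools.
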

By Proposition~\ref{P wCI CI}, if $a^{\core,w}$ exists, then
\[a^{\core,w}\!=\!a^{(1,2)}_{{\rm rprin}=aw\mathcal{R},{\rm rann}={\rm
rann}((aw)^{\ast})}\!=\!a^{(1,2)}_{{\rm
lprin}=\mathcal{R}(aw)^{\ast},{\rm lann}={\rm
lann}(aw)}\!=\!a^{(1,2)}_{{\rm rprin}=aw\mathcal{R},{\rm
lprin}=\mathcal{R}(aw)^{\ast}}\!=\!a^{(1,2)}_{{\rm lann}={\rm
lann}(aw),{\rm rann}={\rm rann}((aw)^{\ast})}.\] If $a_{\core,v}$ exists, then
\[a_{\core,v}\!=\!a^{(1,2)}_{{\rm rprin}=(va)^{\ast}\mathcal{R},{\rm
rann}={\rm rann}(va)}\!=\!a^{(1,2)}_{{\rm lprin}=\mathcal{R}va,{\rm
lann}={\rm lann}((va)^{\ast})}\!=\!a^{(1,2)}_{{\rm
rprin}=(va)^{\ast}\mathcal{R},{\rm
lprin}=\mathcal{R}va}=a^{(1,2)}_{{\rm lann}={\rm
lann}((va)^{\ast}),{\rm rann}={\rm rann}(va)}.\]

Let $b=aw$ and $c=va$. By Theorems~\ref{T 12I xR x0
expression}-\ref{T 12I 0x x0 expression} and Proposition~\ref{P wCI
CI} (resp. Proposition~\ref{P vDCI DCI}), $a^{\core,w}$ exists and
$x=a^{\core,w}$ if and only if one of the conditions (\ref{E wCI
1p})-(\ref{E wCI 4p}) holds, one of the conditions (\ref{E wCI
1a})-(\ref{E wCI 4a}) holds, and one of the conditions (\ref{E wCI
1b})-(\ref{E wCI 2b}) holds, where the conditions are
\begin{flushleft}
\setlength{\abovedisplayskip}{0pt}
\setlength{\belowdisplayskip}{0pt}
\begin{subequations}
\begin{minipage}{0.42\textwidth}
\begin{align}
&\varphi_{bx} = \rho_{b\mathcal{R},{\rm rann}(b^{\ast})}, \varphi_{xa} = \rho_{b\mathcal{R},{\rm rann}(b)},\label{E wCI 1p}\\
&\,_{bx}\varphi = \rho_{\mathcal{R}b^{\ast},{\rm lann}(b)}, \,_{xa}\varphi = \rho_{\mathcal{R}b,{\rm lann}(b)},\\
&\varphi_{bx} = \rho_{b\mathcal{R}, {\rm rann}(b^{\ast})}, \,_{xa}\varphi = \rho_{\mathcal{R}b,{\rm lann}(b)},\\
&\,_{bx}\varphi = \rho_{\mathcal{R}b^{\ast},{\rm lann}(b)},
\varphi_{xa} = \rho_{b\mathcal{R}, {\rm rann}(b)},\label{E wCI 4p}
\end{align}
\end{minipage}
\begin{minipage}{0.29\textwidth}
\begin{align}
&x\mathcal{R} \subseteq b\mathcal{R},\label{E wCI 1a}\\
&{\rm lann}(b) \subseteq {\rm lann}(x),\\
&\mathcal{R}x \subseteq \mathcal{R}b^{\ast},\\
&{\rm rann}(b^{\ast}) \subseteq {\rm rann}(x),\label{E wCI 4a}
\end{align}
\end{minipage}
\begin{minipage}{0.25\textwidth}
\begin{align}
&\!\!\!\!\!a\mathcal{R} \subseteq b\mathcal{R},\label{E wCI 1b}\\
&\!\!\!\!\!{\rm lann}(b) \subseteq {\rm lann}(a).\label{E wCI 2b}
\end{align}
\end{minipage}
\end{subequations}
\end{flushleft}
We also have $a_{\core,v}$ exists and $x=a_{\core,v}$ if and only if
one of the conditions (\ref{E vDCI 1p})-(\ref{E vDCI 4p}) holds, one
of the conditions (\ref{E vDCI 1a})-(\ref{E vDCI 4a}) holds, and one
of the conditions (\ref{E vDCI 1b})-(\ref{E vDCI 2b}) holds, where
the conditions are
\begin{flushleft}
\setlength{\abovedisplayskip}{0pt}
\setlength{\belowdisplayskip}{0pt}
\begin{subequations}
\begin{minipage}{0.43\textwidth}
\begin{align}
&\varphi_{cx} = \rho_{c\mathcal{R}, {\rm rann}(c)}, \varphi_{xc} = \rho_{c^{\ast}\mathcal{R}, {\rm rann}(c)},\label{E vDCI 1p}\\
&\,_{cx}\varphi = \rho_{\mathcal{R}c,{\rm lann}(c)}, \,_{xc}\varphi = \rho_{\mathcal{R}c,{\rm lann}(c^{\ast})},\\
&\varphi_{cx} = \rho_{c\mathcal{R}, {\rm rann}(c)}, \,_{xc}\varphi = \rho_{\mathcal{R}c,{\rm lann}(c^{\ast})},\\
&\,_{cx}\varphi = \rho_{\mathcal{R}c,{\rm lann}(c)}, \varphi_{xc} =
\rho_{c^{\ast}\mathcal{R}, {\rm rann}(c)},\label{E vDCI 4p}
\end{align}
\end{minipage}
\begin{minipage}{0.28\textwidth}
\begin{align}
&x\mathcal{R} \subseteq c^{\ast}\mathcal{R},\label{E vDCI 1a}\\
&{\rm lann}(c^{\ast}) \subseteq {\rm lann}(x),\\
&\mathcal{R}x \subseteq \mathcal{R}c,\\
&{\rm rann}(c) \subseteq {\rm rann}(x),\label{E vDCI 4a}
\end{align}
\end{minipage}
\begin{minipage}{0.27\textwidth}
\begin{align}
&\!\!\!\!\!\mathcal{R}a \subseteq \mathcal{R}c,\label{E vDCI 1b}\\
&\!\!\!\!\!{\rm rann}(c) \subseteq {\rm rann}(a).\label{E vDCI 2b}
\end{align}
\end{minipage}
\end{subequations}
\end{flushleft}
Let $w=1$. The conditions $\varphi_{ax} = \rho_{a\mathcal{R},{\rm
rann}(a^{\ast})}$, $\varphi_{xa} = \rho_{a\mathcal{R},{\rm
rann}(a)}$ and $x\mathcal{R} \subseteq a\mathcal{R}$ are stronger
than the conditions (\ref{E D CI matrix}) of the definition of the
core inverse for finite complex matrices. Let $\mathcal{H}$ be an
arbitrary Hilbert space and $L(\mathcal{H})$ be the ring of all
bounded linear operators from $\mathcal{H}$ to $\mathcal{H}$. In
\cite[Remark 3.1]{Rakic-Dincic-Djordjevic (2014a)}, it is shown that
the conditions $A, X \in L(\mathcal{H})$, $AX=P_{{\rm R}(A)}$ and
${\rm R}(X) \subseteq {\rm R}(A)$, do not imply that $X=A^{\core}$.
Similar considerations can be made for the dual core inverse.

\subsubsection{The right $w$-core and left dual $v$-core inverses}

The \emph{right $w$-core inverse} $x$ of $a$ is defined by the
equations $awxa=a$, $(awx)^{\ast}=awx$, and $awx^{2}=x$. We
analogously have the \emph{left dual $v$-core inverse} $x$ of $a$
defined by the equations $axva=a$, $(xva)^{\ast}=xva$, and
$x^{2}va=x$. These generalized inverses were defined in
\cite{Zhu-Wu-Mosic (2023)}. From Theorem~\ref{T 137 orthogonal
projector} we obtain:
\begin{proposition}\label{P rwCI 137I}
Let $\mathcal{R}$ be a $\ast$-ring and $a, w, x \in \mathcal{R}$.
The following assertions are equivalent:
\begin{enumerate}
  \item $x$ is a right $w$-core inverse of $a$.
  \item $x\in(aw)\{1,3,7\}$ and $a\mathcal{R} \subseteq aw\mathcal{R}$.
  \item $x\in(aw)\{1,3,7\}$ and ${\rm lann}(aw) \subseteq {\rm lann}(a)$.
\end{enumerate}
\end{proposition}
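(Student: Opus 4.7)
The plan is to reduce the proposition to Lemma~\ref{L ab ab1 a y b ab1 ab}(1) applied to the product $aw$, using $x$ as the $\{1\}$-inverse. The key observation is that the inclusions $aw\mathcal{R}\subseteq a\mathcal{R}$ and ${\rm lann}(a)\subseteq{\rm lann}(aw)$ always hold (since $aw=a\cdot w$), so the hypotheses $a\mathcal{R}\subseteq aw\mathcal{R}$ in (2) and ${\rm lann}(aw)\subseteq{\rm lann}(a)$ in (3) actually amount to \emph{equalities} $a\mathcal{R}=aw\mathcal{R}$ and ${\rm lann}(aw)={\rm lann}(a)$, which are exactly the conditions Lemma~\ref{L ab ab1 a y b ab1 ab}(1) characterizes as equivalent to $awxa=a$.

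For (1)~$\Rightarrow$~(2), I will start from the defining equations $awxa=a$, $(awx)^{\ast}=awx$, $awx^{2}=x$ of a right $w$-core inverse. Right-multiplying $awxa=a$ by $w$ gives $awxaw=aw$, so $x\in(aw)\{1\}$; together with the other two equations this yields $x\in(aw)\{1,3,7\}$. Since $a=awxa\in aw\mathcal{R}$, we also get $a\mathcal{R}\subseteq aw\mathcal{R}$. The implication (2)~$\Rightarrow$~(3) is immediate from Lemma~\ref{L L 2.5 RDD (2014b)}(1).

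For (3)~$\Rightarrow$~(1), I combine ${\rm lann}(aw)\subseteq{\rm lann}(a)$ with the trivial reverse inclusion to obtain ${\rm lann}(aw)={\rm lann}(a)$. Since $x\in(aw)\{1\}$ in particular, Lemma~\ref{L ab ab1 a y b ab1 ab}(1) applies and gives $awxa=a$. The remaining defining identities $(awx)^{\ast}=awx$ and $awx^{2}=x$ are part of $x\in(aw)\{1,3,7\}$, so $x$ is a right $w$-core inverse of $a$.

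There is no real obstacle here; the whole argument hinges on the single short observation that converts the one-sided inclusions in (2) and (3) into full equalities, after which Lemma~\ref{L ab ab1 a y b ab1 ab}(1) does all the work. One could alternatively invoke Theorem~\ref{T 137 orthogonal projector} with $a$ replaced by $aw$ to recast $(aw)\{1,3,7\}$ via projectors, but this yields no simplification over the direct approach above.
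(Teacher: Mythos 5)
Your proof is correct. The only substantive point to check is the translation between the defining equations of the right $w$-core inverse and membership in $(aw)\{1,3,7\}$, and you handle it properly: $(awx)^{\ast}=awx$ and $awx^{2}=x$ are literally equations (3) and (7) for the element $aw$, right-multiplying $awxa=a$ by $w$ gives equation (1), and conversely the gap between $awxaw=aw$ and $awxa=a$ is exactly bridged by ${\rm lann}(aw)={\rm lann}(a)$ via Lemma~\ref{L ab ab1 a y b ab1 ab}(1), the inclusion in (3) being an equality because ${\rm lann}(a)\subseteq{\rm lann}(aw)$ always holds; the step (2)~$\Rightarrow$~(3) via Lemma~\ref{L L 2.5 RDD (2014b)}(1) is also fine. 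Your route differs mildly from the paper's: the paper presents the proposition as a consequence of Theorem~\ref{T 137 orthogonal projector}, i.e., it runs the same projector-based scheme used for Proposition~\ref{P wCI CI}, where the key implication is obtained from the identity $xawa=\rho_{\mathcal{R}aw,{\rm lann}(aw)}(1)a=a$ (Theorem~\ref{T 12I projectors} together with Lemma~\ref{L ro a = a and a ro = a}), and in the forward direction reads $a\mathcal{R}\subseteq aw\mathcal{R}$ off from $a=\rho_{aw\mathcal{R},{\rm rann}(aw)}(a)$. You instead argue directly at the level of ideal equalities through Lemma~\ref{L ab ab1 a y b ab1 ab}(1). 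Since that lemma is itself proved in the paper by the projector machinery, the two arguments are ultimately the same in substance; yours is a little more self-contained and elementary at this point, while the paper's phrasing keeps the result inside its unifying projector framework (which is the thematic point of the section). Your closing remark acknowledging the projector alternative is accurate.
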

We note that the equivalence (1) $\Leftrightarrow$ (2) of
Proposition~\ref{P rwCI 137I} appears in \cite[Theorem
2.14]{Zhu-Wu-Mosic (2023)}. Analogously, using Theorem~\ref{T 149
orthogonal projector} we get:
\begin{proposition}\label{P lwDCI 149I}
Let $\mathcal{R}$ be a $\ast$-ring and $a, v, x \in \mathcal{R}$.
The following assertions are equivalent:
\begin{enumerate}
  \item $x$ is a left dual $v$-core inverse of $a$.
  \item $x\in(va)\{1,4,9\}$ and $\mathcal{R}a \subseteq \mathcal{R}va$.
  \item $x\in(va)\{1,4,9\}$ and ${\rm rann}(va) \subseteq {\rm rann}(a)$.
\end{enumerate}
\end{proposition}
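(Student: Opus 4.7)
The plan is to mirror the argument sketched for Proposition~\ref{P rwCI 137I} (which in turn rests on Theorem~\ref{T 149 orthogonal projector}), setting $b=va$ and noting that the three defining equations for a left $v$-dual core inverse $x$ of $a$ — namely $axva=a$, $(xva)^{\ast}=xva$, and $x^{2}va=x$ — already encode that $x\in b\{4,9\}$ together with a ``lifted'' version $axva=a$ of the inner-inverse equation $bxb=b$. So the whole proof reduces to translating back and forth between equations for $b$ and equations for $a$.

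For (1) $\Rightarrow$ (2), I would left-multiply $axva=a$ by $v$ to obtain $vaxva=va$, i.e.\ $bxb=b$; combined with $(xb)^{\ast}=xb$ and $x^{2}b=x$ this gives $x\in(va)\{1,4,9\}$. The same identity $a=axva=(axv)(va)$ exhibits $a\in\mathcal{R}(va)$, whence $\mathcal{R}a\subseteq\mathcal{R}va$. The implication (2) $\Rightarrow$ (3) is immediate from Lemma~\ref{L L 2.5 RDD (2014b)}(3).

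For (3) $\Rightarrow$ (1), from $x\in(va)\{1,4,9\}$ I already have $(xva)^{\ast}=xva$ and $x^{2}va=x$ at no cost, so it only remains to upgrade $vaxva=va$ to $axva=a$. Writing $va(1-xva)=0$ shows $1-xva\in{\rm rann}(va)\subseteq{\rm rann}(a)$, hence $a(1-xva)=0$ and $axva=a$. The only non-routine point of the whole argument is precisely this lift from $bxb=b$ to $axva=a$, and the hypothesis ${\rm rann}(va)\subseteq{\rm rann}(a)$ is tailored exactly to make that lift go through; everything else is a mechanical passage between the $va$-picture and the $a$-picture.
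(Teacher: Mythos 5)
Your proof is correct. All three cycles of implication check out: left-multiplying $axva=a$ by $v$ does give $(va)x(va)=va$, the remaining two defining equations are literally conditions $(4)$ and $(9)$ for $b=va$, the factorization $a=(axv)(va)$ gives $\mathcal{R}a\subseteq\mathcal{R}va$, the step $(2)\Rightarrow(3)$ is Lemma~\ref{L L 2.5 RDD (2014b)}(3), and the lift in $(3)\Rightarrow(1)$ via $1-xva\in{\rm rann}(va)\subseteq{\rm rann}(a)$ is exactly what is needed.

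The route differs in packaging from the paper's intended one. The paper gives no explicit proof here; it states the result ``analogously, using Theorem~\ref{T 149 orthogonal projector}'', i.e.\ mirroring the proofs of Propositions~\ref{P wCI CI} and \ref{P rwCI 137I}: there one characterizes $x\in(va)\{1,4,9\}$ through the projector $\,_{xva}\varphi=\rho_{\mathcal{R}va,{\rm lann}((va)^{\ast})}$ and then converts $axva=a$ into the containment $\mathcal{R}a\subseteq\mathcal{R}va$ via Lemma~\ref{L ro a = a and a ro = a}(2) (with Lemma~\ref{L L 2.5 RDD (2014b)}(4) translating between the principal-ideal and annihilator forms). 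You instead verify everything by direct element computations with the defining equations, never invoking the projector machinery. The mathematical content is the same — your observation that $1-xva\in{\rm rann}(va)\subseteq{\rm rann}(a)$ forces $axva=a$ is precisely what the identity $a\,\rho_{\mathcal{R}va,{\rm lann}((va)^{\ast})}(1)=a$ encodes — but your version is more elementary and self-contained, while the paper's phrasing keeps the result inside its uniform projector framework, which is what lets it dispatch the proposition in one line by analogy.
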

From Proposition~\ref{P rwCI 137I} and Theorem~\ref{T 137 orthogonal
projector}, $x$ is a right $w$-core inverse of $a$ if and only if
one of the conditions (\ref{E rwCI 1p})-(\ref{E rwCI 2p}) holds and
one of the conditions (\ref{E rwCI 1a})-(\ref{E rwCI 2a}) holds,
where the conditions are
\begin{flushleft}
\setlength{\abovedisplayskip}{0pt}
\setlength{\belowdisplayskip}{0pt}
\begin{subequations}
\begin{minipage}{0.54\textwidth}
\begin{align}
&\varphi_{awx} = \rho_{aw\mathcal{R},{\rm rann}((aw)^{\ast})},\,\, x\in aw\mathcal{R},\label{E rwCI 1p}\\
&\,_{ax}\varphi = \rho_{\mathcal{R}a^{\ast},{\rm lann}(a)},\,\, {\rm
lann}(aw)\subseteq{\rm lann}(x),\label{E rwCI 2p}
\end{align}
\end{minipage}
\begin{minipage}{0.01\textwidth}
\end{minipage}
\begin{minipage}{0.45\textwidth}
\begin{align}
&a\mathcal{R} \subseteq aw\mathcal{R},\label{E rwCI 1a}\\
&{\rm lann}(aw) \subseteq {\rm lann}(a).\label{E rwCI 2a}
\end{align}
\end{minipage}
\end{subequations}
\end{flushleft}
By Proposition~\ref{P lwDCI 149I} and Theorem~\ref{T 149 orthogonal
projector}, $x$ is a left dual $v$-core inverse of $a$ if and only
if one of the one of the conditions (\ref{E lvDCI 1p})-(\ref{E lvDCI
2p}) holds and one of the conditions (\ref{E lvDCI 1a})-(\ref{E
lvDCI 2a}) holds, where the conditions are
\begin{flushleft}
\setlength{\abovedisplayskip}{0pt}
\setlength{\belowdisplayskip}{0pt}
\begin{subequations}
\begin{minipage}{0.54\textwidth}
\begin{align}
&\varphi_{xva} = \rho_{(va)^{\ast}\mathcal{R},{\rm rann}(va)},\,\, x\in\mathcal{R}va,\label{E lvDCI 1p}\\
&\,_{xa}\varphi = \rho_{\mathcal{R}a,{\rm lann}(a^{\ast})},\,\, {\rm
rann}(va) \subseteq {\rm rann}(x),\label{E lvDCI 2p}
\end{align}
\end{minipage}
\begin{minipage}{0.01\textwidth}
\end{minipage}
\begin{minipage}{0.45\textwidth}
\begin{align}
&\mathcal{R}a \subseteq \mathcal{R}va,\label{E lvDCI 1a}\\
&{\rm rann}(va) \subseteq {\rm rann}(a).\label{E lvDCI 2a}
\end{align}
\end{minipage}
\end{subequations}
\end{flushleft}

\subsection{$(b,c)$-inverses}\label{S bc 2 inverses}

Let $a, x, b, c, d \in \mathcal{R}$. In this section, we consider
the $(b,c)$ inverses defined by Drazin in \cite{Drazin (2012)}:
$a^{(2)}_{{\rm rprin}=b\mathcal{R},{\rm lprin}=\mathcal{R}c}$ is the
\emph{$(b,c)$ inverse} of $a$, $a^{(2)}_{{\rm
rprin}=b\mathcal{R},{\rm rann}={\rm rann}(c)}$ is the \emph{right
hybrid $(b,c)$ inverse} of $a$, $a^{(2)}_{{\rm
rprin}=\mathcal{R}b,{\rm rann}={\rm lann}(c)}$ is the \emph{left
hybrid $(b,c)$ inverse} of $a$, and $a^{(2)}_{{\rm lann}={\rm
lann}(b),{\rm rann}={\rm rann}(c)}$ is the \emph{annihilator $(b,c)$
inverse} of $a$. See, e.g., \cite{Benitez-Boasso (2016), Drazin (2012), Drazin (2019), KantunMontiel (2014), Ke-CvetkovicIlic-Chen-Visnjic (2018), Mary (2011), Mary (2021), Rakic (2017), Zhu (2018)}. Indeed in \cite{Drazin (2012)}, Drazin
calls $x$ the $(b,c)$-inverse of $a$ if $x \in b\mathcal{R}x \cap
x\mathcal{R}c$, $xab=b$, and $cax=c$. In \cite[page 1922]{Drazin
(2012)}, Drazin shows that this definition is equivalent to the
given here for the $(b,c)$-inverse. The $(d,d)$-inverse of $a$ is
the \emph{inverse of $a$ along $d$} defined by Mary in \cite{Mary
(2011)}. Conversely, each $(b,c)$-inverse is a $(d,d)$-inverse
\cite{Mary (2021)}.  In \cite{Drazin (2019)}, the previous inverses
were generalized as follows. Let $v, w \in \mathcal{R}$. Then,
$(vaw)^{(2)}_{{\rm rprin}=b\mathcal{R},{\rm lprin}=\mathcal{R}c}$ is
the \emph{$(w,v)$-weighted $(b,c)$-inverse} of $a$,
$(vaw)^{(2)}_{{\rm rprin}=b\mathcal{R},{\rm rann}={\rm rann}(c)}$ is
the \emph{right hybrid $(w,v)$-weighted $(b,c)$-inverse} of $a$,
$(vaw)^{(2)}_{{\rm rprin}=\mathcal{R}b,{\rm rann}={\rm lann}(c)}$ is
the \emph{left hybrid $(w,v)$-weighted $(b,c)$-inverse} of $a$, and
$(vaw)^{(2)}_{{\rm lann}={\rm lann}(b),{\rm rann}={\rm rann}(c)}$ is
the \emph{annihilator $(w,v)$-weighted $(b,c)$-inverse} of $a$.

Theorem~\ref{T 12 inverse b cab1 c}(1)-(3) below can be seen as a
generalization of \cite[Theorem 2.13]{Ben-Israel-Greville (2003)}.
\begin{theorem}\label{T 12 inverse b cab1 c}
Let $a, b, c \in \mathcal{R}$ be such that $(cab)\{1\} \neq
\emptyset$. Let $(cab)^{(1)} \in (cab)\{1\}$ and $x=b(cab)^{(1)}c$.
Then:
\begin{enumerate}
  \item $x \in a\{1\} \Leftrightarrow \{ab\mathcal{R}=a\mathcal{R}\! \text{ and } {\rm rann}(cab)={\rm rann}(ab)\} \Leftrightarrow \{ab\mathcal{R}=a\mathcal{R}\! \text{ and } \!\mathcal{R}cab=\mathcal{R}ab\}$.
  \item $\{x \in a\{2\} \text{ and } x\mathcal{R}=b\mathcal{R}\} \Leftrightarrow {\rm rann}(cab)={\rm rann}(b) \Leftrightarrow \mathcal{R}cab=\mathcal{R}b$.
  \item $\{x \in a\{2\} \text{ and } {\rm rann}(x)={\rm rann}(c)\} \Leftrightarrow cab\mathcal{R}=c\mathcal{R}  \Leftrightarrow {\rm lann}(cab)={\rm lann}(c)$.
  \item $\{x \in a\{2\} \text{ and } \mathcal{R}x=\mathcal{R}c\} \Leftrightarrow {\rm lann}(cab)={\rm lann}(c) \Leftrightarrow
  cab\mathcal{R}=c\mathcal{R}$.
  \item $\{x \in a\{2\} \text{ and } {\rm lann}(x)={\rm lann}(b)\} \Leftrightarrow \mathcal{R}cab=\mathcal{R}b \Leftrightarrow
{\rm rann}(cab)={\rm rann}(b)$.
\end{enumerate}
\end{theorem}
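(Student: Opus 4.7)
The plan is to derive all five parts from Lemma~\ref{L ab ab1 a y b ab1 ab} applied to the two factorizations $cab=c(ab)$ and $cab=(ca)b$, combined with direct manipulations of the formula $x=b(cab)^{(1)}c$. The rightmost equivalences in each of (1)--(5) fall out of the lemma immediately: the factorization $cab=c(ab)$ yields $cab\mathcal{R}=c\mathcal{R}\Leftrightarrow{\rm lann}(cab)={\rm lann}(c)$ (used in (3) and (4)) and ${\rm rann}(cab)={\rm rann}(ab)\Leftrightarrow\mathcal{R}cab=\mathcal{R}ab$ (used in (1)), while $cab=(ca)b$ gives ${\rm rann}(cab)={\rm rann}(b)\Leftrightarrow\mathcal{R}cab=\mathcal{R}b$ (used in (2) and (5)).

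For (1), note that $axa=ab(cab)^{(1)}ca$. If $axa=a$ then multiplying on the right by $b$ gives $ab(cab)^{(1)}cab=ab$, and the equality $a=ab\bigl[(cab)^{(1)}ca\bigr]$ places $a\in ab\mathcal{R}$, so $ab\mathcal{R}=a\mathcal{R}$. Conversely, given $a\mathcal{R}=ab\mathcal{R}$ pick $r$ with $a=abr$, and use $ab(cab)^{(1)}cab=ab$ (from Lemma~\ref{L ab ab1 a y b ab1 ab}(2) applied with the factorization $c(ab)$) to compute $axa=ab(cab)^{(1)}cab\cdot r=ab\cdot r=a$.

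For (2)--(5), the main computation is
\[
xax=b(cab)^{(1)}cab(cab)^{(1)}c,
\]
which collapses to $x$ as soon as either $b(cab)^{(1)}cab=b$ (the condition in (2) and (5)) or $cab(cab)^{(1)}c=c$ (the condition in (3) and (4)) holds; both are supplied by Lemma~\ref{L ab ab1 a y b ab1 ab}. Once $x\in a\{2\}$ is secured, the ideal identifications follow from the same factor equalities: $xab=b$ puts $b$ in $x\mathcal{R}$ and gives $x\mathcal{R}=b\mathcal{R}$ as well as ${\rm lann}(x)={\rm lann}(b)$, while $cax=c$ puts $c$ in $\mathcal{R}x$ and gives $\mathcal{R}x=\mathcal{R}c$ as well as ${\rm rann}(x)={\rm rann}(c)$. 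The converse implications in (2) and (4) are short: if $x\in a\{2\}$ and $b=xs\in x\mathcal{R}$, then $xab=xa(xs)=(xax)s=xs=b$, so Lemma~\ref{L ab ab1 a y b ab1 ab} yields ${\rm rann}(cab)={\rm rann}(b)$; the argument for (4) is symmetric, starting from $c=tx$.

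The delicate step, and the main obstacle, is the converse direction in parts (3) and (5), where one starts from an annihilator equality rather than a principal-ideal containment. I plan to handle (3) via the projector framework: since $x\in a\{2\}$, the element $ax$ is idempotent with ${\rm rann}(ax)={\rm rann}(x)={\rm rann}(c)$, so Corollary~\ref{C a idempotent varphia proy}(1) gives $\varphi_{ax}=\rho_{ax\mathcal{R},{\rm rann}(c)}$, and Lemma~\ref{L ro a = a and a ro = a}(1) applied to the tautology ${\rm rann}(c)\subseteq{\rm rann}(c)$ yields $c\cdot ax=c$. Part (5) is handled symmetrically: $\,_{xa}\varphi=\rho_{\mathcal{R}xa,{\rm lann}(b)}$, and Lemma~\ref{L ro a = a and a ro = a}(2) with ${\rm lann}(b)\subseteq{\rm lann}(b)$ gives $xa\cdot b=b$, i.e., $xab=b$.
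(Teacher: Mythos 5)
Your proposal is correct and follows essentially the same route as the paper: all the ideal--annihilator equivalences and the reverse implications come from Lemma~\ref{L ab ab1 a y b ab1 ab} applied to the factorizations $c(ab)$ and $(ca)b$ together with direct manipulation of $x=b(cab)^{(1)}c$, and the forward directions of (3) and (5) are settled by the projector machinery. The only cosmetic difference is that for those two parts you extract $cax=c$ (resp.\ $xab=b$) from $\varphi_{ax}=\rho_{ax\mathcal{R},{\rm rann}(c)}$ via Lemma~\ref{L ro a = a and a ro = a}, whereas the paper reads off $c\mathcal{R}\subseteq cab\mathcal{R}$ from the same decomposition $\mathcal{R}=ax\mathcal{R}\oplus{\rm rann}(c)$ given by Theorem~\ref{T 2I projectors}; these are the same idea.
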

\begin{proof}
We first observe that by Lemma~\ref{L ab ab1 a y b ab1 ab},
$cab\mathcal{R}=ca\mathcal{R}$ (resp. $cab\mathcal{R}=c\mathcal{R}$)
if and only if ${\rm lann}(cab)={\rm lann}(ca)$ (resp. ${\rm
lann}(cab)={\rm lann}(c)$), and $\mathcal{R}cab=\mathcal{R}ab$
(resp. $\mathcal{R}cab=\mathcal{R}b$) if and only if ${\rm
rann}(cab)={\rm rann}(ab)$ (resp. ${\rm rann}(cab)={\rm rann}(b)$).

(1): If $x \in a\{1\}$, then $a=ab(cab)^{(1)}ca$ and
$ab=ab(cab)^{(1)}(cab)$. By the last equalities and Lemma~\ref{L ab
ab1 a y b ab1 ab}(2), $ab\mathcal{R}=a\mathcal{R}$ and ${\rm
rann}(cab)={\rm rann}(ab)$. Conversely, if
$ab\mathcal{R}=a\mathcal{R}$ and ${\rm rann}(cab)={\rm rann}(ab)$,
then there exists $r \in \mathcal{R}$ such that $a=abr$ and, using
again Lemma~\ref{L ab ab1 a y b ab1 ab}(2),
$axa=a(b(cab)^{(1)}c)a=(ab)(cab)^{(1)}(cab)r=abr=a$. Hence, $x \in
a\{1\}$.

(2): We have, $x \in a\{2\}$ and $x\mathcal{R}=b\mathcal{R}$ if and
only if $x=xax$ and there exists $r \in \mathcal{R}$ such that
$b=xr$. By the expression of $x$, we obtain
$b=xr=xaxr=b(cab)^{(1)}cab$. By Lemma~\ref{L ab ab1 a y b ab1
ab}(2), this last equality is equivalent to ${\rm rann}(cab)={\rm
rann}(b)$. Conversely, by Lemma~\ref{L ab ab1 a y b ab1 ab}(2), if
${\rm rann}(cab)={\rm rann}(b)$, then $xab=b(cab)^{(1)}cab=b$. From
the last equality, $x=b(cab)^{(1)}c=xab(cab)^{(1)}c=xax$ and
$x\mathcal{R}=b\mathcal{R}$.

(3): By Theorem~\ref{T 2I projectors}, if $x \in a\{2\}$ and ${\rm
rann}(x)={\rm rann}(c)$, then $\mathcal{R}=
ab(cab)^{(1)}c\mathcal{R} \oplus {\rm rann}(c)$. Therefore,
$c\mathcal{R} \subseteq cab(cab)^{(1)}c\mathcal{R} \subseteq cab
\mathcal{R}$. This implies that $c\mathcal{R}= cab \mathcal{R}$.
Conversely, using Lemma~\ref{L ab ab1 a y b ab1 ab}(1), we obtain
that if $cab\mathcal{R}=c\mathcal{R}$, then $cax=cab(cab)^{(1)}c =
c$. From here, ${\rm rann}(x)={\rm rann}(c)$ and
$xax=b(cab)^{(1)}cax=b(cab)^{(1)}c=x$.

The proofs of (4) and (5) are similar to the proofs of (2) and (3),
respectively.
\end{proof}
We get the next result for right hybrid $(b,c)$ inverses.
\begin{theorem}\label{T 2I bR c0 existence}
Let $a, b, c \in \mathcal{R}$. If any of the conditions
\begin{enumerate}
  \item ${\rm rann}(ab)=\{0\}$, $c\mathcal{R}=\mathcal{R}$ and
  $\mathcal{R}=ab\mathcal{R}\oplus{\rm rann}(c)$, or
  \item ${\rm rann}(b)=\{0\}$, $ca\mathcal{R}=\mathcal{R}$ and
  $\mathcal{R}=b\mathcal{R}\oplus\varphi_{a}^{-1}({\rm rann}(c))$
\end{enumerate}
holds, then $cab \in \mathcal{R}^{-1}$ and
$b(cab)^{-1}c=a^{(2)}_{{\rm rprin}=b\mathcal{R},{\rm rann}={\rm
rann}(c)}$.
\end{theorem}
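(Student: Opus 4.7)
The plan is to reduce both conditions to the situation of Theorem~\ref{T 12 inverse b cab1 c} after first showing that $cab$ is an invertible element of $\mathcal{R}$. Concretely, under either hypothesis, I will verify the two conditions of Lemma~\ref{L invertible}, namely ${\rm rann}(cab)=\{0\}$ and $cab\mathcal{R}=\mathcal{R}$; this identifies $(cab)^{-1}$ as an element of $(cab)\{1\}$. Then, taking $(cab)^{(1)}=(cab)^{-1}$ in Theorem~\ref{T 12 inverse b cab1 c}, parts (2) and (3) of that theorem will deliver both $x \in a\{2\}$ with $x\mathcal{R}=b\mathcal{R}$ and ${\rm rann}(x)={\rm rann}(c)$.

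For condition (1), suppose $cabr=0$. Then $abr$ lies in $ab\mathcal{R}\cap{\rm rann}(c)$, which is $\{0\}$ by the direct sum hypothesis, hence $r \in {\rm rann}(ab)=\{0\}$. For surjectivity, given $s \in \mathcal{R}$, the assumption $c\mathcal{R}=\mathcal{R}$ yields $r$ with $cr=s$; writing $r=abt+u$ with $u\in{\rm rann}(c)$ we obtain $s=cr=cabt \in cab\mathcal{R}$. Finally, since ${\rm rann}(b)\subseteq{\rm rann}(ab)=\{0\}$, we have ${\rm rann}(b)=\{0\}={\rm rann}(cab)$, and since $cab\mathcal{R}=\mathcal{R}=c\mathcal{R}$, the hypotheses of parts (2) and (3) of Theorem~\ref{T 12 inverse b cab1 c} are met.

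For condition (2), the argument is analogous but uses the decomposition $\mathcal{R}=b\mathcal{R}\oplus\varphi_a^{-1}({\rm rann}(c))$. If $cabr=0$, then $br\in\varphi_a^{-1}({\rm rann}(c))\cap b\mathcal{R}=\{0\}$, hence $r=0$ by ${\rm rann}(b)=\{0\}$. For surjectivity, given $s\in\mathcal{R}$, the assumption $ca\mathcal{R}=\mathcal{R}$ produces $r$ with $car=s$; decomposing $r=bt+u$ with $u\in\varphi_a^{-1}({\rm rann}(c))$ (so $cau=0$) gives $s=cabt$. Note that $c\mathcal{R}\supseteq ca\mathcal{R}=\mathcal{R}$ is automatic, so once again ${\rm rann}(cab)={\rm rann}(b)=\{0\}$ and $cab\mathcal{R}=c\mathcal{R}=\mathcal{R}$.

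In both cases Lemma~\ref{L invertible} gives $cab\in\mathcal{R}^{-1}$. Applying Theorem~\ref{T 12 inverse b cab1 c}(2) with $(cab)^{(1)}=(cab)^{-1}$ and the equality ${\rm rann}(cab)={\rm rann}(b)$, we conclude $x=b(cab)^{-1}c\in a\{2\}$ with $x\mathcal{R}=b\mathcal{R}$; and part (3) together with $cab\mathcal{R}=c\mathcal{R}$ yields ${\rm rann}(x)={\rm rann}(c)$. Hence $x=a^{(2)}_{{\rm rprin}=b\mathcal{R},{\rm rann}={\rm rann}(c)}$. There is no real obstacle in this plan; the only subtle point is recognizing that the two different direct sums in (1) and (2) are precisely what is needed to rule out the respective potential obstructions to invertibility of $cab$ (kernel in one factor, image equal to the whole ring in the other).
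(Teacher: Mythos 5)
Your proposal is correct and follows essentially the same route as the paper: verify $cab\mathcal{R}=\mathcal{R}$ and ${\rm rann}(cab)=\{0\}$ to invoke Lemma~\ref{L invertible}, then apply Theorem~\ref{T 12 inverse b cab1 c}(2)(3) with $(cab)^{(1)}=(cab)^{-1}$, using ${\rm rann}(cab)={\rm rann}(b)$ and $cab\mathcal{R}=c\mathcal{R}$. The only difference is that you write out case (2) explicitly, which the paper dismisses as ``similar.''
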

\begin{proof}
We only prove (1) since the proof of (2) is similar. Assume that
${\rm rann}(ab)=\{0\}$, $c\mathcal{R}=\mathcal{R}$ and
  $\mathcal{R}=ab\mathcal{R}\oplus{\rm rann}(c)$. Then $cab\mathcal{R}=c\mathcal{R}=\mathcal{R}$. Let $r
\in \mathcal{R}$. If $cabr=0$, then $abr \in ab\mathcal{R} \cap {\rm
rann}(c)$. Hence, $abr=0$. This shows that ${\rm rann}(cab)={\rm
rann}(ab)={\rm rann}(b)=\{0\}$. By Lemma~\ref{L invertible}, $cab
\in \mathcal{R}^{-1}$, and by Theorem~\ref{T 12 inverse b cab1
c}(2)(3), if $x=b(cab)^{-1}c$, then $x\in a\{2\}$,
$x\mathcal{R}=b\mathcal{R}$ and ${\rm rann}(x)={\rm rann}(c)$.
\end{proof}
Theorems~\ref{T 2I xR x0 unique} and \ref{T 2I bR c0 existence} can
be seen as generalizations of \cite[Theorem
2.14]{Ben-Israel-Greville (2003)}. Let $\mathbb{C}^{m \times n}_{r}$
denote the class of complex matrices of rank $r$. Let $A \in
\mathbb{C}^{m \times n}_{r}$, $s \leq r$, $U \in \mathbb{C}^{n
\times s}_{s}$ and $V \in \mathbb{C}^{s \times n}$. Then, ${\rm
rank}(V)=s \Leftrightarrow {\rm R}(V)=\mathbb{C}^{s}$ and equality
\cite[(2.62)]{Ben-Israel-Greville (2003)} is equivalent to ${\rm
N}(AU)=\{0\}$. This shows that the hypotheses ${\rm rann}(ab)=\{0\}$
and $c\mathcal{R}=\mathcal{R}$ in Theorem~\ref{T 2I bR c0 existence}
are natural generalizations of the hypotheses of \cite[Theorem
2.14]{Ben-Israel-Greville (2003)}.

Using Lemma~\ref{L invertible} and Theorem~\ref{T 12 inverse b cab1
c}(4)(5), we analogously obtain:
\begin{theorem}\label{T 2I Rc 0b existence}
Let $a, b, c \in \mathcal{R}$. If any of the conditions
\begin{enumerate}
  \item ${\rm lann}(ca)=\{0\}$, $\mathcal{R}b=\mathcal{R}$ and
  $\mathcal{R}=\mathcal{R}ca \oplus {\rm lann}(b)$, or
  \item ${\rm lann}(c)=\{0\}$, $\mathcal{R}ab=\mathcal{R}$ and
  $\mathcal{R}=\mathcal{R}c\oplus\,_{a}\varphi^{-1}({\rm lann}(b))$
\end{enumerate}
holds, then $cab \in \mathcal{R}^{-1}$ and
$b(cab)^{-1}c=a^{(2)}_{{\rm lprin}=\mathcal{R}c,{\rm lann}={\rm
lann}(b)}$.
\end{theorem}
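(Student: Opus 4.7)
The plan is to mirror the proof of Theorem~\ref{T 2I bR c0 existence}, but with all sidedness reversed. Under either hypothesis, I would first establish the two conditions $\mathcal{R}cab = \mathcal{R}$ and ${\rm lann}(cab) = \{0\}$; Lemma~\ref{L invertible}(3) then yields $cab \in \mathcal{R}^{-1}$. Once invertibility is in hand, $(cab)^{-1} \in (cab)\{1\}$ automatically, so Theorem~\ref{T 12 inverse b cab1 c}(4)--(5) applied to $x = b(cab)^{-1}c$ gives $x \in a\{2\}$, $\mathcal{R}x = \mathcal{R}c$, and ${\rm lann}(x) = {\rm lann}(b)$. By the uniqueness asserted in Theorem~\ref{T 2I Rx 0x unique}, this forces $x = a^{(2)}_{{\rm lprin}=\mathcal{R}c,{\rm lann}={\rm lann}(b)}$.

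For case (1), I would take an arbitrary $r \in \mathcal{R}$ and, using $\mathcal{R}b=\mathcal{R}$, observe that it suffices to show $\mathcal{R}b \subseteq \mathcal{R}cab$. Writing any element as $r = r'b$ and decomposing $r' = sca + t$ with $sca \in \mathcal{R}ca$ and $t \in {\rm lann}(b)$, the term $tb$ vanishes and $r = scab \in \mathcal{R}cab$. For triviality of ${\rm lann}(cab)$: if $rcab = 0$, then $rca \in \mathcal{R}ca$ and $(rca)b = 0$, so $rca \in \mathcal{R}ca \cap {\rm lann}(b) = \{0\}$; hence $r \in {\rm lann}(ca) = \{0\}$.

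For case (2), given $r \in \mathcal{R}$, use $\mathcal{R}ab = \mathcal{R}$ to pick $s$ with $sab = r$, then decompose $s = s_1 + s_2$ with $s_1 \in \mathcal{R}c$ and $s_2 \in \,_{a}\varphi^{-1}({\rm lann}(b))$. The key point is that $s_2 \in \,_{a}\varphi^{-1}({\rm lann}(b))$ means precisely $s_2 a \in {\rm lann}(b)$, so $s_2 ab = 0$ and $r = s_1 ab$. Writing $s_1 = uc$ gives $r = ucab \in \mathcal{R}cab$. For injectivity: $rcab = 0$ says $rc \in \,_{a}\varphi^{-1}({\rm lann}(b))$, while trivially $rc \in \mathcal{R}c$; the direct sum then forces $rc = 0$, and the hypothesis ${\rm lann}(c) = \{0\}$ yields $r = 0$.

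The main obstacle is not conceptual but notational: one must interpret $\,_{a}\varphi^{-1}({\rm lann}(b))$ correctly as the set of $s$ with $sab = 0$ and remember that the direct sum decomposition sits inside the \emph{left}-module structure, so all applications of distributivity must be on the right. Once this is handled, both halves reduce, just as in Theorem~\ref{T 2I bR c0 existence}, to a one-line application of Lemma~\ref{L invertible} followed by Theorem~\ref{T 12 inverse b cab1 c}.
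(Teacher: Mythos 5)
Your proposal is correct and follows exactly the route the paper intends: the paper proves the right-sided analogue (Theorem~\ref{T 2I bR c0 existence}) in detail and then obtains Theorem~\ref{T 2I Rc 0b existence} "analogously" from Lemma~\ref{L invertible} and Theorem~\ref{T 12 inverse b cab1 c}(4)(5), which is precisely your argument, including the verification of $\mathcal{R}cab=\mathcal{R}$ and ${\rm lann}(cab)=\{0\}$ in each case. The only steps left implicit are the immediate observations that ${\rm lann}(c)\subseteq{\rm lann}(cab)=\{0\}$ and, in case (2), $\mathcal{R}=\mathcal{R}ab\subseteq\mathcal{R}b$, which are what make the equivalences in Theorem~\ref{T 12 inverse b cab1 c}(4)(5) applicable.
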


The next theorem gives relations between the different types of $(b,c)$-inverses.
\begin{theorem}\label{T 2I bc igualdad} Let $a, b, c, x \in
\mathcal{R}$. The following assertions are equivalent:
\begin{enumerate}
  \item $x=a^{(2)}_{{\rm rprin}=b\mathcal{R},{\rm rann}={\rm rann}(c)}$ and $x \in \mathcal{R}c$ (or $c\{1\}\neq\emptyset$).
  \item $x=a^{(2)}_{{\rm rprin}=b\mathcal{R},{\rm rann}={\rm rann}(c)}$ and $(cab)\{1\}\neq\emptyset$.
  \item $x=a^{(2)}_{{\rm lprin}=\mathcal{R}c,{\rm lann}={\rm lann}(b)}$ and $x \in b\mathcal{R}$ (or $b\{1\}\neq\emptyset$).
  \item $x=a^{(2)}_{{\rm lprin}=\mathcal{R}c,{\rm lann}={\rm lann}(b)}$ and $(cab)\{1\}\neq\emptyset$.
  \item $x=a^{(2)}_{{\rm rprin}=b\mathcal{R},{\rm lprin}=\mathcal{R}c}$.
  \item $x=a^{(2)}_{{\rm lann}={\rm lann}(b),{\rm rann}={\rm rann}(c)}$ and $x \in b\mathcal{R}$ (or $b\{1\}\neq\emptyset$) and $x \in \mathcal{R}c$ (or $c\{1\}\neq\emptyset$).
  \item $x=a^{(2)}_{{\rm lann}={\rm lann}(b),{\rm rann}={\rm rann}(c)}$ and $(cab)\{1\}\neq\emptyset$.
  \item $(cab)\{1\}\neq\emptyset$, $\mathcal{R}cab=\mathcal{R}b$ (or
  ${\rm rann}(cab)={\rm rann}(b)$), $cab\mathcal{R}=c\mathcal{R}$ (or ${\rm lann}(cab)={\rm lann}(c)$), and $x=b(cab)^{(1)}c$ for any $(cab)^{(1)} \in (cab)\{1\}$.
\end{enumerate}
\end{theorem}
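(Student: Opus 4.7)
My plan is to show (8) implies each of (1)--(7) via Theorem~\ref{T 12 inverse b cab1 c}, and to show each of (1)--(7) implies (8) by deriving, in a uniform way, the three constituents of (8): $(cab)\{1\}\neq\emptyset$, $\mathcal{R}cab=\mathcal{R}b$ (equivalently ${\rm rann}(cab)={\rm rann}(b)$), $cab\mathcal{R}=c\mathcal{R}$ (equivalently ${\rm lann}(cab)={\rm lann}(c)$), together with the identity $x=b(cab)^{(1)}c$.

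For the direction (8) $\Rightarrow$ (1)--(7), given the three conditions of (8), parts (2)--(5) of Theorem~\ref{T 12 inverse b cab1 c} applied to $x=b(cab)^{(1)}c$ simultaneously yield $x\in a\{2\}$, $x\mathcal{R}=b\mathcal{R}$, $\mathcal{R}x=\mathcal{R}c$, ${\rm rann}(x)={\rm rann}(c)$ and ${\rm lann}(x)={\rm lann}(b)$. The uniqueness assertions of Theorems~\ref{T 2I xR x0 unique}, \ref{T 2I Rx 0x unique}, and \ref{T 2I xR Rx existencia} identify this $x$ with each of the four prescribed $\{2\}$-inverses appearing in (1)--(7), and the side hypotheses $x\in b\mathcal{R}$ or $x\in\mathcal{R}c$ follow immediately from the explicit form $x=b(cab)^{(1)}c$.

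For the converses, in each of (1)--(7) I first argue that $x\in a\{2\}$ satisfies ${\rm lann}(x)\subseteq{\rm lann}(b)$ and ${\rm rann}(x)\subseteq{\rm rann}(c)$: equality of these ideals is already assumed or, when only $b\{1\}\neq\emptyset$ or $c\{1\}\neq\emptyset$ is available, is obtained from Lemma~\ref{L L 2.5 RDD (2014b)} combined with the other ideal data. Since $(1-xa)x=0$ and $x(1-ax)=0$ (so $1-xa\in{\rm lann}(x)$ and $1-ax\in{\rm rann}(x)$), this yields $xab=b$ and $cax=c$. The equalities $\mathcal{R}cab=\mathcal{R}b$ and $cab\mathcal{R}=c\mathcal{R}$ now follow directly: if $cabr=0$ then $abr\in{\rm rann}(c)\subseteq{\rm rann}(x)$, so $br=xabr=0$, giving ${\rm rann}(cab)={\rm rann}(b)$; dually ${\rm lann}(cab)={\rm lann}(c)$.

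It remains to establish $(cab)\{1\}\neq\emptyset$. In (2), (4), (7) this is part of the hypothesis. In (1), (3), (5), (6) the side information implies $x\in b\mathcal{R}$ and $x\in\mathcal{R}c$ (directly, or via Lemma~\ref{L L 2.5 RDD (2014b)} when only $b\{1\}\neq\emptyset$ or $c\{1\}\neq\emptyset$ is assumed), so $x=bs$ and $x=tc$ for some $s,t\in\mathcal{R}$. Then $cabs=cax=c$, $tcab=xab=b$, and $catc=cax=c$, whence
\[
(cab)(sat)(cab)=(cabs)(at)(cab)=(cat)(cab)=(catc)(ab)=cab,
\]
showing $sat\in(cab)\{1\}$; moreover $b(sat)c=(bs)a(tc)=xax=x$. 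With $\mathcal{R}cab=\mathcal{R}b$, $cab\mathcal{R}=c\mathcal{R}$, and $(cab)\{1\}\neq\emptyset$ in hand, Theorem~\ref{T 12 inverse b cab1 c}(2)(3) and the uniqueness from Theorem~\ref{T 2I xR x0 unique} force $x=b(cab)^{(1)}c$ for every $(cab)^{(1)}\in(cab)\{1\}$, yielding (8). The main obstacle is organizing the seven forward implications uniformly; the explicit construction of $sat$ as an inner inverse of $cab$ is precisely what unifies the ``partial'' hypotheses of (1), (3), (5), (6) with the explicit inner-invertibility assumptions of (2), (4), (7).
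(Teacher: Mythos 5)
Your argument is correct, but it follows a genuinely different route from the paper. The paper proves the theorem by making (5) the hub and importing results from Zhu's paper \cite{Zhu (2018)}: it gets (1) $\Leftrightarrow$ (2) $\Leftrightarrow$ (5) from Zhu's Theorem 2.4 and Corollary 2.6 (together with Theorem~\ref{T 2I xR x0 existencia}), (3) $\Leftrightarrow$ (4) $\Leftrightarrow$ (5) from Theorem~\ref{T 2I Rx 0x existencia} and the same corollary, handles (5) $\Leftrightarrow$ (6) via Theorem~\ref{T 2I 0x x0 existencia}(6), and then links (5), (7) to (8) through Zhu's Proposition 2.5 and Theorem~\ref{T 12 inverse b cab1 c}; it also uses Lemma~\ref{L a1neqemptyset} to show that in (1), (3), (6) the parenthetical alternatives ($x \in \mathcal{R}c$ versus $c\{1\}\neq\emptyset$, etc.) are interchangeable. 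You instead make (8) the hub and keep the argument self-contained: the explicit construction of $sat \in (cab)\{1\}$ from the factorizations $x=bs=tc$, together with the identities $xab=b$, $cax=c$ and the resulting equalities ${\rm rann}(cab)={\rm rann}(b)$, ${\rm lann}(cab)={\rm lann}(c)$, in effect reproves the content of Zhu's Corollary 2.6 and Proposition 2.5 inside the ring, after which Theorem~\ref{T 12 inverse b cab1 c} and the uniqueness results do the rest. What your approach buys is independence from the external reference and a uniform treatment of all seven forward implications; what the paper's approach buys is brevity. Two small points of precision you should fix in a final write-up: your opening claim only records the inclusions ${\rm lann}(x)\subseteq{\rm lann}(b)$, ${\rm rann}(x)\subseteq{\rm rann}(c)$, but the step ``$abr\in{\rm rann}(c)\subseteq{\rm rann}(x)$'' (and its dual) needs the reverse inclusions, which do hold because in every case the full equalities are available (directly or via Lemma~\ref{L L 2.5 RDD (2014b)}, using $x\{1\}\neq\emptyset$ since $a\in x\{1\}$); and in cases (4) and (7) the final identification of $x$ with $b(cab)^{(1)}c$ is cleanest via the uniqueness statements of Theorem~\ref{T 2I Rx 0x unique} and Theorem~\ref{T 2I 0x x0 existencia} respectively (matching $\mathcal{R}x=\mathcal{R}c$, ${\rm lann}(x)={\rm lann}(b)$, or the two annihilators), rather than Theorem~\ref{T 2I xR x0 unique} alone, whose hypotheses ($x\mathcal{R}=b\mathcal{R}$) you would otherwise have to derive first.
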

\begin{proof}
If $x\in\{2\}$, then $a \in x\{1\}$. Hence, from Lemma~\ref{L
a1neqemptyset}, in (1) and (6): $x \in \mathcal{R}c \Leftrightarrow
c\{1\}\neq\emptyset$, and in (3) and (6): $x \in
b\mathcal{R}\Leftrightarrow b\{1\}\neq\emptyset$.

As a consequence of \cite[Theorem 2.4]{Zhu (2018)} (or
Theorem~\ref{T 2I xR x0 existencia}) and \cite[Corollary 2.6]{Zhu
(2018)}, we get (1)  $\Leftrightarrow$ (2) $\Leftrightarrow$ (5).
From Theorem~\ref{T 2I Rx 0x existencia} and \cite[Corollary
2.6]{Zhu (2018)}, we obtain (3) $\Leftrightarrow$ (4)
$\Leftrightarrow$ (5).

(5) $\Rightarrow$ (6): It is immediate.

(6) $\Rightarrow$ (5): Suppose that $x=a^{(2)}_{{\rm lann}={\rm
lann}(b),{\rm rann}={\rm rann}(c)}$, $x \in b\mathcal{R}$, and $x
\in \mathcal{R}c$. By Theorem~\ref{T 2I 0x x0 existencia}(6),
$c=cax$ and $b=xab$. Hence, $\mathcal{R}c \subseteq \mathcal{R}x$
and $b\mathcal{R} \subseteq x\mathcal{R}$. We conclude that
$x=a^{(2)}_{{\rm rprin}=b\mathcal{R},{\rm lprin}=\mathcal{R}c}$.

(5) $\Rightarrow$ (8): Suppose that $x=a^{(2)}_{{\rm
rprin}=b\mathcal{R},{\rm lprin}=\mathcal{R}c}$. Then
$x=a^{(2)}_{{\rm rprin}=b\mathcal{R},{\rm lprin}={\rm rann}(c)}$.
Thus, (8) follows from \cite[Proposition 2.5]{Zhu (2018)} and
Theorem~\ref{T 12 inverse b cab1 c}(2)(4).

(8) $\Rightarrow$ (5): It follows from Theorem~\ref{T 12 inverse b
cab1 c}(2)(4).

(7) $\Leftrightarrow$ (8): It follows from Theorem~\ref{T 12 inverse
b cab1 c}(3)(5).
\end{proof}
From Theorems~\ref{T 2I xR x0
existencia}(3), \ref{T 2I Rx 0x existencia}(3) and \ref{T 2I bc igualdad}(5)(1)(3), we get
\cite[Proposition 2.7]{Ke-CvetkovicIlic-Chen-Visnjic (2018)}.

\subsection{The $(p,q)$ inverse}

We now consider $\{2\}$-inverses defined using prefixed idempotent
elements in $\mathcal{R}$. Let $a \in \mathcal{R}$ and $p, q \in
\mathcal{R}^{\bullet}$. Then $a^{(2)}_{{\rm rprin}=p\mathcal{R},{\rm
rann}=q\mathcal{R}}$ is the \emph{image-kernel $(p,q)$ inverse} of
$a$ (see \cite[Definition 3.1]{KantunMontiel (2014)}). The
image-kernel $(p,q)$ inverse is the Cao-Xue $(p,q,l)$ inverse (see
\cite[Definition 2.10]{Cao-Xue (2013)}).

Let $a \in \mathcal{R}$ and $p \in \mathcal{R}^{\bullet}$. Then $a$
is called \emph{Bott-Duffin invertible} if $1-p+ap \in
\mathcal{R}^{-1}$, and in this case, the \emph{Bott-Duffin $p$
inverse} of $a$ is $p(1-p+ap)^{-1}$ (see \cite[Definitions
(c)]{Bott-Duffin (1953)}). If $x \in \mathcal{R}$ is such that
$x=px=xq$, $xap=p$, and $qax=q$, then $x$ is called the
\emph{Bott-Duffin $(p,q)$ inverse} of $a$ (see \cite[Definition
3.2]{Drazin (2012)}). The Bott-Duffin $(p,p)$ inverse of $a$ is the
Bott-Duffin $p$ inverse of $a$ (see \cite[Proposition 3.1]{Drazin
(2012)}). By \cite[Proposition 3.4]{KantunMontiel (2014)},
$x=a^{(2)}_{{\rm rprin}=p\mathcal{R},{\rm rann}=q\mathcal{R}}$ if
and only if $x$ is the Bott-Duffin $(p,1-q)$ inverse of $a$.

Let $a \in \mathcal{R}$ and $p, q \in \mathcal{R}^{\bullet}$. Then
$x \in \mathcal{R}$ is called the \emph{Djordjevi\'{c}-Wei $(p,q)$
inverse} of $a$ if $x \in a\{2\}$, $xa=p$ and $ax=1-q$ (see
\cite[Definition 2.1]{Djordjevic-Wei (2005)}). We note that
$a^{(2)}_{p,q}=a^{(2)}_{{\rm rprin}=p\mathcal{R},{\rm
rann}=q\mathcal{R}}$ with ${\rm rann}(a^{(2)}_{p,q}a)={\rm rann}(p)$
and $aa^{(2)}_{p,q}\mathcal{R}={\rm rann}(q)$. We also have,
$a^{(2)}_{p,q} = a^{(2)}_{{\rm lprin}={\rm lann}(q),{\rm lann}={\rm
lann}(p)}$ with $\mathcal{R}a^{(2)}_{p,q}a=\mathcal{R}p$ and ${\rm
lann}(aa^{(2)}_{p,q})=\mathcal{R}q$. Since $(1-q)a=(1-q)ap
\Leftrightarrow a(1-p)=qa(1-p) \Rightarrow a(1-p)\mathcal{R}
\subseteq q\mathcal{R}$, the conditions of part (2) of the following
theorem are weaker than the conditions of \cite[Theorem
2.1(2)]{Djordjevic-Wei (2005)} that include the equality $px=x$. The
conditions of part (4) are with inclusions instead of with
equalities as in \cite[Theorem 2.4(2)]{Cao-Xue (2013)} in a complex
Banach algebra.
\begin{theorem}\label{T GI 2uv}
Let $a \in \mathcal{R}$ and $p, q \in \mathcal{R}^{\bullet}$. Then
the following statements are equivalent:
\begin{enumerate}
  \item $x \in \mathcal{R}$ is the
Djordjevi\'{c}-Wei $(p,q)$ inverse of $a$.
  \item $a(1-p)\mathcal{R} \subseteq q\mathcal{R}$, $xap=p$, $1-q=ax$, and $xq=0$.
  \item $\mathcal{R}qa \subseteq \mathcal{R}(1-p)$, $p=xa$, $px=x$, and $(1-q)ax=1-q$.
  \item $x \in a\{2\}$, $xa\mathcal{R} \subseteq
  p\mathcal{R}, {\rm rann}(xa) \subseteq
  {\rm rann}(p)$ (or $p\mathcal{R} \subseteq xa\mathcal{R}, {\rm rann}(p) \subseteq {\rm rann}(xa)$)
  and $ax\mathcal{R} \subseteq {\rm rann}(q), {\rm rann}(ax) \subseteq
  q\mathcal{R}$ (or ${\rm rann}(q) \subseteq ax\mathcal{R}, q\mathcal{R} \subseteq {\rm rann}(ax)$).
\end{enumerate}
If $a^{(2)}_{p,q}$ exists, then ${\rm
rann}(p)=\varphi_{a}^{-1}(q\mathcal{R})$ and $\mathcal{R}q =
\,_{a}\varphi^{-1}({\rm lann}(p))$.
\end{theorem}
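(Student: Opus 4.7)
The plan is to separately establish $(1)\Leftrightarrow(4)$, $(1)\Leftrightarrow(2)$, and $(1)\Leftrightarrow(3)$ by direct computation, relying only on the facts that $x\in a\{2\}$ makes $xa$ and $ax$ idempotent and that $p,q\in\mathcal{R}^{\bullet}$; the final two identifications will then drop out from the characterizations of $a^{(2)}_{p,q}$ recorded just before the theorem statement together with Theorems~\ref{T 2I xR x0 unique} and~\ref{T 2I Rx 0x unique}.

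For $(1)\Leftrightarrow(4)$, I would observe that when $x\in a\{2\}$ both $xa$ and $ax$ are idempotents, and then apply Lemma~\ref{L igualdad idempotentes}(3) to the pairs $(xa,p)$ and $(ax,1-q)$: either direction of the inclusions forces the equality of the idempotents, which accounts for the \emph{or}-clauses. Lemma~\ref{L igualdad idempotentes}(1) translates $(1-q)\mathcal{R}={\rm rann}(q)$ and ${\rm rann}(1-q)=q\mathcal{R}$, putting the inclusions on $ax$ into the form stated. For $(1)\Rightarrow(2)$, the equalities $xap=(xa)p=p^{2}=p$, $xq=x-x(1-q)=x-xax=0$, and $a(1-p)=a-(ax)a=a-(1-q)a=qa\in q\mathcal{R}$ are immediate. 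Conversely, given (2), the relation $xax=x(ax)=x(1-q)=x-xq=x$ gives $x\in a\{2\}$; and since $a(1-p)\in q\mathcal{R}$, we get $xa(1-p)\in xq\mathcal{R}=\{0\}$, so $xa=xap=p$, completing (1).

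The argument $(1)\Leftrightarrow(3)$ is strictly parallel with left/right roles interchanged: from (1), $qa=a-(ax)a=a(1-p)\in\mathcal{R}(1-p)$, $px=(xa)x=xax=x$, and $(1-q)ax=(ax)^{2}=ax=1-q$; conversely, $px=x$ and $p=xa$ yield $xax=x$, and $qa\in\mathcal{R}(1-p)$ combined with $px=x$ gives $qax=0$, so that $ax=(1-q)ax+qax=1-q$. For the final claim, the text already records that $a^{(2)}_{p,q}=a^{(2)}_{{\rm rprin}=p\mathcal{R},\,{\rm rann}=q\mathcal{R}}=a^{(2)}_{{\rm lprin}={\rm lann}(q),\,{\rm lann}={\rm lann}(p)}$. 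Applying Theorem~\ref{T 2I xR x0 unique} with $\mathcal{S}=p\mathcal{R}$, $\mathcal{T}=q\mathcal{R}$ gives $\varphi_{xa}=\rho_{p\mathcal{R},\,\varphi_{a}^{-1}(q\mathcal{R})}$; since $xa=p$, Corollary~\ref{C a idempotent varphia proy} also gives $\varphi_{xa}=\rho_{p\mathcal{R},\,{\rm rann}(p)}$, and comparing kernels via Lemma~\ref{L proy im ker}(2) yields ${\rm rann}(p)=\varphi_{a}^{-1}(q\mathcal{R})$. Likewise Theorem~\ref{T 2I Rx 0x unique} with $\mathcal{S}={\rm lann}(q)$, $\mathcal{T}={\rm lann}(p)$ gives ${}_{ax}\varphi=\rho_{{\rm lann}(q),\,{}_{a}\varphi^{-1}({\rm lann}(p))}$, whose kernel is ${\rm lann}(ax)={\rm lann}(1-q)=\mathcal{R}q$ by Lemma~\ref{L igualdad idempotentes}(1), hence $\mathcal{R}q={}_{a}\varphi^{-1}({\rm lann}(p))$.

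There is no real obstacle beyond bookkeeping; the delicate point is to remember that $p,q\in\mathcal{R}^{\bullet}$ is precisely what gives ${\rm rann}(1-p)=p\mathcal{R}$ and $\mathcal{R}q={\rm lann}(1-q)$, which is used both in rewriting the ideal conditions of~(4) and in identifying the kernel ${\rm lann}(ax)=\mathcal{R}q$ in the final step.
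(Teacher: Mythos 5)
Your proposal is correct and follows essentially the same route as the paper: the same direct computations for $(1)\Leftrightarrow(2)$ and $(1)\Leftrightarrow(3)$, Lemma~\ref{L igualdad idempotentes}(3) applied to the idempotents $xa$ and $ax$ for $(1)\Leftrightarrow(4)$, and Theorems~\ref{T 2I xR x0 unique} and~\ref{T 2I Rx 0x unique} for the identities ${\rm rann}(p)=\varphi_{a}^{-1}(q\mathcal{R})$ and $\mathcal{R}q=\,_{a}\varphi^{-1}({\rm lann}(p))$. The only cosmetic difference is that you obtain the inclusions $a(1-p)\mathcal{R}\subseteq q\mathcal{R}$ and $\mathcal{R}qa\subseteq\mathcal{R}(1-p)$ from the elementary identity $a(1-p)=qa$, whereas the paper reads them off the projector/kernel identities it has just established; both are valid.
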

\begin{proof}
(1) $\Rightarrow$ (2)(3): It follows from the definition of
$a^{(2)}_{p,q}$ that $xap=p$, $1-q=ax$, $xq=0$, $p=xa$, $px=x$, and
$(1-q)ax=1-q$.

By Theorems~\ref{T 2I xR x0 unique} and \ref{T 2I Rx 0x
unique}, ${\rm rann}(p)=(1-p)\mathcal{R}={\rm
rann}(xa)=\varphi_{a}^{-1}(q\mathcal{R})$ and $\mathcal{R}q = {\rm
lann}(1-q) = \,_{a}\varphi^{-1}(\mathcal{R}(1-p))$. Then
$a(1-p)\mathcal{R} \subseteq q\mathcal{R}$ and $\mathcal{R}qa
\subseteq \mathcal{R}(1-p)$.

(2) $\Rightarrow$ (1): Assume that $a(1-p)\mathcal{R} \subseteq
q\mathcal{R}$ and there exists $x \in \mathcal{R}$ such that
$xap=p$, $1-q=ax$, and $xq=0$. Then $xax=x$ and $xa(1-p)=0$. Since
$xap=p$ and $xa(1-p)=0$, we have $xa=p$.

The proof of (3) $\Rightarrow$ (1) is similar to the proof of (2)
$\Rightarrow$ (1) and the proof of (1) $\Rightarrow$ (4) is
immediate.

If $x$ is a $\{2\}$-inverse of $a$, then $ax, xa \in
\mathcal{R}^{\bullet}$. Hence, (4) $\Rightarrow$ (1) follows from
Lemma~\ref{L igualdad idempotentes}(3).
\end{proof}
In \cite{KantunMontiel (2014)}, it is noted that if $x=a^{(2)}_{{\rm
rprin}=p\mathcal{R},{\rm rann}=q\mathcal{R}}$, then $x$ is the
Djordjevi\'{c}-Wei $(pxa,q(1-ax))$ inverse of $a$. Using Theorems~\ref{T 2I xR x0 unique} and \ref{T 2I Rx 0x unique},
we obtain the following proposition.
\begin{proposition}
Let $a \in \mathcal{R}$ and $p, q \in \mathcal{R}^{\bullet}$. Then
the following statements are equivalent:
\begin{enumerate}
  \item $x$ is the Djordjevi\'{c}-Wei
$(p,q)$ inverse of $a$.
  \item $x=a^{(2)}_{{\rm rprin}=p\mathcal{R},{\rm rann}=q\mathcal{R}}$, $ap\mathcal{R}=(1-q)\mathcal{R}$, and $\varphi_{a}^{-1}(q\mathcal{R})=(1-p)\mathcal{R}$.
  \item $x=a^{(2)}_{{\rm lprin}={\rm lann}(q),{\rm lann}={\rm lann}(p)}$, $\mathcal{R}(1-q)a=\mathcal{R}p$, and $\,_{a}\varphi^{-1}(\mathcal{R}(1-p))=\mathcal{R}q$.
\end{enumerate}
\end{proposition}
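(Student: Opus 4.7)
The plan is to derive both equivalences $(1) \Leftrightarrow (2)$ and $(1) \Leftrightarrow (3)$ from the uniqueness theorems for $\{2\}$-inverses with prescribed principal and annihilator ideals (Theorems~\ref{T 2I xR x0 unique} and~\ref{T 2I Rx 0x unique}), combined with the characterization of equality of idempotents given in Lemma~\ref{L igualdad idempotentes}(3) and its left-sided analogue. The structure in each direction is the same: translate the defining relations $xa = p$ and $ax = 1-q$ of the Djordjevi\'{c}-Wei inverse into equalities of principal right (resp.\ left) ideals and right (resp.\ left) annihilators of $ax$ and $xa$, and then recognize these as exactly the projector data delivered by Theorem~\ref{T 2I xR x0 unique} (resp.\ Theorem~\ref{T 2I Rx 0x unique}) for the $\{2\}$-inverse $a^{(2)}_{{\rm rprin}=p\mathcal{R},{\rm rann}=q\mathcal{R}}$ (resp.\ $a^{(2)}_{{\rm lprin}={\rm lann}(q),{\rm lann}={\rm lann}(p)}$).

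For $(1)\Rightarrow(2)$ I will invoke the identifications recorded just before Theorem~\ref{T GI 2uv}: if $x$ is the Djordjevi\'{c}-Wei $(p,q)$ inverse of $a$, then $x = a^{(2)}_{{\rm rprin}=p\mathcal{R},{\rm rann}=q\mathcal{R}}$ with $xa = p$ and $ax = 1-q$. Theorem~\ref{T 2I xR x0 unique} then yields $ax\mathcal{R} = \varphi_a(x\mathcal{R}) = ap\mathcal{R}$ and ${\rm rann}(xa) = \varphi_a^{-1}({\rm rann}(x)) = \varphi_a^{-1}(q\mathcal{R})$. Since $ax\mathcal{R} = (1-q)\mathcal{R}$ and ${\rm rann}(xa) = {\rm rann}(p) = (1-p)\mathcal{R}$ (the latter by Lemma~\ref{L igualdad idempotentes}(1)), the two equalities in~(2) drop out. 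For $(2)\Rightarrow(1)$, set $x = a^{(2)}_{{\rm rprin}=p\mathcal{R},{\rm rann}=q\mathcal{R}}$ and observe via Theorem~\ref{T 2I xR x0 unique} that $ax, xa \in \mathcal{R}^{\bullet}$ with $ax\mathcal{R} = ap\mathcal{R}$, ${\rm rann}(ax) = q\mathcal{R}$, $xa\mathcal{R} = p\mathcal{R}$, and ${\rm rann}(xa) = \varphi_a^{-1}(q\mathcal{R})$. Using the two hypotheses together with ${\rm rann}(1-q) = q\mathcal{R}$ and ${\rm rann}(p) = (1-p)\mathcal{R}$ from Lemma~\ref{L igualdad idempotentes}(1), these relations become $ax\mathcal{R} = (1-q)\mathcal{R}$, ${\rm rann}(ax) = {\rm rann}(1-q)$, $xa\mathcal{R} = p\mathcal{R}$, and ${\rm rann}(xa) = {\rm rann}(p)$. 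Two applications of Lemma~\ref{L igualdad idempotentes}(3) then force $ax = 1-q$ and $xa = p$, so $x$ is the Djordjevi\'{c}-Wei $(p,q)$ inverse of $a$.

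The equivalence $(1)\Leftrightarrow(3)$ is entirely parallel: Theorem~\ref{T 2I Rx 0x unique} takes the place of Theorem~\ref{T 2I xR x0 unique}, the identities $\mathcal{R}xa = \,_{a}\varphi(\mathcal{R}x)$ and ${\rm lann}(ax) = \,_{a}\varphi^{-1}({\rm lann}(x))$ replace their right-sided counterparts, and ${\rm lann}(q) = \mathcal{R}(1-q)$, ${\rm lann}(p) = \mathcal{R}(1-p)$ (again from Lemma~\ref{L igualdad idempotentes}(1)) feed the computation. The final step applies the left-sided version of Lemma~\ref{L igualdad idempotentes}(3), which is immediate from parts~(2) and~(3) of that lemma by combining $\mathcal{R}r_1 \subseteq \mathcal{R}r_2$ with ${\rm lann}(r_1) \subseteq {\rm lann}(r_2)$ to swap roles and conclude equality. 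No genuine obstacle arises; the only point requiring care is the translation dictionary between $p\mathcal{R}$, ${\rm rann}(p)$, $\mathcal{R}p$, ${\rm lann}(p)$ and the corresponding objects attached to $q$, $1-p$, and $1-q$. Once this bookkeeping is laid out, the proof is purely mechanical.
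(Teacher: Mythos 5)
Your proposal is correct and follows essentially the same route as the paper, which derives the proposition from Theorems~\ref{T 2I xR x0 unique} and~\ref{T 2I Rx 0x unique} together with the identifications $a^{(2)}_{p,q}=a^{(2)}_{{\rm rprin}=p\mathcal{R},{\rm rann}=q\mathcal{R}}=a^{(2)}_{{\rm lprin}={\rm lann}(q),{\rm lann}={\rm lann}(p)}$ stated just before Theorem~\ref{T GI 2uv}; your use of Lemma~\ref{L igualdad idempotentes}(1)(3) (and its left-sided analogue) to force $ax=1-q$ and $xa=p$ is exactly the bookkeeping the paper leaves implicit.
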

Let $a \in \mathcal{R}$ and $p, q \in \mathcal{R}^{\bullet}$. If
$a^{(2)}_{p,q}$ exists and $a^{(2)}_{p,q}\in a\{1\}$, then
$a^{(2)}_{p,q}$ is the \emph{Djordjevi\'{c}-Wei $(p,q)$-reflexive
generalized inverse} of $a$ and it is denoted by $a^{(1,2)}_{p,q}$
(see \cite[page 3054]{Djordjevic-Wei (2005)}).
\begin{example}
If $a \in \mathcal{R}^{D}$ with ${\rm ind}(a) \leq l$, then
$a^{D}=a^{(2)}_{aa^{D},1-aa^{D}}$. Let $\mathcal{R}$ be a
$\ast$-ring. If $a \in \mathcal{R}^{\dag}$, then
$a^{\dag}=a^{(2)}_{a^{\dag}a,1-aa^{\dag}}=a^{(1,2)}_{a^{\dag}a,1-aa^{\dag}}$.
If $a \in \mathcal{R}^{\core}$, then
$a^{\core}=a^{(2)}_{a^{\core}a,1-aa^{\core}}=a^{(1,2)}_{a^{\core}a,1-aa^{\core}}$.
If $a \in \mathcal{R}_{\core}$, then
$a_{\core}=a^{(2)}_{a_{\core}a,1-aa_{\core}}=a^{(1,2)}_{a_{\core}a,1-aa_{\core}}$.
\end{example}
The equivalence (1) $\Leftrightarrow$ (2) of the following
proposition coincides with \cite[Proposition 3.1]{Cao-Xue (2013)}
for complex Banach algebras.
\begin{proposition}
Let $a \in \mathcal{R}$. Then the following statements are
equivalent:
\begin{enumerate}
  \item $a\{1,2\} \neq \emptyset$.
  \item There exist $p, q \in
\mathcal{R}^{\bullet}$ such that ${\rm rann}(a)={\rm rann}(p)$ and
$a\mathcal{R}=q\mathcal{R}$.
  \item There exist $p, q \in
\mathcal{R}^{\bullet}$ such that $\mathcal{R}a=\mathcal{R}p$ and
${\rm lann}(a)={\rm lann}(q)$.
\item There exist $p, q \in
\mathcal{R}^{\bullet}$ such that $\mathcal{R}a=\mathcal{R}p$ and
$a\mathcal{R}=q\mathcal{R}$.
\end{enumerate}
\end{proposition}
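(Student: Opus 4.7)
The plan is to prove (1) $\Rightarrow$ (2), (3), (4) by a single uniform construction and then prove each of the three reverse implications by first establishing that $a\{1\}\neq\emptyset$ and then invoking the classical reduction from $\{1\}$-inverses to $\{1,2\}$-inverses.

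For the forward direction, I would pick any $x\in a\{1,2\}$ and set $p:=xa$, $q:=ax$. Since $axa=a$ and $xax=x$, both $p$ and $q$ are idempotents of $\mathcal{R}$. Applying the preliminary equalities for $\{1\}$-inverses (equality (1) in Section~2.1) with $a^{(1)}=x$ yields at once
\[
\mathcal{R}p=\mathcal{R}xa=\mathcal{R}a,\quad q\mathcal{R}=ax\mathcal{R}=a\mathcal{R},\quad {\rm rann}(p)={\rm rann}(xa)={\rm rann}(a),\quad {\rm lann}(q)={\rm lann}(ax)={\rm lann}(a),
\]
so the common pair $(p,q)$ simultaneously witnesses all of (2), (3), and (4).

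For the reverse direction the key observation is that the right-principal condition $a\mathcal{R}=q\mathcal{R}$ (present in both (2) and (4)) already forces $a\{1\}\neq\emptyset$: from $q\in a\mathcal{R}$ we have $q=ar$ for some $r\in\mathcal{R}$, and from $a\in q\mathcal{R}$ together with $q^{2}=q$ we get $qa=a$, so $ara=qa=a$, i.e.\ $r\in a\{1\}$. Symmetrically, $\mathcal{R}a=\mathcal{R}p$ in (3) produces a $\{1\}$-inverse $u$ of $a$ from the relation $p=ua$ and $ap=a$. In either case the standard reduction $r\mapsto rar$ yields an element of $a\{1,2\}$, so (1) follows.

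The main subtlety, such as it is, lies in choosing the cleanest route for (4) $\Rightarrow$ (1). A direct appeal to Theorem~\ref{T 12I xR Rx existencia} is not available, since $a\mathcal{R}=q\mathcal{R}$ does not force ${\rm rann}(q)={\rm rann}(a)$ and the four direct-sum hypotheses of that theorem are therefore strictly stronger than (4); routing the argument through regularity as above bypasses this issue. For (2) and (3) one may alternatively invoke Theorems~\ref{T 12I xR x0 existencia} and \ref{T 12I Rx 0x existencia} respectively, using the canonical decompositions $\mathcal{R}=p\mathcal{R}\oplus{\rm rann}(p)$ and $\mathcal{R}=\mathcal{R}p\oplus{\rm lann}(p)$ valid for any $p\in\mathcal{R}^{\bullet}$ (Lemma~\ref{L igualdad idempotentes}), to reformulate the hypotheses as exactly the direct-sum conditions required by those existence theorems.
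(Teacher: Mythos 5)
Your proof is correct, and the forward direction coincides with the paper's: the paper also takes $p=xa$, $q=ax$ for $x\in a\{1,2\}$, citing Theorem~\ref{T 12I projectors} where you cite the elementary identities for inner inverses. The reverse implications are where you genuinely diverge. The paper stays inside its projector machinery: for (2)~$\Rightarrow$~(1) it notes $\mathcal{R}=p\mathcal{R}\oplus{\rm rann}(p)=p\mathcal{R}\oplus{\rm rann}(a)$ and $\mathcal{R}=q\mathcal{R}\oplus{\rm rann}(q)=a\mathcal{R}\oplus{\rm rann}(q)$ and invokes Theorem~\ref{T 12I xR x0 existencia}, with Theorems~\ref{T 12I Rx 0x existencia} and \ref{T 12I xR Rx existencia} handling (3) and (4) the same way. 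You instead extract an inner inverse directly from the single principal-ideal equality ($q=ar$, $qa=a$ give $ara=a$, and symmetrically from $\mathcal{R}a=\mathcal{R}p$) and pass to $rar$; this is more elementary, bypasses Sections 4--6 entirely for this direction, and makes visible the slightly stronger fact that each principal-ideal half of (2), (3), (4) already forces $a\{1,2\}\neq\emptyset$, whereas the paper's route yields the sharper conclusion that a specific $\{1,2\}$-inverse with prescribed ideals (e.g. $a^{(1,2)}_{{\rm rprin}=p\mathcal{R},{\rm rann}={\rm rann}(q)}$) exists. One correction to your aside: the claim that Theorem~\ref{T 12I xR Rx existencia} is ``not available'' for (4)~$\Rightarrow$~(1) is mistaken, and in fact that theorem is exactly the paper's route. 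From $\mathcal{R}a=\mathcal{R}p$ and $a\mathcal{R}=q\mathcal{R}$ one gets ${\rm rann}(a)={\rm rann}(p)$ and ${\rm lann}(a)={\rm lann}(q)$ (Lemma~\ref{L L 2.5 RDD (2014b)}), and then taking $\mathcal{S}=p\mathcal{R}$, $\mathcal{S}'=\mathcal{R}q$, so that ${\rm rann}(\mathcal{S}')={\rm rann}(q)$ and ${\rm lann}(\mathcal{S})={\rm lann}(p)$, the four direct sums in Theorem~\ref{T 12I xR Rx existencia}(2) are precisely the canonical idempotent decompositions; the equality ${\rm rann}(q)={\rm rann}(a)$ you worried about is never needed. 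This misstatement does not affect the validity of your own argument.
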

\begin{proof}
(1) $\Rightarrow$ (2)(3): Let $x \in a\{1,2\}$. Setting $p=xa$ and
$q=ax$, these implications follow from Theorem~\ref{T 12I
projectors}.

(2) $\Rightarrow$ (1): From the hypotheses,
$\mathcal{R}=p\mathcal{R} \oplus {\rm rann}(p) = p\mathcal{R} \oplus
{\rm rann}(a)$ and $\mathcal{R}=q\mathcal{R} \oplus {\rm rann}(q) =
q\mathcal{R} \oplus a\mathcal{R}$. By Theorem~\ref{T 12I xR x0
existencia}, $a^{(1,2)}_{{\rm rprin}=p\mathcal{R},{\rm
rann}=q\mathcal{R}}$ exists. Thus, $a\{1,2\} \neq \emptyset$.

Using Theorems~\ref{T 12I Rx 0x existencia} and \ref{T 12I xR Rx
existencia}, the proofs of the remainder implications are similar to
the proof of (2) $\Rightarrow$ (1).
\end{proof}

\subsection{An example}\label{S An example}

Let $\mathbb{F}$ be a field with ${\rm char}(\mathbb{F})\neq 2$ and
$E_{i,j}=e_{i}e_{j}^{t} \in \mathbb{F}^{2\times2}$ for each $i, j
\in \{1,2\}$ where $e_{1}=(1,0)^{t}$ and $e_{2}=(0,1)^{t}$. Let
$A=E_{1,2}$. Then $A^{2}=0$, $A\mathbb{F}^{2\times2}={\rm rann}(A)$,
$\mathbb{F}^{2\times2}A={\rm lann}(A)$, $A\mathbb{F}^{2\times2}=\{(x_{i,j}) \in \mathbb{F}^{2\times2}:
x_{2,1}=x_{2,2}=0\}$, and $\mathbb{F}^{2\times2}A=\{(x_{i,j}) \in
\mathbb{F}^{2\times2}: x_{1,1}=x_{2,1}=0\}$.

By a direct computation, we get $A\{1\}=\{(x_{i,j}) \in
\mathbb{F}^{2\times2}: x_{2,1}=1\}$ and \[A\{2\}\!=\!\{(x_{i,j}) \in
\mathbb{F}^{2\times2}: x_{1,1}\!=\!x_{1,1}x_{2,1},
\,x_{1,2}\!=\!x_{1,1}x_{2,2}, \,x_{2,1}\!=\!x_{2,1}x_{2,1},
x_{2,2}\!=\!x_{2,1}x_{2,2}\}.\] Hence, $A\{1,2\}=\{(x_{i,j}) \in
\mathbb{F}^{2\times2}: x_{2,1}=1 \text{ and }
x_{1,2}=x_{1,1}x_{2,2}\}$.

Let $\mathcal{S}$ and $\mathcal{S}'$ be the right and the left
ideals of $\mathbb{F}^{2\times2}$ such that
$\mathbb{F}^{2\times2}=A\mathbb{F}^{2\times2}\oplus\mathcal{S}$ and
$\mathbb{F}^{2\times2}=\mathbb{F}^{2\times2}A\oplus\mathcal{S}'$.
Then $\mathcal{S}=\{(x_{i,j}) \in \mathbb{F}^{2\times2}:
x_{1,1}=x_{1,2}=0\}$ and $\mathcal{S}'=\{(x_{i,j}) \in
\mathbb{F}^{2\times2}: x_{1,2}=x_{2,2}=0\}$. We set
$\mathcal{T}=\mathcal{S}$ and $\mathcal{T}'=\mathcal{S}'$. We have
$\rho_{A\mathbb{F}^{2\times2},\mathcal{T}}(I)=\rho_{\mathcal{S}',{\rm
lann}(A)}(I)=E_{1,1}$ and $\rho_{\mathcal{S},{\rm
rann}(A)}(I)=\rho_{\mathbb{F}^{2\times2}A,\mathcal{T}'}(I)=E_{2,2}$.

Let $Z\in A\{1\}$ and $Y=(y_{i,j}) \in \mathbb{F}^{2\times2}$. Then
$\rho_{\mathcal{S},{\rm
rann}(A)}(I)Z=\rho_{\mathbb{F}^{2\times2}A,\mathcal{T}'}(I)Z=E_{2,1}+z_{2,2}E_{2,2}$,
$Z\rho_{A\mathbb{F}^{2\times2},\mathcal{T}}(I)=Z\rho_{\mathcal{S}',{\rm
lann}(A)}(I)=z_{1,1}E_{1,1}+E_{2,1}$, $\rho_{\mathcal{S},{\rm
rann}(A)}(I)Z\rho_{A\mathbb{F}^{2\times2},\mathcal{T}}(I)=\rho_{\mathbb{F}^{2\times2}A,\mathcal{T}'}(I)Z\rho_{\mathcal{S}',{\rm
lann}(A)}(I)=\rho_{\mathcal{S},{\rm
rann}(A)}(I)Z\rho_{\mathcal{S}',{\rm lann}(A)}(I)=E_{2,1}$, and
$(I-ZA)Y(I-AZ)=(y_{1,2}-z_{1,1}y_{2,2}-z_{2,2}y_{1,1}-z_{1,1}z_{2,2}y_{2,1})E_{1,2}$.

By Theorems~\ref{T 1I xaR ax0 expression}(3)-\ref{T 1I 0xa ax0
expression}(3), 
\begin{eqnarray*}
  \{aE_{1,2}+E_{2,1}: a \in \mathbb{F}\} &=& \{X\in
A\{1\} : XA\mathbb{F}^{2\times2}=\mathcal{S} \text{ and } {\rm
rann}(AX)=\mathcal{T}\} \\
&=& \{X\in A\{1\} :
\mathbb{F}^{2\times2}AX=\mathcal{S}' \text{ and } {\rm
lann}(XA)=\mathcal{T}'\} \\
   &=& \{X\in A\{1\} : {\rm
lann}(XA)=\mathcal{T}' \text{ and } {\rm rann}(AX)=\mathcal{T}\} \\
 &=& \{X\in A\{1\} : XA\mathbb{F}^{2\times2}=\mathcal{S} \text{ and }
\mathbb{F}^{2\times2}AX=\mathcal{S}'\}.
\end{eqnarray*}
As a consequence of
Theorems~\ref{T 1I xaR expression}(3) and \ref{T 1I 0xa
expression}(3), we obtain \[\{aE_{2,2}+E_{2,1}+bE_{1,2}: a, b \in
\mathbb{F}\}=\{X\in A\{1\} : XA\mathbb{F}^{2\times2}=\mathcal{S}\} =
\{X\in A\{1\} : {\rm lann}(XA)=\mathcal{T}'\},\] and from
Theorems~\ref{T 1I ax0 expression}(3) and \ref{T 1I Rax
expression}(3) we get, \[\{aE_{1,2}\,+\,E_{2,1}\,+\,bE_{1,1}: a, b
\in \mathbb{F}\}=\{X\in A\{1\} : {\rm rann}(AX)=\mathcal{T}\} =
\{X\in A\{1\} : \mathbb{F}^{2\times2}AX=\mathcal{S}'\}.\] Let
$\mathcal{Z}_{r}=\{Z\in\mathbb{F}^{2\times2}:AZ=\rho_{A\mathbb{F}^{2\times2},\mathcal{T}}(I)\}$
and
$\mathcal{Z}_{l}=\{Z\in\mathbb{F}^{2\times2}:ZA=\rho_{\mathbb{F}^{2\times2}A,\mathcal{T}'}(I)\}$.
Then $\mathcal{Z}_{r} \cap \mathcal{Z}_{l} \subseteq A\{1\}$,
$\mathcal{Z}_{r}=\{(z_{i,j}) \in \mathbb{F}^{2\times2}: z_{2,1}=1
\text{ and } z_{2,2}=0\}$ and $\mathcal{Z}_{l}=\{(z_{i,j}) \in
\mathbb{F}^{2\times2}: z_{1,1}=0 \text{ and } z_{2,1}=1\}$. Since
$\mathbb{F}^{2\times2}=A\mathbb{F}^{2\times2}\oplus\mathcal{S}$ and
$A^{2}=0$, we have $A\mathbb{F}^{2\times2}=A\mathcal{S}$. Similarly,
$\mathbb{F}^{2\times2}A=\mathcal{S}'A$. We also have ${\rm
rann}(\mathcal{S}')=\mathcal{T}$ and ${\rm
lann}(\mathcal{S})=\mathcal{T}'$. Hence, by Theorem~\ref{T 2I xR x0
existencia}(2), $A^{(2)}_{{\rm rprin}=\mathcal{S},{\rm
rann}=\mathcal{T}} \in \mathcal{S} \cap \mathcal{Z}_{r}$, by
Theorem~\ref{T 2I Rx 0x existencia}(2), $A^{(2)}_{{\rm
lprin}=\mathcal{S}',{\rm lann}=\mathcal{T}'} \in \mathcal{S}' \cap
\mathcal{Z}_{l}$, and by Theorem~\ref{T 2I xR Rx existencia}(4),
$A^{(2)}_{{\rm rprin}=\mathcal{S},{\rm lprin}=\mathcal{S}'} \in
\mathcal{S} \cap \mathcal{S}' \cap \mathcal{Z}_{r} \cap
\mathcal{Z}_{l}$. From here, \[E_{2,1}=A^{(2)}_{{\rm rprin}=\mathcal{S},{\rm
rann}=\mathcal{T}}=A^{(2)}_{{\rm lprin}=\mathcal{S}',{\rm
lann}=\mathcal{T}'}=A^{(2)}_{{\rm rprin}=\mathcal{S},{\rm
lprin}=\mathcal{S}'}.\] Applying Theorem~\ref{T 2I 0x x0
existencia}(6), we obtain $E_{2,1}=A^{(2)}_{{\rm lann}=\mathcal{T}',{\rm
rann}=\mathcal{T}}$.

By parts (8) of Theorems~\ref{T 12I xR x0 expression}-\ref{T 12I 0x
x0 expression}, \[E_{2,1}=A^{(1,2)}_{{\rm rprin}=\mathcal{S},{\rm
rann}=\mathcal{T}}=A^{(1,2)}_{{\rm lprin}=\mathcal{S}',{\rm
lann}=\mathcal{T}'}=A^{(1,2)}_{{\rm rprin}=\mathcal{S},{\rm
lprin}=\mathcal{S}'}=A^{(1,2)}_{{\rm lann}=\mathcal{T}',{\rm
rann}=\mathcal{T}}.\] By Theorems~\ref{T 12I xR expression}(6) and
\ref{T 12I 0x expression}(4), \[\{E_{2,1}+aE_{2,2}: a \in
\mathbb{F}\}=\{X\in A\{1,2\} : XA\mathbb{F}^{2\times2}=\mathcal{S}\}
= \{X\in A\{1,2\} : {\rm lann}(XA)=\mathcal{T}'\}.\] By
Theorems~\ref{T 12I x0 expression}(6) and \ref{T 12I Rx
expression}(4), \[\{aE_{1,1}+E_{2,1}: a \in \mathbb{F}\}=\{X\in
A\{1,2\} : {\rm rann}(AX)=\mathcal{T}\} = \{X\in A\{1,2\} :
\mathbb{F}^{2\times2}AX=\mathcal{S}'\}.\]

It is easy to see that
\[\{B\in\mathbb{F}^{2\times2}:\mathcal{S}=B\mathbb{F}^{2\times2}\}=\{B\in\mathbb{F}^{2\times2}:\mathcal{T}'={\rm
lann}(B)\}=\{(b_{i,j})\in\mathbb{F}^{2\times2}:b_{1,1}\!=\!b_{1,2}\!=\!0
\text{ and } (b_{2,1},b_{2,2}) \neq 0\}\] and
\[\{C\in\mathbb{F}^{2\times2}:\mathcal{T}={\rm
rann}(C)\}=\{C\in\mathbb{F}^{2\times2}:\mathcal{S}'=\mathbb{F}^{2\times2}C\}=\{(c_{i,j})\in\mathbb{F}^{2\times2}:c_{1,2}\!=\!c_{2,2}\!=\!0
\text{ and } (c_{1,1},c_{2,1}) \neq 0\}.\]

We note that we have obtained the unique $\{2\}$-inverse
corresponding to the right ideals $\mathcal{S}$ and $\mathcal{T}$
such that
$\mathbb{F}^{2\times2}=A\mathbb{F}^{2\times2}\oplus\mathcal{T}$ and
$\mathbb{F}^{2\times2}=\mathcal{S}\oplus{\rm rann}(A)$ (resp. left
ideals $\mathcal{S}'$ and $\mathcal{T}'$ such that
$\mathbb{F}^{2\times2}=\mathbb{F}^{2\times2}A\oplus\mathcal{T}'$ and
$\mathbb{F}^{2\times2}=\mathcal{S}'\oplus{\rm lann}(A)$). There are
other $\{2\}$-inverses with other principal/annihilator ideals. For
example, as in \cite[Example 2.5]{Drazin (2012)}, we can consider
$B=(\lambda,1)^{t}(\alpha,\beta)$ and $C=(\gamma,\delta)^{t}(1,\mu)$
with $(\alpha,\beta), (\gamma,\delta) \in
\mathbb{F}^{2}\setminus\{0\}$ and $\lambda, \mu \in \mathbb{F}$. The
conditions that any pair of ideals must satisfy are given in
Theorems~\ref{T 2I xR x0 existencia}-\ref{T 2I 0x x0 existencia}.

\bigskip
\textbf{Acknowledgements.} The author thanks the reviewer for the useful observations. This research has been supported by
Grants PIP 11220220100112CO (CONICET) and PROIPRO 03-2823 (UNSL).

\end{document}